\numberwithin{equation}{section}
\newtheorem{thm}[equation]{Theorem}
\newtheorem{cor}[equation]{Corollary}
\newtheorem{lem}[equation]{Lemma}
\newtheorem{prop}[equation]{Proposition}
\newtheorem{claim}[equation]{Claim}
\newtheorem{obs}[equation]{Observation}
\newtheorem{conj}[equation]{Conjecture}
\crefname{thm}{Theorem}{Theorems}
\crefname{cor}{Corollary}{Corollaries}
\crefname{lem}{Lemma}{Lemmas}
\crefname{prop}{Proposition}{Propositions}
\crefname{claim}{Claim}{Claims}
\crefname{obs}{Observation}{Observations}
\crefname{fact}{Fact}{Facts}
\crefname{conj}{Conjecture}{Conjectures}
\crefname{ques}{Question}{Questions}
\crefname{prob}{Problem}{Problems}
\theoremstyle{definition}
\newtheorem{defn}[equation]{Definition}
\newtheorem{ex}[equation]{Example}
\theoremstyle{remark}
\newtheorem{rem}[equation]{Remark}
\newcommand{\eps}{\varepsilon}
\newcommand{\cP}{\mathcal{P}}
\newcommand{\cG}{\mathcal{G}}
\newcommand{\cS}{\mathcal{S}}
\newcommand{\cI}{\mathcal{I}}
\newcommand{\cE}{\mathcal{E}}
\newcommand{\cC}{\mathcal{C}}
\newcommand{\cF}{\mathcal{F}}
\newcommand{\cX}{\mathcal{X}}
\newcommand{\cA}{\mathcal{A}}
\newcommand{\cB}{\mathcal{B}}
\newcommand{\cU}{\mathcal{U}}
\newcommand{\cQ}{\mathcal{Q}}
\newcommand{\bP}{\mathbb{P}}
\newcommand{\bE}{\mathbb{E}}
\newcommand{\bR}{\mathbb{R}}
\newcommand{\bfx}{\mathbf{x}}
\newcommand{\bfy}{\mathbf{y}}
\newcommand{\bfw}{\mathbf{w}}
\newcommand{\nonexE}{\cE^{\mathrm{non-ext}}}
\newcommand{\exE}{\cE^{\mathrm{ext}}}
\newcommand{\ds}{d^{1}}
\def \Gp {G^+}
\def \Fp {F^+}
\def \Fm {F^-}
\def \Dp {D^+}
\def \Fs {F^*}
\def \Fss {F^{**}}
\newcommand{\comp}[1]{#1^{\mathsf{c}}}
\newcommand{\floor}[1]{\left\lfloor #1 \right\rfloor}
\newcommand{\ceil}[1]{\left\lceil #1 \right\rceil}
\title{Exact supported co-degree bounds for Hamilton cycles}
\author{Shoham Letzter\footnotemark[1]
\qquad
Arjun Ranganathan\footnotemark[1]
}
\date{}
\begin{document}

\maketitle

\begin{abstract}
	\setlength{\parskip}{\medskipamount}
    \setlength{\parindent}{0pt}
    \noindent

	For any $k\ge 3$ and $\ell \in [k-1]$ such that $(k,\ell) \ne (3,1)$, we show that any sufficiently large $k$-graph $G$ must contain a Hamilton $\ell$-cycle provided that it has no isolated vertices and every set of $k-1$ vertices contained in an edge is contained in at least $\left(1 - \frac{1}{\lfloor{\frac{k}{k-\ell}\rfloor}(k-\ell)}\right)n - (k - 3)$ edges. We also show that this bound is tight for infinitely many values of $k$ and $\ell$ and is off by at most $1$ for all others, and is hence essentially optimal. This improves an asymptotic version of this result due to Mycroft and Z{\'a}rate-Guer{\'e}n~\cite{mycroft2025positive}, and the case $\ell = k-1$ completely resolves a conjecture of Illingworth, Lang, M\"uyesser, Parczyk and Sgueglia~\cite{illingworth2025spanning}. 

	These results support the utility of \emph{minimum supported co-degree} conditions in a $k$-graph, a recently introduced variant of the standard notion of minimum co-degree applicable to $k$-graphs with non-trivial strong independent sets. Our proof techniques involve a novel blow-up tiling framework introduced by Lang~\cite{lang2023tiling}, avoiding traditional approaches using the regularity and blow-up lemmas.
\end{abstract}

\renewcommand{\thefootnote}{\fnsymbol{footnote}} 

\footnotetext[1]{Department of Mathematics, University College London, Gower Street, London WC1E~6BT, UK. Emails: 
  \textsf{\{\href{mailto:s.letzter@ucl.ac.uk}{s.letzter},
           \href{mailto:arjun.ranganathan.24@ucl.ac.uk}{arjun.ranganathan.24}\}@ucl.ac.uk}. Research of SL supported by the Royal Society.}

\newpage

\tableofcontents

\newpage

\section{Introduction}\label{sec:intro}
    A widespread research theme in extremal graph theory is to determine sufficient conditions that ensure the existence of specified spanning structures in graphs and hypergraphs. A classic result in this vein, due to Dirac~\cite{dirac1952some}, states that any graph on $n\geq 3$ vertices with minimum degree at least $n/2$ contains a Hamilton cycle, and it is not too difficult to see that the degree condition is the best possible. Dirac's theorem has been generalised to various distinct settings over the years (see the surveys \cite{gould2014recent,kuhn2014hamilton,simonovits2020embedding,kuhn2012survey,frieze2019hamilton,rodl2010dirac}) and this has contributed to the development of several powerful techniques, such as regularity, absorption, and rotation-extension methods. In this paper, we will focus on hypergraph extensions of Dirac's theorem.

	A \emph{$k$-uniform hypergraph} or simply a \emph{$k$-graph} $G$ consists of a set of vertices $V(G)$ and a set of edges $E(G)$, where each edge consists of exactly $k$ vertices. The most common generalisation of a minimum degree condition from graphs to hypergraphs is accomplished via the notion of a \emph{minimum co-degree} -- which is the minimum $d$ such that every set of $k-1$ vertices in $G$ is contained in at least $d$ edges -- and we denote it here by $\delta(G)$. The larger uniformity $k$ allows for multiple distinct ways to define a cycle in a $k$-graph. The cyclic structures we search for are \emph{$\ell$-cycles}, which are perhaps the most common extensions of graph cycles. Intuitively, we think of an $\ell$-cycle as a spanning path formed by a cyclic set of edges such that each edge has exactly $\ell$ vertices in common with the preceding edge (it could intersect other edges as well). More formally, given an $n$-vertex $k$-graph $G$ and any $\ell \in [k-1]$ such that $k - \ell$ divides $n$, we define a \emph{Hamilton $\ell$-cycle} in $G$ to be an ordering of the vertices of $G$, say $v_1 \dots v_n$, such that the vertices of the subsequence $v_{i(k - \ell)+1} \dots v_{i(k - \ell) +k}$ form an edge for all $i \ge 0$, where we view the indices cyclically modulo $n$. Since each edge of an $\ell$-cycle contains $k-\ell$ vertices that were not in the previous edge, we trivially require that $k - \ell$ divides $n$ so that when we ``cycle around'' the vertices of $G$, the original sequence of edges gets repeated. We call $(k-1)$-cycles and $1$-cycles \emph{tight} and \emph{loose} cycles respectively.
    
The question of determining sufficient minimum co-degree conditions to ensure a tight Hamilton cycle was first raised by Katona and Kierstead~\cite{katona1999hamiltonian} who conjectured that if $\delta(G)\ge (n-k+2)/2$ then $G$ has a tight Hamilton cycle, and they showed that this bound is the best possible. R\"{o}dl, Ruci\'nski and Szemer\'edi~\cite{rodl2006dirac,rodl2008approximate} first proved this asymptotically for $k\geq 3$ and $\delta(G)\geq n/2+o(n)$, and later obtained an exact result for $k=3$~\cite{rodl2011dirac}. Additionally, Liu and Liu \cite{liu2022hamiltonian} made progress towards an exact result for $k = 4$. In fact, for any $\ell \in [k-1]$ such that $k - \ell$ divides $k$, a tight Hamilton cycle will contain such a Hamilton $\ell$-cycle (provided the necessary divisibility condition holds). If we further suppose that $k$ divides $n$, then this Hamilton $\ell$-cycle will imply the existence of a perfect matching which necessitates $\delta(G) \ge n/2 - k$ as shown in \cite{kuhn2010hamilton,rodl2009perfect}, and hence the aforementioned tight cycle bound of $\delta(G) \ge n/2 + o(n)$ is tight. If $k$ does not divide $n$, then R\"{o}dl, Ruci\'nski and Szemer\'edi~\cite{rodl2009perfect} have determined the minimum codegree threshold for a near-perfect matching of size $\floor{n/k}$ to be roughly $n/k$. It is therefore plausible that the correct minimum codegree threshold for a spanning $\ell$-cycle, when $k-\ell$ divides $k$ but is not $1$ and $n$ is not divisible by $k$, is significantly lower than $n/2$. Conversely, if $k-\ell$ does not divide $k$, a series of works by K\"uhn and Osthus~\cite{kuhn2006loose}, Keevash, K\"uhn, Mycroft and Osthus~\cite{keevash2011loose}, H\`an and Schacht~\cite{han2010dirac} and K\"uhn, Mycroft and Osthus~\cite{kuhn2010hamilton} have established that 
\[
\delta(G)\geq \frac{n}{\ceil{\frac{k}{k-\ell}}(k-\ell)}+o(n)
\]
suffices and is optimal up to the $o(n)$ term, yielding a much lower threshold. Exact bounds, however, have proven notoriously difficult to obtain and are known only in a few special cases. To the best of our knowledge, these are $(k,\ell)=(3,2)$ by R\"{o}dl, Ruci\'nski and Szemer\'edi~\cite{rodl2011dirac}, $(k,\ell)=(3,1)$ by Czygrinow and Molla~\cite{czygrinow2014tight}, $(k,\ell)=(4,2)$ by Garbe and Mycroft~\cite{garbe2018hamilton}, and $k\ge 3$ and $\ell<k/2$ by Han and Zhao~\cite{han2015minimum}. We refer the reader to the surveys~\cite{kuhn2014hamilton,zhao2016recent,rodl2010dirac} for more detailed discussions.

A drawback of a minimum co-degree condition is that it tends to be a fairly strong requirement. For instance, if we start with a complete $k$-graph and remove all edges that contain a fixed pair of vertices, it is immediate that the resulting hypergraph has minimum co-degree equal to zero. However, if $k\ge 3$, it is not hard to see that even fairly small hypergraphs of this form are quite dense, and trivially contain Hamilton cycles. In fact, there are several natural hypergraph classes that are quite dense, but have minimum co-degree equal to zero (multipartite hypergraphs, for instance), and hence the previous theorems are not immediately applicable. This motivates the notion of the \emph{minimum supported co-degree} of a $k$-graph $G$, which we denote $\delta^*(G)$, and define as the maximum integer $d$ such that every $(k-1)$-set that is contained in at least one edge is contained in at least $d$ edges. A minimum supported co-degree requirement immediately handles the previous issues of zero co-degree sets, although it does not rule out isolated vertices, and hence we will usually impose the (fairly weak) requirement of all vertices being non-isolated.

Multiple recent studies have investigated questions that arise from replacing a minimum co-degree with a minimum supported co-degree requirement in classical extremal hypergraph theoretic problems. The notion of supported co-degrees was introduced by Balogh, Lemons and Palmer~\cite{balogh2021maximum} (they use the term ``positive co-degree'' instead) to formulate a generalisation of the Erd\H{o}s-Ko-Rado theorem, providing bounds on the sizes of intersecting families satisfying a minimum supported co-degree condition. Subsequently, Frankl and Wang~\cite{frankl2025intersecting} improved these bounds in almost all cases, and Spiro~\cite{spiro2023t} has extended these studies to the more general case of $t$-intersecting families. Other papers have worked on generalisations of the Andr\'asfai-Erd\H{o}s-S\'os theorem~\cite{liu2024positive} and unique colourability of hypergraphs~\cite{liu2024uniquely}.

Halfpap, Lemons and Palmer~\cite{halfpap2025positive} investigated variants of hypergraph Tur\'an problems, and established the asymptotic supported co-degree threshold for a host $3$-graph to contain a copy of a fixed graph $F$ for several distinct $3$-graphs $F$, and Wu~\cite{wu2025positive} has consequently addressed this question for other examples. Halfpap, Lemons and Palmer~\cite{halfpap2025positive} also study ``jumps'' in this threshold, which has further been tackled by Balogh, Halfpap, Lidick\'y, and Palmer~\cite{balogh2024positive}. Pikhurko~\cite{pikhurko2023limit} recently proved that these problems are well-defined for a generalisation to the ``minimum positive/supported $\ell$-co-degree'' for any $\ell \in [k-1]$, providing analogous results to those Lo and Markstr\"om~\cite{lo2014degree} regarding the usual notion of $\ell$-co-degree.

The question of finding minimum supported co-degree conditions that ensure the existence of spanning structures in hypergraphs was first raised by Halfpap and Magnan~\cite{halfpap2024positive}. They prove an exact optimal lower bound on the minimum supported co-degree required to guarantee a perfect matching in $3$-graphs and a slightly weaker bound for all higher uniformities, which was later improved to an exact tight bound by Mycroft and Z\'arate-Guer\'en~\cite{mycroft2025matching}. Furthermore, Halfpap and Magnan also establish an exact best-possible minimum supported co-degree condition for Hamilton Berge cycles and an asymptotic one for $3$-uniform loose cycles. In a different direction, Illingworth, Lang, M\"uyesser, Parczyk and Sgueglia~\cite{illingworth2025spanning} showed that an $n$-vertex $k$-graph with $\delta^*(G)\ge n/2 + o(n)$ contains a spanning $k$-sphere, asymptotically confirming a conjecture of Georgakopoulos, Haslegrave, Montgomery and Narayanan~\cite{georgakopoulos2022spanning}.

The problem of determining the optimal minimum supported co-degree for $k$-uniform $\ell$-cycles was initially suggested by Halfpap and Magnan in~\cite[Section 5]{halfpap2024positive}, and reiterated by Illingworth, Lang, M\"uyesser, Parczyk and Sgueglia~\cite{illingworth2025spanning} for the special case of tight cycles. This question was recently tackled by Mycroft and Z\'arate-Guer\'en~\cite{mycroft2025positive}, who prove an asymptotically optimal result for all $k\ge 3$ and $\ell\in [k-1]$. 

Our main result provides an improved sufficient minimum supported co-degree condition for $k$-uniform $\ell$-cycles for all $k\ge 3$ and $\ell\in [k-1]$ except $(k,\ell)=(3,1)$, which is the best possible for infinitely many values of $k$ and $\ell$, and is off by at most $1$ for all values of $k$ and $\ell$. We discuss the optimality of our result in \Cref{sec:lower bound}. Importantly, we emphasise that our results are (essentially) exact and not asymptotic, a rather stark difference from the minimum co-degree version of the problem where very few cases have been resolved exactly.

\begin{thm} \label{thm:main}
	Let $1 \le \ell \le k-1$ be such that $k \ge 3$ and $(k,\ell) \neq (3,1)$, let $t = \floor{\frac{k}{k-\ell}}(k - \ell)$, and let $n$ be sufficiently large and divisible by $k-\ell$.
	Then every $k$-uniform $n$-vertex hypergraph without isolated vertices and having minimum supported co-degree at least $(1 - 1/t)n-(k-3)$ will contain a Hamilton $\ell$-cycle.
\end{thm}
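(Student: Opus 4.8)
The plan is to follow the now-standard absorption paradigm, but with the "blow-up tiling framework" of Lang \cite{lang2023tiling} replacing the regularity/blow-up lemma machinery, as advertised in the abstract. At a high level, a Hamilton $\ell$-cycle will be built in three pieces: (i) an \emph{absorbing path} $P_{\mathrm{abs}}$ that can flexibly absorb any small leftover set of vertices into itself while remaining an $\ell$-path; (ii) a short \emph{connecting/reservoir} structure; and (iii) an \emph{almost-spanning $\ell$-cycle} covering all but a tiny linear-or-sublinear fraction of the vertices, obtained by tiling with short $\ell$-paths and stitching them together. The key numerical fact driving everything is that $t = \floor{\frac{k}{k-\ell}}(k-\ell)$ is exactly the parameter for which $1 - 1/t$ is the right first-order supported co-degree density: any supported $(k-1)$-set sees all but roughly an $n/t$ fraction of the possible extensions, which is precisely enough to guarantee that short $\ell$-paths can be extended and connected greedily, mirroring why $\ceil{\frac{k}{k-\ell}}(k-\ell)$ appears in the classical co-degree threshold of K\"uhn--Osthus et al.

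First I would set up the local extension lemma: given an $\ell$-path ending in an edge whose last $k-1$ vertices form a supported $(k-1)$-set (which they do, being inside an edge of the path), the supported co-degree bound $(1-1/t)n - (k-3)$ forces many common neighbours avoiding any previously-used vertex set of size $o(n)$, so one can always extend the path by $k-\ell$ new vertices, and in fact has enough freedom to route the path through (or around) designated vertices. Iterating and being careful about the cyclic divisibility by $k-\ell$, this yields both a connecting lemma (any two short $\ell$-paths with suitable endpoints can be joined by a bounded-length $\ell$-path through a reservoir) and an absorbing gadget for a single vertex; a standard random/probabilistic selection then produces a bounded collection of vertex-disjoint absorbers whose union $P_{\mathrm{abs}}$ absorbs any set of at most, say, $n^{0.9}$ vertices. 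The nontrivial input here is that the $(k,\ell) = (3,1)$ exclusion and the precise $-(k-3)$ additive term must be used to make the extension lemma produce \emph{enough} disjoint choices even at the exact (not asymptotic) threshold — this is where the constant is genuinely tight, so the argument cannot afford any slack.

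Next I would establish the almost-perfect $\ell$-path tiling via Lang's blow-up tiling method: partition (almost all of) $V(G) \setminus V(P_{\mathrm{abs}})$ into short $\ell$-paths of bounded length, using the supported co-degree condition to certify that $G$ restricted to any large enough vertex subset still admits such a tiling — here one exploits that a supported co-degree of essentially $(1-1/t)n$ means the non-neighbourhoods behave like a bounded-size "cover" structure, so the extremal/partite obstructions are controlled by $t$. Then the connecting lemma links these $O(1)$-many (or $o(n)$-many, depending on the tiling granularity) paths together, inserts $P_{\mathrm{abs}}$, and closes up the cycle; finally the absorber swallows the $o(n)$ uncovered vertices, and the divisibility hypothesis $(k-\ell) \mid n$ guarantees the lengths match up so the result is a genuine Hamilton $\ell$-cycle.

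I expect the main obstacle to be making the tiling/covering step work at the \emph{exact} threshold rather than asymptotically: with only a supported co-degree condition (which permits many genuinely zero-co-degree $(k-1)$-sets, e.g. multipartite-like configurations), one must show that no sparse "near-extremal" configuration survives at density exactly $1 - 1/t - o(1)$ per vertex, and rule it out combinatorially rather than by a stability/regularity argument. Concretely, the hard part will be proving a robust "cover lemma" — that if $\delta^*(G) \ge (1-1/t)n - (k-3)$ and $G$ has no isolated vertices, then $G$ cannot be (close to) a blow-up of a small pattern that blocks $\ell$-path tilings — and then feeding this into Lang's framework with the error terms tracked carefully enough that the additive $-(k-3)$ is exactly what is consumed, which is also what forces the $(3,1)$ exception where $t = 2$ and the slack vanishes entirely.
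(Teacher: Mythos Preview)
Your proposal has a genuine structural gap: you are trying to push a single absorption--tiling argument through at the exact threshold, and in particular you hope to ``rule out'' near-extremal configurations via a cover lemma. This cannot work as stated. The graph $G$ may very well be (a perturbation of) the extremal construction --- a vertex set $A$ of size about $n/t$ that is a near-strong-independent set, with $B = V(G)\setminus A$ complete --- and such $G$ both satisfies the hypotheses \emph{and} contains a Hamilton $\ell$-cycle. So there is nothing to rule out; the extremal case must be handled, not excluded. And in that case your extension/connection scheme breaks: a path ending with its last $k-1$ vertices inside $A$ may have \emph{zero} supported co-degree (only $(k-1)$-sets contained in an edge are controlled), and more subtly, the vertices of $A$ have essentially no freedom in where they sit along an $\ell$-cycle (they must land at positions $\equiv 0 \pmod t$, cf.\ \Cref{obs:strong ind set}), so a local greedy/absorber argument that treats all vertices symmetrically cannot assemble a spanning cycle here.

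The paper accordingly splits into two complementary theorems. In the \emph{non-extremal} regime (every $\floor{n/t}$-set spans $\Omega(n^2)$ supported pairs) one may even assume the weaker bound $\delta^*(G)\ge (1-1/t-\varepsilon)n$; there the Lang blow-up tiling is used, but not as you describe: $G$ is tiled by nearly-regular blow-ups of small non-extremal graphs arranged in a cyclic chain, and inside each blow-up a Hamilton $\ell$-path is built from a perfect $\bfw^*$-weighted fractional matching (found via Farkas' lemma) together with an absorbing/connecting walk. The \emph{extremal} regime is treated by ad-hoc structural analysis: clean the partition $A\sqcup B$, balance $|B|=(t-1)|A|$ by replacing a few $A$-vertices with short supported $\ell$-paths, then find a perfect matching in an auxiliary $t$-partite $t$-graph via a random greedy procedure, and close it into a Hamilton cycle using a directed Dirac theorem. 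The exact additive constant $-(k-3)$ is used \emph{only} in the extremal case, at a single step (forcing $\partial^2[A']$ to have minimum degree at least $x+1$ so that one can find $x$ disjoint cherries; see \Cref{rem:exact degree}). Your guess that the constant is ``consumed'' by the extension lemma is therefore misplaced: nothing in the connecting/absorbing machinery needs more than $\delta^*(G)>n/2$, which holds comfortably since $t\ge 3$ when $(k,\ell)\ne(3,1)$.
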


This improves the aforementioned asymptotic results of Mycroft and Z\'arate-Guer\'en~\cite{mycroft2025positive}, who showed that $\delta^*(G) \ge (1 - 1/t)n + o(n)$ suffices. Our methods are largely different from theirs. Our analysis splits into two regimes: the extremal and non-extremal (see \Cref{sec:stability} for details). The extremal regime does not feature in \cite{mycroft2025positive} at all, and for this we use ad-hoc structural analysis with lemmas about random matchings in bipartite graphs and about finding almost spanning subhypergraphs with large minimum supported codegree in almost complete hypergraphs. For the non-extremal case, while Mycroft and Z\'arate-Guer\'en rely on a version of the regularity lemma for hypergraphs (called the weak regularity lemma) along with the absorption method, our approach stems from Lang's breakthrough work \cite{lang2023tiling} on hypergraph tilings, which allows us to completely avoid the regularity lemma and greatly simplifies the use of the absorption method. For the latter part, however, we use a novel idea from \cite{mycroft2025positive} about weighted fractional matchings that, coupled with Farkas' linear-algebraic lemma, allows one to find spanning $\ell$-cycles in certain $k$-partite $k$-graphs, which is one of the steps in our proof. 

We point out that Illingworth, Lang, M\"uyesser, Parczyk and Sgueglia~\cite{illingworth2025spanning} conjectured that for all $k\ge 3$, any sufficiently large $k$-graph with $\delta^*(G) \ge (1-1/k)n$ contains a tight Hamilton cycle, which is a special case of \cref{thm:main} (up to the $k-3$ term). Observe that if $k$ divides $n$, then a tight Hamilton cycle in $G$ will contain a perfect matching. Thus, our result implies that $\delta^*(G) \ge (1 - 1/k)n - (k-3)$ ensures a perfect matching, which recovers a result of Mycroft and Z\'arate-Guer\'en~\cite{mycroft2025matching} up to an additive constant of one, who show that a bound of $(1 - 1/k)n - (k - 2)$ suffices.

As alluded to above, the bound in \cref{thm:main} is off by one in most cases. In fact, in many cases (namely whenever $t \ge \ell+2$ or when $\ell = k-1$, that is, we are seeking a tight Hamilton cycle), our techniques can be adapted to prove \cref{thm:main} with the improved bound of $\delta^*(G) \ge (1 - 1/t)n - (k - 2)$ instead (which, for instance, would immediately imply the previously discussed optimal perfect matching bound). 
However, since this will require some technical modifications, we choose to present a single unified proof for $\delta^*(G) \ge (1 - 1/t)n - (k - 3)$ that will work for all cases instead. We point out the part of our proof that requires the exact degree condition in \Cref{rem:exact degree}, and briefly discuss how to suitably alter our proof and remove the extra one in the bound in all relevant cases in \Cref{sec:improving the bound}.

Finally, we remark that in a recent personal communication, Mycroft and Z\'arate-Guer\'en informed us that they are preparing a manuscript where they prove that any sufficiently large $3$-graph with no isolated vertices and $\delta^*(G) \ge n/2$ contains a loose Hamilton cycle. This improves the previous asymptotic bound of $n/2 + o(n)$ established by them~\cite{mycroft2025positive} and Halfpap and Magnan~\cite{halfpap2024positive} (which our methods can recover as well) and extends \cref{thm:main} to include $(k,\ell) = (3,1)$, the only case we do not handle.

\paragraph{Organisation of the paper.} In \Cref{sec:overview}, we first provide lower bound constructions to prove the optimality of \cref{thm:main}. We then split the proof of \cref{thm:main} into two complementary cases and handle them separately in \cref{thm:Hamilton cycle non-extremal,thm:Hamilton cycle extremal}. We also provide brief proof overviews for both these theorems in \Cref{sec:proof-overviews}. Then in \Cref{sec:prelims} we introduce some notation and definitions and provide some basic tools.

\Crefrange{sec:non-extremal}{sec:proof Hamilton paths} are dedicated to proving \cref{thm:Hamilton cycle non-extremal}, and \Crefrange{sec:extremal hypergraphs}{sec:Hamilton cycle extremal} prove \cref{thm:Hamilton cycle extremal}. These two parts are treated largely independently, and can be read as such. We conclude with some open problems in \Cref{sec:conclusion}.

\section{Overview}\label{sec:overview}

	In this section we first give three extremal constructions (see \Cref{sec:lower bound}). We then state, in \Cref{sec:stability}, two theorems that will split the task of finding a Hamilton $\ell$-cycle in a $k$-graph with appropriate minimum supported codegree into two cases: non-extremal and extremal, and observe that these theorems together imply our main result. Finally, we give brief proof sketches for each theorem in \Cref{sec:proof-overviews}.

\subsection{Extremal lower bound constructions}\label{sec:lower bound}
    We first discuss the optimality of the lower bound in \cref{thm:main}. To begin with, we say that a subset $U$ of at most $k$ vertices in a $k$-graph $G$ is \emph{supported} if there is an edge containing $U$. Furthermore, if $|U| \le k-1$, set $d_G^1(U)$ to be the number of vertices $v \in V(G)$ such that $U \cup \{v\}$ is supported. We say that a subset $A$ of vertices is a \emph{strong independent set} if every edge intersects $A$ in at most one vertex. 
    
    We will provide three lower bound constructions. The first one will show that, at the very least, we need $\delta^*(G) \ge (1 - 1/t)n - (k - 2)$ in \cref{thm:main} for any $k\ge 3$, $\ell \in [k-1]$ and $n$ which is divisible by $k-\ell$. The second one, which is a slight modification of the first, shows that if $k$ is odd, $\ell = (k - 1)/2$, and $n$ satisfies some divisibility conditions, then we require $\delta^*(G) \ge (1 - 1/t)n - (k - 3)$, which is why our main result is essentially tight and is the strongest possible bound that applies to all relevant $k$, $\ell$ and $n$. The final one will be an entirely different construction that applies only to the $(k,\ell) = (3,1)$ case and shows that we need $\delta^*(G) \ge n/2$, which is in line with the second example.
    
    \begin{ex}\label{ex:lower bd weak}
        Let $k\ge 3$, $\ell \in [k-1]$ and $t = \floor{\frac{k}{k-\ell}}(k - \ell)$. Let $n$ be chosen such that $k-\ell$ divides $n$. Now, consider an $n$-vertex $k$-graph $G$ with a partition $A\sqcup B$ of its vertex set such that $|A| = \floor{n/t} + 1$ and $E(G)$ consists of all $k$-sets $e \in \binom{V(G)}{k}$ that satisfy $|e \cap A| \le 1$ (and so $A$ is a strong independent set). Suppose $S$ is any supported $(k-1)$-set. If $|S\cap A| = 0$, then $S$ supports all vertices outside itself, and so $d_G^1(S) = n-k+1$. If $|S\cap A| = 1$, then it supports all vertices in $B\setminus S$, meaning that $d_G^1(S) = n - \floor{n/t} - (k-2)$, and this is equal to $\delta^*(G)$ since there is no supported set with $|S\cap A| \ge 2$. The following observation shows that $G$ cannot contains a Hamilton $\ell$-cycle.
	\end{ex}

	\begin{obs}\label{obs:strong ind set}
		Let $H$ be an $n$-vertex $k$-uniform $\ell$-cycle. Then the maximum size of a strong independent set in $H$ is at most $\floor{n/t}$.
	\end{obs}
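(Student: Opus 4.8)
The plan is a double-counting argument: I will show that every vertex of the $\ell$-cycle lies in at least $\floor{k/(k-\ell)}$ edges, and then use that a strong independent set meets every edge at most once. Write $d = k-\ell$ and $m = \floor{k/d}$, so that $t = md \le k$, and label the vertices of $H$ as $v_1,\dots,v_n$ so that the edges are $e_i = \{v_{id+1},\dots,v_{id+k}\}$ for $i = 0,1,\dots,n/d-1$, with indices read cyclically modulo $n$. Note that $H$ has exactly $n/d$ edges, and that $n/d$ is an integer since an $\ell$-cycle on $n$ vertices forces $d \mid n$.

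First I would establish that every vertex of $H$ lies in at least $m$ edges. Indeed, for a fixed $j$ we have $v_j \in e_i$ precisely when $id \bmod n$ lies in the set $\{j-1, j-2, \dots, j-k\}$ taken modulo $n$, which is a cyclic block of $k$ consecutive residues. As $i$ ranges over $\{0,1,\dots,n/d-1\}$, the value $id \bmod n$ ranges over all $n/d$ multiples of $d$ in $\mathbb{Z}/n\mathbb{Z}$; here the hypothesis $d \mid n$ is exactly what makes these evenly spaced with gap $d$ all the way around. Since $t = md \le k$, any block of $k$ consecutive integers contains $m$ disjoint runs of $d$ consecutive integers, hence at least $m$ multiples of $d$; the same count survives reduction modulo $n$ because $d \mid n$. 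Therefore $v_j$ lies in at least $m$ of the edges $e_i$.

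Now let $A$ be a strong independent set of $H$. Counting incidences between $A$ and the edges of $H$ in two ways yields
\[
	m\,|A| \;\le\; \sum_{a \in A} \bigl|\{\, i : a \in e_i \,\}\bigr| \;=\; \sum_{i} |e_i \cap A| \;\le\; \sum_{i} 1 \;=\; \frac{n}{d},
\]
where the first inequality uses the previous paragraph and the last inequality uses that each edge of $H$ meets the strong independent set $A$ in at most one vertex. Hence $|A| \le n/(md) = n/t$, and since $|A|$ is an integer we conclude $|A| \le \floor{n/t}$. There is no genuinely hard step here; the only point needing a moment's care is the cyclic book-keeping in the second paragraph, i.e.\ that a window of $k$ consecutive residues modulo $n$ always captures at least $m$ multiples of $d$ wherever it sits, which is precisely where $d \mid n$ and $t = md \le k$ enter. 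One could also note that the bound is attained by $\{v_1, v_{t+1}, v_{2t+1}, \dots\}$, which is consistent with the role this observation plays in \Cref{ex:lower bd weak}.
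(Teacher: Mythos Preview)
Your proof is correct and uses essentially the same double-counting idea as the paper; the only difference is cosmetic. The paper counts incidences between the independent set and ``segments'' of exactly $t$ consecutive vertices starting at positions $i(k-\ell)+1$, which gives \emph{exactly} $m = t/(k-\ell)$ segments through each vertex, whereas you count incidences with the full edges and get \emph{at least} $m$ edges through each vertex; either way the inequality $m|A| \le n/(k-\ell)$ drops out.

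One small slip in your closing remark: when $t < k$ the set $\{v_1, v_{t+1}, v_{2t+1}, \dots\}$ is \emph{not} a strong independent set, since the edge $e_0 = \{v_1,\dots,v_k\}$ already contains both $v_1$ and $v_{t+1}$. The set that does attain the bound (and which the paper verifies in \Cref{prop:special vxs in ell paths}) is $\{v_t, v_{2t}, \dots\}$; the shift matters because the edges start at specific positions $id+1$.
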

	
	\begin{proof}
		As noted previously, since $H$ is an $\ell$-cycle, $k-\ell$ divides $n$. Recall that we can label the vertices in $H$ as $v_1 \dots v_n$ such that $v_{i(k-\ell) + 1} \dots v_{i(k - \ell) + k}$ is an edge for all $i\ge 0$, with indices seen cyclically modulo $n$. Define the set of \emph{segments} to be the collection of subsequences $\cS = \{v_{i(k - \ell) + 1} \dots v_{i(k - \ell) + t}: i\ge 0\}$ and the set of \emph{intervals} $\cI = \{v_{i(k - \ell) + 1} \dots v_{(i+1)(k - \ell)} : i\ge 0\}$, where we view indices cyclically modulo $n$ as usual. Let $X$ be a strong independent set.

		First, we observe that every segment contains exactly $t/(k - \ell)$ intervals, and that every interval is part of $t/(k - \ell)$ segments. Since the intervals partition $V(H)$, we see that every vertex is contained in $t/(k - \ell)$ segments. The crucial observation is that, due to the definition of $\ell$-cycles and since trivially $t\le k$, every segment is contained in an edge of $H$, and consequently every segment contains at most one vertex of $X$. Now, consider the set of vertex-segment pairs $\{(v, P) : v\in X, P\in \cS, x\in P\}$. Since each vertex belongs to $t/(k - \ell)$ segments, we see that the size of this set is exactly $t|X|/(k - \ell)$. However, as observed previously, each segment $P\in \cS$ can contain at most one vertex $v\in X$, and hence the number of such pairs is at most $|\cS| = n/(k - \ell)$. Comparing the two shows $|X| \le n/t$ and hence $|X| \le \floor{n/t}$.
	\end{proof}

	We remark that Mycroft and Z{\'a}rate-Guer{\'e}n~\cite[Section 1.4]{mycroft2025positive} prove the same bound with this extremal construction, but we include our proof as it is simpler. We also wish to point out that Illingworth, Lang, M\"uyesser, Parczyk and Sgueglia~\cite{illingworth2025spanning} provide a version of this construction for tight cycles.

\begin{ex}\label{ex:lower bd strong}
    Next, we provide a specialised modification of the previous example. Let $k\ge 3$ be odd, $\ell = (k-1)/2$, and consequently $t = \floor{\frac{k}{k-\ell}}(k - \ell) = (k+1)/2 = \ell+1 = k - \ell$. Let $n$ be such that $t$ divides $n$ and $n/t+1$ is even. Yet again, we consider an $n$-vertex $k$-graph $G$ with a partition $A\sqcup B$ of its vertex set with $|A| = n/t + 1$. Write $A =  \{a_1, \dots, a_{|A|}\}$ and let $E(G)$ consist of all $e\in \binom{V(G)}{k}$ such that $e\cap A \subseteq \{a_{2i-1}, a_{2i}\}$ for some $i\in [|A|/2]$ (so the graph $\partial^2[A]$ is a perfect matching). Clearly $|e\cap A| \le 2$ for all $e \in E(G)$. If $S$ is any supported $(k-1)$-set, then, similar to the previous example, it is not too hard to see that
    \[d_G^1(S) =
		\left\{
		\begin{array}{ll}
            n - (k-1) & \textnormal{if} \ |e\cap A|=0,\\
            |B| - (k - 2) +1 = |B| - (k-3) & \textnormal{if} \ |e\cap A|=1,\\
            |B| - (k - 3) & \textnormal{if} \ |e\cap A|=2,\\
        \end{array}
		\right.
    \]
    and so $\delta^*(G) = |B| - (k - 3) =  n - n/t - (k - 2)$. We now argue that $G$ cannot contain a Hamilton $\ell$-cycle, which shows that the bound in \cref{thm:main} cannot be improved.

    To the contrary, suppose that there exists such a cycle $C = v_1 \dots v_n$, and let $E(C)$ denote the edges of $G$ that are part of the cycle $C$. Every edge $e \in E(C)$ shares $\ell = (k-1)/2$ vertices with the preceding and succeeding edges of the cycle, and hence contains a unique vertex $v_e$ at position $\ell + 1 = t$ (the ``middle'' vertex) that is not part of any other edge of $E(C)$.
	Let $C^t$ denote the set of these vertices, so that $|C^t| = n/t$. Another important observation we require is that if $u\in A$ and $u v_1 \dots v_{k-1} \in E(G)$, then $v v_1 \dots v_{k-1} \in E(G)$ for any $v\in B \setminus \{v_1, \dots, v_{k-1} \} $.
    
    Consider any edge $e\in E(C)$ such that $e\cap A \ne \emptyset$. If $v_e \in B$, then there exists some $u \in e \cap A$ that is distinct from $v_e$. From our observation above, and since $e$ is the only edge of $C$ that contains $v_e$, the vertex sequence obtained by swapping $u$ and $v_e$ in the sequence corresponding to $C$ still yields a Hamilton $\ell$-cycle. Hence, we may assume that $v_e \in A$ for all $e\in E(C)$ that intersect $A$. Next, suppose there exists some $u \in A \setminus C^t$. The discussion in the previous paragraph implies that are two edges $e,f \in E(C)$ that contain $u$. By choice of $C$, as $A$ intersects $e$ and $f$, we know that $v_e, v_f \in A$. Hence, we may conclude that $u v_e$ and $u v_f$ are both supported pairs due to the edges $e$ and $f$ respectively, meaning that $u$ is a vertex of degree at least two in $\partial^2[A]$, a contradiction. Thus, we see that $A\subseteq C^t$, which implies $|A| \le |C^t| = n/t$, providing the desired contradiction.
\end{ex}

\begin{ex}\label{ex:lower bd 3 unif loose}
    Suppose $k=3$ and $\ell=1$, so that $t = \floor{\frac{k}{k-\ell}}(k-\ell) = 2$, and suppose $n\equiv 2 \pmod{4}$. Let $G$ be an $n$-vertex $3$-graph that consists of the union of two complete $3$-graphs $H_1$ and $H_2$, each on $n/2 +1$ vertices, such that $H_1$ and $H_2$ have precisely two vertices in common, say $a$ and $b$. Since any supported $2$-set must be contained entirely in $H_i$ for some $i$, we see that $\delta^*(G) = |H_i| - 2 = n/t -1$.

    Suppose $G$ has a $3$-uniform loose cycle $C= v_1 \dots v_n$. Then, since any two consecutive vertices are supported, we see that any $v\in V(H_i)\setminus \{a,b\}$ must be preceded and succeeded only by vertices of $V(H_i)$ in the cycle $C$. From this, it is easy to see that $C$ must contain a sequence of consecutive vertices corresponding to a loose Hamilton path in $H_i$ for each $i$. However, any $3$-uniform loose path must have odd order (since the first edge has three vertices and each consecutive edge adds two new vertices), and $|H_i| = n/2+1$ is even, which is a contradiction.
\end{ex}

We point out that the last example was brought to our attention by Richard Mycroft and Camila Z\'arate-Guer\'en, and we are thankful for that.

\subsection{Proof of the main result using two key theorems}\label{sec:stability}
	We prove our main result, \cref{thm:main}, by splitting it into two disjoint and complementary cases. Roughly speaking, we handle hypergraphs that are structurally ``near-extremal'' and ``non-extremal'' separately. We classify hypergraphs into one of these two types based on the observation that both extremal examples contain a large ``sparse'' set of size roughly $n/t$ with very few supported pairs.

    The first theorem treats $k$-graphs that are far from extremal, that is, where every set of size at least $n/t$ induces several supported pairs. Note that here we allow for a minimum supported co-degree which is slightly lower than the exact bound given in \Cref{thm:main}.

	\begin{restatable}{thm}{thmHamiltonCycleNonExtremal}
		\label{thm:Hamilton cycle non-extremal}
		Let $k \ge 3$ and $\ell \in [k-1]$ be such that $(k,\ell) \neq (3,1)$, and set $t = \floor{\frac{k}{k-\ell}}(k-\ell)$. Let $1/n \ll \eps \ll \mu \ll 1/k$ such that $n$ is divisible by $k-\ell$. If $G$ is an $n$-vertex $k$-graph with no isolated vertices and $\delta^*(G)\geq (1-1/t-\eps)n$ such that every set of $\floor{n/t}$ vertices of $G$ contains at least $\mu n^2$ supported pairs, then $G$ has a Hamilton $\ell$-cycle.
	\end{restatable}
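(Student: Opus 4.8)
\emph{Overall strategy.} The plan is to prove \cref{thm:Hamilton cycle non-extremal} by the absorption method, with Lang's tiling framework \cite{lang2023tiling} in the role usually played by the (weak) hypergraph regularity lemma. Introduce an auxiliary parameter with $\eps \ll \eps' \ll \mu$, and proceed: (i) set aside a small random \emph{reservoir} $R$, $|R|\le \eps' n$, with a robust connectivity property; (ii) build an \emph{absorbing} $\ell$-path $A$, $|V(A)|\le\eps' n$, that can swallow any leftover set of size divisible by $k-\ell$; (iii) cover $V(G)\setminus(V(A)\cup R)$, apart from a tiny remainder, by boundedly many $\ell$-paths using Lang's framework; (iv) connect $A$ and these paths into one $\ell$-cycle through $R$; and (v) absorb the leftover set — the unused reservoir together with the uncovered vertices — into $A$, noting that its size is divisible by $k-\ell$ since $(k-\ell)\mid n$ and every other piece we built is an $\ell$-path.

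\emph{Connecting, reservoir, absorbers.} The basic tool is a \emph{connecting lemma}: any two admissible end-segments of $\ell$-paths can be joined by an $\ell$-path of bounded length, greedily, using only reservoir vertices (even after a few have been spent). Both hypotheses are needed here. The supported co-degree bound $\delta^*(G)\ge(1-1/t-\eps)n$ guarantees that a supported path-end can be extended by a new edge in $\Omega(n)$ ways and, on iterating, that short path-segments grow freely; the non-extremal hypothesis — every $\lfloor n/t\rfloor$-set spans at least $\mu n^2$ supported pairs — rules out ``almost-split'' graphs such as the two-clique example of \cref{ex:lower bd 3 unif loose}, in which paths lying in different parts cannot be merged. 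A standard deletion argument (keep each vertex in $R$ independently with probability $\rho\ll\mu$) then produces a reservoir inheriting these extension and merging properties. For the absorbing path, to each $(k-\ell)$-set $w$ one associates \emph{absorbers} — short $\ell$-paths whose vertex set together with $w$ reorganises into an $\ell$-path with the same ends; the degree and density conditions give $\Omega(n^{c})$ absorbers per $w$ for some $c=c(k)$, a random robustly-absorbing sub-family is chosen, and the connecting lemma chains it into a single path $A$. When $k-\ell\mid k$ (in particular for tight cycles, $\ell=k-1$) single-block absorbers suffice; when $k-\ell\nmid k$ one must include a few absorber ``types'' so that $A$ can also correct the residue of the leftover modulo the relevant lattice.

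\emph{The tiling step.} The core of the argument is (iii). We seek an almost-perfect tiling of $V(G)\setminus(V(A)\cup R)$ by copies of a fixed short $\ell$-path $P^*$, afterwards stitched together and connected through $R$. Invoking Lang's framework \cite{lang2023tiling} reduces this to (a) exhibiting a suitable \emph{fractional} $P^*$-tiling of $G$ and (b) verifying his local, regularity-free covering hypotheses; both are derived from $\delta^*(G)\ge(1-1/t-\eps)n$ together with the $\mu n^2$-supported-pairs condition, the latter being precisely what defeats the strong-independent-set obstruction quantified in \cref{obs:strong ind set}. When fitting tiles and reservoir vertices into $\ell$-paths with the exact lengths required for the closing-up step, a recurring sub-task is to find a spanning $\ell$-cycle inside an auxiliary $k$-partite $k$-graph with prescribed part sizes; for this I would import the weighted fractional-matching device of Mycroft and Z\'arate-Guer\'en \cite{mycroft2025positive}, which via Farkas' lemma turns the required balance of the part sizes into a feasible fractional solution.

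\emph{Main obstacle.} I expect the hardest point to be exactly this almost-cover step. Extracting Lang's hypotheses from a \emph{supported} co-degree condition is delicate, because $\delta^*$ controls only those $(k-1)$-sets that are already supported and is vacuous on the rest; one must therefore track carefully which configurations remain supported throughout the embedding and couple this with the global density condition, where the constructions of \cref{sec:lower bound} show there is essentially no slack. The $k$-partite Farkas gadget must also be set up with precisely the right part sizes so that tiles, reservoir and absorbing path ultimately merge into one $\ell$-cycle on all $n$ vertices with the divisibility $(k-\ell)\mid n$ respected at every stage. A secondary difficulty, as noted, is the lattice bookkeeping in the absorber construction in the non-divisible case $k-\ell\nmid k$.
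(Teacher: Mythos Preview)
Your outline follows the classical global absorption architecture --- reservoir, absorbing path, almost-cover, connect, absorb --- essentially the scheme of Mycroft--Z\'arate-Guer\'en \cite{mycroft2025positive} with Lang's tiling in place of weak regularity. The paper takes a structurally different route. It applies the blow-up chain lemma of \cite{illingworth2025spanning} (\Cref{lem:blow-up chain}) to decompose $G$ as a cyclic chain of nearly-regular blow-ups $F_1^*, \ldots, F_r^*$ of bounded-size $k$-graphs $F_i \in \nonexE_{2\eps,\mu'}(s,k,\ell)$, consecutive blow-ups sharing a single edge, and then reduces the entire problem to finding a spanning $\ell$-path in each individual $F_i^*$ with prescribed start and end edges (\Cref{lem:Hamilton paths in blow-ups}). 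There is no global reservoir and no global absorbing path: both connection and absorption happen inside each $F_i^*$, via a single deterministic tight walk in the small graph $F_i$ (where $\delta^*(F_i) > s/2$ since $t \ge 3$) that visits every ordered edge and carries built-in absorbers for $(k-\ell)$-tuples, and is then blown up to an $\ell$-path in $F_i^*$. What this buys is that all the delicate work --- connectivity, absorption, the fractional matching --- takes place in objects of bounded size $s$, where the arguments are elementary; your scheme must establish each of these lemmas directly in $G$.

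Two points in your sketch are misplaced. First, the Farkas/weighted-fractional-matching device: in the paper it is used to produce the perfect $\bfw^*$-weighted fractional matching in $F_i$ (\Cref{lem:wt perfect frac matching}), and this is precisely where the non-extremal hypothesis is spent; the resulting edge weights then dictate the part sizes of complete $k$-partite sub-blow-ups $T_e$, inside which spanning $\ell$-cycles are found by the elementary \Cref{prop:ell cycle in k-partite k-graph} --- not by a further Farkas argument. You have inverted these roles. Second, your remark about ``lattice bookkeeping'' and multiple absorber types when $k-\ell \nmid k$ is off target: absorbers here always swallow $(k-\ell)$-sets, and since $(k-\ell)\mid n$ and every $\ell$-path has order congruent to $k$ modulo $k-\ell$, the leftover is automatically divisible by $k-\ell$; no lattice obstruction arises. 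The divisibility dichotomy $k-\ell \mid k$ versus $k-\ell \nmid k$ affects the threshold in the ordinary co-degree problem, not the absorber design here.
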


    The second theorem deals with $k$-graphs that are close to the two extremal examples above, namely where there is a set of size $\floor{n/t}$ which contains few supported pairs. This result requires the exact bound on the minimum supported co-degree.

	\begin{restatable}{thm}{thmHamiltonCycleExtremal}
		\label{thm:Hamilton cycle extremal}    
		Let $k \ge 3$ and $\ell \in [k-1]$ be such that $(k,\ell) \neq (3,1)$, and set $t = \floor{\frac{k}{k-\ell}}(k-\ell)$.
		Let $1/n\ll \eps \ll 1/k\leq 1/3$ be such that $n$ is divisible by $k-\ell$. 
		Suppose $G$ is an $n$-vertex $k$-graph with no isolated vertices and $\delta^*(G)\geq (1-1/t)n-(k-3)$. If there is a subset $A \subseteq V(G)$ with $|A|=\floor{n/t}$ that contains at most $\eps n^2$ supported pairs, then $G$ has a Hamilton $\ell$-cycle.
	\end{restatable}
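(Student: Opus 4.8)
The plan is to exploit the structure forced by the hypothesis: $A$ has $|A| = \floor{n/t}$ and fewer than $\eps n^2$ supported pairs, while every supported $(k-1)$-set has supported co-degree at least $(1-1/t)n - (k-3)$. First I would show that $A$ must be almost independent in a strong sense and that $B := V(G) \setminus A$ must be almost complete. Indeed, since $|B| = n - \floor{n/t} \approx (1-1/t)n$ and a supported $(k-1)$-set has $d^1_G(S) \ge (1-1/t)n - (k-3)$, essentially all of the supported co-degree of $S$ must come from $B$; a counting/averaging argument then shows that all but $O(\sqrt{\eps}\,n)$ vertices of $A$ have almost all their ``potential'' supported pairs going into $B$, and that almost every $(k-1)$-subset of $B$ (or $(k-1)$-subset with exactly one vertex in $A$) is supported. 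In other words, after moving an $O(\sqrt{\eps}\,n)$-sized ``bad'' set of vertices of $A$ into a reservoir, we are in a situation that closely resembles \Cref{ex:lower bd weak}: $A$ is (nearly) strong independent, $B$ is (nearly) complete, and almost every cross $(k-1)$-set is supported. Cleaning up—passing to a slightly smaller $A'$ that is genuinely strong independent with $|A'|$ still $\floor{n/t} - O(\sqrt{\eps}\,n)$, and absorbing the discarded vertices into $B$—should be routine but needs care because we must not destroy the divisibility $|A'| \le \floor{n/t}$ while keeping $|A'|$ large enough.

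Next I would build the Hamilton $\ell$-cycle by hand, using the near-complete structure of $B$. The key arithmetic fact (from \Cref{obs:strong ind set}) is that an $\ell$-cycle uses each strong independent vertex in a ``private'' position, so a cycle on $n$ vertices can accommodate up to $\floor{n/t}$ vertices of $A$, and our cleaned-up $|A'|$ is at most this. So the target is: construct a cyclic ordering in which the vertices of $A'$ occupy the ``middle'' positions of pairwise disjoint segments of length $t$, all other vertices come from $B$, and every window of $k$ consecutive vertices forms an edge. Since almost every $(k-1)$-set meeting $A'$ in at most one vertex is supported and has huge supported co-degree, and $B$ is almost complete, I would first handle the $O(\sqrt{\eps}\,n)$ vertices that are ``bad'' (those in $A$ with atypical neighbourhoods, and any low-degree vertices of $B$, and the non-isolated-but-awkward ones): set aside a small random reservoir $R \subseteq B$, greedily build short paths through each bad vertex using only typical vertices of $B \cup R$, and splice these short paths into the eventual cycle. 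This is a standard absorption-free patching argument enabled by the density of $B$.

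Then, on the remaining ``typical'' vertices, I would produce the bulk of the cycle. One clean way: first find a nearly spanning collection of vertex-disjoint ``loose-ish'' segments that together use every vertex of $A'$ exactly once in a private middle slot and are otherwise inside $B$ — this reduces to a bipartite matching / Hall-type argument between $A'$ and the family of available length-$t$ windows in $B$, for which a greedy argument suffices since every window is available to almost every vertex. Then connect these segments cyclically through $B$, which is possible because $B$ is essentially complete: any sufficiently long interval of $B$-vertices can be inserted, and the final length/divisibility bookkeeping ($k - \ell \mid n$) can be met by adjusting the lengths of the all-$B$ connecting intervals. Finally splice in the patched short paths handling the bad vertices. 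Throughout, the exact degree bound $\delta^*(G) \ge (1-1/t)n - (k-3)$ is used exactly where \Cref{ex:lower bd strong} shows it is needed: it forces $|A'|$ down to at most $\floor{n/t}$ and, more subtly, it forces the structure $\partial^2[A]$ to be essentially a matching, so that vertices of $A$ with two supported neighbours in $A$ are rare enough to be absorbed. I would flag this as \Cref{rem:exact degree}.

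The main obstacle I anticipate is the ``cleaning'' step: we need to identify the small bad set, move it into $B$, and argue that afterwards $A'$ is strong independent \emph{and} $|A'|$ does not drop below $\floor{n/t} - O(\sqrt{\eps}\,n)$ while $B' := V(G) \setminus A'$ is still essentially complete with every typical cross $(k-1)$-set supported — all simultaneously, and all robust enough to survive the later surgery. The counting is delicate because removing a vertex from $A$ changes which $(k-1)$-sets are supported, and one has to ensure the estimates on the number of supported sets do not degrade uncontrollably. Once the correct ``stability'' statement is isolated (a genuine strong independent set $A'$ of the right size plus near-completeness of $B'$ plus a negligible reservoir of exceptional vertices), the cycle construction itself is essentially a bookkeeping exercise in splicing paths through an almost-complete $k$-graph, though getting the divisibility and the cyclic closure exactly right will require attention to detail.
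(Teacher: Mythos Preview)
Your high-level intuition is sound---clean up $A$ and $B$, place $A$-vertices in ``private'' positions, and connect through the nearly complete $B$---and this matches the paper's overall shape. But there is a genuine gap in the middle, and you have misidentified where the exact degree condition enters.

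You want to clean $A$ into a genuinely strong independent set $A'$ with $|A'| \le \floor{n/t}$. This is not achievable in general. Cleaning moves badly-behaved vertices of $A$ into $B$, but it must \emph{also} move badly-behaved vertices of $B$ (those not lying in any $A$-rich $(k-1)$-set) back into $A'$, so one ends up with $|A'| = \floor{n/t} + x$ for some $0 \le x \le \mu n$, and $A'$ is \emph{not} strongly independent. The paper does not attempt to shrink $A'$ further. Instead, it observes that $\partial^2[A']$ has minimum degree at least $x+1$---this is precisely where the exact bound $\delta^*(G) \ge (1-1/t)n - (k-3)$ is used, via \Cref{obs:min 1-degree}---and applies \Cref{lem:disjoint edges cherries} to extract $x$ vertex-disjoint cherries in $\partial^2[A']$ with leaves in $A$. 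Each cherry is then extended by $t-2$ vertices of $B'$ into a sequence of length $t+1$ that ``supports an extended $\ell$-path''; replacing three singletons by one such sequence decreases the count of $A'$-blocks by $2$ while consuming $t-2$ vertices of $B'$, which is exactly what is needed to achieve the balance $(t-1)|\cP| = |B' \setminus V(\cP)|$ (\Cref{prop:size balancing final}). Your claim that the exact degree ``forces $\partial^2[A]$ to be essentially a matching'' is backwards: the degree condition is used to guarantee $\partial^2[A']$ has \emph{enough} edges to furnish the cherries, not to limit them.

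Your connection step is also too loose. Saying ``connect segments cyclically through $B$ since $B$ is essentially complete'' hides the real difficulty: one must choose, for each block in $\cP$, a length-$(t-1)$ prefix in $B''$ so that the resulting sequences support $\ell$-paths \emph{and} can be cyclically chained. The paper handles this by forming a $t$-partite $t$-graph $G^+$ on $B_1 \sqcup \cdots \sqcup B_{t-1} \sqcup \cP$, finding a perfect matching $M$ via a \emph{randomised} procedure (\Cref{cor:k-partite-matching}) that guarantees, for every edge $e$, that most other matching edges are compatible predecessors and successors of $e$; this makes the auxiliary digraph on $M$ have minimum semi-degree at least $|M|/2$, and Ghouila--Houri (\Cref{lem:directed Dirac}) then closes the cycle. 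A plain Hall-type matching gives no such control over compatibility, so your greedy patching would not obviously close up.
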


	Notice that the proof of \Cref{thm:main} follows immediately from \Cref{thm:Hamilton cycle non-extremal,thm:Hamilton cycle extremal}.
	For the most part, the proofs of the two theorems use separate arguments and tools. In the rest of the section, we provide short overviews for the proofs of \cref{thm:Hamilton cycle non-extremal,thm:Hamilton cycle extremal}.

\subsection{Proof overviews} \label{sec:proof-overviews}
\subsubsection{Proof overview for \cref{thm:Hamilton cycle non-extremal}}\label{sec:pf overview non-extremal}
In this section, we provide a brief overview of the key ideas we use to prove \cref{thm:Hamilton cycle non-extremal}, and include a more detailed proof sketch in \Cref{sec:pf sketch non-extremal}. 

The approach we use is built on a blow-up tiling technique introduced by Lang~\cite{lang2023tiling}, and extended in~\cite{illingworth2025spanning,lang2024hypergraph} (we make precise what we mean by a blow-up in \Cref{sec:main lemmas non extremal}). Our ``blow-up tiling lemma'', namely \cref{lem:blow-up chain}, allows us to tile $G$ with almost balanced blow-ups of smaller $k$-graphs that obey approximate versions of the degree condition and the non-extremal structure of $G$. These tiles will have a special cyclic linkage property in terms of common edges, which will then reduce our problem to finding a Hamilton $\ell$-path within each blown up tile, because these can be ``chained together'' to form a Hamilton $\ell$-cycle in $G$. 

Hence, given any ``non-extremal'' $F$ and a sufficiently large nearly balanced blow-up $F^*$, we want a Hamilton $\ell$-path in $F^*$. We will first partition $F^*$ into complete $k$-partite $k$-graphs with carefully selected part sizes to ensure each has an almost spanning $\ell$-path. These $k$-partite $k$-graphs will be formed by splitting up the complete $k$-partite $k$-graphs corresponding to the blow-ups of edges of $F$. We determine how to split up these $k$-graphs based on edge weights provided by a perfect fractional matching (defined in \Cref{sec:Farkas}) in $F$. Technically, we work with a ``vertex weighted'' version of fractional matchings  due to Mycroft and Z{\'a}rate-Guer{\'e}n~\cite{mycroft2025positive}, which is crucial for $\ell$-cycles that are not necessarily tight. 

Finally, we use the minimum supported co-degree condition to find a path in $F^*$ (obtained by blowing up a suitable walk in $F$) that can be used to link together the nearly spanning $\ell$-paths found in the aforementioned $k$-partite subgraphs and absorb a few uncovered vertices.

\subsubsection{Proof overview for \cref{thm:Hamilton cycle extremal}}\label{sec:pf overview extremal}
We provide a short overview of our proof strategy for \cref{thm:Hamilton cycle extremal} for the special case of tight cycles (which is what motivates the general strategy) and include a more detailed sketch including $\ell$-cycles in Section~\ref{sec:pf sketch extremal}. 
Suppose $G$ is an ``extremal'' $n$-vertex $k$-graph, meaning it contains a set $A$ of size roughly $n/k$ (since $t = k$ for tight cycles) within which there are few supported pairs. Let $B =  V(G) \setminus A$. \Cref{ex:lower bd weak} suggests that a tight Hamilton cycle has a strong independent set of size $n/k$, whose vertices are exactly the $k$th vertices along the cycle. We build a Hamilton cycle in $G$ based on this observation.

We first show that we can transfer a few vertices across $A$ and $B$ to ensure high co-degrees for supported sets across the partition while maintaining $|A| \approx n/k$. We then replace a few vertices $A$ with short tight paths, and this will recover $|B| = (k-1)|A|$.

We then partition $B$ into $k-1$ sets $B_1, \dots, B_{k-1}$ such that $|A| = |B_i|$ for all $i$. We define an auxiliary $k$-partite $k$-graph $\Gp$ with parts $B_1 \times \dots \times B_{k-1} \times A$ whose edges correspond to vertex sequences that form tight paths, and randomly construct a perfect matching $M$. The randomness will maintain the high co-degree conditions so that the edges of $M$ can be connected into a tight Hamilton cycle in $G$.

\section{Preliminaries}\label{sec:prelims}

	In this section we provide notation and a few simple preliminary results and standard probabilistic tools that will be used in the proofs of both the non-extremal and extremal theorems. More specialised notation and results, pertaining to only one of the theorems, will be mentioned at the beginning of the relevant part of the paper.

\subsection{Notation}\label{sec:notation}
For any $m_1, m_2 \in \mathbb{N}$ with $m_1 \le m_2$, we let $[m_1]$ denote the set $\{1, 2, \dots, m_1\}$ and often use $[m_1, m_2]$ to denote $\{m_1, m_1 + 1, \dots, m_2\}$. 
Given any set $S$ and any $0\le m \le |S|$, we use $\binom{S}{m}$ to denote the set of all $m$-subsets of $S$. Throughout the paper, we frequently use ``$\ll$'' notation. We write $a \ll b$ to mean that for any choice of $b > 0$, there exists some $a_0 > 0$ such that the statement in question holds for all $a \le a_0$. We may sometimes write $b \gg a$ for the same. 

For a $k$-graph $G = (V(G), E(G))$, we use $|G|$ to denote the number of vertices and $e(G)$ to denote the number of edges. We often simply write $v_1 \dots v_k$ for an edge $\{v_1, \dots, v_k\} \in E(G)$. We let $E^*(G)$ denote the set of ordered edges of $G$, that is, the set of all ordered tuples $(v_1, \dots, v_k)$ such that $\{v_1, \dots, v_k\} \in E(G)$ (and hence each edge of $E(G)$ leads to $k!$ ordered edges in $E^*(G)$). We sometimes denote this ordered edge as $v_1 \dots v_k$ as well when it is clear from context (or explicitly specified) that we are dealing with ordered edges.

For any $U \subseteq V(G)$, we let $G[U]$ denote the subgraph of $G$ induced by $U$. We say that a vertex subset $S \subseteq V(G)$ with $|S|\le k$ is \emph{supported in $G$} if these exists some edge of $G$ containing $S$. Analogous to ordered edges, we define an ordered supported set to be an ordered tuple $v_1 \dots v_s = (v_1, \dots, v_s)$ such that the set $\{v_1, \dots, v_s\}$ is supported in $G$. For any $i\in [k-1]$, we let define the \emph{$i$-shadow of $G$}, denoted $\partial^i G$, to be the $i$-graph with $V(\partial^i G)=V(G)$ whose edges are precisely the supported $i$-sets of $G$. We define $\partial_G^i[U]$ to be the subgraph induced by a vertex subset $U$, and will often drop the subscript $G$ and write $\partial^i[U]$, as long as it does not lead to any ambiguity.

Given any $n\gg k\geq 3$ and $\ell\in [k-1]$, we set $t=t(k,\ell)=\floor{\frac{k}{k-\ell}}(k-\ell)$, and we refer to this parameter $t$ throughout the paper. Define $\exE_\eps(n,k)$ to be the set of all $n$-vertex $k$-graphs $G$ with $n$ divisible by $k - \ell$ that have no isolated vertices, satisfy $\delta^*(G)\geq n - \floor{n/t} - (k-3)$, and for which there is a set of $\floor{n/t}$ vertices containing at most $\eps n^2$ supported pairs. We define the family $\nonexE_{\eps,\mu}(n,k,\ell)$ as the collection of all $n$-vertex $k$-graphs with $n$ divisible by $k - \ell$ that have no isolated vertices, satisfy $\delta^*(G) \ge (1 - 1/t - \eps)n$ and for which every subset of at least $n/t$ vertices contains at least $\mu n^2$ supported pairs.

For a $k$-graph $G$ and $\ell \in [k-1]$, a \emph{$k$-uniform $\ell$-walk} $W$ in $G$ is a sequence of (not necessarily distinct) vertices $v_1\dots v_r$ such that $r \equiv k \pmod{k-\ell}$ and $v_{s(k-\ell)+1} \ldots v_{s(k-\ell)+k}$ is an edge for every integer $s \in \left[0,\frac{r-k}{k-\ell} \right]$.
In particular, consecutive edges $v_{s(k-\ell)+1} \ldots v_{s(k-\ell)+k}$ and $v_{(s+1)(k-\ell)+1} \ldots v_{(s+1)(k-\ell)+k}$ share the $\ell$ vertices $v_{(s+1)(k-\ell)+1}, \ldots, v_{(s+1)(k-\ell)+\ell}$ (they possibly share more vertices as vertices are allowed to repeat). We say that the $\ell$-walk $W=v_1\dots v_r$ has \emph{order} $r$, and will denote its order as $|W|$. We call such a $W$ an \emph{$\ell$-path} if all the $v_i$ are distinct. We call $W$ a \emph{tight walk} if $\ell = k-1$ and call it a \emph{loose walk} if $\ell = 1$. We define tight and loose paths analogously.

A \emph{$k$-uniform $\ell$-cycle} $C$ is a cyclic sequence of distinct vertices $v_1 \ldots v_r$, where $r$ is divisible by $k-\ell$ and $v_{s(k-\ell)+1} \ldots v_{s(k-\ell)+k}$ is an edge for every $s \in \left[0, \frac{r-k}{k-\ell} \right]$ (addition of indices is taken modulo $r$). A $k$-uniform $(k-1)$-cycle is called a \emph{tight cycle}, and a $1$-cycle is called a \emph{loose cycle}. Finally, we say that an $\ell$-path or an $\ell$-cycle is \emph{Hamiltonian} if every vertex of $G$ appears in the corresponding sequence exactly once, and will call these structures Hamilton $\ell$-paths and Hamilton $\ell$-cycles respectively.

Additionally, we also introduce some non-standard notation that is motivated by our proof strategy. For any supported $i$-set $S\subseteq V(G)$, we define the \emph{vertex neighbourhood} of $S$, denoted $N_G^1(S)$, to be the set of all vertices $v\in V(G)$ such that $S\cup\{v\}$ is a supported $(i+1)$-set, and set $d_G^1(S)=|N_G^1(S)|$ to be the \emph{vertex co-degree} of $S$. For $S=\{v_1,\dots,v_i\}$, we will simply write $N_G^1(v_1\dots v_i)$ and $d_G^1(v_1\dots v_i)$ for $N_G^1(S)$ and $d_G^1(S)$ respectively. As always, we may sometimes drop the subscript $G$ if there is no ambiguity in the host graph. Furthermore, for any $U\subseteq V(G)$, we will denote $N_G^1(S)\cap U$ as $N_U^1(S)$ and $d_U^1(S)=|N_U^1(S)|$.
Finally, for an arbitrary $i$-set $S$ (which may not be supported), we define the \emph{degree of $S$}, denoted $d(S)$, to be the number of edges of $G$ which contain $S$. The \emph{minimum $i$-degree} of $G$, denoted $\delta_i(G)$, is the maximum integer $d$ such that every set of $i$ vertices is contained in at least $d$ edges.

\subsection{Preliminary results}

    First, we mention a few quick facts about $t = \floor{\frac{k}{k-\ell}}(k-\ell)$. Since these are fairly simple, we may often use these without quoting the observation below.
	\begin{obs}
		\label{obs:useful t info}
		Let $k \ge 3$, $\ell \in [k-1]$, and $t = \floor{\frac{k}{k-\ell}}(k - \ell)$. Then $\ell +1 \le t \le k$, $t \ge \frac{k+1}{2}$, and $t \ge 3$ unless $(k,\ell) = (3,1)$.
	\end{obs}
    \begin{proof}
        We prove the claims sequentially. For the first one, we see that
        \[
            t > \left( \frac{k}{k-\ell} - 1 \right) (k - \ell) = \ell,
        \]
        and the fact that $t\le k$ is immediate from the definition.
        
        We prove the second claim in two cases. If $\ell \ge k/2$, then we are done by the first part. Otherwise, we will have $k - \ell > k/2$ which implies $t\ge k - \ell \ge (k + 1)/2$.
        
        The final claim is easy since $t \in \{1,2\}$ requires at least one of $\floor{\frac{k}{k - \ell}}$ and $(k - \ell)$ to be equal to one and the other to be equal to either one or two depending on the desired value of $t$. Then, as $k \ge 3$, it is simple to check that $t = 2$ for $(k, \ell) = (3, 1)$ and $t \ge 3$ otherwise.
    \end{proof}

Next, we show a quick result about certain vertices in $\ell$-paths that will be required for both the extremal and non-extremal cases. Intuitively speaking, these vertices form the ``sparse'' set of size $n/t$ in the extremal constructions. In a vertex sequence corresponding to an $\ell$-path or $\ell$-cycle, an edge always starts at a vertex immediately after any of these special vertices. This motivates the proof strategy described in \Cref{sec:pf overview extremal} for near-extremal hypergraphs.

\begin{prop}
\label{prop:special vxs in ell paths}
    Let $k\ge 3$, $\ell \in [k-1]$ and $t = \floor{\frac{k}{k - \ell}}(k-\ell)$. Suppose $G$ is a $k$-graph and $W=v_1\dots v_r$ is a vertex sequence, where $r$ is divisible by $k-\ell$. Let $E_W = \{v_{i(k-\ell)+1} \dots v_{i(k-\ell)+k}: i\ge 0\}$ be a collection of subsequences of $W$ and $A=\{v_j:j\equiv 0\pmod t \}$, where  we view all indices cyclically modulo $r$. Then every $e \in E_W$ contains exactly one vertex from $A$.
\end{prop}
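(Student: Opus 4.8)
The plan is to count incidences between the set $A$ and the subsequences in $E_W$, in the same spirit as the proof of Observation~\ref{obs:strong ind set}. First I would fix notation: write $m = r/(k-\ell)$, so that $E_W = \{e_0, e_1, \dots, e_{m-1}\}$ where $e_i = v_{i(k-\ell)+1}\dots v_{i(k-\ell)+k}$, with all vertex indices read cyclically modulo $r$. Note that $|E_W| = m$ and each $e_i$ spans $k$ consecutive positions. The set $A$ consists of those $v_j$ with $j \equiv 0 \pmod t$; since $t \mid r$ is \emph{not} assumed, I should be slightly careful — but in fact the indices are taken modulo $r$ and $A$ is defined as a set of vertices, so what matters is the residues modulo $t$ of the $k$ consecutive positions occupied by each $e_i$.

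The key step is the observation that each block of $k$ consecutive positions contains exactly one position divisible by $t$. This holds because $\ell + 1 \le t \le k$ by Observation~\ref{obs:useful t info}: among any $k$ consecutive integers, the number of multiples of $t$ is either $\lfloor k/t \rfloor$ or $\lceil k/t \rceil$; since $t \le k < 2t$ (as $t \ge (k+1)/2 > k/2$), this count is always exactly $1$. Concretely, the positions of $e_i$ are $i(k-\ell)+1, \dots, i(k-\ell)+k$, and exactly one of these, say $i(k-\ell) + p_i$ with $p_i \in [k]$, satisfies $i(k-\ell)+p_i \equiv 0 \pmod t$; uniqueness follows since two such would differ by a nonzero multiple of $t$ that is at most $k-1 < t$, impossible. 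Hence $e_i$ contains the vertex $v_{i(k-\ell)+p_i} \in A$, and contains no other vertex whose index is $\equiv 0 \pmod t$.

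The one subtlety to address carefully is that $A$ is a set of \emph{vertices}, not of indices, so a priori a single vertex $v_j \in A$ could coincide with some $v_{j'}$ where $j' \not\equiv 0 \pmod t$ — the sequence $W$ need not have distinct entries, since $W$ is only assumed to be a vertex sequence (this matters because the proposition is meant to apply to $\ell$-walks too, not just paths). But this does not affect the conclusion: the claim ``$e_i$ contains exactly one vertex from $A$'' should be read as ``exactly one of the $k$ entries of the subsequence $e_i$ lies in $A$,'' and the index computation above shows precisely that one of the $k$ positions of $e_i$ hits a multiple of $t$ and the other $k-1$ do not — and a position that is not a multiple of $t$ contributes a vertex not placed into $A$ by its own index. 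If one instead wants the genuine set-theoretic statement $|e_i \cap A| = 1$, it still follows: $e_i \cap A \ne \emptyset$ from the multiple-of-$t$ position, and this is the only obstacle — so the cleanest write-up is to phrase everything at the level of positions/indices and note at the end that this immediately gives both readings. I expect the main (minor) obstacle to be exactly this bookkeeping of ``position versus vertex,'' together with being careful that the cyclic reduction modulo $r$ interacts correctly with divisibility by $t$; once one commits to counting positions modulo $t$ within each length-$k$ block, the argument is a one-line pigeonhole using $t \le k < 2t$ from Observation~\ref{obs:useful t info}.
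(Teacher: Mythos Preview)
Your argument has a genuine gap. You claim that among any $k$ consecutive integers there is exactly one multiple of $t$, justifying this via $t \le k < 2t$; equivalently, you assert that two multiples of $t$ among the $k$ positions of an edge would differ by a positive multiple of $t$ that is at most $k-1 < t$. But $k-1 < t$ is simply false in general: we only know $t \le k$, and when $t < k$ a block of $k$ consecutive integers can contain two multiples of $t$. For a concrete instance take $k=5$, $\ell=2$, so $k-\ell=3$ and $t = \lfloor 5/3\rfloor \cdot 3 = 3$; the block $\{3,4,5,6,7\}$ contains two multiples of $3$. So the uniqueness half of your argument does not go through for arbitrary length-$k$ blocks.

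What rescues the statement is the specific starting position of each edge: the positions of $e_i$ are $i(k-\ell)+1,\dots,i(k-\ell)+k$, and $t$ is itself a multiple of $k-\ell$. Hence the smallest multiple of $t$ lying in this block is at least $(i+1)(k-\ell)$, and then
\[
(i+1)(k-\ell) + t \;>\; (i+1)(k-\ell) + \ell \;=\; i(k-\ell)+k,
\]
using $t \ge \ell+1$ from Observation~\ref{obs:useful t info}. So the next multiple of $t$ already overshoots the block, giving uniqueness. This is exactly the step the paper carries out, and it is the missing ingredient in your write-up: you must use the alignment of the edge's starting index with multiples of $k-\ell$, not merely the inequality $k < 2t$.
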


\begin{proof}
    The edges of $W$ are of the form $v_{i(k-\ell)+1} \dots v_{i(k-\ell)+k}$. Since $t\le k$, each edge contains at least one vertex from $A$. We will show that no edge has two vertices from $A$. 

	Fix an edge $v_{i(k-\ell)+1} \ldots v_{i(k-\ell)+k}$.
    Observe that if $v_j\in A$, then $j$ is a multiple of $k-\ell$. Consequently, if $j$ is the smallest index such that $v_j\in A\cap\{v_{i(k-\ell)+1},\dots,v_{i(k-\ell)+k}\}$, then $j\ge(i+1)(k-\ell)$. Now, we see that
    \begin{align*}
        j+t&\ge (i+1)(k-\ell)+\floor{\frac{k}{k-\ell}}(k-\ell)\\
        &>(i+1)(k-\ell)+\left(\frac{k}{k-\ell}-1\right)(k-\ell)\\
        &=i(k-\ell)+k,
    \end{align*}
    which means there is no other vertex of $A$ in the edge $\{v_{i(k-\ell)+1},\dots,v_{i(k-\ell)+k}\}$.
\end{proof}

\subsection{Probabilistic tools}

Next, we state (without proof) standard Chernoff bounds for binomial and hypergreomtric random variables. 
\begin{lem}
\label{lem:Chernoff}
Let $X_1,\dots,X_n$ be random variables that take values in $\{0,1\}$ such that $\bP[X_i=1]=p$ for all $i$. Suppose $X=X_1+\dots + X_n$ has either a binomial or a hypergeometric distribution with sample size $n$ and success probability $p$. Then for any $a>0$,
\[
\bP[|X-\bE X|>a]=\bP[|X-np|>a]\leq 2\exp\left(-\frac{a^2}{3np}\right)
\]
\end{lem}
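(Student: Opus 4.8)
The final statement is the standard Chernoff bound (\Cref{lem:Chernoff}), which the authors explicitly state without proof. Since this is a well-known tool cited verbatim in countless papers, the only reasonable "proof proposal" is to indicate how it follows from standard material.

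\textbf{Plan of proof.} Since \Cref{lem:Chernoff} is a classical concentration inequality, the plan is simply to derive it from the exponential moment (Bernstein--Chernoff) method. First I would treat the binomial case: writing $X = \sum_i X_i$ with independent $\{0,1\}$-valued summands of mean $p$, I would bound the upper tail $\bP[X - np > a]$ by optimising the Markov-type estimate $\bP[X \ge np + a] \le e^{-\lambda(np+a)}\,\bE[e^{\lambda X}] = e^{-\lambda(np+a)}(1 - p + pe^{\lambda})^n$ over $\lambda > 0$, using the inequality $1 - p + pe^{\lambda} \le \exp(p(e^{\lambda}-1))$. A standard choice of $\lambda = \log(1 + a/(np))$ followed by the elementary estimate $(1+x)\log(1+x) - x \ge x^2/(2(1+x/3))$ (valid for $x \ge 0$, and its analogue for the lower tail) yields $\bP[X - np > a] \le \exp(-a^2/(3np))$ once one also accounts for the slightly weaker but cleaner denominator; the lower tail $\bP[np - X > a]$ is handled symmetrically with $\lambda < 0$, and a union bound over the two tails produces the stated factor of $2$.

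\textbf{The hypergeometric case.} For the hypergeometric distribution one cannot appeal to independence directly, so I would invoke Hoeffding's observation that a hypergeometric random variable (number of successes in $n$ draws without replacement from a population with success probability $p$) is a convex order (or, more precisely, satisfies the same exponential moment bound as) the corresponding binomial random variable. Concretely, Hoeffding showed $\bE[f(X_{\mathrm{hyp}})] \le \bE[f(X_{\mathrm{bin}})]$ for every continuous convex $f$, and applying this to $f(x) = e^{\lambda x}$ transfers the moment generating function bound above verbatim; the rest of the argument is then identical to the binomial case. Alternatively one may cite the version of this bound in a standard reference (e.g. Janson--Łuczak--Ruciński or Chung--Lu) directly, which is what the authors in effect do by stating it without proof.

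\textbf{Main obstacle.} There is essentially no obstacle here: the statement is a textbook result and the authors include it only for self-containedness, to be used as a black box in the probabilistic arguments of both the extremal and non-extremal parts. The only mild subtlety is getting the constant in the exponent to match the clean form $a^2/(3np)$ rather than the sharper but messier $a^2/(2np(1+a/(3np)))$; this is resolved by the crude bound $a \le np$ in the regime of interest, or simply by noting $3np \ge 2np(1 + a/(3np))$ whenever $a \le 3np/2$, and for larger deviations the bound is trivial or follows from the one-sided estimate. Hence I would present this as a one-line deduction from the exponential moment method together with Hoeffding's reduction, exactly as the authors do by omitting the proof entirely.
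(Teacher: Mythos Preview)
Your proposal is correct and matches the paper's approach exactly: the authors state \Cref{lem:Chernoff} without proof as a standard tool, and you correctly identify this, outlining the textbook exponential-moment argument together with Hoeffding's reduction for the hypergeometric case purely for completeness. There is nothing to add.
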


Another tool we will require is McDiarmid's inequality. 
\begin{lem}[McDiarmid's inequality \cite{mcdiarmid1989method}]
\label{lem:McDiarmid}
    Let $X_1, \ldots, X_s$ be independent random variables taking values in sets $\Omega_1, \ldots, \Omega_s$, respectively. Let $f:\Omega_1\times\dots\times\Omega_s\to \bR$ be a function. Suppose there exist constants $c_1,\dots, c_s\in \bR$ such that, for any $i\in [s]$,
    \[
    |f(x_1,\dots,x_s)-f(x_1',\dots,x_s')|\le c_i
    \]
    for any $(x_1,\dots,x_s), (x_1',\dots,x_s')\in \Omega_1\times\dots\times\Omega_s$ that differ only in the $i$th coordinate. Then the random variable $Z=f(X_1,\dots,X_s)$ satisfies the following. For every $\lambda\ge 0$,
    \[
    \bP[|Z-\bE Z|\ge \lambda]\le 2\exp\left(\frac{-2\lambda^2}{c_1^2+\dots+c_s^2} \right).
    \]
\end{lem}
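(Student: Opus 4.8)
The plan is to derive this from the Azuma--Hoeffding martingale inequality applied to the Doob martingale associated with $Z=f(X_1,\dots,X_s)$, which in turn rests on Hoeffding's lemma on the moment generating function of a bounded mean-zero random variable.

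First I would introduce the filtration $\cF_i=\sigma(X_1,\dots,X_i)$ for $0\le i\le s$ (with $\cF_0$ trivial) and set $Z_i=\bE[Z\mid\cF_i]$. Then $(Z_i)_{i=0}^s$ is a martingale with $Z_0=\bE Z$ and $Z_s=Z$, and the deviation telescopes as $Z-\bE Z=\sum_{i=1}^s D_i$ where $D_i=Z_i-Z_{i-1}$. The crucial step is to bound the increments: I claim that, conditionally on $\cF_{i-1}$, the variable $D_i$ lies almost surely in an interval of length at most $c_i$. Here one uses independence of the $X_j$ crucially. Since the $X_j$ are independent, $Z_i=g_i(X_1,\dots,X_i)$ where $g_i(x_1,\dots,x_i)=\bE\bigl[f(x_1,\dots,x_i,X_{i+1},\dots,X_s)\bigr]$, and $Z_{i-1}=g_{i-1}(X_1,\dots,X_{i-1})$ with $g_{i-1}(x_1,\dots,x_{i-1})=\bE_{X_i}\bigl[g_i(x_1,\dots,x_{i-1},X_i)\bigr]$. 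Fixing $x_1,\dots,x_{i-1}$ and writing $\ell_i=\inf_x g_i(x_1,\dots,x_{i-1},x)$ and $u_i=\sup_x g_i(x_1,\dots,x_{i-1},x)$, the increment $D_i=g_i(x_1,\dots,x_{i-1},X_i)-g_{i-1}(\dots)$ takes values in $[\,\ell_i-g_{i-1},\,u_i-g_{i-1}\,]$, an interval of width $u_i-\ell_i$. Applying the bounded-differences hypothesis $|f(\cdots)-f(\cdots)|\le c_i$ for inputs differing only in coordinate $i$, and then averaging over $X_{i+1},\dots,X_s$, gives $u_i-\ell_i\le c_i$.

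Given this, I would invoke Hoeffding's lemma: if $Y$ satisfies $\bE[Y\mid\cG]=0$ and (conditionally on $\cG$) $Y\in[a,b]$ with $b-a\le c$, then $\bE[e^{sY}\mid\cG]\le e^{s^2c^2/8}$ for all $s\in\bR$; this follows from convexity of $y\mapsto e^{sy}$ together with an elementary optimisation. Applying this with $Y=D_i$ conditionally on $\cF_{i-1}$ and peeling off the conditional expectations one at a time via the tower property yields
\[
\bE\bigl[e^{s(Z-\bE Z)}\bigr]=\bE\Bigl[e^{s\sum_{i=1}^s D_i}\Bigr]\le \exp\Bigl(\tfrac{s^2}{8}\textstyle\sum_{i=1}^s c_i^2\Bigr).
\]
A standard Chernoff--Markov argument then gives, for $s>0$,
\[
\bP[Z-\bE Z\ge\lambda]\le e^{-s\lambda}\,\bE\bigl[e^{s(Z-\bE Z)}\bigr]\le \exp\Bigl(\tfrac{s^2}{8}\textstyle\sum_i c_i^2-s\lambda\Bigr),
\]
and optimising over $s$ (taking $s=4\lambda/\sum_i c_i^2$) gives $\bP[Z-\bE Z\ge\lambda]\le\exp\bigl(-2\lambda^2/\sum_i c_i^2\bigr)$. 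Running the same argument for $-Z$ and taking a union bound produces the two-sided estimate with the factor $2$, as stated.

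The main obstacle is the increment bound in the middle step: one must exploit the independence of the $X_j$ to express $Z_i$ as an explicit average over the ``future'' coordinates, so that the hypothesis on $f$ — which only compares inputs differing in a single coordinate — can legitimately be used to conclude that the conditional range of $D_i$ has width at most $c_i$. Once this is in place, Hoeffding's lemma and the Chernoff optimisation are routine.
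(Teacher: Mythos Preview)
Your proof is correct and is the standard Doob-martingale/Azuma--Hoeffding derivation of McDiarmid's inequality. However, the paper does not prove this lemma at all: it is stated with a citation to \cite{mcdiarmid1989method} and used as a black box, so there is no proof in the paper to compare against.
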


\section{Non-extremal hypergraphs} \label{sec:non-extremal}
	We will first prove \cref{thm:Hamilton cycle extremal} (the non-extremal case) across the next six sections. In this section, we first provide a detailed proof sketch, and then briefly describe the principal results and techniques that we utilise. We finally state three key lemmas -- \cref{lem:blow-up chain,lem:concentration,lem:Hamilton paths in blow-ups} -- and show how they imply \cref{thm:Hamilton cycle extremal}. The remaining five sections and \Cref{appendix:proof-blow-up-tiling} then prove these three chief lemmas.

\subsection{Proof sketch for \cref{thm:Hamilton cycle non-extremal}}\label{sec:pf sketch non-extremal}
Our goal is to show every $G \in \nonexE_{\eps, \mu} (n, k, \ell)$ has a Hamilton $\ell$-cycle. For the sake of clarity, we only sketch the proof for tight cycles ($\ell = k-1$, in which case $t = k$), and briefly discuss the necessary modifications for $\ell$-cycles at the end. Our proof has three main steps

\paragraph{Tiling.} As mentioned in \Cref{sec:pf overview non-extremal}, the main lemma behind our strategy, namely \cref{lem:blow-up chain}, provides a tiling of $G$ with almost balanced blow-ups (defined in \Cref{sec:main lemmas non extremal}) of smaller $k$-graphs that obey (almost) the same minimum supported co-degree condition as $G$ and also have an approximate version of the non-extremal structure of $G$. Moreover, these blown-up tiles will be linked in a cyclic fashion, where each tile shares a common edge with the previous and next tile in the chain, and is disjoint from the rest. Consequently, within each blown-up tile, we find a tight Hamilton path that starts at the edge shared with the previous tile, and ends at the edge shared with the next tile. The tight Hamilton paths in consecutive tiles can then be ``linked together'', and repeating this process along the chain of cyclically linked tiles forms a tight Hamilton cycle in $G$.

\paragraph{Blowing up a fractional matching.} Thus (morally speaking), our problem is reduced to finding a tight Hamilton path in a $k$-graph $F^*$ that is an almost balanced blow-up of some $F \in \nonexE_{\eps, \mu} (n, k, k-1)$, with the start and end edges of the path specified a priori. Our strategy starts by (nearly) partitioning $F^*$ into a collection of balanced complete $k$-partite $k$-graphs (which will be suitable subgraphs of the $k$-graphs formed by blow-ups of edges of $F$) that cover almost all of $F^*$. To this end, we first find a perfect fractional matching in $F$, which is a set of non-negative edge weights such that the total weight on each vertex is precisely one. We do this via Farkas' lemma, a standard linear algebraic technique. We do not describe how we use it to find the perfect fractional matching here as this part is fairly disjoint from the rest of the proof, we just mention that we make use of both the degree condition and the non-extremal structure of $F$. Now, for each vertex $v \in V(F)$, let $B_v \subseteq V(F^*)$ denote the blown-up part corresponding to $v$. We partition each $B_v$ into parts corresponding to each edge of $e\in E(F)$ that contains $v$ such that the proportion of $B_v$ covered by this part $B_v^e$ is given by the weight of the edge $e$ in the fractional matching. Thus, we end up with a subgraph $T_e$ (whose parts are the sets $B_v^e$ for all vertices $v\in e$) of the original $k$-partite $k$-graph formed by the blow-up of $e$. All the parts of $T_e$ will have the same size (governed by the weight of $e$), and it is easy to see $T_e$ has a spanning tight cycle. Ideally, since the fractional matching is perfect, these disjoint subgraphs will span all the vertices of $G$. However, we may end up having to round down the sizes of these parts $B_v^e$ to integer values, meaning that we lose some vertices and consequently the $k$-graphs $\{T_e\}_{e \in E(F)}$ only cover almost all the vertices of $G$.

\paragraph{Connection and absorption.} Finally, we find a tight path in $F^*$ that can be used to link up the spanning tight paths found in the aforementioned $k$-partite subgraphs, and can also ``absorb'' into itself the remaining few uncovered vertices (technically, we first find such a path and then apply the strategy outlined in the previous paragraph to the leftover). In order to build such a tight path $P$, we will heavily rely on the fact that $\delta^*(F) > n/2$ (which will hold even in the $\ell$-cycle case as $(k, \ell) \ne (3, 1)$ implies $t \ge 3$). Since $\delta^*(F) > n/2$, it is easy to argue that we can find a tight walk $W$ in $F$ that contains (several copies of) every edge of $F$. We can also specify the start and end edges, which, as discussed, is important to our proof. This is blown up to a tight path $P$ in $F^*$ by walking along $W$ and picking a new copy of a vertex whenever we see a repeated vertex, which is possible if the blow-up is sufficiently large relative to $|F|$. We then show that we can replace any segment of $P$ formed by a blow-up of an edge $e$ by a spanning path in the $k$-graph $T_e$, which allows us to connect the necessary tight paths. All that is left is to imbue $W$, and consequently $P$, with some absorption features. Given a vertex $v \in V(F)$, the ``simple absorber'' we use will consist of two supported sets $W_1$ and $W_2$ such that both $W_1 W_2$ and $W_1 v W_2$ are tight paths. In a manner similar to what we describe above, we can exploit the connectivity property due to the degree condition to ensure that the tight walk $W$, which contains every edge of $F$ as a subwalk, also has several absorber subwalks $W_1 W_2$ for each vertex $v$. The blown-up path $P$ in $F^*$ then retains these absorption attributes, allowing it to incorporate into itself the few unused vertices in $F^*$. 

\paragraph{Adjustments for $\ell$-cycles.}
The broad proof structure for $\ell$-cycles remains the same, but the details need more care. To begin with, the complete $k$-partite subgraphs $T_e$ will now be modified to have carefully chosen part sizes (described in \cref{prop:ell cycle in k-partite k-graph}) that will ensure a spanning $\ell$-cycle instead. As mentioned in \Cref{sec:pf overview non-extremal}, this is done using a vertex-weighted generalisation of fractional matchings introduced by Mycroft and Z{\'a}rate-Guer{\'e}n~\cite{mycroft2025positive}, which will lead to the ``correct'' imbalance in the parts of $T_e$. The connectivity aspect of the proof remains fairly similar. In fact, we usually form the required $\ell$-walks by building tight walks whose orders satisfy specific divisibility conditions such that the same vertex sequences will also correspond to $\ell$-walks and $\ell$-paths (see \cref{obs:tight to ell walk}). Additionally, while the starting point for the absorbers we use for $\ell$-cycles are the same as the simple absorbers in the tight case, we need to be careful with the length of the walks and we need to be able to absorb $(k-\ell)$ vertices at a time, as detailed in \Cref{sec:absorbing walk}.

 \subsection{Main lemmas and proof of \cref{thm:Hamilton cycle non-extremal}}\label{sec:main lemmas non extremal}
Before we state our three chief lemmas, we introduce necessary notation.
Given a $k$-graph $F$, we say that a $k$-graph $F^*$ is a \emph{blow-up} of $F$ if it can be obtained by replacing each vertex $v\in V(F)$ with an independent set $B_v$ and each edge $v_1 \dots v_k\in E(F)$ with every possible $k$-partite edge across $B_{v_1},\dots, B_{v_k}$, that is, with a complete $k$-partite $k$-graph across the corresponding parts. We call $F^*$ a \emph{$(\gamma,m)$-regular blow-up} if the size of each part lies in the interval $[(1-\gamma)m,(1+\gamma)m]$. 
We say that $F^*$ is a \emph{$(\gamma,m)$-nearly-regular blow-up} if all but at most one part has size in the interval $[(1-\gamma)m,(1+\gamma)m]$, and the remaining part (if it exists) has size exactly one. In particular, every $(\gamma,m)$-regular blow-up is a $(\gamma,m)$-nearly-regular blow-up.
If $F^*$ is a blow-up of $F$, we define the \emph{projection map} $\phi: V(F^*)\to V(F)$ defined by $\phi(u)=v$ if $u$ is a blow-up of $v$, that is, $u\in B_v$. For any set $U\subseteq V(F^*)$, we denote $\phi(U)=\{\phi(u): u\in U\}$. Finally, we will often refer to blow-ups of ordered edges, which we formalise now. For an arbitrary ordered edge $e = v_1 \dots v_k \in E^*(F)$, we say that an ordered edge of $F^*$, say $e^* = u_1 \dots u_k \in E^*(F^*)$, is a \emph{blow-up of $e$} if $u_i \in B_{v_i}$ for every $i\in [k]$. 

Here is the general ``tiling by blow-ups'' lemma. It is very similar to Lemma 2.1 from \cite{illingworth2025spanning}, with the only difference being that here the lemma is stated for general hypergraph properties $\cG$ and $\cS$, whereas in \cite{illingworth2025spanning} $\cG$ is the family of $n$-vertex $k$-graphs $G$ with $\delta^*(G) \ge (1/2 + \eps)n$ and $\cS$ is the family of $s$-vertex $k$-graphs $S$ with $\delta^*(S) \ge (1/2 + \eps/2)s$. The more general statement given here follows from the proof of Lemma 2.1 in \cite{illingworth2025spanning}, by replacing the specific properties used there by $\cG$ and $\cS$ and using the assumption about the number of ways to extend a vertex in any $G \in \cG$ to a hypergraph in $\cS$; we elaborate a bit more on this in \Cref{appendix:proof-blow-up-tiling}. We remark that Lang and Sanhueza-Matamala \cite{lang2024hypergraph} provide a more general result (see \cite[Proposition 6.3]{lang2024hypergraph}), but with slightly stronger assumptions and a different conclusion. 

\begin{restatable}{lem}{lemBlowUpChain}
\label{lem:blow-up chain}
Let $1/n\ll 1/m_2\ll 1/m_1\ll 1/s,\gamma\ll \eps,1/k\le 1/2$. 
Let $\cG$ and $\cS$ be families of $n$-vertex and $s$-vertex $k$-graphs such that for every $G \in \cG$ and every $W \subseteq V(G)$ of size at most $2k$, there are at least $(1 - 1/s^2)\binom{n-|W|}{s-|W|}$ sets $U \subseteq V(G)$ of size $s$ that contain $W$, such that $G[U]$ is a copy of a graph in $\cS$.
Suppose that $G\in \cG$. Then there exists a sequence of $s$-vertex $k$-graphs $F_1,\dots, F_r$ and a sequence of subgraphs $F_1^*,\dots, F_r^*\subseteq G$ such that the following hold for all $i,j\in [r]$ with addition modulo $r$:

\begin{itemize}
    \item $F_i$ is a copy of a $k$-graph in $\cS$,
    \item $F_i^*$ is a $(\gamma,m_i^*)$-nearly-regular blow-up of $F_i$ for some $m_i^*\in[m_1,m_2]$,
    \item $V(F_1^*)\cup\dots\cup V(F_r^*)=V(G)$,
	\item $V(F_i^*)\cap V(F_j^*)=\emptyset$ unless $j \in \{i-1,i,i+1\}$,
    \item $V(F_i^*)\cap V(F_{i+1}^*)$ consists of exactly $k$ vertices that induce an edge in $F_i^*$ and $F_{i+1}^*$ that is disjoint from the singleton parts of both blow-ups (if they exist).
\end{itemize}
\end{restatable}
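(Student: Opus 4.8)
The plan is to prove \cref{lem:blow-up chain} by following the blueprint of \cite[Lemma 2.1]{illingworth2025spanning}, adapting it to the abstract hypotheses on $\cG$ and $\cS$. The key input we are handed is a strong ``robustness'' property: in any $G \in \cG$, all but a $1/s^2$-fraction of $s$-sets containing any prescribed small set $W$ (of size at most $2k$) induce a copy of a member of $\cS$. The overall strategy is a greedy/absorptive construction that peels off blown-up tiles one at a time, maintaining a chain structure, until only a controllable remainder is left, which is then folded back into the chain.

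\textbf{Step 1: Setting up the iteration.} First I would fix the hierarchy $1/n \ll 1/m_2 \ll 1/m_1 \ll 1/s, \gamma \ll \eps, 1/k$ and describe the invariant we maintain: after $i$ steps we have a chain $F_1^*, \dots, F_i^*$ of vertex-disjoint-except-for-consecutive-overlap blow-ups, each $F_j^*$ a $(\gamma, m_j^*)$-nearly-regular blow-up of a copy $F_j$ of a graph in $\cS$, with the consecutive overlaps being single edges disjoint from any singleton parts, and with the set $R$ of vertices not yet covered still large (at least some $\Omega(n)$) so that the robustness hypothesis can still be applied inside $G[R]$ — here one needs that $G[R]$ inherits enough of the structure, or rather that we only ever invoke the extension property of $G$ itself on sets $W$ living in the already-built part, and pull the new tile's vertices from $R$. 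The precise bookkeeping (that $G[U]$ for a random $s$-set $U$ meeting a prescribed edge $W$ of size $k$ is in $\cS$ with probability $\ge 1 - 1/s^2$, even after conditioning $U \subseteq R \cup W$) is where the hypothesis is used: by a union bound over the few ``bad'' configurations, a random $s$-set extending $W$ into $R$ works with positive probability, so such a tile exists.

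\textbf{Step 2: Building one tile and attaching it.} Given the current last edge $e_i$ (the overlap edge shared with $F_i^*$, or an arbitrary edge to start), I would use the extension hypothesis to find an $s$-set $U_{i+1} \supseteq e_i$ with $G[U_{i+1}] \cong$ some $S \in \cS$ and $U_{i+1} \setminus e_i \subseteq R$; then pick inside $S$ a further edge $e_{i+1}$ disjoint from $e_i$ (possible since $s$ is large compared to $k$ and members of $\cS$ have no isolated vertices / high supported co-degree, which forces enough edges — this is implicitly part of what $\cS$-membership buys us, matching the $\delta^* \ge (1/2+\eps/2)s$ instance), to serve as the next overlap. Next I would blow up $S$ inside $G$: distribute the vertices of $R$ (plus the $2k$ fixed vertices of $e_i \cup e_{i+1}$, which get singleton parts if needed) into parts $B_v$ for $v \in V(S)$ of nearly-equal size $m_{i+1}^* \in [m_1, m_2]$, using the robustness property again vertex-by-vertex to guarantee that the assignment can be made so that each part is genuinely an independent-set blow-up and all required $k$-partite edges are present — this is essentially a semi-random greedy embedding, controlled by \cref{lem:Chernoff} to keep part sizes within $(1\pm\gamma)m_{i+1}^*$. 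The two overlap edges $e_i, e_{i+1}$ sit in distinct parts and are disjoint from the (at most one) singleton part, giving the last bullet.

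\textbf{Step 3: Closing the chain and absorbing the remainder.} After $r-1$ such steps the uncovered set $R$ has shrunk below $m_1$ (so we cannot form a full tile from it); at this point I would make the final tile $F_r^*$ absorb $R$ by building it on $R$ together with a fresh $s$-set $U_r \supseteq e_{r-1} \cup e_0$ chosen (via the hypothesis applied with $W = e_{r-1} \cup e_0$, $|W| \le 2k$) so that $G[U_r] \in \cS$ and the blow-up can be made $(\gamma, m_r^*)$-nearly-regular with the leftover at most $m_1 \ll \gamma m_r^*$ many vertices absorbed into one designated part — making that part slightly larger but still within the $(1\pm\gamma)$ window, or alternatively distributed so evenly that no window is violated; the ``nearly-regular'' slack (one singleton part) is exactly the escape hatch for parity/divisibility leftovers. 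Closing requires $F_r^*$ to overlap both $F_{r-1}^*$ (on $e_{r-1}$) and $F_1^*$ (on $e_0$), which is why we recorded $e_0$ at the very start.

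\textbf{Main obstacle.} I expect the real work to be Step 2's simultaneous requirements: producing, inside the shrinking remainder $R$, a blow-up that is (a) nearly-regular with part sizes forced into $[m_1,m_2]$, (b) genuinely a blow-up of a graph in $\cS$ (not just an arbitrary $\cS$-graph on those $s$ projected vertices, but with \emph{all} $k$-partite edges present across the designated parts inside $G$), and (c) correctly hinged on prescribed disjoint edges at both ends — all while only spending a $1/s^2$ ``error budget'' per application of the hypothesis and needing the budget to survive the union bound over the $\binom{s}{k}$-ish many edges whose blow-ups must be complete. The resolution is the same as in \cite{illingworth2025spanning}: choose the $s$-set and the distribution of $R$ into parts by a single randomised procedure, show via \cref{lem:Chernoff} (for part sizes and for each $k$-partite-edge-presence event) that all the relevant events hold with probability $1 - o(1)$, invoke the $(1-1/s^2)$-robustness to handle the $\cS$-membership event, and conclude by a union bound that a valid tile exists. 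Since this is a faithful generalisation of an already-published argument, I would, as the excerpt itself indicates, present the full details only in \Cref{appendix:proof-blow-up-tiling} and here just record that the proof of \cite[Lemma 2.1]{illingworth2025spanning} goes through verbatim with $\cG$, $\cS$ in place of the concrete families, the only changed ingredient being the use of the stated extension hypothesis in place of the supported-co-degree computation.
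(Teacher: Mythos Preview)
Your proposal describes a different strategy from the paper's, and it has a real gap in Step~2. The paper's proof (following \cite{illingworth2025spanning}) does \emph{not} build the chain greedily one tile at a time. Instead it proceeds in three decoupled phases: (i) use Lang's tiling lemma \cite[Lemma~4.4]{lang2023tiling} to find an almost perfect packing of $G$ by pairwise \emph{disjoint} $(0,m_2)$-regular blow-ups of graphs in $\cS$; (ii) absorb the few uncovered vertices into new $(0,m_{1.5})$-nearly-regular blow-ups via a Tur\'an-type argument (every $k$-graph with $\Omega(n^k)$ edges contains a $(0,m_3)$-regular blow-up of an edge) plus pigeonhole, yielding a perfect packing $B_1^*,\dots,B_r^*$; (iii) only then impose the chain structure, by inserting small \emph{connector} blow-ups $C_i^*$ between $B_i^*$ and $B_{i+1}^*$, each $C_i^*$ a $(0,m_1)$-regular blow-up of some $\cS$-graph containing prescribed edges $e_{\mathrm{right}}^i$ and $e_{\mathrm{left}}^{i+1}$, found via \cite[Lemma~4.3]{illingworth2025spanning} and the $2k$-robustness hypothesis. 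The cyclic closure is just one extra connector $C_r^*$.

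The gap in your approach is the leap from ``find an $s$-set $U_{i+1}\supseteq e_i$ with $G[U_{i+1}]\in\cS$'' to ``blow up $S$ inside $G$ by distributing $R$ into parts $B_v$ of size $\approx m$''. The hypothesis controls only which $s$-vertex induced subgraphs lie in $\cS$; it says nothing about whether, for a given $s$-set $U$ with $G[U]\cong S$, the complete $k$-partite structure of a blow-up of $S$ is present in $G$ on $sm$ further vertices. Your proposed fix---randomly assign $R$ to parts and use Chernoff to get all $k$-partite edges---does not follow from the stated robustness: edge-presence in $G$ between arbitrary vertices of $R$ is not governed by the density of good $s$-sets. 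This is precisely why the paper outsources the production of blow-ups to Lang's lemma, which is itself a substantial result. Your greedy scheme, as written, lacks any mechanism to produce genuine blow-ups with parts of size in $[m_1,m_2]$.
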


In order to use \Cref{lem:blow-up chain} in our setting, we will prove the following ``structural inheritance'' lemma.

\begin{lem}
\label{lem:concentration}
    Let $1/n\ll 1/s\ll \eps\ll \mu'\ll \mu\ll 1/k \le 1/3$ and let $\ell \in [k-1]$. Let $G\in\nonexE_{\eps,\mu} (n,k,\ell)$, let $W \subseteq V(G)$ be a subset of size at most $2k$, and let $S$ be a uniformly random $s$-set from $V(G)$ that contains $W$. Then, with probability at least $1 - 1/s^2$, the subgraph $G[S]$ is in $\nonexE_{2\eps,\mu'}(s,k,\ell)$.
\end{lem}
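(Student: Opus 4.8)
The plan is to verify, on a single high-probability event, the four requirements in the definition of $\nonexE_{2\eps,\mu'}(s,k,\ell)$ for the induced subgraph $G[S]$: that $s$ is divisible by $k-\ell$ (we simply take $s$ divisible by $k-\ell$ when fixing the hierarchy — this choice is harmless), that $G[S]$ has no isolated vertices, that $\delta^*(G[S])\ge(1-1/t-2\eps)s$, and that every $U\subseteq S$ with $|U|\ge s/t$ spans at least $\mu' s^2$ pairs that are supported in $G[S]$. The first three, together with a reduction of the fourth, follow from one concentration estimate, which I would prove by a union bound over all small supported sets.

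First I would observe that the supported co-degree bound extends to all supported sets of size at most $k-1$: if $R$ is supported in $G$ with $1\le|R|\le k-1$, extend it inside an edge to a supported $(k-1)$-set $T\supseteq R$; then $N^1_G(T)\subseteq N^1_G(R)$ and $d^1_G(T)$ equals the number of edges of $G$ containing $T$, which is at least $\delta^*(G)\ge(1-1/t-\eps)n$. Hence $d^1_G(R)\ge(1-1/t-\eps)n$ for every supported $R$ with $|R|\le k-1$; in particular (taking $|R|=1$) every vertex of $H:=\partial^2 G$ has degree at least $(1-1/t-\eps)n$. Now fix such an $R\subseteq V(G)$ and condition on $R\subseteq S$: then $S\setminus(W\cup R)$ is a uniformly random subset of $V(G)\setminus(W\cup R)$ of the appropriate size, so $|N^1_G(R)\cap S|$ stochastically dominates a hypergeometric variable of mean at least $(1-1/t-\eps)s-O(k)$, and \cref{lem:Chernoff} gives $\bP[\,|N^1_G(R)\cap S|<(1-1/t-2\eps)s\mid R\subseteq S\,]\le 2\exp(-\Omega(\eps^2 s))$, uniformly in $R$. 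Since $\sum_R\bP[R\subseteq S]=\bE[\#\{R\subseteq S:R\text{ supported},\,|R|\le k-1\}]\le\sum_{i\le k-1}\binom{s}{i}\le ks^{k-1}$, summing gives that with probability at least $1-1/(3s^2)$ (using $1/s\ll\eps$) the event $(\star)$ holds: every $R\subseteq S$ supported in $G$ with $|R|\le k-1$ has $|N^1_G(R)\cap S|\ge(1-1/t-2\eps)s$.

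Assume $(\star)$. For a $(k-1)$-set $R$ supported in $G[S]$ one has $N^1_{G[S]}(R)=N^1_G(R)\cap S$ (an edge of $G[S]$ through $R$ is exactly $R\cup\{v\}$ with $v\in N^1_G(R)\cap S$), so $R$ lies in at least $(1-1/t-2\eps)s$ edges of $G[S]$, giving $\delta^*(G[S])\ge(1-1/t-2\eps)s$. For isolated vertices: for $v\in S$ the singleton $\{v\}$ is supported in $G$, and we greedily grow supported sets $\{v\}=R_1\subsetneq\dots\subsetneq R_k$ inside $S$, choosing at step $j$ a vertex of $(N^1_G(R_j)\cap S)\setminus R_j$ — nonempty by $(\star)$ since $(1-1/t-2\eps)s>k$ — so that $R_k$ is a supported $k$-set inside $S$, i.e.\ an edge of $G[S]$ through $v$. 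The same greedy extension started from any pair $\{x,y\}\subseteq S$ supported in $G$ (note $2\le k-1$) shows that on $(\star)$ every pair of $S$ supported in $G$ is supported in $G[S]$, i.e.\ $\partial^2(G[S])=H[S]$. Hence the fourth requirement reduces to: whp every $U\subseteq S$ with $|U|=\lceil s/t\rceil$ has at least $\mu' s^2$ edges in $H[U]$ (a larger $U$ has at least as many).

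This last point is the heart of the proof and the step I expect to be the main obstacle; it is where the non-extremal hypothesis enters essentially. The structure I would follow is: (i) the hypothesis that every $\lfloor n/t\rfloor$-subset of $H$ spans at least $\mu n^2$ edges, together with the min-degree bound on $H$, forces every ``near-independent'' set of $H$ (maximum internal degree $\le\sqrt{\mu'}\,n$) to have size at most $(1/t-c\mu)n$ for an absolute constant $c>0$ — indeed, padding such a set $I$ of size $n/t-\delta n$ to size $\lfloor n/t\rfloor$ by any $\delta n$ vertices produces a set with at most $\tfrac12\sqrt{\mu'}\,n|I|+|I|\delta n+\binom{\delta n}{2}$ edges, which forces $\delta\ge c\mu$; (ii) if $H[U]$ has fewer than $\mu' s^2$ edges then deleting the fewer than $2\sqrt{\mu'}\,s$ vertices of $U$ of internal degree at least $\sqrt{\mu'}\,s$ leaves a near-independent $U_0\subseteq U$ of size more than $(1-2\sqrt{\mu'}\,t)s/t$, so it suffices to show that whp $S$ contains no near-independent set of $H$ of size at least $(1-2\sqrt{\mu'}\,t)s/t$; (iii) since $(1-2\sqrt{\mu'}\,t)/t>1/t-c\mu$ (as $\mu'\ll\mu$), this is a transference of (i) across the random restriction to $S$: for a fixed near-independent set $J$ of $H$ one gets $\bP[|J\cap S|\ge(1-2\sqrt{\mu'}\,t)s/t]\le\exp(-\Omega(\mu^2 s))$ from \cref{lem:Chernoff}, but a naive union over all near-independent sets, or even over all maximal ones, is too wasteful, since $H$ may have exponentially many clustered near one near-extremal region. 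I expect this to be resolved by a container-type argument: cover all near-independent sets of $H$ by at most $\exp(o(\mu^2 s))$ ``containers'' of size at most $(1/t-c\mu/2)n$ — using $\Delta(\overline{H})\le(1/t+\eps)n$ as the supersaturation input — and then union-bound the events $|C\cap S|\ge(1-2\sqrt{\mu'}\,t)s/t$ over the containers $C$. Getting this container/transference step right is the crux; everything else — the hypergeometric estimates, the greedy extensions, and the reductions above — is routine.
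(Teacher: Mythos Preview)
Your handling of the first three requirements—no isolated vertices, $\delta^*(G[S])\ge(1-1/t-2\eps)s$, and the identification $\partial^2(G[S])=H[S]$ on the event $(\star)$—is correct and matches the paper's argument (its event $\cC_1$) essentially line for line.

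The divergence is in the fourth requirement. You reduce to a transference statement for near-independent sets of $H$ and propose a container argument, correctly flagging it as the crux. There is a genuine subtlety you do not address: the container family for near-independent sets of $H$ lives on $n$ vertices, and since the hierarchy reads $1/n\ll 1/s$, a count of the form $n^{O_\mu(1)}=\exp(O_\mu(\log n))$ is \emph{not} $\exp(o(\mu^2 s))$ in general—$\log n$ may dwarf $s$. This can be repaired by observing that the container fingerprint lies inside the near-independent set and hence inside $S$, so one only needs to union-bound over $\binom{s}{O_\mu(1)}$ fingerprints; but without this observation the argument as written does not close.

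The paper sidesteps containers entirely via \cref{prop:approx non extremal}: under the codegree hypothesis, the condition ``every $\lfloor s/t\rfloor$-set spans $\ge\mu' s^2$ supported pairs'' is implied by the per-vertex condition ``for every $v$, either $d^1(v)\ge(1-1/t+3\mu't)s$ or $S\setminus N^1(v)$ spans $\ge\mu_1 s^2$ supported pairs''. This collapses the union bound from exponentially many subsets to $s$ vertices. Concretely (the paper's event $\cC_2$), for each $v\in S$ with $d^1_G(v)\le(1-1/t+\mu_2)n$, \cref{prop:approx non extremal}(i) gives $\ge\mu_2 n^2$ supported pairs in $V(G)\setminus N^1_G(v)$; each such pair lies in $\Omega_k(n^{k-2})$ edges by \cref{prop:codeg to deg}, and McDiarmid shows $\Omega_k(\mu_2 s^k)$ of these edges land in $S$, yielding $\ge\mu_1 s^2$ supported pairs in $S\setminus N^1_H(v)$.

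In effect the paper's ``containers'' are just the $n$ non-neighbourhoods $V(G)\setminus N^1_G(v)$—one per vertex—and the fact that any sparse $U\subseteq S$ is almost entirely contained in one of them is exactly your step (ii). So your instinct is sound, but the explicit per-vertex reduction of \cref{prop:approx non extremal} is both simpler and immune to the $n$-versus-$s$ counting issue.
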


As explained before, roughly speaking, our plan is to find a spanning $\ell$-path in each blow-up in the structure given by \Cref{lem:blow-up chain}, and then join up these paths. The following lemma shows that, under a necessary divisibility condition, every such blow-up indeed contains a spanning $\ell$-path.

\begin{lem}
\label{lem:Hamilton paths in blow-ups}    
Let $1/m\ll 1/s\ll \gamma\ll \eps\ll \mu\ll 1/k\le 1/3$, let $\ell \in [k-1]$, and suppose $(k,\ell) \ne (3,1)$. Suppose $F\in \nonexE_{\eps,\mu}(s,k,\ell)$ and $F^*$ is a $(\gamma,m)$-nearly-regular blow-up of $F$ satisfying $|\Fs| \equiv k \pmod{k-\ell}$. Let $e_1,e_2\in E(F^*)$ be two ordered edges that are vertex-disjoint from each other and from the singleton part of $F^*$ (if it exists). Then there exists a spanning $\ell$-path in $F^*$ from $e_1$ to $e_2$.
\end{lem}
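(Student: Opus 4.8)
The plan is to follow the route sketched in \Cref{sec:pf sketch non-extremal}, combining three ingredients: a vertex-weighted fractional matching of $F$ (in the sense of Mycroft and Z\'arate-Guer\'en~\cite{mycroft2025positive}), obtained via Farkas' lemma; an auxiliary result producing spanning $\ell$-paths with prescribed end-edges in complete $k$-partite $k$-graphs whose part sizes meet suitable ratio and divisibility conditions; and an absorbing $\ell$-path obtained by blowing up a carefully chosen $\ell$-walk of $F$. The single fact that makes everything go through is that $\delta^*(F)\ge(1-1/t-\eps)s>s/2$: indeed $t\ge 3$ here by \cref{obs:useful t info}, since $(k,\ell)\ne(3,1)$, so $1-1/t-\eps\ge 2/3-\eps$. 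A minimum supported co-degree above $s/2$ makes the shadow structure of $F$ very well connected, so that any two supported $\ell$-sets can be joined by a short $\ell$-walk inside $F$ and every edge of $F$ lies on many short "loops". Throughout, we produce $\ell$-walks by first constructing tight walks whose length is congruent to $k$ modulo $k-\ell$, so that the same vertex sequence is automatically an $\ell$-walk as well.

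First I would fix a short $\ell$-walk $W$ in $F$ from $\phi(e_1)$ to $\phi(e_2)$ — where $\phi$ is the projection $V(F^*)\to V(F)$ and $\phi(e_i)$ is the ordered edge of $F$ that $e_i$ blows up — designed so that: $W$ visits the vertex of $F$ whose blow-up part is the singleton of $F^*$ (if that part exists); for every edge $e\in E(F)$, $W$ contains a short sub-walk looping around $e$ (a detour lying entirely inside the blow-up of $e$); and for every vertex $v\in V(F)$, $W$ contains enough pairwise disjoint absorber gadgets, each of which optionally threads $k-\ell$ further, arbitrary, vertices of $B_v$ onto the blown-up path, to absorb any $O(s)$ vertices of $B_v$. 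Such a $W$, of length polynomial in $s$, exists by the connectivity noted above. Since $1/m\ll 1/s$, blowing up $W$ greedily — taking a fresh vertex of $B_v$ whenever $W$ revisits $v$, and using exactly the vertices of $e_1$ and $e_2$ for the first and last $k$ positions — yields an $\ell$-path $P_0$ in $F^*$ from $e_1$ to $e_2$ that uses at most $\gamma m$ vertices of each part.

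Next I would tile the remainder. Write $B_v'=B_v\setminus V(P_0)$, which differs from $B_v$ by only polynomially many vertices. Using the degree condition together with the non-extremality of $F$ (which rules out the strong-independent-set obstruction to the existence of the matching), Farkas' lemma supplies a vertex-weighted fractional matching of $F$, which I take to be a vertex of the corresponding polytope, hence supported on at most $s$ edges. From its edge weights and the targets $|B_v'|$ I partition almost all of each $B_v'$ into pieces $B_v^e$, one per support edge $e\ni v$, of sizes chosen so that for each such $e$ the complete $k$-partite $k$-graph $T_e$ with parts $\{B_v^e:v\in e\}$ — augmented to also contain the boundedly many vertices of the $e$-loop of $P_0$ — has part sizes satisfying the hypotheses of the $k$-partite result, and hence admits a spanning $\ell$-path between the two blow-ups of $e$ that are the first and last edges of that loop. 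Rounding leaves at most $O(s)$ uncovered vertices in each $B_v'$; by perturbing the part sizes by $O(1)$ I arrange that the number left in each part is divisible by $k-\ell$, the global count being consistent since $|F^*|\equiv k\pmod{k-\ell}$.

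Finally I assemble: for each support edge $e$, replace the $e$-loop segment of $P_0$ by a spanning $\ell$-path of $T_e$ with the same first and last edges; carrying this out for all $e$ produces an $\ell$-path covering everything except the $O(s)$ leftover vertices per part, which are then absorbed by activating gadgets of $P_0$. The result is a spanning $\ell$-path of $F^*$ from $e_1$ to $e_2$, as required. The main obstacle is the tiling step: one must simultaneously satisfy the part-size ratio conditions for every $T_e$ and all the divisibility constraints, both per part and globally, while only nudging the fractional-matching weights by $O(1)$ in each part — this is precisely where the vertex-weighted fractional matching of~\cite{mycroft2025positive} and a careful Farkas'-lemma argument (which is also where the non-extremality hypothesis gets consumed) are essential, and it is the part that is genuinely more delicate for non-tight $\ell$ than for tight cycles. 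A secondary, routine nuisance is the bookkeeping around the singleton part of a nearly-regular blow-up, which is exactly why we insist that $e_1$ and $e_2$ avoid it.
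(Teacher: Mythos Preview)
Your proposal follows the paper's architecture closely: reduce to a regular blow-up by threading the singleton vertex through the connecting walk, build an absorbing/connecting $\ell$-path $P_0$ by blowing up an $\ell$-walk in $F$, tile the remainder into complete $k$-partite pieces $T_e$ whose part sizes are dictated by a $\bfw^*$-weighted perfect fractional matching of $F$ (found via Farkas' lemma, consuming the non-extremality hypothesis), and absorb the rounding residue. Your observation that one may take the matching to be a basic feasible solution, hence supported on at most $s$ ordered edges, is a nice economy the paper does not use.

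There is, however, one genuine soft spot. You design absorber gadgets per vertex $v\in V(F)$ --- each optionally threading $k-\ell$ further vertices from the single part $B_v$ --- and consequently need the leftover in \emph{each} part to be divisible by $k-\ell$; you assert this can be arranged ``by perturbing the part sizes by $O(1)$'', but the tiles $T_e$ require the exact $(k-1):(t-1)$ part-size ratios of \Cref{prop:ell cycle in k-partite k-graph}, so one cannot nudge a single $|B_v^e|$ without destroying the spanning cycle in $T_e$, and adjusting the integer multipliers so that all $s$ parts simultaneously hit multiples of $k-\ell$ is a constraint you have not discharged. The paper sidesteps this entirely by building absorbers per $(k-\ell)$-\emph{sequence} of vertices of $F$ rather than per vertex (\Cref{prop:sequence tight absorber}, \Cref{cor:final ell walk}): each gadget can then swallow $k-\ell$ blow-up vertices with \emph{arbitrary} projections, so only the \emph{global} leftover $|B^*|$ need be divisible by $k-\ell$, and this drops out cleanly from $|\Fs|\equiv k\pmod{k-\ell}$ together with $|T_e|=t(k-1)\lfloor\hat q(e)\rfloor\equiv 0\pmod{k-\ell}$. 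A secondary point: the paper's splicing uses a spanning $\ell$-\emph{cycle} in each $T_e$ through one prescribed ordered edge (a blow-up of $e^{+t}$), inserted at the matching edge of the connecting path; your formulation instead asks for spanning $\ell$-paths with \emph{two} prescribed end-edges in $T_e$, a strictly stronger primitive you would need to supply separately.
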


Assuming~\Cref{lem:blow-up chain,lem:concentration,lem:Hamilton paths in blow-ups}, it is fairly straightforward to prove~\Cref{thm:Hamilton cycle non-extremal}.

\begin{proof}[Proof of~\Cref{thm:Hamilton cycle non-extremal}]
	Let us set up the chain of parameters $1/n\ll 1/m_2\ll 1/m_1\ll 1/s\ll\gamma\ll \eps\ll \mu'\ll \mu \ll 1/k\le 1/3$. Let $\cG=\nonexE_{\eps,\mu}(n,k,\ell)$ and $\cS=\nonexE_{2\eps,\mu'}(s,k,\ell)$. Then, by \Cref{lem:concentration}, we can apply \Cref{lem:blow-up chain} to $G\in \cG$, and let $F_1,\dots, F_r\in \cS$ and $\Fs_1, \ldots, \Fs_r$ be the $k$-graphs so obtained, where each $\Fs_i$ is a $(\gamma,m_i^*)$-nearly-regular blow-up of $F_i$ for some $m_i^*\in[m_1,m_2]$. 
    
    For every $i\in[r]$, let $e_i = E(\Fs_i) \cap E(\Fs_{i+1})$, and fix any ordering $v_{i,1} \ldots v_{i,k}$ of $e_i$. In particular, we have $e_r=e_0=E(\Fs_r)\cap E(\Fs_1)$ (as we think of the indices $i\in[r]$ cyclically modulo $r$). We define integers $s_1, \ldots, s_r \in [k-\ell]$ and $t_1, \ldots, t_r \in  [\ell+1, k]$, as follows. Take $s_1 = 1$. Given $s_i$, let $t_i$ be the unique integer in $[\ell+1, k]$ such that 
	\begin{equation} \label{eqn:divisibility}
		|\Fs_i| - (s_i-1) - (k - t_i + 1) \equiv k \pmod{k-\ell}.
	\end{equation}
	Finally, set $s_{i+1} = t_i-\ell$. Define $\Fss_i := \Fs_i - \{v_{i-1,1}, \ldots, v_{i-1,s_i-1}, v_{i,t_i}, \ldots, v_{i,k}\}$. Since $e_{i-1}$ and $e_i$ are both disjoint of the singleton part of $\Fs_i$, it is immediate that $\Fss_i$ is a $(2\gamma,m_i^*)$-nearly-regular blow-up of $F_i$, using $s_i +k - t_i \le 2k \ll \gamma m_i^*$. Additionally, we remark that $\bigcup_{i\in[r]} V(\Fss_i)=V(G)$. Indeed, let us define $U_i=V(\Fs_i)\setminus V(\Fss_i)$. By choice, $U_i\subseteq V(e_{i-1})\bigsqcup V(e_i)$. But then
    \[
        U_i\cap V(e_i)=\{v_{i,t_i},\dots, v_{i,k} \}\subseteq \{v_{i,s_{i+1}},\dots,v_{i,k}\}\subseteq V(\Fss_{i+1}),
    \]
    and similarly $U_i\cap V(e_{i-1})\subseteq V(\Fss_{i-1})$. Hence, $\bigcup_{i\in[r]} V(\Fss_i)=\bigcup_{i\in[r]} V(\Fs_i)=V(G)$.
    
	Now, our goal is to find a spanning $\ell$-path, say $P_i$, in $\Fss_i$ from $S_i := v_{i-1,s_i} \ldots, v_{i-1,k}$ to $T_i := v_{i,1} \ldots v_{i,t_i-1}$. To this end, we first find ordered edges $e_{S_i}, e_{T_i}\in E(\Fss_i)$ such that $e_{S_i}$ starts with $S_i$, $e_{T_i}$ ends with $T_i$, and $e_{S_i}$ and $e_{T_i}$ are vertex-disjoint from each other and from the singleton part of $\Fss_i$, if it exists. Applying the supported co-degree condition $\delta^*(F_i)\ge (1-1/k-2\eps)s$ on $\phi(S_i)$ and $\phi(T_i)$ provides edges $e_{S_i}'$, and $e_{T_i}'$ in $F$ which satisfy the above requirements (with $\phi(S_i)$ and $\phi(T_i)$ in place of $S_i$ and $T_i$ respectively), and we let $e_{S_i}, e_{T_i}\in E(\Fss_i)$ be arbitrary blow-ups of $e_{S_i}'$, and $e_{T_i}'$ respectively that contains $S_i$ and $T_i$, respectively. Since $|\Fss| \equiv k \pmod{k-\ell}$ (see \eqref{eqn:divisibility}), \Cref{lem:Hamilton paths in blow-ups} then provides the desired path $P_i$. 

    The key point to note is that, for each $i\in [r]$, the sequence of the final $\ell$ vertices of $P_i$ exactly coincides with the sequence of the first $\ell$ vertices of $P_{i+1}$, namely $v_{i,t_i-\ell}\dots v_{i,t_i-1}$. Set $Q_i$ to be the vertex sequence obtained by removing the last $\ell$ vertices of $P_i$. Let $Q$ be the cyclic vertex sequence obtained by sequentially concatenating the sequences $Q_1\dots Q_r$. Then, by the observation above, we may conclude that the vertex sequence $Q$ forms an $\ell$-cycle in $G$. Since each $P_i$ spans $\Fss_i$ and $\bigcup_{i\in[r]} V(\Fss_i)=V(G)$, the $\ell$-cycle $Q$ spans $V(G)$, completing the proof.
\end{proof}

\paragraph{Organisation of the next five sections.}
Since \cref{lem:blow-up chain} is based on~\cite[Lemma 2.1]{illingworth2025spanning} with largely the same proof, we provide only a sketch and postpone it to \Cref{appendix:proof-blow-up-tiling}. 
We prove some useful facts about $\nonexE_{n,k,\ell}$ in \Cref{sec:basics non-extremal}, and use them to prove \cref{lem:concentration} in \Cref{sec:inheritance}. Then, as discussed in \Cref{sec:pf overview non-extremal}, we describe how we find a suitable fractional matching and an absorbing and connecting walk in \Cref{sec:Farkas,sec:absorbing walk}, respectively. We put it all together to prove \cref{lem:Hamilton paths in blow-ups} in \Cref{sec:proof Hamilton paths}.

\section{Basic tools}\label{sec:basics non-extremal}
	In this section we prove a few preliminary results that we will use later.
	Recall that $\nonexE_{\eps,\mu}(n,k,\ell)$ is the family of $n$-vertex graphs with minimum positive co-degree at least $(1-1/t-\eps)n$ where every $\floor{n/t}$ vertices span at least $\mu n^2$ supported pairs. To begin with, we prove an approximate alternate description for $\nonexE_{\eps,\mu}(n,k,\ell)$ that we use when proving \cref{lem:concentration}. The aim of \cref{lem:concentration} is to show that, roughly speaking, the properties of graphs in $\nonexE_{\eps,\mu}(n,k,\ell)$ are preserved under taking a random subset of vertices $S$. A naive approach to guaranteeing that the non-extremal properties are preserved takes a union bound over all subsets of $S$ of size $|S|/t$, which would not yield an effective bound. Instead, using also the minimum supported co-degree condition, we show that it suffices to guarantee that non-neighbourhoods of vertices with vertex co-degree close to $(1-1/t)n$ have many supported pairs. This is formalised in the following proposition.

	\begin{prop}
	\label{prop:approx non extremal}
		Let $k\ge 3$ and $\ell\in [k-1]$ be integers, $t=\floor{\frac{k}{k-\ell}}(k-\ell)$, and let $1/n\ll\eps\ll \mu'\ll \mu\ll 1/k$. 
		\begin{enumerate}[label = \rm(\roman*)]
			\item \label{itm:non-ext-1}
				Suppose $G\in \nonexE_{\eps,\mu}(n,k,\ell)$. Then for all $v\in V(G)$, either $d^1(v)\ge (1-1/t+\mu')n$ or $V(G)\setminus N^1(v)$ contains at least $\mu'n^2$ supported pairs.
			\item \label{itm:non-ext-2}
				Let $G$ be an $n$-vertex $k$-graph with no isolated vertices and $\delta^*(G) \ge (1 -1/t - \eps)n$. Suppose that for all $v\in V(G)$, either $d^1(v)\ge (1-1/t + 3\mu' t)n$ or $V(G)\setminus N^1(v)$ contains at least $\mu n^2$ supported pairs. Then every set of $\floor{n/t}$ vertices in $G$ contains at least $\mu' n^2$ supported pairs.
		\end{enumerate}
	\end{prop}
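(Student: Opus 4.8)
The two parts call for rather different arguments: part~(i) is a routine ``extend-and-delete'' manipulation, whereas part~(ii) is the substantive direction, which I would prove by contradiction.

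For part~(i), fix $v$ and assume $d^1(v) < (1-1/t+\mu')n$, so $|V(G)\setminus N^1(v)| > (1/t-\mu')n$. If $|V(G)\setminus N^1(v)| \ge n/t$ then the property defining $\nonexE_{\eps,\mu}(n,k,\ell)$ already gives at least $\mu n^2 \ge \mu' n^2$ supported pairs inside it. Otherwise, extend $V(G)\setminus N^1(v)$ by fewer than $2\mu' n$ vertices to a set $A$ with $|A|=\ceil{n/t}$: now $A$ has at least $\mu n^2$ supported pairs, and deleting the added vertices destroys at most $2\mu' n\cdot n$ of them, so $V(G)\setminus N^1(v)$ retains at least $\mu n^2 - 2\mu' n^2 \ge \mu' n^2$ supported pairs, using $\mu'\ll\mu$.

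For part~(ii), the first step is to note that the supported co-degree bound forces every vertex to have large vertex co-degree: since $G$ has no isolated vertices, each $v$ lies in an edge $e$, and picking $u\in e$ with $u\ne v$ makes $e\setminus\{u\}$ a supported $(k-1)$-set containing $v$, hence lying in at least $\delta^*(G)\ge(1-1/t-\eps)n$ edges, each supplying a distinct vertex of $N^1(v)$; thus $d^1(v)\ge(1-1/t-\eps)n$, i.e. $|V(G)\setminus N^1(v)|\le(1/t+\eps)n$, for every $v$. Now suppose for contradiction that $A\subseteq V(G)$ with $|A|=\floor{n/t}$ has fewer than $\mu' n^2$ supported pairs. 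Then $\sum_{v\in A}d_A^1(v)$, being twice the number of supported pairs in $A$, is below $2\mu' n^2$, so all but at most $2\sqrt{\mu'}n$ vertices of $A$ lie in $A':=\{v\in A:d_A^1(v)<\sqrt{\mu'}n\}$. For $v\in A'$ we have $|A\setminus N^1(v)|>\floor{n/t}-\sqrt{\mu'}n$, and combining with $|V(G)\setminus N^1(v)|\le(1/t+\eps)n$ yields $|(V(G)\setminus N^1(v))\setminus A|\le 3\sqrt{\mu'}n$ (using $\eps\ll\mu'$ and $n$ large). Hence every supported pair in $V(G)\setminus N^1(v)$ either lies inside $A$ or meets this small leftover set, so $V(G)\setminus N^1(v)$ has fewer than $\mu' n^2 + 3\sqrt{\mu'}n^2 < \mu n^2$ supported pairs (using $\mu'\ll\mu$). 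The dichotomy assumed in part~(ii) then forces $d^1(v)\ge(1-1/t+3\mu' t)n$, whence $|A\setminus N^1(v)|\le|V(G)\setminus N^1(v)|\le(1/t-3\mu' t)n$ and so $|A\cap N^1(v)| = |A|-|A\setminus N^1(v)| > 3\mu' t n - 1$, for \emph{every} $v\in A'$.

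Summing the last inequality over the at least $\floor{n/t}-2\sqrt{\mu'}n$ vertices of $A'$ gives
\[
\sum_{v\in A}d_A^1(v) \;\ge\; \sum_{v\in A'}|A\cap N^1(v)| \;>\; \bigl(\floor{n/t}-2\sqrt{\mu'}n\bigr)\bigl(3\mu' t n - 1\bigr) \;>\; 2\mu' n^2,
\]
the final step being valid for $n$ large and $\mu'$ small since the leading term is $(n/t)(3\mu' t n) = 3\mu' n^2$ and all the error terms (from the floor, from $\sqrt{\mu'}n$, from the $-1$) are $o(\mu' n^2)$; this contradicts $\sum_{v\in A}d_A^1(v) < 2\mu' n^2$. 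I expect this last counting step to be the main obstacle, and it is exactly why the hypothesis carries the precise constant $3\mu' t$: the per-vertex gain $3\mu' t n$, once multiplied by $|A'|\approx n/t$, must overtake the budget $2\mu' n^2$, so a constant of $2$ would be borderline and anything smaller would fail. Everything else is bookkeeping with the $\eps$-slack, the floor function, and the exceptional set $A\setminus A'$, comfortably absorbed by the hierarchy $1/n\ll\eps\ll\mu'\ll\mu\ll1/k$.
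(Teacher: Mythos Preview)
Your proof is correct. Part~(i) is identical to the paper's argument. For part~(ii) you use the same ingredients --- a low-degree vertex in the bad set, the blanket bound $d^1(v)\ge(1-1/t-\eps)n$ forcing $V(G)\setminus N^1(v)$ to sit almost entirely inside $A$, and the dichotomy hypothesis --- but you organise them differently. The paper picks a \emph{single} vertex $v\in U$ with $d_U^1(v)\le 3\mu' t n$ (guaranteed by averaging), deduces $d_G^1(v)\le(1-1/t+3\mu't)n$, and then reads the dichotomy in the forward direction: $V(G)\setminus N^1(v)$ must contain $\ge\mu n^2$ supported pairs, almost all of which land in $U$, directly contradicting the assumption on $U$. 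You instead work with the whole set $A'$ of typical vertices, apply the dichotomy contrapositively to conclude each $v\in A'$ has $d^1(v)\ge(1-1/t+3\mu't)n$ and hence $d_A^1(v)>3\mu'tn-1$, and only then sum to beat the budget $2\mu'n^2$. Your route is slightly longer (it needs the auxiliary set $A'$ and the final summation), but it is equally valid, and your explanation of why the constant $3\mu't$ is the right one is accurate.
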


	\begin{proof}
		Item \ref{itm:non-ext-1} is fairly simple to prove. Suppose there exists some $v\in V(G)$ such that $d^1(v)\le (1-1/t+\mu')n$, and hence $|V(G)\setminus N_G^1(v)|\ge n/t-\mu' n$. Let $A$ be any set of the form $(V(G)\setminus N_G^1(v))\sqcup B$, where $B$ is an arbitrary set of distinct vertices from $N_G^1(v)$ such that $|B|$ is the smallest possible value required to make $|A|\ge \floor{n/t}$; then $|B| \le \mu'n$ (note that $B$ is potentially empty). Since $G\in \nonexE_{\eps,\mu}(n,k,\ell)$, the set $A$ contains at least $\mu n^2$ supported pairs. As $|B|\le \mu'n$, at most $\mu'n^2$ of these supported pairs have a vertex in $B$, and thus at least $(\mu-\mu') n^2\ge \mu' n^2$ lie entirely within $V(G)\setminus N_G^1(v)$.

		Now for \ref{itm:non-ext-2}. Suppose there exists some $U\subseteq V(G)$ with $|U|\ge \floor{n/t} \ge n/t -1$ that contains less than $\mu' n^2$ supported pairs. Then, by the handshaking lemma on the graph of supported pairs in $U$, there exists some $v\in U$ such that $d_U^1(v)\le 3\mu't n$. Since $|V(G)\setminus U|\le n- n/t$, we see that $d_G^1(v)\le (1-1/t+3\mu' t)n$, and hence, by assumption on $G$, the set $T=V(G)\setminus N_G^1(v)$ contains at least $\mu n^2$ supported pairs. Since $G$ has no isolated vertices and $\delta^*(G) \ge (1 - 1/t - \eps)n$, we know that $d_G^1(v)\ge (1-1/t-\eps)n$, and hence $|T|\le n/t+\eps n$. By our choice of $v$, we have $|T\cap U|\ge |U|-3\mu't n\ge n/t-4\mu' tn$, and consequently $|T\setminus U|\le n/t+\eps n-(n/t-4\mu' tn)\le 8\mu'tn$. Hence, the number of supported pairs in $T$ which contain at least one vertex outside $U$ is at most $8\mu'tn^2$. 
		Consequently, $T\cap U\subseteq U$ contains at least $(\mu-8\mu't)n^2\ge \mu' n^2$ supported pairs, a contradiction to our assumption on $U$.
	\end{proof}

	In a similar vein, but somewhat in the ``opposite direction'', we also show that taking a blow-up approximately preserves non-extremal structure.
	Here we use the alternative characterisation of non-extremal graphs given by the previous proposition.

	\begin{prop}
	\label{prop:blowup is nonextremal}
		Let $0 \le \gamma\ll \eps\ll \mu'\ll \mu\ll 1/k\le 1/3$, let $m, s \ge 1$ be integers, and $t=\floor{\frac{k}{k-\ell}}(k-\ell)$. Suppose $F\in \nonexE_{\eps,\mu}(s,k,\ell)$ and $F^*$ is a $(\gamma,m)$-regular blow-up of $F$. Then $\Fs\in \nonexE_{2\eps,\mu'}(|F^*|,k,\ell)$.
	\end{prop}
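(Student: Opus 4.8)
The plan is to verify the two defining properties of membership in $\nonexE_{2\eps, \mu'}(|F^*|, k, \ell)$ for $F^*$: the minimum supported co-degree condition, and the property that every set of $\floor{|F^*|/t}$ vertices contains many supported pairs (along with the trivial observation that $F^*$ has no isolated vertices, which is immediate since $F$ has none and blow-ups preserve this). I would set $n^* := |F^*|$ throughout. The first property is routine: if $U = \{u_1, \dots, u_{k-1}\}$ is a supported $(k-1)$-set in $F^*$, then its projection $\phi(U)$ has at most $k-1$ vertices and is supported in $F$ (an edge of $F^*$ containing $U$ projects to an edge of $F$ containing $\phi(U)$); note $\phi$ restricted to $U$ might not be injective, so $|\phi(U)| \le k-1$. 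Since $F^*$ is a blow-up with all parts of size in $[(1-\gamma)m, (1+\gamma)m]$, and $n^* = \sum_v |B_v|$ with $|V(F)| = s$, a short count gives $N^1_{F^*}(U) \supseteq \bigcup_{w \in N^1_F(\phi(U))} B_w$ minus at most $|U| \le k-1$ vertices of $U$ itself, so $d^1_{F^*}(U) \ge (1-\gamma)m \cdot d^1_F(\phi(U)) - k$. Using $d^1_F(\phi(U)) \ge \delta^*(F) \ge (1 - 1/t - \eps)s$ and $n^* \le (1+\gamma) m s$, a direct calculation shows $d^1_{F^*}(U) \ge (1 - 1/t - 2\eps)n^*$ provided $\gamma \ll \eps$ and $m$ is not too small; if $m$ could be very small one should be slightly more careful, but since the conclusion only needs $2\eps$ against $\eps$ there is ample slack.

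For the second property I would use \Cref{prop:approx non extremal}\ref{itm:non-ext-2} applied to $F^*$, which reduces the task to showing: for every vertex $u \in V(F^*)$, either $d^1_{F^*}(u) \ge (1 - 1/t + 3\mu' t)n^*$ or $V(F^*) \setminus N^1_{F^*}(u)$ contains at least $\mu n^2$ — wait, I need to be careful about which $\mu$-parameters the proposition is stated with; I would invoke it with the pair $(\mu'', \mu')$ playing the roles of $(\mu, \mu')$ there, for a suitable intermediate $\mu''$ with $\mu' \ll \mu'' \ll \mu$, so that what I must establish is: for every $u \in V(F^*)$, either $d^1_{F^*}(u) \ge (1 - 1/t + 3\mu'' t)n^*$ or $V(F^*) \setminus N^1_{F^*}(u)$ contains at least $\mu'' (n^*)^2$ supported pairs. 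To get this, let $u \in B_v$. By \Cref{prop:approx non extremal}\ref{itm:non-ext-1} applied to $F$, either $d^1_F(v) \ge (1 - 1/t + \mu')s$ or $V(F) \setminus N^1_F(v)$ contains at least $\mu' s^2$ supported pairs. In the first case, since $N^1_{F^*}(u) \supseteq \bigcup_{w \in N^1_F(v)} B_w \setminus \{u\}$ (any $w \in N^1_F(v)$ distinct from $v$ contributes all of $B_w$, and if $v \in N^1_F(v)$ then $B_v \setminus \{u\}$), we get $d^1_{F^*}(u) \ge (1-\gamma)m \cdot d^1_F(v) - 1 \ge (1 - 1/t + \mu'/2)s(1-\gamma)m \ge (1 - 1/t + 3\mu'' t)n^*$ once $\mu'' \ll \mu'$ and $\gamma$ small. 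In the second case, each supported pair $\{x, y\}$ in $V(F) \setminus N^1_F(v)$ blows up to a complete bipartite graph between $B_x$ and $B_y$ of at least $((1-\gamma)m)^2$ supported pairs in $F^*$, and all of these lie in $V(F^*) \setminus N^1_{F^*}(u)$ since $N^1_{F^*}(u)$ projects into $N^1_F(v) \cup \{v\}$ — more precisely $\phi(N^1_{F^*}(u)) \subseteq N^1_F(v)$ unless... actually one should check $\phi(N^1_{F^*}(u)) \subseteq N^1_F(v) \cup \{v\}$, and the single part $B_v$ contributes at most $|B_v|^2 \le ((1+\gamma)m)^2$ extra pairs which we discard; this yields at least $(\mu' s^2 - s)((1-\gamma)m)^2 \ge \mu'' (n^*)^2$ supported pairs in $V(F^*) \setminus N^1_{F^*}(u)$ after absorbing the discarded part, using $n^* \le (1+\gamma)ms$ and $\mu'' \ll \mu'$.

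The main obstacle, and the only place requiring genuine care, is the second case of the second property: one must be certain that \emph{essentially all} the supported pairs produced by blowing up a supported pair of $V(F) \setminus N^1_F(v)$ actually avoid $N^1_{F^*}(u)$, and that the vertices of $B_v$ (which may or may not lie outside $N^1_{F^*}(u)$, and which are not governed by a "supported pair in $F$" since $v \notin V(F) \setminus N^1_F(v)$ trivially) don't spoil the count. This is handled by the crude bound $|B_v|^2 \le ((1+\gamma)m)^2 = O(m^2)$, which is negligible compared to $\mu' s^2 m^2 = \Theta(\mu'(n^*)^2)$ as long as $s$ is large, i.e. $1/s \ll \mu'$; this is consistent with the parameter hierarchy in the statement ($\eps \ll \mu' \ll \mu \ll 1/k$ and implicitly $1/s$ small, though I note the statement does not explicitly bound $1/s$ against $\mu'$, so one may need to additionally observe that the interesting regime has $s$ large — in the application inside \Cref{lem:Hamilton paths in blow-ups} we have $1/s \ll \gamma \ll \eps \ll \mu' \ll \mu$, so this is fine, but for the standalone proposition one should either add the hypothesis or note that for small $s$ the claim about sets of size $\floor{n^*/t}$ can be checked directly). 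A secondary minor subtlety is keeping straight the three scales $\mu' \ll \mu'' \ll \mu$ so that the $\eps$'s and $\mu$'s match up correctly when invoking \Cref{prop:approx non extremal} in both directions; I would lay out the hierarchy $\gamma \ll \eps \ll \mu' \ll \mu'' \ll \mu \ll 1/k$ at the start and then the inequalities fall into place.
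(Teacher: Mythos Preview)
Your approach is the same as the paper's: verify the co-degree bound directly, then use \Cref{prop:approx non extremal} in both directions---part~\ref{itm:non-ext-1} on $F$ to get a dichotomy for each $v=\phi(u)$, and part~\ref{itm:non-ext-2} on $F^*$ to conclude. There is, however, a parameter slip. You apply~\ref{itm:non-ext-1} to $F$ with the \emph{small} parameter $\mu'$, so in the degree case you only obtain $d^1_F(v)\ge(1-1/t+\mu')s$, which blows up to at most $(1-1/t+\mu')n^*$. But the hypothesis of~\ref{itm:non-ext-2} (with output $\mu'$) requires $d^1_{F^*}(u)\ge(1-1/t+3\mu' t)n^*$, and $3\mu' t>\mu'$. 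You half-notice this, writing ``once $\mu''\ll\mu'$'', which contradicts your own hierarchy $\mu'\ll\mu''$. The fix is exactly what the paper does: apply~\ref{itm:non-ext-1} to $F$ with the \emph{intermediate} parameter (your $\mu''$, the paper's $\eta$), so the degree case gives $d^1_F(v)\ge(1-1/t+\mu'')s$, which blows up well above $(1-1/t+3\mu' t)n^*$, and the supported-pairs case gives $\ge\mu'' s^2$ pairs, which blows up to $\ge(\mu''/2)(n^*)^2$; then invoke~\ref{itm:non-ext-2} with $\mu''/2$ and $\mu'$ in the roles of $\mu$ and $\mu'$.

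Your worry about $B_v$ and about needing $s$ large is unnecessary. One has $N^1_{F^*}(u)=\bigcup_{w\in N^1_F(v)}B_w$ exactly: vertices of $B_v\setminus\{u\}$ are never neighbours of $u$, since blow-up edges are transversals. Moreover no supported pair $\{x,y\}\subseteq V(F)\setminus N^1_F(v)$ can contain $v$ (if $x=v$ then $y\in N^1_F(v)$). Hence every such pair blows up to $|B_x||B_y|\ge((1-\gamma)m)^2$ supported pairs lying entirely inside $V(F^*)\setminus N^1_{F^*}(u)$, with nothing to subtract; no lower bound on $s$ beyond the given hierarchy is needed.
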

	\begin{proof}
		Set $|\Fs|=N$. By the definition of $F^*$, it is straightforward to see that $(1-\gamma)ms \le N\le (1+\gamma)ms$.
		First, notice that since $F$ has no isolated vertices, neither does $\Fs$.
		Second, since $\delta^*(F)\ge (1-1/t-\eps)s$ and $F^*$ is a $(\gamma,m)$-regular blow-up of $F$, we see that 
		\begin{align*}
			\delta^*(F^*)
			\ge\left(1-\frac{1}{t}-\eps \right)(1-\gamma)ms
			\ge\left(1-\frac{1}{t}-2\eps \right)(1+\gamma)ms
			\ge\left(1-\frac{1}{t}-2\eps \right)N,
		\end{align*}
		using $\gamma\ll \eps$.

		Finally, we need to show that any subset of $V(F^*)$ of size at least $N/t$ contains at least $\mu' N^2$ supported pairs. We do this via \Cref{prop:approx non extremal}. Fix $\eta$ such that $\mu' \ll \eta \ll \mu$. Let $w\in V(F^*)$ be arbitrary, and let $\phi(w)=v\in V(F)$ (recall that $\phi : V(\Fs) \to V(F)$ maps all copies in $\Fs$ of a vertex $v$ in $F$ to $v$). Since $F\in \nonexE_{\eps,\mu}(s,k,\ell)$, by \Cref{prop:approx non extremal}~\ref{itm:non-ext-1}, we know that either $\ds(v)\ge (1-1/t+\eta)s$ or $V(F)\setminus N^1(v)$ contains at least $\eta s^2$ supported pairs.
        If the former is true, then since every vertex neighbour of $v$ is blown up to at least $(1-\gamma)m$ vertices in $F^*$, we have
        \[
            \ds_{F^*}(w)\ge \left(1-\frac{1}{t} +\eta \right)(1-\gamma)ms 
			\ge \left(1 - \frac{1}{t} + 3\mu' t\right)(1 + \gamma)ms
			\ge \left(1-\frac{1}{t} + 3\mu't \right)N,
        \]
        where we use $N\le (1+\gamma)ms$ and $\mu'\ll \eta \ll 1/t$.
        In the other case, we conclude that the number of supported pairs in $V(F^*)\setminus N^1(w)$ is at least
        \[
            \eta s^2 \cdot (1-\gamma)^2 m^2 \ge \frac{\eta^2}{4} (1+\gamma)^2 m^2  s^2 \ge \frac{\eta^2}{4} N^2.
        \]
		Since $\mu' \ll \eta^2$, \Cref{prop:approx non extremal}~\ref{itm:non-ext-2} implies that every set of at least $N/t$ vertices contains at least $\mu' N^2$ supported pairs, as desired.
    \end{proof}

	Next, we also present a simple proposition that helps provide a minimum degree condition based on the minimum supported co-degree. This is Proposition~$1.4$ of~\cite{mycroft2025positive}. We include the proof here for completeness. 
	\begin{prop}
	\label{prop:codeg to deg}    
		Let $G$ be a $k$-graph. Then every supported set $S$ in $G$ with $|S|\le k-1$ satisfies
		\[
		d(S)\ge\frac{\delta^*(G)^{k-|S|}}{(k-|S|)!}.
		\]
	\end{prop}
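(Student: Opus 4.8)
The plan is to prove the statement by induction on $j := k - |S|$.

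For the base case $j = 1$, i.e. $|S| = k-1$: since $S$ is supported, it is a $(k-1)$-set contained in at least one edge, so by the very definition of the minimum supported co-degree we get $d(S) \ge \delta^*(G) = \delta^*(G)^{1}/1!$, as required.

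For the inductive step, suppose $2 \le j \le k$ and that the claim holds for all supported sets of size $|S|+1$. I would first record the double-counting identity obtained by summing degrees of one-element extensions of $S$: every edge $e$ containing $S$ contains exactly $k-|S|$ vertices outside $S$, and for $v \notin S$ the set $S \cup \{v\}$ is supported precisely when $v \in N^1(S)$ (and $d(S\cup\{v\}) = 0$ otherwise), so
\[
  (k-|S|)\, d(S) \;=\; \sum_{v \in N^1(S)} d(S \cup \{v\}).
\]
The one point needing a short argument is that $d^1(S) := |N^1(S)| \ge \delta^*(G)$. To see this, pick an edge $e \supseteq S$ (it exists as $S$ is supported) and any $(k-1)$-set $T$ with $S \subseteq T \subseteq e$; then $T$ is a supported $(k-1)$-set, so $d(T) \ge \delta^*(G)$, and the $\ge \delta^*(G)$ edges through $T$ have pairwise distinct ``extra'' vertices $v = e' \setminus T$, each lying in $N^1(S)$ and outside $S$ (since $v \notin T \supseteq S$). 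Hence $d^1(S) \ge \delta^*(G)$.

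Applying the inductive hypothesis to each $S \cup \{v\}$ (which is supported of size $|S|+1 \le k-1$) gives $d(S \cup \{v\}) \ge \delta^*(G)^{k-|S|-1}/(k-|S|-1)!$; plugging this and the bound on $d^1(S)$ into the identity yields
\[
  (k-|S|)\, d(S) \;\ge\; d^1(S)\cdot \frac{\delta^*(G)^{k-|S|-1}}{(k-|S|-1)!} \;\ge\; \frac{\delta^*(G)^{k-|S|}}{(k-|S|-1)!},
\]
and dividing through by $k-|S|$ completes the induction. I do not expect a genuine obstacle here: the only step requiring care is the inequality $d^1(S) \ge \delta^*(G)$ together with the bookkeeping that the extra vertices are distinct and disjoint from $S$; everything else is routine double counting. (If $G$ has no edges the statement is vacuous, and if it has an edge then $\delta^*(G) \ge 1$, so no degeneracy issues arise.)
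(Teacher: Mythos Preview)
Your proof is correct and follows essentially the same approach as the paper: extend $S$ one vertex at a time using the fact that any supported set of size at most $k-1$ has at least $\delta^*(G)$ one-vertex extensions, then divide out the overcounting. The paper phrases this as directly counting ordered $(k-|S|)$-tuples, while you package it as an induction with an explicit double-counting identity; your justification of $d^1(S)\ge \delta^*(G)$ via passing to a supported $(k-1)$-superset is more carefully stated than the paper's ``by definition'', but the content is the same.
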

\begin{proof}
    By definition, for every supported set $S$ of size at most $k-1$, there are at least $\delta^*(G)$ vertices $v$ such that $v\in N^1(S)$. By repeatedly applying this observation, we see that there must be at least $\delta^*(G)^{k-|S|}$ ordered sequences of vertices $(v_1,\dots, v_{k-|S|})$ such that $S\cup\{v_1,\dots, v_{k-|S|}\}$ is an edge of $G$. Hence, we have
    \[
    d(S)\ge \frac{\delta^*(G)^{k-|S|}}{(k-|S|)!}. \qedhere 
    \]
\end{proof}
\section{Structural inheritance}\label{sec:inheritance}
We now prove our structural inheritance lemma. As noted above, the crucial observation here is the alternative characterisation of non-extremal graphs given by \Cref{prop:approx non extremal}~\ref{itm:non-ext-2}.

\begin{proof}[Proof of \Cref{lem:concentration}]
    Write $W = \{w_1, \dots, w_r\}$, where $r = |W| \le 2k$. Let $y_1, \dots, y_{s-r}$ be $s - r$ vertices of $V(G)$ picked independently and uniformly at random (with repetition). Set $T = \{x_1, \dots, x_s\}$, where $x_i = w_i$ for $i\le r$ and $x_i = y_{i - r}$ for $i>r$. Let $T_0=T\setminus W = \{x_{r+1}, \dots, x_s\}$ and recall that $S$ is a randomly uniform set of $s$ vertices in $V(G)$ that contains $W$. Let $\cA$ be the event that ${x_1, \dots, x_s}$ are distinct. Let $\cB_1$ and $\cB_2$ be the events that $G[S]\in \nonexE_{2\eps,\mu'}(s,k,\ell)$ and $G[T]\in \nonexE_{2\eps,\mu'}(s,k,\ell)$ respectively. Then, we first see that
    \begin{align*}
		\bP[\comp{\cA}] \le \frac{s^2}{n}=o(1).
	\end{align*}
    Consequently, we have
    \begin{align*}
        \bP[\comp\cB_1]&=\bP[\comp\cB_2\mid \cA]
        =\frac{\bP[\comp\cB_2\cap \cA]}{\bP[\cA]}
        \le \bP[\comp\cB_2](1+o(1)).
    \end{align*}
    Hence, it will suffice to show that $\bP[\comp\cB_2]\le \frac{1}{2s^2}$.

    Let $H=G[T]$ and fix parameters $\mu_1,\mu_2$ such that $\mu'\ll \mu_1\ll \mu_2\ll \mu$. We define the following two events.
    \begin{itemize} 
		\item 
			$\cC_1=\{\frac{d^1_H(U)}{s}\ge \frac{d^1_G(U)}{n}-\eps \textnormal{ for every } U\subseteq T \text{ with } |U|\le k-1 \text{ which is supported in $G$}\}$,
		\item $\cC_2 = \{\text{if $x \in T$ satisfies $d_G^1(x) \le (1 - 1/t + \mu_2)n$ then $T \setminus N_H^1(x)$ has at least }\mu_1 s^2 \ \text{supported} \allowbreak \text{pairs}\}$.
    \end{itemize}
    First, we will argue that $\cC_1\cap \cC_2$ implies $\cB_2$. It will subsequently suffice to show that each $\cC_i$ occurs with high probability, and then use a union bound. 

    Indeed, from $\cC_1$ and from $G \in \nonexE_{\eps, \mu}(n,k,\ell)$, we know that $H$ has no isolated vertices and that $\delta^*(H)\ge (1-1/t-2\eps)s$. Now, consider any $x\in V(H)$ such that $d_H^1(x)\le (1-1/t+3\mu't)s$. From $\cC_1$, we see that $d_G^1(x)\le (1 - 1/t + 3\mu't + \eps)n\le (1 - 1/t + \mu_2)n$, and from $\cC_2$ we conclude that $V(H)\setminus N^1_H(x)$ contains at least $\mu_1 s^2$ supported pairs. Then by \Cref{prop:approx non extremal}~\ref{itm:non-ext-2}, we know that every $\floor{s/t}$-subset in $V(H)$ has at least $\mu' s^2$ supported pairs. Thus, overall, we have $H\in \nonexE_{2\eps,\mu'}(s,k,\ell)$.

    Let us start with $\cC_1$. Let $I\subseteq [s]$ be any subset such that $|I|\in[k-1]$. Define the random variables $X_I=\{x_i:i\in I\}$ and $d_I = |N^1_H(X_I) \cap T_0|$. Consider the event 
	\begin{equation*}
		\cE_I=\{X_I \textnormal{ is supported in $G$ and } d_H^1(X_I)/s\le d_G^1(X_I)/n-\eps\}. 
	\end{equation*}
	Let $\cU_I=\{U\subseteq V(G):|U|\le |I|, \bP[X_I=U]>0\}$. 
	Then, observe that
    \begin{align*}
        \bP[\cE_I]&= \sum_{\substack{U\subseteq V(G) \\ |U| \le |I|}} \bP[\cE_I\cap \{X_I=U\}]\\
        &=\sum_{U\in \cU_I} \bP[\cE_I\mid X_I=U]\cdot\bP[X_I=U].
    \end{align*}
	In particular, to show that $\bP[\cE_I]\le e^{-\Omega(s)}$, we may consider any arbitrary $U\in \cU_I$, condition on $X_I=U$, and argue that $\bP[\cE_I\mid X_I=U]\le e^{-\Omega(s)}$ for any such choice of $U$.

    If $U$ is not supported in $G$, then $\bP[\cE_I\mid X_I=U]=0$, and hence we may assume that it is supported. We may lower bound the expected value of $d_I$ as
    \begin{align*}
        \bE d_I  &= 
		d^1_G(U) \cdot \left(1 - \left(1 - \frac{1}{n}\right)^{s - |U \cup W|}\right) \\
        &\ge\frac{(s - 3k) \cdot \delta^*(G)}{2n} \\
        &\ge \frac{s-3k}{2}\cdot\left(1-\frac{1}{t}-\eps\right)\ge \frac{s}{4},
    \end{align*}
	using $t \ge 3$ and $k \ll s$. 
	Notice that changing the outcome of any single $x_i$ changes $d_I$ by at most $1$, and so by McDiarmid's inequality (\Cref{lem:McDiarmid}),
    \[
        \bP\left[d_I-\bE d_I\le -\frac{\eps \bE d_I}{2}\right]\le 
		2\exp\left(\frac{-2\eps^2 (\bE d_I)^2}{s}\right)
		= e^{-\Omega(s)}.
    \]
    Thus, since $d_H(X_I) \ge d_I$, with probability at least $1-e^{-\Omega(s)}$, we have
    \[
    \frac{d_H(X_I)}{s} \ge \frac{d_I}{s}\ge \frac{1-\eps/2}{s} \cdot (s-r) \cdot \frac{d}{n} \ge 
	\left(1 - \frac{\eps}{2}\right)\left(1 - \frac{k}{s}\right) \cdot \frac{d}{n} \ge
	\frac{d}{n}-\eps,
    \]
	where we use that $k/s\ll\eps$.
    Hence, as argued previously, we now have $\bP[\cE_I]\le e^{-\Omega(s)}$ for every $I\subseteq [s]$ with $|I|\in[k-1]$. Taking a union bound over all possibilities of $I$, we see that $\bP[\cC_1]\ge 1-(k-1)\binom{s}{k-1}e^{-\Omega(s)} \ge 1 - \frac{1}{4s^2}$. 

    Next, we will show that $\cC_2$ occurs with high probability. For each $i\in [s]$, define the event 
	\begin{equation*}
		\cF_i=\{T\setminus N^1_H(x_i) \textnormal{ has less than } \mu_1s^2 \textnormal{ supported pairs and } d_G^1(x_i)< (1-1/t+\mu_2)n\}. 
	\end{equation*}
    Fix any integer $i \in [r+1, s]$. Similar to our previous calculations, we note that
    \[
    \bP[\cF_i]=\sum_{u\in V(G)}\bP[\cF_i\cap\{x_i=u\}]= \sum_{u\in V(G)} \bP[\cF_i\mid x_i=u]\cdot \bP[x_i=u].
    \]
    Consider any arbitrary $u\in V(G)$. We will argue that, for each fixed $i \in [r+1, s]$, we have $\bP[\cF_i\mid x_i=u]\le e^{-\Omega(s)}$, and then the above equation then guarantees that $\bP[\cF_i]\le e^{-\Omega(s)}$. We comment at the end of the proof on how to handle $\cF_i$ for $i \in [r]$. 

    If $d_G^1(u)\ge (1-1/t+\mu_2)n$, then $\bP[\cF_i \mid x_i = u]=0$, and we are done.
	Hence, we may suppose $d_G^1(u)\le (1-1/t+\mu_2)n$. Since $G\in \nonexE_{\eps,\mu}(n,k,\ell)$, by \Cref{prop:approx non extremal}~\ref{itm:non-ext-1}, there must be at least $\mu_2 n^2$ supported pairs in $V(G)\setminus N_G^1(u)$. Let us call this set of supported pairs $P$. From \Cref{prop:codeg to deg}, any supported pair $w_1 w_2\in P$ satisfies
    \[
        d_G(w_1 w_2)\ge \frac{\delta^*(G)^{k-2}}{(k-2)!} \ge \left(\frac{\delta^*(G)}{k-2}\right)^{k-2}\ge \left(\frac{n-n/t-\eps n}{k-2} \right)^{k-2}\ge \left(\frac{n}{3k}\right)^{k-2}
    \]
    since $t\ge 2$. Let $E_P$ be the set of all edges of $G$ containing at least one pair of $P$. We can bound the number of such edges from below as
    \begin{align*}
        |E_P|\ge \frac{1}{\binom{k}{2}}\sum_{w_1 w_2\in P} d_G(w_1 w_2)\ge \frac{2\mu_2}{(3k)^{k-2}\cdot k^2}\cdot n^k
		\ge \frac{2\mu_2}{(3k)^k}\cdot n^k.
    \end{align*}
	For each edge $e \in E_P$, the probability that $e \in E(H[T_0 \setminus \{x_i\})$ is 
	\begin{align*}
		&\bP\left[\bigcup_{J \subseteq [r+1,s] \setminus \{i\}, \, |J| = k} \left\{\{x_j : j \in J\} = e\right\}\right] \\ 
		\ge &\sum_{\substack{J \subseteq [r+1,s] \setminus \{i\}, \\ |J| = k}} \bP[\{x_j : j \in J\} = e] - \sum_{\substack{J,J' \subseteq [r+1,s] \setminus \{i\}, \\ |J|=|J'|=k, J \neq J'}} \bP[\{x_j : j \in J\} = \{x_{j'} : j' \in J'\} = e] \\
		  \ge &\frac{(s-r-1)(s-r-2) \ldots (s-r-k)}{n^k} - \frac{\big((s-r-1)(s-r-2) \ldots (s-r-k)\big)^2}{n^{k+1}} \ge \frac{s^k}{2n^k},
	\end{align*}
	where we used that if $J, J'$ are two distinct sets of size $k$ then $|J \cup J'| \ge k+1$.
	We thus obtain the following lower bound on $Z := |E_P \cap E(H[T_0 \setminus \{x_i\}])|$.
    \begin{align*}
        \bE Z&
		\ge \sum_{e \in E_P} \bP\big[e \in E(H[T_0 \setminus \{x_i\}])\big] 
		\ge \frac{\mu_2}{(3k)^k} \cdot s^k 
		= \beta \mu_2 s^k. 
    \end{align*}
    where $\beta=1/(3k)^k$ is a function of $k$. Next, observe that if a single vertex in $T_0 \setminus \{x_i\}$ is replaced by an arbitrary vertex of $V(G)$, then the number of edges of $E_P$ in $H[T_0 \setminus \{x_i\}]$ changes by at most $s^{k-1}$ since this bounds the maximum degree of a vertex in $H$. Using McDiarmid's inequality (\Cref{lem:McDiarmid}), we have
    \[
    \bP[Z-\bE Z\le -\frac{1}{2} \cdot \bE Z]\le 2\exp\left(\frac{- \beta^2 \mu_2^2 s^{2k}}{2 \cdot s\cdot s^{2k-2}}\right)\le \exp(-\Omega(s)).
    \]
    It readily follows that $Z\ge \beta\mu_2 s^k/2$ with probability at least $1-e^{-\Omega(s)}$. 
	Each edge of $E_P$ counted in $Z$ contains at least one pair of $P$, and each pair of $P$ is counted at most $s^{k-2}$ times. Hence, with probability at least $1-e^{-\Omega(s)}$, there are at least $\beta\mu_2 s^2/2\ge \mu_1s^2$ supported pairs of $P$ in $H$. By choice, all of these lie outside $N_H^1(x_i)=N_H^1(u)$.

    Thus, as noted initially, we conclude that $\bP[\cF_i]\le e^{-\Omega(s)}$ for all $i\in[r+1, s]$. Finally, if $i \in [r]$, we have $\bP[\cF_i \cap \{x_i = u\}]=0$ for all $u\ne w_i$, which means that $\bP[\cF_i] =\bP[\cF_i \mid x_i = w_i]$, and hence we can perform the same calculations as above with $u = w_i$. Additionally, here we will instead have $Z = E_P\cap E(H[T_0])$, but the bounds are identical anyway. 
	Hence, by taking a union bound over all $i \in [s]$, we conclude that $\bP[\cC_2]\ge 1-se^{-\Omega(s)} \ge 1- \frac{1}{4s^2}$.

    Since $\cC_1\cap \cC_2$ implies $\cB_2$, we use a union bound and see that
    \[
        \bP[\comp{\cB_2}]\le \bP[\comp{\cC_1}\cup \comp{\cC_2}]\le \frac{1}{2s^2},
    \]
     and thus $\bP[\cB_2]\ge 1-1/2s^2$, as required.
\end{proof}

\section{Perfect weighted fractional matchings}\label{sec:Farkas}
Our overall approach will rely on finding a large, nearly perfect matching in a regular blow-up $F^*$ of some $F \in \nonexE_{\eps,\mu}(n,k,\ell)$, and finding a way to connect the edges of this matching via an $\ell$-path. Towards that end, we make use of a recently introduced idea of Mycroft and Z{\'a}rate-Guer{\'e}n~\cite{mycroft2025positive} that they call \emph{perfect weighted fractional matchings}, and we borrow their definitions and notation. 

To begin with, we define a \emph{fractional matching} in a $k$-graph $H$ to be a weight function that maps the edges of $H$ to the unit interval, say $q:E(H)\to [0,1]$, such that for every vertex $v\in V(H)$ the sum of $q(e)$ for all edges $e$ containing $v$ is at most $1$, that is,
\[
\sum\limits_{\substack{e\in E(H)\\ e\ni v}} q(e)\le 1.
\]
We say that $q$ is a \emph{perfect fractional matching} if this sum is precisely $1$ for every vertex $v\in V(H)$. Note that a (perfect) matching is a special type of a (perfect) fractional matching where the edge weights are restricted to the set $\{0,1\}$. 

Now, suppose we have $k$ weights $w_1,\dots,w_k\in [0,1]$. Then a \emph{$(w_1,\dots,w_k)$-fractional matching} is similar to a fractional matching $q$, except that any edge $e$ places weight $w_iq(e)$ on its $i$th vertex. Of course, we now need to consider edges with a pre-defined vertex ordering. Hence, instead, we will simply consider all possible permutations of the vertices of each edge.

Recall from \Cref{sec:notation} that, for a given a $k$-graph $H$, we let $E^*(H)$ denote the set of ordered edges of $H$. For every vertex $v\in V(H)$ and $i\in [k]$, we write $E_i^v(H)\subseteq E^*(H)$ for the set of all ordered edges with $v$ as the $i$th vertex. Fix $\bfw=(w_1,\dots,w_k)\in \bR^k_{\ge 0}$. A \emph{$\bfw$-weighted fractional matching} is a weight function $q:E^*(H)\to\bR$ such that $q(e)\ge 0$ for all $e\in E^*(H)$, and 
\[
\sum\limits_{i\in [k]}\sum\limits_{e\in E_i^v(H)} w_i\cdot q(e)\le 1
\]
for every vertex $v\in V(H)$. We call it \emph{perfect} if we have equality in the above inequality for all $v\in V(H)$. Observe that a (perfect) fractional matching is simply a (perfect) $(1,\dots,1)$-weighted fractional matching. Following~\cite{mycroft2025positive}, we will focus on perfect weighted fractional matchings with weights $\bfw^*=(w_1,\dots, w_k)$, defined by 
\begin{equation*}
	w_i = \left\{	
		\begin{array}{ll}
			k-1 & i = 1 \\
			t-1 & 2 \le i \le k,
		\end{array}
		\right.
\end{equation*}
where 
\begin{equation*}
	t := t(k,\ell) = \floor{\frac{k}{k-\ell}}(k-\ell).
\end{equation*}
The reason for this particular choice of weights is that any complete $k$-uniform $k$-partite hypergraph with parts $A, B_1, \ldots, B_{k-1}$, such that $|A|/(k-1) = |B_i|/(t-1)$ for every $i \in [k-1]$ (and an additional divisibility condition holds), has a spanning $\ell$-cycle (see \Cref{prop:ell cycle in k-partite k-graph} and Proposition 1.8 in \cite{mycroft2025positive}). This means that if an ordered edge $e = v_1 \ldots v_k$ receives some weight $q(e)$, then there is an $\ell$-cycle that we can use cover almost all the vertices in the blow-up of $e$ where $v_i$ is blown-up by $w_i \floor{q(e)}$ vertices, for $i \in [k]$, a key step in our proof. 

A convenient and often elegant method to find perfect fractional matchings is using Farkas' lemma, a linear algebraic result that is quite handy for this purpose -- see~\cite{keevash2015geometric,rodl2006perfect,lo2024towards} for examples. In order to state the lemma, we introduce some simple notation. Given a vector $\mathbf{v}=(v_1,\dots,v_n)\in\bR^n$ and any $a\in \bR$, we write $\mathbf{v}\ge a$ to mean that $v_i\ge a$ for all $i\in[n]$. We use analogous notation for $\mathbf{v}\le a$, $\mathbf{v}> a$ and $\mathbf{v}< a$. Finally, we use $\mathbf{1}$ to denote the all ones vector.

\begin{lem}[Farkas' lemma]
\label{lem:farkas}    
Let $\mathbf{A}\in \mathbb{R}^{m\times n}$ and $\mathbf{b}\in \mathbb{R}^m$. Then either there exists $\bfx\in\mathbb{R}^n$ such that $\mathbf{Ax}=\mathbf{b}$ and $\bfx\geq 0$, or there exists $\bfy\in \mathbb{R}^m$ such that $\mathbf{A}^\top\bfy\geq 0$ and $\mathbf{b}^\top\bfy<0$.
\end{lem}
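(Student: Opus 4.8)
The statement to prove is Farkas' lemma, and the plan is to deduce it from the separating hyperplane theorem for closed convex sets. First I would record that the two alternatives are mutually exclusive: if $\mathbf{Ax}=\mathbf{b}$ with $\bfx\ge 0$ and also $\mathbf{A}^\top\bfy\ge 0$, then $\mathbf{b}^\top\bfy=(\mathbf{Ax})^\top\bfy=\bfx^\top(\mathbf{A}^\top\bfy)\ge 0$, contradicting $\mathbf{b}^\top\bfy<0$. So it suffices to show that whenever the first alternative fails, the second one holds.

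Let $\mathbf{a}_1,\dots,\mathbf{a}_n$ be the columns of $\mathbf{A}$ and let $C=\{\mathbf{Ax}:\bfx\in\bR^n,\ \bfx\ge 0\}$, the convex cone generated by $\mathbf{a}_1,\dots,\mathbf{a}_n$. Failure of the first alternative is precisely the statement $\mathbf{b}\notin C$. Granting that $C$ is closed, the separating hyperplane theorem produces $\bfy\in\bR^m$ and a scalar $\alpha$ with $\bfy^\top\mathbf{b}<\alpha\le\bfy^\top\mathbf{c}$ for all $\mathbf{c}\in C$. Since $\mathbf{0}\in C$ we get $\alpha\le 0$, hence $\bfy^\top\mathbf{b}<0$; and since $C$ is a cone, $\lambda\mathbf{c}\in C$ for every $\mathbf{c}\in C$ and $\lambda>0$, which on letting $\lambda\to\infty$ forces $\bfy^\top\mathbf{c}\ge 0$ for every $\mathbf{c}\in C$. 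Applying this with $\mathbf{c}=\mathbf{a}_j$ for each $j\in[n]$ gives $\mathbf{A}^\top\bfy\ge 0$, completing this direction.

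The only substantial point is the claim that a finitely generated cone $C$ is closed, which I would prove by induction on the number $n$ of generators. The cases $n\le 1$ are trivial. For the inductive step the crucial sub-lemma is that the Minkowski sum $K+\bR_{\ge 0}\mathbf{a}$ of a closed cone $K$ with a single ray is again closed; one treats separately the case $-\mathbf{a}\in K$ (where the sum equals $K+\bR\mathbf{a}$, whose closedness follows by passing to the quotient by the line $\bR\mathbf{a}$) and the case $-\mathbf{a}\notin K$ (where, given a convergent sequence $\mathbf{k}_i+t_i\mathbf{a}\to\mathbf{z}$ with $\mathbf{k}_i\in K$ and $t_i\ge 0$, one shows the $t_i$ remain bounded, extracts a convergent subsequence, and deduces $\mathbf{z}\in K+\bR_{\ge 0}\mathbf{a}$). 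I expect this closedness argument to be the main obstacle; everything else is routine bookkeeping. A fully elementary alternative that avoids topology is to induct on $n$ directly, eliminating one coordinate of $\bfx$ at a time in Fourier--Motzkin fashion, but that route is more computational, so I would present the convexity-based proof.
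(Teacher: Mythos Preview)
Your proof is correct and follows a standard route to Farkas' lemma via the separating hyperplane theorem; the closedness of finitely generated cones is indeed the only nontrivial ingredient, and your sketch of that step is fine.

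The paper, however, does not prove this lemma at all: it is stated as a well-known linear-algebraic tool (``a standard linear algebraic result'') and quoted without proof, with references to places where it has been used similarly. So there is no paper-side argument to compare against; your write-up simply supplies a proof where the paper chose to cite the result.
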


Using this, we can show that non-extremal $k$-graphs have a $w^*$-weighted perfect fractional matching.

\begin{lem}
    \label{lem:wt perfect frac matching}
    Let $k\ge 3$ and $\ell\in [k-1]$ be integers, $t=\floor{\frac{k}{k-\ell}}(k-\ell)$, and let $1/n\ll\eps\ll 1/k$, where $n$ is divisible by $t$. Let $\bfw^*=(w_1,\dots,w_k)$ where $w_1=k-1$ and $w_i=t-1$ for all $2\le i\le k$. Then every $H\in \nonexE_{\eps,4\eps}(n,k,\ell)$ has a $\bfw^*$-weighted perfect fractional matching.
\end{lem}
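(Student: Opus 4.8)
The plan is to invoke Farkas' lemma (\Cref{lem:farkas}) in the usual way. Let $\mathbf{A}$ be the matrix with rows indexed by $V(H)$ and columns by the ordered edges $E^*(H)$, where $\mathbf{A}_{v,e}=w_i$ if $v$ is the $i$-th vertex of $e$ and $\mathbf{A}_{v,e}=0$ otherwise, and let $\mathbf{b}=\mathbf{1}$. A $\mathbf{w}^*$-weighted perfect fractional matching of $H$ is precisely a vector $\bfx\ge 0$ with $\mathbf{A}\bfx=\mathbf{1}$, so by \Cref{lem:farkas} it suffices to rule out the existence of $\bfy\in\bR^{V(H)}$ with $\mathbf{A}^\top\bfy\ge 0$ and $\sum_v y_v<0$. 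Since each edge of $H$ contributes all of its $k!$ orderings to $E^*(H)$ and $w_1=k-1\ge t-1=w_2=\dots=w_k$, the condition $\mathbf{A}^\top\bfy\ge 0$ is equivalent to
\[
	(t-1)\sum_{v\in e}y_v+(k-t)\min_{v\in e}y_v\ \ge\ 0 \qquad\text{for every }e\in E(H),
\]
which we call $(\dagger)$. So I would assume, for contradiction, that some $\bfy$ satisfies $(\dagger)$ together with $\sum_v y_v<0$, and (after rescaling) that $\min_v y_v=-1$.

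The next step is to extract structure from $(\dagger)$. It immediately yields that $\sum_{v\in e}y_v\ge 0$ for every edge $e$, and that whenever $v$ attains the minimum of $y$ over an edge $e$ one has $\sum_{u\in e\setminus\{v\}}y_u\ge\frac{k-1}{t-1}|y_v|$. A short calculation then shows that for every $\theta<0$ the set $\{v:y_v\le\theta\}$ is independent in $H$ (an edge inside it would make the left-hand side of $(\dagger)$ at most $t(k-1)\theta<0$). Strengthening this a little — if a supported pair $\{u,v\}$ had $y_u,y_v\le\theta$ then by $(\dagger)$ each of the $d(\{u,v\})$ edges through $\{u,v\}$ would carry positive $y$-weight of order $|\theta|$ on its remaining $k-2$ vertices, and there are at least $\delta^*(H)^{k-2}/(k-2)!$ such edges by \Cref{prop:codeg to deg}, so a common vertex of these edges of positive $y$-value accumulates total positive mass of order $|\theta|n$ — one aims to deduce that $A:=\{v:y_v\le-\tau\}$ is a \emph{strong} independent set for a suitable threshold $\tau$ of order $1/n$. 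Since $H\in\nonexE_{\eps,4\eps}(n,k,\ell)$, every $\floor{n/t}$-set of $H$ spans at least $4\eps n^2$ — in particular at least one — supported pair, so a strong independent set of $H$ has fewer than $\floor{n/t}$ vertices, and hence $|A|<\floor{n/t}\le n/t$.

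It remains to derive a contradiction from $\sum_v y_v<0$. Outside $A$ all vertices have $y_v>-\tau$, so the vertices with $-\tau<y_v<0$ together carry negative mass at most $\tau n=O(1)$, and the core of the argument is to show that $(\dagger)$ forces the total positive mass $\sum_{v:y_v>0}y_v$ to exceed $\sum_{v\in A}|y_v|$ by at least that amount. For this one uses that (since $A$ is a strong independent set) every edge meeting $A$ meets it in exactly one vertex $v$, its other $k-1$ vertices lie outside $A$, and their $y$-sum is at least $\frac{k-1}{t-1}|y_v|$; summing this over all $v\in A$ and all edges through $v$, bounding $d(v)$ from below via \Cref{prop:codeg to deg} and using the minimum supported co-degree of $H$ to control how often a fixed vertex outside $A$ is incident with $A$, should produce the required lower bound on the positive mass.

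I expect this last balancing step to be the main obstacle. The naive degree estimates are lossy by a factor of roughly $(1-1/t)^{-(k-1)}$, so a direct count does not close the gap; one has to use the slack $\eps$ in the hypothesis — equivalently, the fact that $|A|$ is \emph{strictly} below the critical size $n/t$ at which the forced positive mass and the negative mass exactly cancel (this critical configuration is, up to the constant weights, exactly the tight instance of \Cref{ex:lower bd weak}) — in order to win. Making the dependence between the threshold $\tau$, the size bound $|A|<\floor{n/t}$, and the co-degree slack quantitatively compatible is the delicate part of the proof.
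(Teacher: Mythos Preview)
Your Farkas setup and the reformulation $(\dagger)$ are correct, but the core of your argument---showing that the threshold set $A=\{v:y_v\le-\tau\}$ is strongly independent and then balancing positive against negative mass---is left unresolved, and as you yourself note, the naive counting loses a multiplicative constant that cannot be recovered from the slack $\eps$. This is a genuine gap: the global mass-balancing does not close, and your sketch for the strong independence of $A$ is also only heuristic (a supported pair inside $A$ does not violate $(\dagger)$ directly; it merely forces some positive mass elsewhere, which is the same unresolved balancing problem in disguise).

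The paper avoids all of this by constructing a \emph{single} violating ordered edge. After sorting so that $y_1\le\dots\le y_n$, it lets $r$ be the least index with $v_1v_r$ supported (so $r\le n/t+2\eps n$ by the co-degree condition), and seeks an edge $e=v_{i_1}\dots v_{i_k}$ with $i_j\le 2n/t+1$ for all $j\ge 3$ and with $(k-1)y_{i_1}+(t-1)y_{i_2}$ small enough that a short algebraic chain gives $0\le\frac{n}{t}\cdot\frac{t-2}{(t-1)(k-2)}\,\bfy\cdot\chi_{i_1}(e)\le y_1+\dots+y_n<0$. If $r\le n/t$ one takes $(i_1,i_2)=(1,r)$ and extends greedily within the first $n/t+\eps n+1$ vertices. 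If $r>n/t$, the non-extremality hypothesis is used \emph{constructively}: the $4\eps n^2$ supported pairs inside $\{v_1,\dots,v_r\}$ yield a matching $M_1$ of supported pairs spanning at least $4\eps n$ vertices, which is extended to a perfect matching $M_3$ of $\{v_1,\dots,v_{2n/t}\}$; one then picks from $M_1$ the pair minimising $(k-1)y_i+(t-1)y_j$, and an averaging over $M_3$ supplies the required bound. The idea you are missing is precisely this: use the non-extremality to \emph{locate} a specific supported pair among the low-$y$ vertices and build the witness edge from it, rather than only to cap $|A|$ and then attempt a global sum.
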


\begin{proof}
	Let $V(H)=\{v_1,\dots,v_n\}$. For each edge $e\in E(H)$ and each $v_i\in e$, define $\chi_i(e)\in\bR^n$ to be the vector with $i$th coordinate $k-1$, $j$th coordinate $t-1$ for all $j$ such that $v_j\in e\setminus \{v_i\}$ and all other coordinates $0$. Let $\cX=\{\chi_i(e):e\in E(H), v_i\in e\}$.

	Suppose $H$ does not have a $\bfw^*$-weighted perfect fractional matching. This means that there is no set of weights $\{q(\bfx):q(\bfx)\ge 0,\bfx\in \cX\}$ which satisfies $\sum_{\bfx\in\cX} q(\bfx)\cdot \bfx=\mathbf{1}$, for otherwise, defining for each edge $e = \{v_{i_1}, \ldots, v_{i_k}\}$ and $j \in [k]$, 
	\begin{equation*}
		q'(v_{i_j} v_{i_1} \ldots v_{i_{j-1}} v_{i_{j+1}} \ldots v_{i_k}) = q(\chi_{i_j}(e)),
	\end{equation*}
	(and $q'(e) = 0$ for all other ordered edges corresponding to $e$), would constitute a $\bfw^*$-weighted perfect fractional matching. 
	Hence, by~\Cref{lem:farkas}, there exists $\bfy\in \bR^n$ such that $\bfy\cdot\mathbf{1}<0$ and $\bfy\cdot\bfx\ge 0$ for every $\bfx\in \cX$.

	Let $\bfy=(y_1,\dots,y_n)$. Then $\bfy\cdot\mathbf{1}=\sum y_i<0$. By potentially reordering the vertices, we may assume without loss of generality that $y_i\le y_j$ for all $i\le j$. Let $r$ be the smallest index such that $v_r\in N^1(v_1)$, that is, $v_1v_r$ is a supported pair. Since $H\in \nonexE_{\eps,4\eps}(n,k,\ell)$, we know that $\delta^*(H)\ge (1-1/t-\eps)n$, and hence $r\le n/t+\eps n+1 \le n/t + 2\eps n$.  

    Overall, we hope to obtain a contradiction by finding a ``low-weight'' edge of $H$, say $e=v_{i_1}\dots v_{i_k}$, which satisfies the following two key properties.
    \begin{itemize}
        \item $\frac{n}{t}\cdot\frac{t-2}{(t-1)(k-2)} \cdot ((k-1)y_{i_1}+(t-1)y_{i_2})\le y_1+\dots+ y_{2n/t}$, and
        \item $i_j\le 2n/t+1$ for all $j\ge 3$.
    \end{itemize}
    Indeed, given any such edge $e$, we let $\bfx\in \bR^n$ be the vector with $x_{i_1}=k-1$, $x_{i_j}=t-1$ for all $2\le j\le k-1$, and zero otherwise. Then clearly $\bfx=\chi_{i_1}(e) \in \cX$. Consequently, we obtain a contradiction as
    \begin{align*}
         0&\le \frac{n}{t}\cdot\frac{t-2}{(t-1)(k-2)}\cdot \bfy\cdot\bfx\\
          &= \frac{n}{t}\cdot\frac{t-2}{(t-1)(k-2)}\left((k-1)y_{i_1}+\sum\limits_{j\ge 2}(t-1)y_{i_j}  \right)\\
           &= \frac{n}{t}\cdot\frac{t-2}{(t-1)(k-2)}\cdot\big((k-1)y_{i_1}+(t-1)y_{i_2}\big) +\frac{n}{t} \cdot\left(\sum\limits_{j\ge 3}\frac{t-2}{k-2} \cdot y_{i_j}  \right)\\
          &\le \left(y_1+\dots +y_{\frac{2n}{t}}\right)+\frac{n}{t}\cdot (t-2) \cdot y_{\frac{2n}{t}+1} \\
          &\le\left(y_1+\dots +y_{\frac{2n}{t}}\right)+\left(y_{\frac{2n}{t}+1}+\dots +y_n\right)\\
          &<0,
    \end{align*}
    where the penultimate inequality uses $2\le t\le k$ and the first and final ones follow from the assumptions on $\bfy$.

    Let us try to find such an $e$. First, suppose $r\le n/t$. Set $i_1=1$ and $i_2=r$. By repeatedly using the minimum supported co-degree condition, we can find vertices $v_{i_3},\dots, v_{i_k}$ such that $v_{i_j}\in N^1(v_{i_1},\dots,v_{i_{j-1}})$ and $i_j\le n/t+\eps n+1$ for all $j\ge 3$. Observe that $e=v_{i_1}\dots v_{i_k}\in E(G)$. Furthermore, we have
    \begin{align*}
		\frac{n}{t}\cdot\frac{t-2}{(t-1)(k-2)} \cdot ((k-1)y_{i_1}+(t-1)y_{i_2})
		&= \frac{n}{t}\left(\frac{(t-2)(k-1)}{(t-1)(k-2)} \cdot y_1+\frac{t-2}{k-2} \cdot y_{\frac{n}{t}+1}\right)\\
		&\le \frac{n}{t}\left(y_1+y_{\frac{n}{t}+1}\right)\\
		&\le \left(y_1+\dots +y_{\frac{n}{t}}\right)+\left(y_{\frac{n}{t}+1}+\dots+y_{\frac{2n}{t}} \right),
    \end{align*}
    where we use $2\le t\le k$. Thus, $e$ meets the required criteria.

    Hence, we may assume that $n/t+1\le r\le n/t+2\eps n$. Let $A=\{v_1, \ldots, v_r\}$ and $B=\{v_{r+1},\dots ,v_{2n/t}\}$. Since $H\in \nonexE_{\eps,4\eps}(n,k,\ell)$ and $|A|=r\ge n/t$, there are at least $4\eps n^2$ supported pairs in $A$. This implies that we can greedily find a matching $M_1$ of supported pairs in $A$ that contains $v_1v_r$ and which spans at least $4\eps n$ vertices. 
	
    Now, we note that $|B|=2n/t-r$ and $|A\setminus V(M_1)|\le r-4\eps n$. 
	Since $r\le n/t+2\eps n$, we see that $|B|\ge |A\setminus V(M_1)|$. Hence, we can pair up each vertex of $|A\setminus V(M_1)|$ with a distinct vertex of $B$ arbitrarily (note that these vertex pairs need not be supported in $H$). Add this collection of pairs $M_1$ and call the resulting set of vertex pairs $M_2$. 

    For our final step, we consider all vertices in $C=\{v_1,\dots,v_{2n/t}\}\setminus V(M_2)$. By construction every vertex in $\{v_1,\dots,v_r\}$ is part of some pair in $M_2$. Thus, we may deduce that $C\subseteq \{v_{r+1},\dots, v_{2n/t}\}=B$. Since $|V(M_2)|$ is even, the same must be true of $|C|=2n/t-|V(M_2)|$. Hence, we can arbitrarily pair up all vertices in $C$. We add this collection of vertex pairs to $M_2$, and let the resulting set be $M_3$. Since every vertex in $v_1,\dots v_{2n/t}$ is contained in exactly one pair of $M_3$, we have $|M_3|=n/t$. 

    Let $v_{i_1}v_{i_2}$ to be a supported pair in $M_1$ such that $(k-1)y_{i_1}+(t-1)y_{i_2}\le (k-1)y_i+(t-1)y_j$ for all $v_iv_j\in M_1$ with $i<j$. Since $v_1v_r\in M_1$ and $i_1\ge 1$, we have $i_2\le r$. By construction, for any pair $v_iv_j\in M_3\setminus M_1$ with $i<j$, we have $v_j\in B$, and hence $j > r$. This implies
    \[
    (k-1)y_{i_1}+(t-1)y_{i_2}\le (k-1)y_1+(t-1)y_r\le (k-1)y_i+(t-1)y_j
    \]
    for all $v_iv_j\in M_3$, where the first inequality uses that $v_1v_r\in M_1$ and the definition of $v_{i_1}v_{i_2}$.  

    Since $v_{i_1}v_{i_2}$ is a supported pair, by repeatedly using the minimum supported co-degree condition, we can find an edge $v_{i_1}v_{i_2}\dots v_{i_k}\in E(H)$ such that $v_{i_j}\in N^1(v_{i_1},\dots,v_{i_{j-1}})$ and $i_j\le n/t+\eps n+1\le 2n/t+1$ for all $j\ge 2$. Additionally, we have
	\begin{align*}
		\frac{n}{t}\cdot\frac{t-2}{(t-1)(k-2)} \cdot ((k-1)y_{i_1}+(t-1)y_{i_2})
		&= \frac{t-2}{(t-1)(k-2)}\cdot |M_3| \cdot \left((k-1)y_{i_1}+(t-1)y_{i_2}\right) \\
		&\le \frac{t-2}{(t-1)(k-2)}\sum\limits_{v_iv_j\in M_3} \left((k-1)y_i+(t-1)y_j\right)\\
		&\le \sum\limits_{v_iv_j\in M_3} \left(\frac{(t-2)(k-1)}{(t-1)(k-2)} y_i+\frac{t-2}{k-2}y_{j}\right)\\
		&\le \sum\limits_{v_iv_j\in M_3} (y_i+y_j)\\
		&=y_1+\dots +y_{\frac{2n}{t}},
	\end{align*}
    where we have used $2\le t\le k$ as before. Since $e$ satisfies the desired conditions, the proof is complete.
\end{proof}

Notice that in the previous lemma we assume (for convenience) that $n$ is divisible by $t$. One can remove the divisibility assumption by taking an appropriate blow-up, as follows.

\begin{cor}
    \label{cor:wt perfect frac matching}
    Let $k\ge 3$ and $\ell\in [k-1]$ be integers, $t=\floor{\frac{k}{k-\ell}}(k-\ell)$, and let $1/n\ll\eps \ll \mu \ll 1/k$. Let $\bfw^*=(w_1,\dots,w_k)$ where $w_1=k-1$ and $w_i=t-1$ for all $2\le i\le k$. Then every $H\in \nonexE_{\eps,\mu}(n,k,\ell)$ has a $\bfw^*$-weighted perfect fractional matching.
\end{cor}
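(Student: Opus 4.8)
The plan is to deduce the corollary from \Cref{lem:wt perfect frac matching} by passing to a balanced blow-up of $H$ whose order is divisible by $t$, applying the lemma there, and then pushing the resulting weighted fractional matching back down along the projection map.

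Concretely, I would first let $H^*$ be the perfectly balanced blow-up of $H$ in which each vertex is replaced by exactly $t$ copies; that is, $H^*$ is a $(0,t)$-regular blow-up of $H$, so $|H^*| = tn$ is divisible by $t$. By \Cref{prop:blowup is nonextremal} (applied with $\gamma = 0$, $m = t$, $s = n$) we obtain $H^* \in \nonexE_{2\eps,\mu'}(tn,k,\ell)$ for a parameter $\mu'$ with $\eps \ll \mu' \ll \mu$; since $\mu' \gg \eps$ forces $\mu' \ge 8\eps$, in fact $H^* \in \nonexE_{2\eps,\, 4(2\eps)}(tn,k,\ell)$. As $1/(tn) \ll 2\eps \ll 1/k$ and $t \mid tn$, \Cref{lem:wt perfect frac matching} applies to $H^*$ with the role of $\eps$ played by $2\eps$, and yields a $\bfw^*$-weighted perfect fractional matching $q^* : E^*(H^*) \to \bR_{\ge 0}$.

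It then remains to project $q^*$ back to $H$. Write $\phi : V(H^*) \to V(H)$ for the projection, and for $v \in V(H)$ let $B_v = \phi^{-1}(v)$, which has size exactly $t$. Since every edge of a blow-up has its vertices in pairwise distinct parts, $\phi$ is injective on each edge of $H^*$, so for every ordered edge $e^* \in E^*(H^*)$ the image $\phi(e^*)$ is a genuine ordered edge of $H$; hence we may define $q : E^*(H) \to \bR_{\ge 0}$ by
\[
q(e) \;=\; \frac{1}{t} \sum_{\substack{e^* \in E^*(H^*) \\ \phi(e^*) = e}} q^*(e^*).
\]
To verify that $q$ is a $\bfw^*$-weighted perfect fractional matching, fix $v \in V(H)$ and sum the defining identity $\sum_{i \in [k]} \sum_{e^* \in E_i^u(H^*)} w_i q^*(e^*) = 1$ over the $t$ vertices $u \in B_v$. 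For each fixed $i \in [k]$, as $u$ ranges over $B_v$ the sets $E_i^u(H^*)$ are pairwise disjoint and their union is precisely the set of ordered edges of $H^*$ whose $i$-th vertex lies in $B_v$, i.e.\ whose projection lies in $E_i^v(H)$; grouping these by their projection gives $\sum_{u \in B_v} \sum_{e^* \in E_i^u(H^*)} q^*(e^*) = t \sum_{e \in E_i^v(H)} q(e)$. Therefore $t = \sum_{u \in B_v} 1 = \sum_{i \in [k]} w_i \cdot t \sum_{e \in E_i^v(H)} q(e)$, and dividing by $t$ yields $\sum_{i \in [k]} \sum_{e \in E_i^v(H)} w_i q(e) = 1$, as required; nonnegativity of $q$ is clear from that of $q^*$.

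There is no genuine obstacle here: all of the substance is contained in \Cref{lem:wt perfect frac matching}, and this corollary is a routine lifting-and-projecting argument. The only points needing a little care are bookkeeping ones — checking that the parameter hierarchies of \Cref{prop:blowup is nonextremal} and \Cref{lem:wt perfect frac matching} fit together (which they do comfortably, with slack, since $\eps \ll \mu' \ll \mu$), and tracking the normalising factor $1/t$ that arises from the $t$-fold blow-up so that the per-vertex sums come out to exactly $1$.
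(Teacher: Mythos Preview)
Your proof is correct and follows essentially the same approach as the paper: pass to the $t$-blow-up, apply \Cref{prop:blowup is nonextremal} and then \Cref{lem:wt perfect frac matching}, and project the resulting $\bfw^*$-weighted perfect fractional matching back down via the averaging formula $q(e) = \frac{1}{t}\sum_{\phi(e^*)=e} q^*(e^*)$. You are in fact slightly more careful than the paper with the parameter bookkeeping and with verifying the per-vertex sums; the paper simply asserts that the projected weights form a $\bfw^*$-weighted perfect fractional matching and leaves the check to the reader.
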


\begin{proof}
	Let $H^*$ be the $t$-blow-up of $H$, namely were each vertex is replaced by an independent set of size $t$.
	Then, clearly, $|H^*|$ is divisible by $t$. Moreover, by \Cref{prop:blowup is nonextremal} and because $H^*$ is a $(0,t)$-blow-up of $H$, we have that $H^* \in \nonexE_{\eps, 4\eps}(nt,k,\ell)$.
	Thus, by \Cref{lem:wt perfect frac matching} the graph $H^*$ has a $\bfw^*$-weighted perfect fractional matching $q$.
	This can be translated to a $\bfw^*$-weighted perfect fractional matching $q'$ in $H$ as follows. For any ordered edge $e \in E^*(H)$, let $F_e \subseteq E^*(H^*)$ be the collection of all ordered edges of $H^*$ that are blow-ups of $e$. Define 
    \[
        q'(e) = \frac{1}{t} \sum_{e^* \in F_e} q(e^*).
    \]
    It is fairly straightforward to verify that $q'$ is a $\bfw^*$-weighted perfect fractional matching in $H$.
\end{proof}

\section{Finding an absorbing and connecting walk}\label{sec:absorbing walk}
We first recall the definitions of $\ell$-walks and ordered supported sets from \Cref{sec:notation}. We begin this section by introducing some convenient notation and terminology.
Given a tight  walk $W=v_1\dots v_r$ we say that two subwalks $W_1=v_i\dots  v_{i+t}$ and $W_2=v_j\dots  v_{j+s}$ are \emph{disjoint} if $j>i+t$ or $i>j+s$, i.e.\ if their index sets are disjoint (hence, they need not be disjoint as vertex sets). Given vertex sequences (not necessarily $\ell$-walks) $W_1,\dots, W_s$, we let $W_1 \dots W_s$ denote the concatenation of $W_1, \ldots, W_s$ in order. Next, if $S=u_1\dots u_s$ and $T=v_1\dots v_t$ are ordered supported sets, we say that an $\ell$-walk $W$ \emph{joins} $S$ and $T$ if its first $s$ vertices are $u_1 \dots u_s$ and its last $t$ vertices are $v_1 \dots v_t$.

We first note that any tight walk of suitable order contains an $\ell$-walk.
\begin{obs}
\label{obs:tight to ell walk}
    Let $k\ge 2$ and $\ell\in[k-1]$. Suppose $W=v_1\dots v_r$ is a $k$-uniform tight walk in a $k$-graph $G$ such that $r\equiv k\pmod{k-\ell}$. Then $v_1 \ldots v_r$ is an $\ell$-walk in $G$ comprised of edges of the form $v_{i+1}\dots v_{i+k}$, whenever $i\equiv 0\pmod{k-\ell}$ and $i \in [0, r-k]$.
\end{obs}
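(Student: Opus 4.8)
The statement to prove is \Cref{obs:tight to ell walk}: a tight walk $W=v_1\dots v_r$ with $r\equiv k\pmod{k-\ell}$ is also an $\ell$-walk, with its edges being exactly the $k$-subsequences starting at indices $i\equiv 0\pmod{k-\ell}$.

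\textbf{Plan.} The proof is essentially a matter of unwinding the two definitions and checking the index arithmetic, so I would keep it short. Recall that a tight walk $W=v_1\dots v_r$ is one where $v_{j+1}\dots v_{j+k}$ is an edge for every $j\in[0,r-k]$ (this is the $\ell=k-1$ case of the $\ell$-walk definition, where the step size $k-\ell$ equals $1$). Meanwhile, to say $v_1\dots v_r$ is an $\ell$-walk we must verify: (a) the order condition $r\equiv k\pmod{k-\ell}$, and (b) that $v_{s(k-\ell)+1}\dots v_{s(k-\ell)+k}$ is an edge for every integer $s\in[0,\frac{r-k}{k-\ell}]$.

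\textbf{Key steps.} First, condition (a) is given by hypothesis, so nothing to do there. Second, for condition (b), fix any integer $s$ with $0\le s\le\frac{r-k}{k-\ell}$ and set $j=s(k-\ell)$. Then $j$ is a non-negative integer with $j\le r-k$, so $j\in[0,r-k]$, and hence $v_{j+1}\dots v_{j+k}$ is an edge of $G$ because $W$ is a tight walk. Since $j=s(k-\ell)\equiv 0\pmod{k-\ell}$, this is exactly the assertion that the $k$-subsequence starting at an index $\equiv 0\pmod{k-\ell}$ (with $i=j\in[0,r-k]$) forms an edge. As $s$ was arbitrary in the required range, conditions (a) and (b) together show $v_1\dots v_r$ is an $\ell$-walk whose edges are precisely the subsequences $v_{i+1}\dots v_{i+k}$ with $i\equiv 0\pmod{k-\ell}$ and $i\in[0,r-k]$, completing the proof.

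\textbf{Main obstacle.} There is really no obstacle here — the only thing to be careful about is the bookkeeping on the index ranges, namely confirming that as $s$ ranges over integers in $[0,\frac{r-k}{k-\ell}]$, the quantity $j=s(k-\ell)$ ranges over exactly the multiples of $k-\ell$ in $[0,r-k]$, each of which is a legitimate starting index for an edge of the tight walk $W$. One should also note in passing that $\frac{r-k}{k-\ell}$ is indeed an integer precisely because $r\equiv k\pmod{k-\ell}$, so the range of $s$ is well-defined; this is the only place the order hypothesis is used beyond verifying condition (a) verbatim.
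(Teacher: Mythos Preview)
Your proof is correct. The paper states this as an observation without proof, since it follows immediately from the definitions; your write-up correctly unwinds these definitions and verifies the index arithmetic, which is all that is needed.
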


Our strategy for proving \cref{lem:Hamilton paths in blow-ups} will first involve replacing each edge $e\in E(F)$ with a large complete $k$-partite $k$-graph (the blow-up of $e$) in $F^*$, where the sizes of the parts will be determined based on a perfect weighted fractional matching that we obtain via \cref{cor:wt perfect frac matching}. We will consider a suitable $\ell$-path in $F^*$ that spans all the vertices of this $k$-partite $k$-graph. 
Then, we will use a ``connecting path'' to move to the next $k$-partite $k$-graph, and repeat this procedure. 
The following proposition will central to finding spanning $\ell$-paths in suitable $k$-partite $k$-graphs.

\begin{prop}
\label{prop:ell cycle in k-partite k-graph}
    Let $H$ be a complete $k$-partite $k$-graph on $r$ vertices with parts $A,B_1,\dots, B_{k-1}$, where $r$ is at least $1$ and is divisible by $t(k-1)$. Let $|A|=r/t$ and $|B_i|=r(t-1)/t(k-1)$ for all $i\in[k-1]$. Then there exists a spanning $\ell$-cycle in $H$ which contains an ordered edge $e\in B_1\times \dots \times B_{t-1}\times A\times B_t\times \dots\times B_{k-1}$.
\end{prop}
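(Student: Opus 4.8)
The plan is to construct the spanning $\ell$-cycle explicitly by laying out the vertices of $H$ in a cyclic sequence organised into blocks of length $t$, one block per vertex of $A$. Recall that $t \le k$ by \Cref{obs:useful t info}, and that an $\ell$-cycle on $r$ vertices is a cyclic ordering $v_1 \dots v_r$ (with $k-\ell \mid r$) such that every window $v_{i(k-\ell)+1} \dots v_{i(k-\ell)+k}$ of length $k$ is an edge. By \Cref{prop:special vxs in ell paths}, in such a cycle the vertices at positions divisible by $t$ form a strong independent set meeting every edge in exactly one vertex, so they are the natural home for the part $A$. Concretely, I would write $r = t \cdot |A|$ and split $\{1,\dots,r\}$ into consecutive intervals $I_1, \dots, I_{|A|}$ of length $t$; the last position of each $I_j$ (a multiple of $t$) will hold a distinct vertex of $A$. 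This uses all $r/t$ vertices of $A$ exactly once. The remaining $t-1$ positions in each interval must be filled from $B_1, \dots, B_{k-1}$, and the counting works out precisely because there are $(t-1)|A| = (t-1)r/t$ non-$A$ positions in total and $\sum_i |B_i| = (k-1)\cdot \frac{r(t-1)}{t(k-1)} = \frac{r(t-1)}{t}$ non-$A$ vertices to place.

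The key step is to decide, for each window of $k$ consecutive positions, which parts it sees, so that (a) every window contains exactly one $A$-position (guaranteed by \Cref{prop:special vxs in ell paths} once $A$ sits at the multiples of $t$), and (b) the $k-1$ non-$A$ positions in each window are filled by vertices lying in $k-1$ \emph{distinct} parts $B_1,\dots,B_{k-1}$ — since $H$ is complete $k$-partite, this is exactly the condition for the window to be an edge. Because consecutive windows overlap in $\ell$ vertices and shift by $k-\ell$, a clean way to enforce (b) is to assign to each non-$A$ position a label in $[k-1]$ that depends only on its residue modulo some period, chosen so that within any length-$k$ window the $k-1$ non-$A$ positions carry the $k-1$ distinct labels $1,\dots,k-1$ in some order (the order may vary, which is why the statement only asks for the existence of \emph{one} edge $e$ with a prescribed part-pattern, not all of them). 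I would verify that the periodic labelling pattern of period $t$ on each block — namely: within a block of $t$ consecutive positions, the non-$A$ positions get labels forming a fixed $(t-1)$-subset of $[k-1]$, arranged so that gluing two blocks and sliding a window of length $k = t + (k-t)$ across the boundary always produces a set of $k-1$ distinct labels — does the job. Here the inequality $t \ge (k+1)/2$ from \Cref{obs:useful t info}, equivalently $k - t \le t - 1$, is what makes a window straddle at most one block boundary's worth of "new" labels, preventing label collisions.

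Once the label pattern is fixed, I must check that the total number of positions receiving label $i$ equals $|B_i| = \frac{r(t-1)}{t(k-1)}$ for each $i$; the divisibility hypothesis $t(k-1) \mid r$ is exactly what guarantees these are integers and that a balanced distribution of the $t-1$ labels used per block across all $|A| = r/t$ blocks hits each label the same number of times, namely $\frac{(t-1)|A|}{k-1} = \frac{r(t-1)}{t(k-1)}$ — this forces the per-block $(t-1)$-subsets of labels to be chosen so that each label in $[k-1]$ is used in the same number of blocks, which is possible since $(t-1) \cdot \frac{r}{t}$ is divisible by $k-1$. Then fill each label-$i$ position with a distinct vertex of $B_i$ (arbitrarily) and each $A$-position with a distinct vertex of $A$. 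By construction every length-$k$ window is an edge, so we obtain a spanning $\ell$-cycle. Finally, for the requirement that the cycle contain an edge $e$ in $B_1 \times \dots \times B_{t-1} \times A \times B_t \times \dots \times B_{k-1}$: one window starts at the beginning of some block, so its first $t-1$ non-$A$ positions, then the $A$-position at position $t$, then $k-t$ more non-$A$ positions from the next block; by permuting the labels within blocks (which only permutes the names $B_1,\dots,B_{k-1}$ and hence is harmless by symmetry) I can arrange that this particular window realises exactly the pattern $B_1,\dots,B_{t-1},A,B_t,\dots,B_{k-1}$, giving the desired ordered edge $e$.

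The main obstacle I anticipate is purely combinatorial bookkeeping: exhibiting an explicit periodic label assignment of period $t$ such that every length-$k$ window sees $k-1$ distinct labels, and simultaneously the global label counts match the prescribed part sizes. The inequality $k - t \le t - 1$ is the structural fact that makes such a pattern exist (it is false precisely when $(k,\ell)=(3,1)$, i.e. $t=2 < k-1$, consistent with that case being excluded elsewhere — though note this proposition as stated does allow $t=2$ when $k=3,\ell=1$ is the only offender, so one should double-check the boundary). Verifying these two conditions hold simultaneously, and that the divisibility hypothesis $t(k-1)\mid r$ is precisely strong enough to make all the relevant counts integral, is where the real (if elementary) work lies.
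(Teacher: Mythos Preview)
Your high-level strategy---placing the vertices of $A$ at positions divisible by $t$ and labelling the remaining positions from $\{B_1,\dots,B_{k-1}\}$---is exactly the paper's approach, and you correctly identify that \Cref{prop:special vxs in ell paths} ensures each relevant window meets $A$ exactly once. However, the crucial ingredient is the actual labelling, and your proposal is internally inconsistent on this point: you repeatedly claim a ``periodic label assignment of period $t$'', but a period-$t$ labelling uses the \emph{same} $(t-1)$-subset of labels in every block, so when $t<k$ the parts $B_t,\dots,B_{k-1}$ would never appear at all. You then contradict yourself by saying the per-block subsets must vary so that each label is used in the same number of blocks---but you never specify \emph{how} the subsets, and the order of labels within each block, are chosen so that every window straddling a block boundary sees $k-1$ distinct labels. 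That compatibility condition between adjacent blocks is the entire content of the argument, and you have left it as an unverified hope.

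The paper resolves this with one clean idea: the labelling has period $t(k-1)$, not $t$. Concretely, at the non-$A$ positions one simply writes $B_1,B_2,\dots,B_{k-1},B_1,B_2,\dots$ cyclically (with $A$ inserted at every multiple of $t$), giving the period-$t(k-1)$ pattern
\[
B_1\dots B_{t-1}\,A\,B_t\dots B_{2t-2}\,A\,\dots\,A\,B_{q-t+2}\dots B_q\,A,\qquad q=(t-1)(k-1),
\]
with the $B$-indices read modulo $k-1$. In one period, $A$ appears $k-1$ times and each $B_i$ appears $t-1$ times, so the divisibility $t(k-1)\mid r$ makes the counts match exactly. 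Now any $k$ consecutive positions contain at most one vertex from each $B_i$, because the non-$A$ positions among them carry \emph{consecutive} labels in the length-$(k-1)$ cycle $1,\dots,k-1$ and there are at most $k-1$ of them; combined with at most one $A$ (from \Cref{prop:special vxs in ell paths}), each window is transversal and hence an edge. No block-by-block compatibility analysis is needed. This also immediately delivers the required ordered edge: the very first window $v_1\dots v_k$ lies in $B_1\times\dots\times B_{t-1}\times A\times B_t\times\dots\times B_{k-1}$ by construction, so your ``permute by symmetry'' step is unnecessary (and, as stated, invalid: the parts $B_i$ are given, not interchangeable). Finally, your aside that $k-t\le t-1$ fails for $(k,\ell)=(3,1)$ is incorrect---it holds with equality there---and indeed the proposition covers that case.
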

\begin{proof}
    Set $q=(t-1)(k-1)$. We define a sequence $X_1\dots X_{q+k-1}$ of (non-distinct) parts belonging to the set $\{A,B_1\dots,B_{k-1}\}$ as
    \[
        X_1\dots X_{q+k-1}=B_1\dots B_{t-1}A^{(1)}B_t\dots B_{2t-2}A^{(2)}\dots\dots A^{(k-2)} B_{q-t+2}\dots B_qA^{(k-1)},
    \]
	where $A^{(i)}=A$, and, $B_i = B_j$ for the unique $j \in [k-1]$ such that $i \equiv j \pmod{k-1}$, for every $i$ (that is, we think of the indices $B_1 \dots B_q$ cyclically modulo $k-1$).
	To begin with, we argue that we can sequentially list all the vertices of $H$ as $v_1\dots v_r$ with $v_i\in X_i$ for all $i$, where we view the indices of $X_i$ cyclically modulo $q+k-1$.

    Indeed, we first note that $r\equiv 0\pmod{q+k-1}$, since $q+k-1=t(k-1)$ divides $r$ by assumption. This means that the sequence $X_1\dots X_r$ is formed by repeating the sequence $X_1\dots X_{q+k-1}$ exactly $r/(q+k-1)=r/t(k-1)$ times. The set $A$ appears $k-1$ times in $X_1\dots X_{q+k-1}$, and hence appears exactly $r/t=|A|$ times in $X_1\dots X_r$. Similarly, each $B_i$ appears precisely $t-1$ times in $X_1\dots X_{q+k-1}$, and hence there are $r(t-1)/t(k-1)=|B_i|$ instances of $B_i$ in $X_1\dots X_r$. Thus, we can find the desired sequence $v_1\dots v_r$.

    Since $r$ is divisible by $t$, we know that $r\equiv 0\pmod{k-\ell}$. We claim that $v_1\dots v_r$ is the required spanning $\ell$-cycle in $H$. It suffices to show that $v_{i+1}\dots v_{i+k}$ forms an edge for all $i \le r-k$ such that $i\equiv 0\pmod{k-\ell}$.
    
    Let us first fix any subsequence $v_{i+1}\dots v_{i+k}$ of $k$ consecutive vertices such that $i\equiv 0\pmod{k-\ell}$. By construction, for each $i\in [k-1]$, any $k$ consecutive vertices in $v_1\dots v_r$ can contain at most one vertex from $B_i$. By \Cref{prop:special vxs in ell paths}, we know that $v_{i+1}\dots v_{i+k}$ contains at most one vertex of the form $v_j$ with $j\equiv 0\pmod{t}$, and consequently at most one vertex from $A$. Hence, $v_i\dots v_{i+k-1}$ contains exactly one vertex from each of $A,B_1,\dots,B_{k-1}$, implying that it forms an edge of $H$.
	For the final part of the proposition, notice that $v_1 \ldots v_k$ is an ordered edges in the desired set that is contained in the $\ell$-cycle.
\end{proof}

When proving \Cref{lem:Hamilton paths in blow-ups}, we will see that the divisibility condition in the previous proposition might lead to a few leftover vertices in the blow-up of each edge $e$ of $F$.
We thus require that the connecting path in $\Fs$ has some ``absorption'' features to incorporate into itself these leftover vertices. We start by building a tight walk in $F$ that has all the required features, in a manner such that intermediate edges can be deleted to form an $\ell$-walk with suitable start and end vertices. We argue how this can be blown up to a tight path in $F^*$ by selecting distinct vertices along the walk. 

As preparation, we prove a handy result which is based on the proof of of~\cite[Lemma 3.3]{illingworth2025spanning}. Given any two ordered supported sets, we wish to find a tight walk from one ordered edge to the other.

\begin{prop}
\label{prop:initial walk supported sets}
    Suppose $G$ is an $n$-vertex $k$-graph with $\delta^*(G)>n/2$ and no isolated vertices, where $n\ge 4k$. Let $S=u_1\dots u_s$ and $T=v_1\dots v_t$ be ordered supported sets, where with $s,t\in [k]$. Then there exists a tight walk in $G$ joining $S$ and $T$, and has at least $k$ vertices between them.
\end{prop}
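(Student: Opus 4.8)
The plan is to find a tight walk from some ordered edge extending $S$ to some ordered edge extending $T$, and then insert extra vertices in the middle to guarantee the walk has at least $k$ vertices strictly between $S$ and $T$. The basic engine is the minimum supported co-degree condition $\delta^*(G) > n/2$: given any ordered supported set of size at most $k-1$, we can always extend it by one more vertex (since a $(k-1)$-set has a supported vertex-neighbourhood of size $> n/2 \ge 1$), and more importantly, given \emph{two} ordered supported $(k-1)$-sets $P = p_1 \dots p_{k-1}$ and $Q = q_1 \dots q_{k-1}$, their vertex-neighbourhoods $N^1(P)$ and $N^1(Q)$ each have size $> n/2$, so they intersect; picking $w \in N^1(P) \cap N^1(Q)$ gives edges $p_1 \dots p_{k-1} w$ and $w q_1 \dots q_{k-1}$, i.e. a tight walk $p_1 \dots p_{k-1} w q_1 \dots q_{k-1}$ joining these two $(k-1)$-sets. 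This is exactly the connectivity trick from \cite[Lemma 3.3]{illingworth2025spanning}.

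First I would build an ordered edge $e_S = u_1 \dots u_s x_{s+1} \dots x_k$ extending $S$: since $S$ is supported with $|S| = s \le k$, if $s = k$ then $S$ is already an edge and $e_S = S$; otherwise repeatedly apply $\delta^*(G) \ge 1$ (really $> n/2$) to extend $S$ one vertex at a time to a full edge. Symmetrically build an ordered edge $e_T = y_1 \dots y_{k-t} v_1 \dots v_t$ ending in $T$. Now let $P$ be the last $k-1$ vertices of $e_S$ and let $Q$ be the first $k-1$ vertices of $e_T$; these are both supported $(k-1)$-sets, so by the intersection argument above there is a vertex $w$ with $w \in N^1(P) \cap N^1(Q)$, and concatenating $e_S$, then $w$, then $e_T$ gives a tight walk joining $S$ and $T$. (One should choose $w$ outside $V(e_S) \cup V(e_T)$ if desired — possible since $|N^1(P) \cap N^1(Q)| > n/2 - \text{hmm}$, actually $|N^1(P)| + |N^1(Q)| > n$ forces the intersection to have size $> 0$, and to avoid the at most $2k$ bad vertices we instead note $|N^1(P) \cap N^1(Q)| \ge |N^1(P)| + |N^1(Q)| - n > 0$; to get it disjoint from a fixed small set we'd want the stronger count, but $n \ge 4k$ and $\delta^*(G) > n/2$ give $|N^1(P)|, |N^1(Q)| \ge n/2$, so the intersection has size $\ge n/2 + n/2 - n = 0$ in the worst case — this is the one genuinely delicate point, see below).

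The remaining issue is the requirement of \emph{at least $k$ vertices between} $S$ and $T$. The walk $e_S\, w\, e_T$ currently has only $(k - s) + 1 + (k - t)$ vertices strictly between $S$ and $T$, which may be fewer than $k$ (e.g. if $s = t = k$ it is just the single vertex $w$). To fix this, instead of connecting $P$ directly to $Q$, I would first walk $P$ forward by several steps: pick $w_1 \in N^1(P)$, then let $P_1$ be the last $k-1$ vertices of $P w_1$ and pick $w_2 \in N^1(P_1)$, and so on, inserting as many new vertices $w_1, w_2, \dots$ as needed (at each step the available set has size $> n/2 > 2k$, so we can keep all the $w_i$ and the endpoints distinct while $n \ge 4k$); after enough steps the accumulated middle has $\ge k$ vertices, and then connect the current trailing $(k-1)$-set to $Q$ by the intersection argument. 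The main obstacle is the bookkeeping around \emph{distinctness and the size of the neighbourhood intersection}: to both (a) keep all inserted vertices distinct from each other and from $V(S) \cup V(T)$, and (b) guarantee $N^1(\cdot) \cap N^1(Q) \neq \emptyset$, one uses that each vertex-neighbourhood has size $> n/2$, so the pairwise intersection with any other such neighbourhood has size $> n/2 + n/2 - n \ge 0$; strictness of $\delta^*(G) > n/2$ (rather than $\ge n/2$) is what makes this work, and $n \ge 4k$ leaves room to dodge the $O(k)$ already-used vertices. I expect this distinctness/counting step to be the only real content; everything else is routine iteration of the co-degree bound.
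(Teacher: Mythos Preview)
Your proposal contains a genuine gap at the core step. You claim that choosing a single vertex $w \in N^1(P) \cap N^1(Q)$, with $P = p_1 \dots p_{k-1}$ and $Q = q_1 \dots q_{k-1}$, yields a tight walk $p_1 \dots p_{k-1}\, w\, q_1 \dots q_{k-1}$. This is false for $k \ge 3$: for the sequence to be a tight walk, every window of $k$ consecutive vertices must form an edge, so in particular $p_2 \dots p_{k-1}\, w\, q_1$ must be an edge, then $p_3 \dots p_{k-1}\, w\, q_1 q_2$, and so on. The condition $w \in N^1(P) \cap N^1(Q)$ only certifies the two edges $\{p_1, \dots, p_{k-1}, w\}$ and $\{w, q_1, \dots, q_{k-1}\}$, which share just the single vertex $w$; the intermediate windows are uncontrolled. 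The extra ``walk forward'' steps you add later do not repair this, since the final connection to $Q$ still goes through the same flawed one-vertex bridge. This is not the trick in \cite[Lemma~3.3]{illingworth2025spanning}; that argument (and the paper's) is more elaborate.

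What is actually needed is a sequence of $k$ bridging vertices chosen to satisfy \emph{two} sliding $(k-1)$-set constraints simultaneously. Concretely, the paper first fixes an auxiliary edge $f = x_1 \dots x_k$ disjoint from $S \cup T$ and an extension $e_S = u_1 \dots u_k$ of $S$, and then picks $z_1, \dots, z_k$ one at a time with
\[
z_i \in N^1(u_{i+1} \dots u_k z_1 \dots z_{i-1}) \cap N^1(x_{i+1} \dots x_k z_1 \dots z_{i-1}),
\]
possible precisely because $\delta^*(G) > n/2$ forces these two vertex-neighbourhoods to intersect. The first condition makes $u_1 \dots u_k z_1 \dots z_k$ a genuine tight walk; the second guarantees that each set $\{z_1, \dots, z_{i-1}, x_i, \dots, x_k\}$ is an edge, so one can ``rotate'' from $z_1 \dots z_k$ to $x_1 \dots x_k$ through a tight walk. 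Doing the same from $f$ to an extension of $T$ and concatenating gives the desired walk, and the intermediate edge $f$ automatically supplies the $\ge k$ vertices between $S$ and $T$. As a minor point, your concern about keeping inserted vertices distinct is unnecessary: the statement asks for a tight \emph{walk}, and walks allow repeated vertices.
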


\begin{proof}
    Consider any edge $f=x_1\dots x_k$ that is vertex disjoint from $S\cup T$ (which exists since $n\ge 4k$, $G$ has no isolated vertices, and $\delta^*(G) > n/2$). We will construct a tight walk joining $u_1\dots u_s$ and $x_1\dots x_k$, and another one that joins $x_1\dots x_k$ and $v_1\dots v_t$.

    Since $\delta^*(G)>n/2\ge 2k$, there exists an edge $e_S=u_1\dots u_s u_{s+1}\dots u_k$ that contains $S$ and is vertex disjoint from $\{x_1,\dots, x_k\}\cup \{v_1,\dots,v_t\}$. We sequentially find vertices $z_1,\dots, z_k$ such that 
    \[
    z_i\in N^1(u_{i+1}\dots u_k z_1\dots z_{i-1})\cap N^1(x_{i+1}\dots x_k z_1\dots z_{i-1}).
    \]

    Define the following tight walks.
    \[
    W_i=z_1\dots z_{i-1} x_i\dots x_k
    \]
    for all $i\in[k]$, and 
    \[
    Q=u_1\dots u_k z_1\dots z_k.
    \]
    By choice of $z_1,\dots,z_k$, it is not hard to see that these sequences are indeed tight walks (in fact, $W_1,\dots,W_k$ are edges). Additionally, we crucially note that $W_{i+1}W_i$ is a tight walk for all $i\in[k-1]$-- intuitively, we ``cycle through'' the edge $W_{i+1}=z_1\dots z_i x_i\dots x_k$ one vertex at a time for $i-1$ steps, then replace $z_i$ with $x_i$, and cycle through $x_{i+1} \ldots x_k z_1 \ldots z_{i-1}x_i$ for $k-i$ steps. 
	Then $R=QW_k\dots W_1$ forms a tight walk that joins $S=u_1\dots u_s$ and $x_1\dots x_k$. The same arguments can be used to construct a tight walk $R'$ joining $x_1\dots x_k$ and $v_1\dots v_t$. Then $RR'$ is the required tight walk.
\end{proof}
Next, we state an easy observation which will serve as the building block for the absorption.

\begin{obs}
\label{obs:vx tight absorber}
    Let $k\ge 3$ and suppose $G$ is an $n$-vertex $k$-graph with $\delta^*(G)> n/2$, and $v\in V(G)$ is an arbitrary vertex. Then there exist ordered supported sets $W_1$ and $W_2$ in $G$ such that $W_1 W_2$ and $W_1vW_2$ are tight walks and $|W_i|= k-1$ for $i\in[2]$.
\end{obs}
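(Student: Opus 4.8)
I would build both walks at once as a single sequence $c_1 \dots c_{2k-1}$ of vertices with $c_k = v$, and then set $W_1 := c_1 \dots c_{k-1}$ and $W_2 := c_{k+1} \dots c_{2k-1}$, so that $W_1 v W_2 = c_1 \dots c_{2k-1}$ and $W_1 W_2 = c_1 \dots c_{k-1} c_{k+1} \dots c_{2k-1}$. Unwinding the definition of a tight walk, the requirement that $W_1 v W_2$ be a tight walk is exactly that each of the $k$ sets $\omega_i := \{c_i, \dots, c_{i+k-1}\}$ with $1 \le i \le k$ is an edge, and the requirement that $W_1 W_2$ be a tight walk is exactly that each of the $k-1$ sets $\eta_i := \{c_i, \dots, c_{k-1}\} \cup \{c_{k+1}, \dots, c_{k+i}\}$ with $1 \le i \le k-1$ is an edge (here $\eta_i$ is the $i$-th window of $W_1 W_2$: the last $k-i$ vertices of $W_1$ followed by the first $i$ vertices of $W_2$). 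The key bookkeeping observation is that $\omega_{i+1} = A_i \cup \{c_{k+i}\}$ and $\eta_i = B_i \cup \{c_{k+i}\}$, where
\[
A_i := \{c_{i+1}, \dots, c_{i+k-1}\}, \qquad B_i := \{c_i, \dots, c_{k-1}\} \cup \{c_{k+1}, \dots, c_{k+i-1}\}
\]
are $(k-1)$-sets determined by $c_1, \dots, c_{k+i-1}$ alone.

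The construction is then greedy, left to right. Since $G$ has no isolated vertices, fix any edge containing $v$, call its other vertices $c_1, \dots, c_{k-1}$ in any order, and set $c_k := v$; this makes $\omega_1$ an edge. For $i = 1, 2, \dots, k-1$ in turn, with $c_1, \dots, c_{k+i-1}$ already chosen, pick $c_{k+i}$ to be any vertex of $N^1(A_i) \cap N^1(B_i)$: assuming $A_i$ and $B_i$ are supported (see below), $\delta^*(G) > n/2$ gives $|N^1(A_i)|, |N^1(B_i)| > n/2$, so $N^1(A_i) \cap N^1(B_i)$ is a nonempty subset of the $n$-element set $V(G)$ and such a choice exists; this choice makes $\omega_{i+1}$ and $\eta_i$ edges. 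Once the loop finishes, $\omega_1, \dots, \omega_k$ are all edges (so $W_1 v W_2$ is a tight walk) and $\eta_1, \dots, \eta_{k-1}$ are all edges (so $W_1 W_2$ is a tight walk). Finally, $W_1$ and $W_2$ are supported ordered $(k-1)$-sets: their vertex sets are contained in the edges $\omega_1$ and $\omega_k$ respectively, which in particular forces $c_1, \dots, c_{k-1}$ (resp. $c_{k+1}, \dots, c_{2k-1}$) to be distinct.

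What remains, and is the only point needing care, is the inductive verification that $A_i$ and $B_i$ are supported when they are used. For $i = 1$, both $A_1 = \{c_2, \dots, c_k\}$ and $B_1 = \{c_1, \dots, c_{k-1}\}$ are contained in the fixed edge $\omega_1$ through $v$. For $i \ge 2$, one has $A_i \subseteq \omega_i$ and $B_i \subseteq \eta_{i-1}$, and $\omega_i = A_{i-1} \cup \{c_{k+i-1}\}$ and $\eta_{i-1} = B_{i-1} \cup \{c_{k+i-1}\}$ were turned into edges at step $i-1$ via the choice $c_{k+i-1} \in N^1(A_{i-1}) \cap N^1(B_{i-1})$. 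Beyond this there is no real obstacle: the argument is just a short greedy construction, and note in particular that distinctness of all the $c_i$ is neither claimed nor needed, so it suffices that each $N^1(A_i) \cap N^1(B_i)$ is nonempty rather than large.
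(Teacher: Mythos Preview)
Your proof is correct and is essentially identical to the paper's. With the dictionary $u_j = c_j$ for $j\in[k-1]$, $v = c_k$, and $w_j = c_{k+j}$ for $j\in[k-1]$, your sets $A_i$ and $B_i$ are exactly the paper's $\{u_{i+1},\dots,u_{k-1},v,w_1,\dots,w_{i-1}\}$ and $\{u_i,\dots,u_{k-1},w_1,\dots,w_{i-1}\}$, so the greedy choice $c_{k+i}\in N^1(A_i)\cap N^1(B_i)$ is precisely the paper's condition $w_i\in N^1(u_{i+1}\dots u_{k-1}vw_1\dots w_{i-1})\cap N^1(u_i\dots u_{k-1}w_1\dots w_{i-1})$. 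You supply more detail than the paper does, in particular the inductive check that $A_i$ and $B_i$ are supported (via $A_i\subseteq\omega_i$, $B_i\subseteq\eta_{i-1}$) and the verification that $W_2$ consists of distinct vertices, but the underlying construction is the same.
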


\begin{proof}
    Consider any edge $e=u_1\dots u_{k-1} v$ containing $v$. Since $\delta^*(G)> n/2$, we may sequentially pick vertices $w_1,\dots,w_{k-1}$ such that 
    \[
    w_i\in N^1(u_{i+1}\dots u_{k-1} v w_1\dots w_{i-1})\cap N^1(u_i\dots u_{k-1} w_1 \dots w_{i-1}).
    \]
	Then $W_1=u_1 \dots u_{k-1}$ and $W_2=w_1 \dots w_{k-1}$ are the required ordered supported sets. 
\end{proof}

As mentioned previously, proving \Cref{lem:Hamilton paths in blow-ups} will require a connecting path in $F^*$ that can absorb a few uncovered vertices, and the number of such vertices will turn out to be a multiple of $k-\ell$. Hence, our next step is to build larger tight walks that can integrate into themselves multisets of $k-\ell$ vertices. We would also like to able to delete edges from them to convert them to $\ell$ walks when necessary (via \cref{obs:tight to ell walk}), and thus will also have a restriction on the order of such a walk.

Suppose we have some fixed $k\ge 3$ and $\ell \in [k-1]$. Given a sequence $U=v_1 \dots v_s$ of (not necessarily distinct) vertices, we define a \emph{tight $U$-absorber} to be a sequence of tight walks $(W_0,\dots ,W_s)$ such that $|W_i| \ge k-1$ for every $i$ and $W_0\dots W_s$ and $W_0v_1W_1\dots W_{s-1}v_sW_s$ are both tight walks in $G$.
We define a \emph{$U$-absorber} to be a sequence of finite vertex sequences $(W_0,\dots, W_s)$ with $|W_i| \ge k-1$ for every $i$, such that the vertex sequences formed by $W_0\dots W_s$ and $W_0 v_1 W_1 \ldots W_{s-1} v_s W_s$ are both $\ell$-walks. Note that for tight walks, this is exactly the same as the notion of a tight $U$-absorber. We will often simply refer to the walk $W_0\dots W_s$ as a (tight) $U$-absorber.

Given this, we argue that we can find absorbers of any desired length.

\begin{prop}
\label{prop:sequence tight absorber}
     Suppose $G$ is an $n$-vertex $k$-graph with $\delta^*(G)>n/2$ and no isolated vertices, where $n\ge 4k$, and let $\ell \in [k-1]$. Let $U=v_1\dots v_s$ be any finite vertex sequence. Then, for any $q\in\{0,\dots,k-\ell-1\}$, $G$ contains a tight $U$-absorber $W=W_0\dots W_s$ such that $|W|\equiv q\pmod{k-\ell}$.
\end{prop}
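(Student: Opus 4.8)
The plan is to assemble the absorber from three kinds of pieces: a ``head'' piece $W_0$ ending in a prescribed supported set and of tunable length; for each $v_i$, a ``gadget'' consisting of the two supported sets $A_i,B_i$ provided by \cref{obs:vx tight absorber}, which let us optionally swallow $v_i$; and ``connector'' tight walks between consecutive gadgets supplied by \cref{prop:initial walk supported sets}. The length of $W_0$ will be chosen at the very end to fix $|W|$ modulo $k-\ell$.

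First I would dispose of the case $s=0$: then a $U$-absorber is just a single tight walk of length $\ge k-1$, and since $G$ has no isolated vertices it contains a supported $(k-1)$-set, which is a tight walk of length $k-1$; repeatedly prepending one vertex at a time — always possible because $\delta^*(G)>n/2$ forces the first $k-1$ vertices of the current walk, which form a supported set, to have nonempty vertex-neighbourhood — produces tight walks of every length $\ge k-1$, in particular one congruent to $q$ modulo $k-\ell$.

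For $s\ge 1$ I would apply \cref{obs:vx tight absorber} to each $v_i$ to get ordered supported $(k-1)$-sets $A_i,B_i$ with $A_iB_i$ and $A_iv_iB_i$ tight walks. For $i\in[s-1]$, applying \cref{prop:initial walk supported sets} to the ordered supported sets $B_i$ and $A_{i+1}$ yields a tight walk $W_i$ whose first $k-1$ vertices are $B_i$ and whose last $k-1$ vertices are $A_{i+1}$ (and which has at least $k$ vertices in between, so certainly $|W_i|\ge k-1$). Put $W_s:=B_s$, and let $W_0$ be a tight walk of length $(k-1)+r_0$ ending in $A_1$, built by starting from $A_1$ and prepending $r_0$ vertices as in the previous paragraph, where $r_0\in\{0,\dots,k-\ell-1\}$ is chosen so that $(k-1)+r_0+\sum_{i=1}^{s-1}|W_i|+(k-1)\equiv q\pmod{k-\ell}$; such an $r_0$ exists because varying it shifts the left-hand side through all residue classes. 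Then $|W_0W_1\cdots W_s|\equiv q\pmod{k-\ell}$ and every $W_i$ has at least $k-1$ vertices, as required by the definition.

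It then remains to verify that $(W_0,\dots,W_s)$ is a tight $U$-absorber, i.e.\ that $W_0W_1\cdots W_s$ and $W_0v_1W_1\cdots v_sW_s$ are tight walks; since each $W_i$ is itself a tight walk, this reduces to checking the windows of $k$ consecutive vertices that straddle a junction. By construction, after $W_{i-1}$ the concatenation reads $\dots A_iB_i\dots$ (respectively $\dots A_iv_iB_i\dots$): indeed $W_{i-1}$ ends in $A_i$ (its tail, for $i\ge 2$ coming from the connector and for $i=1$ from $W_0$) and $W_i$ begins with $B_i$ (for $i\le s-1$ by choice of the connector, and for $i=s$ because $W_s=B_s$). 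Hence every straddling window lies inside $A_iB_i$ or inside $A_iv_iB_i$, both of which are tight walks, so it is an edge of $G$. The one point that genuinely needs care is precisely this junction bookkeeping — arranging that the matched supported sets $A_i,B_i$ sit exactly at the ends of consecutive pieces so that \cref{obs:vx tight absorber} governs the crossing windows — while the length adjustment of $W_0$ and the repeated uses of the minimum supported co-degree condition are entirely routine.
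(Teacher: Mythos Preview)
Your proof is correct and follows essentially the same strategy as the paper's: both build $W_i$ for $i\in[s-1]$ as connector walks from $B_i$ to $A_{i+1}$ via \cref{prop:initial walk supported sets}, set $W_s=B_s$, and obtain $W_0$ by prepending a suitable number of vertices to $A_1$ to fix the residue class modulo $k-\ell$. Your treatment is in fact slightly cleaner, explicitly disposing of $s=0$ and spelling out the junction verification that the paper leaves implicit.
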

\begin{proof}
    Using \Cref{obs:vx tight absorber} for each $i\in[s]$, we can find tight walks $A_i$ and $B_i$ such that $A_iB_i$ and $A_iv_iB_i$ are tight walks, and $|A_i|,|B_i|= k-1$. Furthermore, since $\delta^*(G)>n/2$ and $n \ge 4k$, by \Cref{prop:initial walk supported sets}, we can find a tight walk $F_i$ joining $B_i$ and $A_{i+1}$ for all $i\in[s-1]$. Hence, $R=A_1B_1F_1A_2B_2\dots F_{s-1} A_{s}B_{s}$ and $R'=A_1v_1B_1F_1 A_2v_2B_2\dots F_{s-1} A_s v_s B_s$ are both tight walks. 
    
	Next, suppose that $|R|\equiv r \pmod{k-\ell}$. Set $p=q-r\pmod {k-\ell}$ and suppose that $A_1 = x_1 \dots x_{k-1}$ (viewed as an ordered supported set). Then, using minimum supported co-degree condition, we successively pick vertices $y_1,\dots, y_p$ where $y_i\in N^1(y_{i-1}\dots y_1 x_1\dots x_{k-i})$, which ensures that $y_p\dots y_1 A_1$ is a tight path. Set $W_0=y_p\dots y_1 A_1$, $W_i=B_i F_i A_{i+1}$ for all $i\in[s-1]$ and $W_s=B_s$. Since $|A_i|, |B_i| = k-1$ for every $i \in [s]$, we have $|W_i| \ge k-1$ for every $i \in [s]$. Then the $W_0\dots W_s$ constitutes the desired tight $U$-absorber. 
\end{proof}

Next, we use the absorbers we just found to modify the above walk obtained in \Cref{prop:initial walk supported sets} to control its order, which will allow us to convert it into a suitable $\ell$-walk by deleting edges.

\begin{prop}
\label{prop:correct order walk supported sets}
    Suppose $G$ is an $n$-vertex $k$-graph with $\delta^*(G)>n/2$ and no isolated vertices, where $n\ge 4k$, and let $\ell \in [k-1]$. Suppose $S$ and $T$ are ordered supported sets of size $s$ and $t$ respectively, with $s,t\in[k]$. Then, for any $q\in\{0,\dots,k-\ell-1\}$, there exists a tight walk in $G$ joining $S$ and $T$ whose order is at least $2k$ and is congruent to $q \pmod{k-\ell}$.
\end{prop}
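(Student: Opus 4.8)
The plan is to combine \Cref{prop:initial walk supported sets}, which gives \emph{some} tight walk joining $S$ and $T$ without control over its order, with \Cref{prop:sequence tight absorber}, which lets us adjust the order modulo $k-\ell$. First I would apply \Cref{prop:initial walk supported sets} to obtain a tight walk $R$ joining $S$ and $T$ with at least $k$ vertices strictly between them; say $|R| = r_0$ and $r_0 \equiv r \pmod{k-\ell}$. The idea is then to splice a short tight $U$-absorber into $R$ (for a suitable one-element or empty sequence $U$) whose length is chosen to correct the residue of $r_0$ to $q$. Concretely, pick any vertex $w$ appearing in the ``middle'' portion of $R$, so that $R$ passes through an edge $e$ containing $w$; write $R$ as $R = R' w'' R''$ where $w''$ is (say) the $k$-vertex window of $R$ around $w$. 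We want to replace this window with a longer tight walk that starts and ends with the same ordered supported sets as the boundaries of $w''$ inside $R$, so that the concatenation is still a tight walk, but whose total length shifts the residue by the required amount $p := q - r_0 \pmod{k-\ell}$.

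The cleanest way to realise this is the following. Let $S' = $ (first $k-1$ vertices of the window) and $T' = $ (last $k-1$ vertices of the window), both ordered supported sets of size $k-1$; these are exactly the ``interfaces'' at which the window is glued into $R$. Apply \Cref{prop:sequence tight absorber} (or even just the construction inside \Cref{prop:initial walk supported sets} together with the padding trick at the end of the proof of \Cref{prop:sequence tight absorber}) to find a tight walk $W$ joining $S'$ and $T'$ whose length is congruent to the length of the window plus $p$ modulo $k-\ell$. Replacing the window $w''$ by $W$ inside $R$ yields a tight walk $\tilde{R}$ joining $S$ and $T$ with $|\tilde{R}| \equiv q \pmod{k-\ell}$. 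To guarantee the order is at least $2k$, we note that \Cref{prop:initial walk supported sets} already provides at least $k$ interior vertices, and we can inflate further: if $\tilde R$ is still too short, prepend a padding of $k-\ell$ more vertices at the start (using the minimum supported co-degree to extend the ordered supported set $S$ one vertex at a time, exactly as in the last paragraph of the proof of \Cref{prop:sequence tight absorber}), which changes neither the residue nor the validity of the walk and increases the length by $k-\ell$; repeat until $|\tilde R| \ge 2k$.

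Alternatively — and this may in fact be the slicker route to write up — one can bypass the splicing entirely: take the absorber-free walk $R$ from \Cref{prop:initial walk supported sets} joining $S$ to an intermediate edge $f = x_1 \dots x_k$, and a walk $R'$ joining $f$ to $T$, but before concatenating, insert between $R$ and $R'$ a tight walk from $x_1 \dots x_{k-1}$ back to $x_1 \dots x_{k-1}$ (a closed tight walk based at the ordered supported set $x_1\dots x_{k-1}$) of any prescribed length modulo $k-\ell$; such a walk exists because $\delta^*(G) > n/2$ lets us greedily extend $x_1 \dots x_{k-1}$ by one vertex at a time and eventually return. Choosing its length to correct the residue, and padding as above to reach order $\ge 2k$, gives the claim. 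Either way the construction is routine; the only point requiring care is bookkeeping the residue of $|R|$ modulo $k-\ell$ and checking that every glued junction still respects the tight-walk condition (consecutive $k$-windows forming edges), which follows because at each gluing point the two pieces share an ordered supported set of size $k-1$, exactly $\ell$ or more vertices, and the relevant windows are edges by construction.

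The main obstacle is not conceptual but notational: keeping track of which ordered supported set of size $k-1$ (or up to $k$) sits at each seam, and verifying that the length adjustments are done with a block whose size is genuinely free modulo $k-\ell$ rather than pinned down by the edge structure. The closed-walk-based-at-an-edge idea handles this cleanly because extending an ordered supported set by one vertex via $\delta^*(G) > n/2$ is always possible, so lengths $\equiv q \pmod{k-\ell}$ are all attainable; one just has to make sure the return to the original ordered $(k-1)$-set is feasible, which again uses $\delta^*(G) > n/2$ to find an edge completing the last $k-1$ chosen vertices back to $x_1 \dots x_{k-1}$ (or, more carefully, to route through a fixed edge twice). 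I would write the proof using this closed-walk approach, citing \Cref{prop:initial walk supported sets} for the two ``halves'' and \Cref{obs:vx tight absorber}/\Cref{prop:sequence tight absorber}-style greedy extensions for the length correction and the final padding to order $\ge 2k$.
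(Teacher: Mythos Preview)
Your high-level plan---build some tight walk from $S$ to $T$ via \Cref{prop:initial walk supported sets} and then correct its residue modulo $k-\ell$ using the absorber machinery of \Cref{prop:sequence tight absorber}---is the same as the paper's. However, each of your three concrete correction mechanisms has a gap.

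In your first approach you ask for a tight walk $W$ from a prescribed $S'$ to a prescribed $T'$ of prescribed residue modulo $k-\ell$; but this is precisely the proposition you are proving. \Cref{prop:sequence tight absorber} gives an absorber of prescribed residue with \emph{unspecified} endpoints, and the ``padding trick'' at the end of its proof prepends vertices at one end, thereby changing that end---so neither yields a walk between fixed $S'$ and $T'$ of chosen residue. In your second approach, the existence of a closed tight walk based at $x_1\dots x_{k-1}$ of any prescribed residue is not justified: $\delta^*(G)>n/2$ lets you extend by one vertex at a time, but ``eventually return'' requires the last $k-1$ vertices to be exactly $x_1,\dots,x_{k-1}$ in order, and nothing guarantees a specific $x_i$ lies in the relevant vertex-neighbourhood. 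Routing through the fixed edge $f$ repeatedly adds multiples of $k$ to the length, which only hits residues that are multiples of $\gcd(k,k-\ell)$---not all residues in general (e.g.\ $k=4$, $\ell=2$). Finally, your padding-to-$2k$ step prepends vertices before $S$, so the resulting walk no longer joins $S$ and $T$.

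The paper avoids all of this by reversing the order of operations. It first takes an arbitrary $(k-\ell)$-sequence $U=v_1\dots v_{k-\ell}$ and applies \Cref{prop:sequence tight absorber} to obtain a tight $U$-absorber $W_0\dots W_{k-\ell}$ (any residue, no control on endpoints), then uses \Cref{prop:initial walk supported sets} to attach walks $R_S$ from $S$ to the first $k-1$ vertices of $W_0$ and $R_T$ from the last $k-1$ vertices of $W_{k-\ell}$ to $T$. Only \emph{after} the full walk $R_S W_0\dots W_{k-\ell} R_T$ is assembled does one read off its residue $r$, set $p=q-r\pmod{k-\ell}$, and insert exactly $v_1,\dots,v_p$ into their slots, yielding $R_S W_0 v_1 W_1\dots v_p W_p W_{p+1}\dots W_{k-\ell} R_T$. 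The point you are missing is that the absorber is not used to hit a target residue upfront, but to adjust by any amount in $\{0,\dots,k-\ell-1\}$ \emph{after} the walk is complete, without touching either endpoint. The order $\ge 2k$ then comes for free, since already $|R_S|\ge s+k+(k-1)\ge 2k$.
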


\begin{proof}
    Consider any sequence of $k-\ell$ vertices $U=v_1\dots v_{k-\ell}$. Applying \Cref{prop:sequence tight absorber}, we can find a tight $U$-absorber $W_0 \dots W_{k-\ell}$ whose order is congruent to $k\pmod{k-\ell}$. Crucially, by construction, the same is true of the tight walk  $W_0v_1W_1\dots W_{k-\ell-1}v_{k-\ell}W_{k-\ell}$.
	Let $S'$ be the ordered supported set corresponding to the first $k - 1$ vertices of $W_0$ and $T'$ be the ordered supported set corresponding to the last $k - 1$ vertices of $W_{k-\ell}$. By \Cref{prop:initial walk supported sets}, we have tight walks $R_S$ and $R_T$ of order at least $k$ that join $S$ to $S'$ and $T'$ to $T$ respectively. Consequently, the tight walk $R_S W_0\dots W_{k-\ell} R_T$ joins $S$ and $T$. Suppose that the order of this walk is congruent to $r\pmod{k-\ell}$, and set $p=q-r\pmod{k-\ell}$. Then the walk $R W_0 v_1 W_1\dots W_{p-1} v_p W_p W_{p+1} W_{p+2}\dots W_{k-\ell}Q_{k-\ell} R'$ meets the conditions we need.
\end{proof}

We now put it all together. As earlier, we let $E^*(G)$ denote the set of ordered edges of $G$. We wish to find a tight walk that contains every ordered edge, $e\in E^*(G)$, has several (tight) absorbers for each $(k-\ell)$-sequence, and whose order is congruent to $k\pmod{k-\ell}$.

\begin{lem}
\label{lem:final tight walk}
    Let $G$ be an $n$-vertex $k$-graph with $\delta^*(G)>n/2$ and no isolated vertices, where $n\ge 4k$. Let $\ell\in[k-1]$ and let $d$ be any positive integer. Suppose $S=u_1\dots u_s$ and $T=v_1\dots v_t$ are ordered supported sets of order $s$ and $t$ respectively, where $s,t\in[k]$. Then there exists a tight walk $W$ that satisfies the following conditions.
    \begin{enumerate}[label=\textnormal{(T\arabic*)}, ref=(T\arabic*)]
        \item\label{itm:T1} $W$ joins $S$ and $T$,
        \item\label{itm:T2} $|W|\equiv k\pmod{k-\ell}$,
        \item\label{itm:T3} for every ordered edge $e\in E^*(G)$, the walk $W$ contains a subwalk $W_e$ corresponding to $e$, such that these subwalks are pairwise disjoint, and the first vertex of each such subwalk is at a position congruent to $1\pmod{k-\ell}$ in $W$,
        \item\label{itm:T4} for each $(k-\ell)$-sequence $U$, the walk $W$ contains at least $d$ subwalks that are tight $U$-absorbers, all such subwalks are pairwise disjoint from each other, and the first vertex of each of these subwalk is at a position congruent to $1\pmod{k-\ell}$ in $W$, and 
        \item\label{itm:T5} the subwalks in~\ref{itm:T3} and~\ref{itm:T4} are pairwise disjoint from each other.
    \end{enumerate}
\end{lem}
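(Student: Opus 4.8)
The plan is to construct $W$ by stringing together, in a single long tight walk, a sequence of ``gadget'' blocks: one block for each ordered edge $e \in E^*(G)$ (realising property \ref{itm:T3}), and $d$ blocks for each $(k-\ell)$-sequence $U$ (realising property \ref{itm:T4}), all glued to each other and to the prescribed endpoints $S$ and $T$ by connecting walks from \Cref{prop:initial walk supported sets} and \Cref{prop:correct order walk supported sets}. Since $E^*(G)$ and the set of $(k-\ell)$-sequences are both finite, this is a finite concatenation, so no limiting argument is needed.

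First I would fix an enumeration $e^{(1)}, \dots, e^{(p)}$ of $E^*(G)$ and an enumeration $U^{(1)}, \dots, U^{(q)}$ of all $(k-\ell)$-sequences on $V(G)$, and for each $U^{(j)}$ fix $d$ copies of a tight $U^{(j)}$-absorber $A^{(j,1)}, \dots, A^{(j,d)}$, each chosen via \Cref{prop:sequence tight absorber} to have order congruent to, say, $k \pmod{k-\ell}$ (this congruence is exactly what \Cref{prop:sequence tight absorber} allows us to prescribe, and it makes the bookkeeping in step \ref{itm:T2} clean). Each gadget block is just a short tight walk: for an ordered edge $e = x_1 \dots x_k$, the block is the single-edge walk $x_1 \dots x_k$; for an absorber it is the walk $W_0 \dots W_s$ produced above. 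Now I would splice these blocks together in the order
\[
S \;\rightsquigarrow\; e^{(1)} \;\rightsquigarrow\; e^{(2)} \;\rightsquigarrow\; \cdots \;\rightsquigarrow\; e^{(p)} \;\rightsquigarrow\; A^{(1,1)} \;\rightsquigarrow\; \cdots \;\rightsquigarrow\; A^{(q,d)} \;\rightsquigarrow\; T,
\]
where each ``$\rightsquigarrow$'' denotes a connecting tight walk joining the last $k-1$ vertices of the previous block (an ordered supported set, since every block ends inside an edge) to the first $k-1$ vertices of the next block, obtained from \Cref{prop:initial walk supported sets}; this is legitimate because $\delta^*(G) > n/2$ and $n \ge 4k$. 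The concatenation of a tight walk with another tight walk sharing the appropriate $k-1$ vertex overlap is again a tight walk — and in fact the connecting walks from \Cref{prop:initial walk supported sets} are designed so that the junctions are valid — so the resulting object $W$ is a tight walk joining $S$ and $T$, giving \ref{itm:T1}. Because each $e^{(i)}$ appears verbatim as a block, and each $U^{(j)}$-absorber appears $d$ times as a block, and distinct blocks occupy disjoint index ranges in $W$, properties \ref{itm:T3}, \ref{itm:T4}, \ref{itm:T5} follow, except for the two positional congruence requirements.

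To handle the positional congruences in \ref{itm:T3} and \ref{itm:T4} together with the global length congruence \ref{itm:T2}, the key device is that a connecting walk joining two ordered supported sets can be chosen with its order prescribed modulo $k-\ell$: this is precisely \Cref{prop:correct order walk supported sets}. So I would build $W$ left to right, and just before inserting the $i$-th gadget block I would choose the preceding connecting walk via \Cref{prop:correct order walk supported sets} with the residue $q$ picked so that the first vertex of the incoming block lands at a position congruent to $1 \pmod{k-\ell}$ in $W$; concretely, if the current length of the partial walk (counted as number of vertices, minus the $k-1$-vertex overlap convention one is using for concatenation) is $L$, one wants the connecting walk to contribute a number of fresh vertices making the running total hit the right residue, which \Cref{prop:correct order walk supported sets} permits since it gives every residue in $\{0,\dots,k-\ell-1\}$. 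After all gadget blocks are placed, the final connecting walk to $T$ is chosen — again via \Cref{prop:correct order walk supported sets} — with residue selected so that $|W| \equiv k \pmod{k-\ell}$, giving \ref{itm:T2}. One small consistency check: the $U$-absorbers were chosen with order $\equiv k \pmod{k-\ell}$, which ensures that deleting the absorbed-vertex-free version versus the absorbed version does not disturb length residues in a way that conflicts with \ref{itm:T2}; and since we control the residue of every connecting walk independently, there is enough freedom to meet all constraints simultaneously.

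The main obstacle — though it is more a matter of care than of difficulty — is the bookkeeping of index positions: ``disjoint subwalks'' here means disjoint index ranges (vertices may repeat), and the concatenation convention (whether consecutive tight walks overlap in $k-1$ vertices or are simply written one after another, with the overlap implicit) must be fixed once and used consistently, so that the phrase ``first vertex at a position congruent to $1 \pmod{k-\ell}$'' is unambiguous and the residues propagate correctly through each splice. I would state the convention explicitly at the start of the proof, verify that splicing a tight walk $P$ ending in the ordered supported set $Z$ with a connecting walk from \Cref{prop:initial walk supported sets}/\Cref{prop:correct order walk supported sets} starting at $Z$ yields a tight walk whose length is the sum of lengths minus $(k-1)$, and then the residue arithmetic is a routine induction along the sequence of blocks. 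Everything else is a direct appeal to \Cref{obs:vx tight absorber}, \Cref{prop:sequence tight absorber}, \Cref{prop:initial walk supported sets} and \Cref{prop:correct order walk supported sets}, all of which only require $\delta^*(G) > n/2$ and $n \ge 4k$, exactly the hypotheses we are given.
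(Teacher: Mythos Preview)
Your proposal is correct and follows essentially the same approach as the paper: both string together a list of gadget blocks (one for each ordered edge, $d$ for each $(k-\ell)$-sequence via \Cref{prop:sequence tight absorber}) using connecting walks from \Cref{prop:correct order walk supported sets} with residues chosen so that each block starts at a position $\equiv 1 \pmod{k-\ell}$ and the final length is $\equiv k \pmod{k-\ell}$. The only cosmetic difference is that the paper computes the required residues for all connectors upfront (fixing $|Q_0|\equiv k-1$, $|Q_i|\equiv k-2$, $|Q_x|\equiv 2k-1$ and then stripping the $(k-1)$-vertex overlaps), whereas you describe picking each connector's residue adaptively from the running total; both work.
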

\begin{proof}
    Let $\cP=\{P_1,\dots, P_x\}$ be a collection of tight walks satisfying the following properties.
    \begin{itemize}
        \item $|P_i|\equiv k\pmod{k-\ell}$ for all $i\in[x]$,
        \item for each $e\in E^*(G)$, there exist some distinct $i_e \in [x]$ such that $P_{i_e} = e$,, 
		\item for each $(k-\ell)$-sequence $U$, there exists some $I_U \subseteq [x]$ such that $|I_U|\ge d$ and every $P_i$ with $i \in I_U$ is a tight $U$-absorber, and
        \item for any two $(k-\ell)$-sequences $U\ne U'$, the sets $I_U$ and $I_{U'}$ are disjoint, and $i_e \notin I_U$ for any ordered edge $e$ and $(k - \ell)$-sequence $U$.
    \end{itemize}
    Observe that we can always find such a collection $\cP$ using \Cref{prop:sequence tight absorber}. Then, by \Cref{prop:correct order walk supported sets}, we may find a set of tight walks $\cQ=\{Q_0,\dots, Q_x\}$ such that $Q_0$ joins $S$ and the first $k-1$ vertices of $P_1$, $Q_i$ joins the last $k-1$ vertices of $P_i$ and the first $k-1$ vertices of $P_{i+1}$ for all $i\in[x-1]$, $Q_x$ joins the last $k-1$ vertices of $P_x$ and $T$ and $|Q_i|\ge 2k$ for all integers $i \in [0,x]$. Additionally, we also impose that $|Q_0| \equiv k-1 \pmod{k-\ell}$, $|Q_i| \equiv k - 2 \pmod{k - \ell}$ for every $i \in [x-1]$, and $|Q_x| \equiv 2k-1 \pmod{k - \ell}$. Let $Q_0'$ be obtained by removing the last $k - 1$ vertices from $Q_0$, so $|Q_0'| \equiv 0 \pmod{k - \ell}$; let $Q_i'$ be obtained by removing the first and last $k - 1$ vertices in $Q_i$ for $i \in [x-1]$, so $|Q_i'| \equiv -k \pmod{k - \ell}$; and let $Q_x'$ be obtained by removing the first $k - 1$ vertices from $Q_x$, so $|Q_x'| \equiv k \pmod{k - \ell}$.
	Let $W$ be the tight walk $Q_0' P_1 Q_1' \dots P_x Q_x'$. Clearly $W$ joins $S$ and $T$ by construction. It is also straightforward to see that
    \[
		|W|\equiv \sum_{j \in [0,x]}|Q_j'| + \sum_{j \in [x]}|P_j| \equiv k\pmod{k-\ell}.
    \]
    Furthermore, for every $i\in [x]$, we see that the first vertex of $P_i$ is at position
    \[
        |Q_0'|+\sum_{j=1}^{i-1}|P_j|+\sum_{j=1}^{i-1}|Q_j'| + 1
		\equiv 1 \pmod{k-\ell}.
    \] 
    in $W$. Thus, $W$ is the desired walk.
\end{proof}

Next, by applying \Cref{obs:tight to ell walk}, we immediately obtain the corresponding statement for $\ell$ walks.
\begin{cor}
\label{cor:final ell walk}
        Let $G$ be an $n$-vertex $k$-graph with $\delta^*(G)>n/2$ and no isolated vertices, where $n\ge 4k$. Let $\ell\in[k-1]$ and let $d$ be any positive integer. Suppose $S=u_1\dots u_s$ and $T=v_1\dots v_t$ are ordered supported sets of order $s$ and $t$ respectively, where $s,t\in[k]$. Then there exists an $\ell$-walk $W$ that satisfies the following conditions.
    \begin{enumerate}[label=\textnormal{(W\arabic*)}, ref=(W\arabic*)]
        \item\label{itm:W1} $W$ joins $S$ and $T$,
        \item\label{itm:W2} for every ordered edge $e\in E^*(G)$, the walk $W$ contains a subwalk corresponding to $e$ and all such subwalks are pairwise disjoint,
		\item\label{itm:W3} for each $(k-\ell)$-sequence $U$, the walk $W$ contains at least $d$ subwalks that are $U$-absorbers and all such subwalks are pairwise disjoint, and
        \item\label{itm:W4} the subwalks from claims~\ref{itm:W2} and~\ref{itm:W3} are pairwise disjoint from each other
    \end{enumerate}
\end{cor}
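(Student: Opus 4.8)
The plan is to obtain this as an essentially immediate consequence of \Cref{lem:final tight walk} and \Cref{obs:tight to ell walk}. First I would apply \Cref{lem:final tight walk} to $G$, $\ell$, $d$ and the ordered supported sets $S$, $T$, obtaining a tight walk $W$ satisfying \ref{itm:T1}--\ref{itm:T5}. Since $|W| \equiv k \pmod{k-\ell}$ by \ref{itm:T2}, \Cref{obs:tight to ell walk} tells us that the very same vertex sequence $W$ is an $\ell$-walk, whose edges are precisely the length-$k$ subsequences starting at the positions of $W$ congruent to $1 \pmod{k-\ell}$. It then remains to translate \ref{itm:T1}--\ref{itm:T5} into \ref{itm:W1}--\ref{itm:W4}.

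Properties \ref{itm:W1} and \ref{itm:W4} are immediate: the condition of joining $S$ and $T$ only constrains the first $s$ and last $t$ vertices, so it reads the same for tight walks and for $\ell$-walks, and pairwise disjointness of the relevant subwalks is inherited verbatim from \ref{itm:T5}. For \ref{itm:W2}, given an ordered edge $e \in E^*(G)$, property \ref{itm:T3} supplies a subwalk of $W$ equal to $e$ that begins at a position congruent to $1 \pmod{k-\ell}$ in $W$; by the description of the edges of $W$ above, this subwalk is exactly one of the edges in the $\ell$-walk decomposition of $W$, so $W$ contains $e$ as an $\ell$-walk edge, and pairwise disjointness of these subwalks follows from \ref{itm:T3}.

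The only point needing a brief argument is \ref{itm:W3}, namely that every tight $U$-absorber subwalk produced by \ref{itm:T4} is also a $U$-absorber in the $\ell$-walk sense. Fix a $(k-\ell)$-sequence $U$ and let $(W_0, \dots, W_{k-\ell})$ be one of the at least $d$ such tight $U$-absorbers contained in $W$, so that $W_0 \dots W_{k-\ell}$ and the walk $W^{+}$ obtained from it by inserting the $i$-th entry of $U$ between $W_{i-1}$ and $W_i$ for each $i \in [k-\ell]$ are both tight walks. By the construction in \Cref{lem:final tight walk} (these subwalks are taken from the family $\cP$, all of whose members have order $\equiv k \pmod{k-\ell}$) we have $|W_0 \dots W_{k-\ell}| \equiv k \pmod{k-\ell}$, hence also $|W^{+}| = |W_0 \dots W_{k-\ell}| + (k-\ell) \equiv k \pmod{k-\ell}$; so \Cref{obs:tight to ell walk} turns both tight walks $W_0 \dots W_{k-\ell}$ and $W^{+}$ into $\ell$-walks, which is exactly what it means for $(W_0, \dots, W_{k-\ell})$ to be a $U$-absorber. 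Moreover, since this subwalk starts at a position congruent to $1 \pmod{k-\ell}$ in $W$, its $\ell$-walk edge decomposition as a standalone walk coincides with the restriction to it of that of $W$, so it really is an absorber sitting correctly inside the $\ell$-walk $W$; the pairwise disjointness of these subwalks and their disjointness from those of \ref{itm:W2} come from \ref{itm:T4} and \ref{itm:T5}. I do not anticipate any real obstacle here: the whole deduction is routine, and the only care needed is in tracking the position-congruence conditions so that the edge boundaries of $W$ viewed as an $\ell$-walk line up with those of the subwalks when passing from the tight to the $\ell$-uniform setting.
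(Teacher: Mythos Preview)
Your proposal is correct and follows exactly the approach the paper takes: the paper derives the corollary in one line by applying \Cref{obs:tight to ell walk} to the tight walk furnished by \Cref{lem:final tight walk}. You spell out the verification of \ref{itm:W1}--\ref{itm:W4} in more detail than the paper does (in particular, you correctly observe that the absorber subwalks have order $\equiv k \pmod{k-\ell}$ by inspecting the construction in the proof of \Cref{lem:final tight walk}, a fact not recorded in the statement of \ref{itm:T4} but needed for \ref{itm:W3}), but the substance is identical.
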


Finally, we note that in a regular blow-up of $F$ with $\delta^*(F) > 5s/9$ (say), one can find an $\ell$-path (rather than walk) with similar properties to the above.
\begin{lem}
\label{lem:walk to path Q}
    Let $1/m\ll 1/s\ll \gamma\ll 1/k\le 1/3$. Suppose $F$ is an $s$-vertex $k$-graph with $\delta^*(F)> 5s/9$ and let $F^*$ be a $(\gamma,m)$-regular blow-up of $F$. Let $e_1,e_2\in E(F^*)$ be any two ordered edges. Then there exists an $\ell$-path $Q$ in $F^*$ from $e_1$ to $e_2$ that satisfies the following conditions.
    \begin{enumerate}[label=\textnormal{(Q\arabic*)}, ref=(Q\arabic*)]
        \item \label{itm:Q1} $Q$ joins $e_1$ and $e_2$,
		\item \label{itm:Q2} $|Q|\le \gamma m$, 
        \item \label{itm:Q3} for every ordered edge $e\in E^*(F)$, the path $Q$ contains a subpath corresponding to blow-up of $e$, and these blow-ups are all pairwise disjoint,
        \item \label{itm:Q4} let $\cU=\{U_1,\dots,U_r\}$ be any collection of pairwise disjoint $(k-\ell)$-subsets of $V(F^*)$ such that $r\le k^2 s^k$. Then the path $Q$ contains at least one subpath that is a $U_i$-absorber for each $i\in[r]$, and all these subpaths are pairwise disjoint, and
        \item\label{itm:Q5} the subpaths from claims~\ref{itm:Q3} and~\ref{itm:Q4} are pairwise disjoint from each other.
    \end{enumerate} 
\end{lem}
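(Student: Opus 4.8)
The plan is to first produce a suitable $\ell$-walk $W$ in $F$ using \Cref{cor:final ell walk}, and then blow it up vertex-by-vertex into the desired $\ell$-path $Q$ in $F^*$, while forcing the first and last $k$ vertices of the blow-up to equal $e_1$ and $e_2$. Note first that no $\ell$-path from $e_1$ to $e_2$ can exist if $e_1$ and $e_2$ intersect, so we assume (as in all applications) that they are vertex-disjoint; write $e_1 = a_1 \ldots a_k$ and $e_2 = b_1 \ldots b_k$. Since $e_1, e_2 \in E(F^*)$, their projections $\phi(e_1) = \phi(a_1)\ldots\phi(a_k)$ and $\phi(e_2) = \phi(b_1)\ldots\phi(b_k)$ are ordered edges of $F$, hence ordered supported sets of size $k$. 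After deleting the isolated vertices of $F$ (which lie in no edge and are irrelevant below), $F$ has no isolated vertices, $\delta^*(F) > 5s/9 > s/2$, and $s \ge 4k$ since $1/s \ll 1/k$, so \Cref{cor:final ell walk} applies with $S = \phi(e_1)$, $T = \phi(e_2)$ and parameter $d := k^2 s^k$: it yields an $\ell$-walk $W = z_1 \ldots z_p$ in $F$ joining $\phi(e_1)$ and $\phi(e_2)$ (in particular $z_i = \phi(a_i)$ for $i \in [k]$ and $z_{p-k+j} = \phi(b_j)$ for $j \in [k]$) and satisfying \ref{itm:W2}, \ref{itm:W3}, \ref{itm:W4}. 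Inspecting the proof of \Cref{cor:final ell walk} and of the propositions it relies on, $W$ is the concatenation of boundedly many (in terms of $k$, $s$, $d$) subwalks, each of length bounded in terms of $k$; hence $|W| \le C(k,s)$ for some function $C$, and the hierarchy $1/m \ll 1/s \ll \gamma$ gives $p = |W| \le \gamma m$. Also $p \ge 2k$ since $W$ contains, e.g., an absorber subwalk.

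Next we would construct $Q = u_1 \ldots u_p$ greedily: set $u_i := a_i$ for $i \in [k]$ and $u_{p-k+j} := b_j$ for $j \in [k]$ (consistent because $e_1, e_2$ are disjoint, $p \ge 2k$, and $a_i \in B_{\phi(a_i)} = B_{z_i}$, $b_j \in B_{\phi(b_j)} = B_{z_{p-k+j}}$), and for each remaining $i$ in increasing order pick $u_i \in B_{z_i}$ distinct from all previously chosen vertices, which is possible as fewer than $p + 2k$ vertices have been chosen so far while $|B_{z_i}| \ge (1-\gamma)m > p + 2k$ (using $|W| \le \gamma m$ and $\gamma < 1/2$). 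Since $W$ is an $\ell$-walk, each edge $z_{j+1}\ldots z_{j+k}$ of $W$ has $k$ distinct vertices, so $u_{j+1}\ldots u_{j+k}$ is a transversal of the corresponding parts and thus an edge of $F^*$; as the $u_i$ are pairwise distinct, $Q$ is an $\ell$-path in $F^*$ and a blow-up of $W$. More generally, blowing up any $\ell$-walk of $F$ with respect to the parts $\{B_v\}_{v \in V(F)}$ gives an $\ell$-walk of $F^*$, a fact we would use repeatedly.

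It then remains to verify \ref{itm:Q1}--\ref{itm:Q5}. Property \ref{itm:Q1} holds because the first and last edges of $Q$ are the blow-ups of the first and last edges of $W$, namely $\phi(e_1)$ and $\phi(e_2)$, and these blow-ups equal $e_1$ and $e_2$ by our choice of endpoints; \ref{itm:Q2} is $|Q| = |W| \le \gamma m$. For \ref{itm:Q3}, given $e \in E^*(F)$, property \ref{itm:W2} supplies a subwalk of $W$ equal to $e$, on some indices $i, \ldots, i+k-1$, so $u_i \ldots u_{i+k-1}$ is a blow-up of $e$ inside $Q$; as distinct indices of $W$ yield distinct vertices of $Q$, disjointness of index sets in $W$ gives vertex-disjointness in $Q$. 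For \ref{itm:Q4}, given pairwise disjoint $(k-\ell)$-subsets $U_1, \ldots, U_r$ of $V(F^*)$ with $r \le k^2 s^k$, fix for each $i$ an ordering $U_i = \{w^{(i)}_1, \ldots, w^{(i)}_{k-\ell}\}$ and set $\sigma^{(i)} := \phi(w^{(i)}_1)\ldots\phi(w^{(i)}_{k-\ell})$. By \ref{itm:W3} (together with \ref{itm:W4}), $W$ contains at least $d = k^2 s^k$ pairwise disjoint $\sigma$-absorber subwalks for every $(k-\ell)$-sequence $\sigma$, with all these absorbers pairwise disjoint across all $\sigma$ and disjoint from the subwalks used in \ref{itm:Q3}; since at most $r \le d$ of the indices $i$ share any given value of $\sigma^{(i)}$, we would greedily match each $U_i$ to a distinct $\sigma^{(i)}$-absorber subwalk of $W$. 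Its blow-up $(W_0, \ldots, W_{k-\ell})$ inside $Q$ is a $U_i$-absorber: writing the chosen $F$-absorber as $(A_0, \ldots, A_{k-\ell})$, the sequences $W_0 \ldots W_{k-\ell}$ and $W_0 w^{(i)}_1 W_1 \ldots w^{(i)}_{k-\ell} W_{k-\ell}$ are blow-ups of the $\ell$-walks $A_0 \ldots A_{k-\ell}$ and $A_0 \sigma^{(i)}_1 A_1 \ldots \sigma^{(i)}_{k-\ell} A_{k-\ell}$ of $F$ (using $w^{(i)}_m \in B_{\sigma^{(i)}_m}$), hence $\ell$-walks of $F^*$, and $|W_m| = |A_m| \ge k-1$; pairwise disjointness again follows from that of the index sets. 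Finally \ref{itm:Q5} follows from \ref{itm:W4} via the same correspondence.

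The step I expect to be the main obstacle is the bookkeeping behind \ref{itm:Q4}: one must take $d$ large enough ($d = k^2 s^k$ here) to dominate the size of an arbitrary admissible family even when all its members share a single projection type, so the greedy matching to disjoint absorbers succeeds; one must check that a blown-up $F$-absorber of the right projection type genuinely absorbs the corresponding $F^*$-set, which is exactly the statement that blow-ups of $\ell$-walks of $F$ are $\ell$-walks of $F^*$; and one must keep $|W|$ below $\gamma m$, for which it is enough to observe that the construction behind \Cref{cor:final ell walk} has bounded depth, so $|W|$ depends only on $k$ and $s$, and then invoke $1/m \ll 1/s$.
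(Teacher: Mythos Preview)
Your proposal is correct and follows essentially the same approach as the paper: apply \Cref{cor:final ell walk} in $F$ with $S=\phi(e_1)$, $T=\phi(e_2)$ and $d=k^2 s^k$ to get an $\ell$-walk $W$ of length bounded in terms of $s$ and $k$, then lift $W$ vertex-by-vertex to an $\ell$-path $Q$ in $F^*$, forcing the first and last $k$ vertices to coincide with $e_1$ and $e_2$. Your treatment is in fact slightly more careful than the paper's in two places: you explicitly force the endpoints to be $e_1,e_2$ (the paper just says ``choose distinct $v_i$ with $\phi(v_i)=w_i$'' and declares \ref{itm:Q1} clear), and you note that $e_1,e_2$ must be assumed disjoint and that isolated vertices of $F$ may be discarded before invoking \Cref{cor:final ell walk}.
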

\begin{proof}
    Since $\delta^*(F)>5s/9>s/2$, we can use \Cref{cor:final ell walk} to find an $\ell$-walk $W$ in $F$ that satisfies \ref{itm:W1}--\ref{itm:W4} with $s$, $k^2 s^k$, $\phi(e_1)$ and $\phi(e_2)$ in place of $n$, $d$, $S$ and $T$ respectively.

    Let $|W|=x$, where $x=x(s,k,\ell)$. We now blow this up to an $\ell$-path $Q$ in $F^*$. Based on our parameter choices, we may suppose that $x\le (1-\gamma)m$. Setting $W=w_1\dots w_x$, for each $i\in [x]$, we choose distinct vertices $v_i\in V(F^*)$ such that $\phi(v_i)=w_i$, which is possible since $x\le(1-\gamma)m\le |\phi^{-1}(w)|$ for every $w\in V(F)$. Then, by choice of $W$, it is clear that $Q=v_1\dots v_x$ is an $\ell$-path in $F^*$ which satisfies~\ref{itm:Q1}--\ref{itm:Q3}. To see \ref{itm:Q4}, we note that $W$ has at least $k^2 s^k$ subwalks that are $\phi(U_i)$-absorbers for each $i\in [r]$. Since $r\le k^2 s^k$ the $\ell$-path $Q$ will contain a distinct absorber for each $U_i$. Then property~\ref{itm:W4} of $W$ immediately implies~\ref{itm:Q5} for $Q$.
\end{proof}

\section{Proof of \Cref{lem:Hamilton paths in blow-ups}}\label{sec:proof Hamilton paths}
    Our goal in this section is to prove \Cref{lem:Hamilton paths in blow-ups}, which asserts that nearly-regular blow-ups of non-extremal graphs with a certain divisibility condition have a Hamilton $\ell$-path (with specified start and end). To avoid having to worry about the singleton part throughout the proof, we now state a version of \Cref{lem:Hamilton paths in blow-ups} for regular blow-ups (i.e.\ where there is no singleton part) and show that it implies \Cref{lem:Hamilton paths in blow-ups}. The rest of the section will then be dedicated to the proof of this new version.  
	As always, given $k$ and $\ell$, we set $t = \floor{\frac{k}{k-\ell}}(k-\ell)$ and recall that $t \ge 3$ for all $k\ge 3$ and $\ell \in [k-1]$ except $(k, \ell) \ne (3,1)$, as this will be fairly important to our proof.

	\begin{lem} \label{lem:Hamilton paths in blow-ups no singleton}
		Let $1/m\ll 1/s\ll \gamma\ll \eps\ll \mu\ll 1/k\le 1/3$, let $\ell \in [k-1]$, and suppose $(k,\ell) \ne (3,1)$. Suppose $F\in \nonexE_{\eps,\mu}(s,k,\ell)$ and $F^*$ is a $(\gamma,m)$-regular blow-up of $F$ satisfying $|\Fs| \equiv k \pmod{k-\ell}$. Let $e_1,e_2\in E(F^*)$ be two disjoint ordered edges. Then there exist a spanning $\ell$-path in $F^*$ from $e_1$ to $e_2$.
	\end{lem}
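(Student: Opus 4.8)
The plan is to produce the spanning $\ell$-path as an ``inflation'' of the long connecting-and-absorbing path supplied by \Cref{lem:walk to path Q}: that path visits a blow-up of every edge of $F$, and near each such blow-up we splice in a spanning $\ell$-path of a complete $k$-partite $k$-graph whose part sizes are dictated by a weighted fractional matching, leaving only a tiny remainder which the path absorbs. First I would record that, since $(k,\ell)\ne(3,1)$, \Cref{obs:useful t info} gives $t\ge3$, so $\delta^*(F)\ge(1-1/t-\eps)s\ge(2/3-\eps)s>5s/9$; hence \Cref{lem:walk to path Q} applies and yields an $\ell$-path $Q$ in $F^*$ joining $e_1$ and $e_2$ with $|Q|\le\gamma m$, containing a pairwise-disjoint blow-up subpath $P_e$ of every ordered edge $e\in E^*(F)$ (and I will use that, by the construction behind \Cref{lem:final tight walk}, the first vertex of $P_e$ sits at a position $\equiv1\pmod{k-\ell}$, so $P_e$ is an edge of the $\ell$-path $Q$), and able to absorb any family of at most $k^2s^k$ pairwise-disjoint $(k-\ell)$-subsets of $V(F^*)$. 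Put $F':=F^*-V(Q)$; since $|V(Q)|\le\gamma m\ll m$ no part is emptied, so $F'$ is a $(2\gamma,m)$-regular blow-up of $F$, and by \Cref{prop:blowup is nonextremal} we have $F'\in\nonexE_{2\eps,\mu'}(|F'|,k,\ell)$ for a suitable $\mu'$; write $B_v':=B_v\setminus V(Q)$ for the parts of $F'$.

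The second step is to build the $k$-partite pieces. A \emph{balanced} tiling of $F'$ would leave $\Theta(\gamma m)$ uncovered vertices, far more than $Q$ can absorb, so instead I would realise the \emph{actual} part sizes $b_v':=|B_v'|$: apply \Cref{cor:wt perfect frac matching} to $F'$ to get a $\bfw^*$-weighted perfect fractional matching $q^*$ of $F'$, and project it to $F$ by setting $\hat q(e)=\sum q^*(e^*)$ over all ordered edges $e^*$ of $F'$ that are blow-ups of $e$. Summing the defining identity of $q^*$ over the vertices of a fixed part of $F'$ gives $\sum_{i\in[k]}\sum_{e\in E_i^v(F)}w_i\hat q(e)=b_v'$ for every $v\in V(F)$. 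Setting $\mathrm{bl}_e=\floor{\hat q(e)}$, we have $\sum_{i,e}w_i\mathrm{bl}_e\le b_v'$ with a deficit of at most $\sum_{e\ni v}w_i=O_k(s^{k-1})$ per part; hence for each ordered edge $e=v_1\dots v_k$ with $\mathrm{bl}_e\ge1$ we can carve out of $B_{v_1}',\dots,B_{v_k}'$ pairwise-disjoint sets $A_e\subseteq B_{v_1}'$ of size $(k-1)\mathrm{bl}_e$ and $B_{v_j}^e\subseteq B_{v_j}'$ of size $(t-1)\mathrm{bl}_e$ for $2\le j\le k$, and also incorporate into this piece the $k$ vertices of $P_e$ (decreasing the ``fresh'' target sizes by one each). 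Let $T_e$ be the resulting complete $k$-partite $k$-graph; then $|T_e|=t(k-1)\mathrm{bl}_e$ is divisible by $t(k-1)$ and, since $k-\ell\mid t$, by $k-\ell$. The set $R$ of vertices of $F^*$ in neither $V(Q)$ nor any $T_e$ then satisfies $|R|=O_k(s^k)$, and using $|F^*|\equiv|Q|\equiv k\pmod{k-\ell}$ together with $|T_e|\equiv0\pmod{k-\ell}$, also $|R|\equiv0\pmod{k-\ell}$.

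The third step is to assemble everything. By \Cref{prop:ell cycle in k-partite k-graph} each $T_e$ (with exactly the above part sizes, and $r_e=|T_e|$ divisible by $t(k-1)$) has a spanning $\ell$-cycle $C_e$ containing a prescribed edge $f_e\in B_1^e\times\dots\times B_{t-1}^e\times A_e\times B_t^e\times\dots\times B_{k-1}^e$; using the freedom in assigning blown-up vertices and in which vertex of $e$ plays which part-role, I would arrange $f_e$ to be exactly the edge of $Q$ underlying $P_e$, with matching internal order. Rotating $C_e$ to start at $f_e$ and deleting the fewer than $k-\ell$ trailing vertices left uncovered (these join the remainder) yields a spanning $\ell$-path of $T_e$ whose initial edge is $P_e$; splicing this into $Q$ in place of $P_e$ --- carefully matching the $\ell$-vertex overlaps of $Q$ with the ends of the inserted piece --- and doing so for every relevant $e$, turns $Q$ into an $\ell$-path $Q'$ from $e_1$ to $e_2$ covering all of $V(F^*)$ except a set $R'\supseteq R$ with $|R'|=O_k(s^k)$ and $|R'|\equiv0\pmod{k-\ell}$. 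Finally, partition $R'$ into at most $k^2s^k$ sets of size $k-\ell$ and absorb them into $Q'$ via the absorbers of \Cref{lem:walk to path Q}, producing the desired spanning $\ell$-path of $F^*$ from $e_1$ to $e_2$.

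I expect the main obstacle to be twofold. The more conceptual difficulty is handling the imbalance of $F^*$: because the parts of $F'$ need not be equal, one cannot simply tile with a balanced fractional matching of $F$ --- the fix above is to apply \Cref{cor:wt perfect frac matching} to $F'$ \emph{itself} and project, so that the block sizes realise the true part sizes up to $O_k(s^k)$, which is all the absorber can afford. The more technical difficulty is the splicing: matching up the $\ell$-overlap structure of $Q$ at both ends of each inserted spanning path of $T_e$, which is why I arrange $f_e$ to coincide with $P_e$ (possibly enlarging $P_e$ to a short blow-up subpath of $e$ rather than a single edge, so that the piece can be entered and exited compatibly) and why positions modulo $k-\ell$ must be tracked throughout; here the high supported co-degree of $F$, used via \Cref{lem:walk to path Q}, provides the needed connectivity.
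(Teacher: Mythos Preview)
Your overall architecture matches the paper's proof closely: build $Q$ via \Cref{lem:walk to path Q}, apply \Cref{cor:wt perfect frac matching} to the blow-up $F':=F^*\setminus V(Q)$ itself (not to $F$) so that the projected weights $\hat q$ realise the actual part sizes $|B_v'|$, carve out the $k$-partite pieces $T_e$, cover each $T_e$ by a spanning $\ell$-cycle from \Cref{prop:ell cycle in k-partite k-graph}, splice these into $Q$, and absorb the $O_k(s^k)$ remainder. You also correctly identify that applying the matching to $F'$ rather than $F$ is the key to keeping the remainder small enough for the absorbers. The order in which you absorb and splice is reversed relative to the paper, but since the absorber subpaths and the edge-subpaths are disjoint (property \ref{itm:Q5}) either order works.

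There is, however, a genuine gap in the splicing step. You want the special edge $f_e$ of $C_e$ to coincide with the subpath $P_e\subseteq Q$, which is a blow-up of the ordered edge $e=v_1\dots v_k$. But \Cref{prop:ell cycle in k-partite k-graph} forces the part of size $r/t$ (your $A_e\subseteq B_{v_1}'$) to sit at position $t$ of $f_e$; hence $f_e$ is necessarily a blow-up of $e^{+t}:=v_2\dots v_t\,v_1\,v_{t+1}\dots v_k$, not of $e$. Your proposed fix, ``freedom in which vertex of $e$ plays which part-role'', does not exist: the $\bfw^*$-weighted matching assigns weight $k-1$ specifically to the \emph{first} coordinate of each ordered edge, so the large part must be $B_{v_1}^e$, and unless $t=k$ (the tight case) the sizes $(k-1)\mathrm{bl}_e$ and $(t-1)\mathrm{bl}_e$ differ. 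The paper resolves this by splicing $T_e$ not at the blow-up of $e$ in $Q$ but at the blow-up of $e^{+t}$: since $Q$ contains a disjoint blow-up of \emph{every} ordered edge, these insertion points exist and are pairwise disjoint (the map $e\mapsto e^{+t}$ is a bijection on $E^*(F)$). Concretely, if $d$ is the copy of $e^{+t}$ in $Q$ with neighbouring edges $f_1,f_2$, and $c=f_e$ is the special edge of $C_e$, then the paper replaces $f_1\,d\,f_2$ by $f_1\,d\,(C_e\setminus c)\,c\,f_2$; this works in the blow-up because $c$ and $d$ lie over the same ordered edge of $F$.

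Two smaller remarks. First, incorporating the vertices of $P_e$ into $T_e$ is an unnecessary complication; the paper keeps $T_e\subseteq F'$ disjoint from $V(Q)$, which makes the divisibility and disjointness bookkeeping cleaner and avoids the off-by-one adjustments you mention. Second, there are no ``trailing vertices'' to delete when passing from $C_e$ to a path: since $|T_e|=t(k-1)\mathrm{bl}_e$ is divisible by $k-\ell$, removing one edge from the $\ell$-cycle $C_e$ yields a spanning $\ell$-path of $T_e$ with no leftover vertices.
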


	\begin{proof}[Proof of \Cref{lem:Hamilton paths in blow-ups} using \Cref{lem:Hamilton paths in blow-ups no singleton}]
		If $F^*$ does not contain a singleton part, then we are done, so we may assume that it does. Let $u^*\in V(F),V(F^*)$ be the vertex corresponding to the singleton part. Since $F\in \nonexE_{\eps,\mu}(n,k,\ell)$, we know $\delta^*(F) \ge (1-1/t-\eps)s$, where $t\ge 3$ as $(k,\ell)\ne (3,1)$. Let $H=F-u^*$. It is easy to check that $H \in \nonexE_{2\eps, \mu/2}(n,k,\ell)$. In particular, $\delta^*(H)\ge \delta^*(F)-1 > s/2$. 
    
		We claim that there is a tight walk $W$ in $F$ such that $W$ starts in $\phi(e_1)$, ends in some edge $f$ in $H$, contains exactly one instance of $u^*$, and has order $k \pmod{k - \ell}$.
		Indeed, let $e^* = \{u^*, u_1, \ldots, u_{k-1}\}$ be any edge in $F$ that contains $u^*$, and let $f = \{u_1, \ldots, u_k\}$ be any edge in $F$ that contains $\{u_1, \ldots, u_{k-1}\}$ and avoids $u^*$ (which exists by the assumption on $\delta^*(F)$). By \cref{prop:correct order walk supported sets}, we can find a tight walk $W'$ in $H$ from $\phi(e_1)$ to $u_1 \ldots u_{k-1}$ such that $|W'|\equiv -1 \pmod{k-\ell}$. Then $W = W' u^* u_1 \ldots u_k$ satisfies the requirements. Write $W = w_1 \ldots w_x$ where $|W| = x$ can be bounded as a function of $s$, $k$ and $\ell$, and we may thus assume $x \le \gamma m$.
    
		Our next step is to convert this into a tight path in $F^*$ that avoids $e_2$. For each $i\in [x]$, we choose a vertex $v_i\in V(F^*)\setminus V(e_2)$ such that $\phi(v_i)=w_i$, all the $v_i$ are distinct, and the first $k$ vertices correspond to $e_1$. This is possible since $x \le \gamma m-k\le |\phi^{-1}(w)- V(e_2)|$ for every $w\in V(F)\setminus \{u^*\}$, and recalling that $u^*$ appears precisely once in $W$. Then $P=v_1 \dots v_x$ is a tight path in $F^* - V(e_2)$ from $e_1$ to $f^*$ (where $\phi(f^*)=f$), it contains $u^*$, and $|P| \equiv k \pmod{k - \ell}$. By \Cref{obs:tight to ell walk}, the sequence $v_1 \ldots v_x$ can be thought of as an $\ell$-path from $e_1$ to $f^*$.

		Set $H^*=F^*-\{v_1, \ldots, v_{x-k}\}$. Notice that if $H^*$ contains a spanning $\ell$-path from $f^*$ to $e_2$, then $F^*$ contains a spanning $\ell$-path from $e_1$ to $e_2$ (obtained by adding $v_1 \ldots v_{x-k}$ at the beginning).
		By \Cref{lem:Hamilton paths in blow-ups no singleton}, it will suffice to show that $|H^*|\equiv k\pmod{k-\ell}$ and that $H^*$ is a $(2\gamma,m)$-regular blow-up of $H$, since $f^*,e_2\in E(H^*)$ are disjoint ordered edges by choice. The first condition is immediate as $|H^*|=|F^*|-(x-k) \equiv |F^*| \pmod{k-\ell} \equiv k \pmod{k-\ell}$. Next, notice that $H^*$ is indeed a blow-up of $H$, as $u^* \notin V(H^*)$. Moreover, each $w\in V(H)$ is blown up to at least $(1-\gamma)m$ vertices in $F^*$, meaning that there are at least $(1-\gamma)m -x\ge (1-2\gamma) m$ copies of $w$ left in $H^*$. 
	\end{proof}

	Before turning to the proof of \Cref{lem:Hamilton paths in blow-ups no singleton}, we recall some notation.
	As defined in \Cref{sec:non-extremal}, we let $\phi:V(\Fs)\to V(F)$ denote the projection map, that is, $\phi(v')=v$ for all $v'$ in the independent set corresponding to $v$. As in \Cref{sec:Farkas}, we use $E^*(H)$ to denote the set of ordered edges of a given $k$-graph $H$, and, for every $v\in V(H)$ and $i\in[k]$, we write $E_i^v(H)\subseteq E^*(H)$ for the set of ordered edges with $v$ as their $i$th vertex. Finally, for any ordered edge $e=v_1\dots v_k\in E^*(F)$, we use $e^{+t}\in E^*(F)$ to be the ordered edge $v_2\dots v_{t-1} v_1 v_t\dots v_k$.

To begin with, we will integrate blow-ups of edges of $F$ into the $\ell$-path $Q$ in $\Fs$ obtained by \cref{lem:walk to path Q}. The idea is as follows. Every ordered edge $e\in E^*(F)$ is blown up to a $k$-partite $k$-graph in $\Fs$. We will consider a subgraph of this $k$-graph (obtained by paring down the sizes of the parts based on the $\bfw^*$-weighted perfect fractional matching) that will be almost perfectly balanced except for one part, and will find suitable a spanning $\ell$-path in this subgraph using \Cref{prop:ell cycle in k-partite k-graph}. The idea will then be to replace the edge of $Q$ that corresponds to $e^{+t}$ with this spanning $\ell$-path. Overall, we will partition almost all of $V(F^*)$ into such $k$-partite $k$-graphs, and replacing the relevant edges of $Q$ with the corresponding spanning $\ell$-paths will provide an $\ell$-path that covers all but at most a few vertices of $V(F^*)$. We will then use the absorption features of $Q$ to absorb these into the path as well. 

\begin{proof}[Proof of \Cref{lem:Hamilton paths in blow-ups no singleton}]
    Let $e_1$ and $e_2$ be as in the statement. As $F \in \nonexE_{\eps, mu}(s,k,\ell)$ and $t\ge 3$, we can apply \Cref{lem:walk to path Q} to find an $\ell$-path $Q$ in $\Fs$ from $e_1$ to $e_2$ satisfying \ref{itm:Q1}--\ref{itm:Q5} with $e_1$ and $e_2$. By~\ref{itm:Q2}, we know that $|V(Q)|\le \gamma m$.
	Write $\Fss := \Fs \setminus V(Q)$; so $\Fss$ is a $(2\gamma, m)$-regular blow-up of $F$. By \cref{prop:blowup is nonextremal} and \cref{cor:wt perfect frac matching}, there is a $\bfw^*$-weighted perfect fractional matching in $\Fss$, where we recall that $\bfw^*=(w_1,\dots,w_k)$ with $w_1=k-1$ and $w_i=t-1$ for all $2\le i\le k$. Denote this $\bfw^*$-weighted perfect fractional matching $q$.

	For each vertex $v\in V(F)$, we let $B_v\subseteq V(\Fss)$ denote the vertex subset corresponding to the blow-up of $v$. 
    For every ordered edge $e\in E^*(F)$, define 
    \[
        \hat{q}(e)=\sum_{\substack{e'\in E^*(\Fss)\\ \phi(e') = e}} q(e').
    \]
    Observe that
    \begin{align*}
        \sum_{i\in [k]}\sum_{e\in E^v_i(F)}w_i \hat{q}(e)&=\sum_{i\in [k]}\sum_{e\in E^v_i(F)}\sum_{\substack{e'\in E^*(\Fss)\\ \phi(e') = e }} w_i q(e')\\
        &=\sum_{v'\in B_v}\sum_{i\in [k]}\sum_{e'\in E^{v'}_i(\Fss)} w_i q(e')\\
        &=\sum_{v'\in B_v} 1=|B_v|.
    \end{align*}
    For every vertex $v\in V(F)$, we arbitrarily partition $B_v$ into sets $B_v^e$, for all ordered edges $e$ in $E^*(F)$ that contain $v$, and an additional set $B_v^*$, so that 
    $|B_v^e|=w_i\floor{\hat{q}(e)}$ for every $e\in E_i^v(F)$ and $i\in[k]$, and $B_v^*$ consists of all the remaining vertices. We see that
    \begin{align*}
        |B_v^*|&=|B_v|-\sum_{e\ni v} |B_v^e|\\
        &=\sum_{i\in[k]}\sum_{e\in E_i^v(F)}w_i (\hat{q}(e)-\floor{\hat{q}(e)})\\
        &\le \sum_{i\in[k]}\sum_{e\in E_i^v(F)} (k-1)\\
        &\le k^2 s^{k-1},
    \end{align*}
    since there are at most $s^{k-1}$ ordered edges in $F$ containing $v$ in the $i$th position. Finally, for any ordered edge $e=v_1 \dots v_k\in E^*(F)$, define $T_e\subseteq \Fss$ to be the complete $k$-partite $k$-graph with parts $B_{v_1}^e,\dots,B_{v_k}^e$. 

    We now modify $Q$ into the desired Hamilton $\ell$-path in $F^*$. We will first replace appropriate edges of $Q$ with spanning $\ell$-paths in each $T_e$, which will form an $\ell$-path that spans all the vertices except those in $\bigsqcup_{v\in V(F)} B_v^*$. We then absorb these remaining few vertices into $Q$ using property~\ref{itm:Q4}.

    We start with the latter part, detailing the absorption mechanism. Define $B^*=\bigcup_{v\in V(F)} B_v^*$. We will need the following simple claim.
	\begin{claim}
		$|B^*|$ is divisible by $k-\ell$ and is at most $k^2 s^k$.
	\end{claim}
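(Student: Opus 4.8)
My plan is to deduce both parts of the claim purely from the way the partition of $V(\Fss)$ into the sets $B_v^e$ and $B_v^*$ was set up, together with the divisibility hypothesis $|\Fs|\equiv k\pmod{k-\ell}$ on the blow-up and the elementary fact that $t$ is a multiple of $k-\ell$.

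First I would handle the size bound, which is immediate: since $B^*=\bigsqcup_{v\in V(F)}B_v^*$ and $|V(F)|=s$, summing the estimate $|B_v^*|\le k^2 s^{k-1}$ established just above over all $v\in V(F)$ gives $|B^*|\le s\cdot k^2 s^{k-1}=k^2 s^k$.

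For the divisibility, the key point is that for every ordered edge $e=v_1\dots v_k\in E^*(F)$ the complete $k$-partite $k$-graph $T_e$ satisfies
\[
  |V(T_e)|=\sum_{i\in[k]}|B_{v_i}^e|=\Bigl(\sum_{i\in[k]}w_i\Bigr)\floor{\hat{q}(e)}=(k-1)t\,\floor{\hat{q}(e)},
\]
which is divisible by $k-\ell$ because $t=\floor{\frac{k}{k-\ell}}(k-\ell)$ is. On the other hand, $\{B_v:v\in V(F)\}$ partitions $V(\Fss)$ and each $B_v$ is the disjoint union of the sets $B_v^e$ (over ordered edges $e\ni v$) together with $B_v^*$, so
\[
  |\Fss|=\sum_{e\in E^*(F)}|V(T_e)|+|B^*|,
\]
whence $|B^*|\equiv|\Fss|\pmod{k-\ell}$. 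Finally $\Fss=\Fs\setminus V(Q)$, and since $Q$ is an $\ell$-path we have $|V(Q)|\equiv k\pmod{k-\ell}$ by definition of an $\ell$-walk; combining this with $|\Fs|\equiv k\pmod{k-\ell}$ gives $|\Fss|\equiv 0\pmod{k-\ell}$, and therefore $|B^*|\equiv 0\pmod{k-\ell}$.

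There is no genuine obstacle here — the claim is a short divisibility bookkeeping. The only thing to be mildly careful with is that the identity $|\Fss|=\sum_e|V(T_e)|+|B^*|$ uses the defining property of the partition (the $B_v^e$ are pairwise disjoint and, together with $B_v^*$, exhaust $B_v$), which is immediate from the construction.
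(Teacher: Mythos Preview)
Your proposal is correct and follows essentially the same approach as the paper: both compute $|V(T_e)|=(k-1)t\lfloor\hat q(e)\rfloor$, use the partition $V(\Fss)=\bigsqcup_e V(T_e)\sqcup B^*$ to reduce the divisibility of $|B^*|$ to that of $|\Fss|=|\Fs|-|V(Q)|$, and handle the size bound by summing the estimate $|B_v^*|\le k^2 s^{k-1}$ over the $s$ vertices of $F$.
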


	\begin{proof}
		Observe that
		\[
			\bigsqcup_{v\in V(F)} \bigsqcup_{\substack{e\in E^*(F)\\ e\ni v}} B_{v}^e= \bigsqcup_{e\in E^*(F)} T_{e},
		\]
		implying that $|B^*| = |\Fss| - \sum_{e \in E^*(F)}|T_e|$. Thus, for divisibility, it suffices to show that each $|T_e|$ and $|\Fss|$ are divisible by $k-\ell$.
		For $T_e$, this follows from the following calculation.
		\begin{equation}\label{eq:size Te}
			|T_e|=\sum_{i\in[k]} |B_{v_i}^e|=\sum_{i\in[k]}w_i\floor{\hat{q}(e)}=\floor{\hat{q}(e)}(k-1+(k-1)(t-1))=\floor{\hat{q}(e)}t(k-1).
		\end{equation}
		Since $Q$ is an $\ell$-path, we know $|V(Q)|\equiv k\pmod{k-\ell}$, and by assumption the same is true of $|\Fs|$, which means $|\Fs|-|V(Q)|\equiv 0\pmod{k-\ell}$. This establishes that $|B^*| \equiv 0 \pmod{k - \ell}$. 
		For the upper bound, notice that $|B_v^*|\le k^2 s^{k-1}$ for each $v\in V(F)$. Thus $|B^*|\le |V(F)|\cdot k^2 s^{k-1}=k^2 s^k$.
	\end{proof}

    By the last claim we can construct a partition $B^*=\bigsqcup_{i=1}^{r} W_i$, where each $|W_i|=k-\ell$, and so that $r\le k^2 s^k$. Then, by~\ref{itm:Q4}, we can modify $Q$ to an $\ell$-path $Q'$ from $e_1$ to $e_2$ such that $V(Q') = V(B^*)\cup V(Q)$ and, using~\ref{itm:Q5}, we deduce that $Q'$ retains property~\ref{itm:Q3}.
    Hence, we have 
    \[
        V(F^*)\setminus V(Q')=\bigsqcup_{v\in V(F)} \bigsqcup_{\substack{e\in E^*(f)\\ e\ni v}} B_v^e= \bigsqcup_{e\in E^*(F)} T_e.
    \]
    We move on to the final step, where we replace certain edges of $Q'$ with spanning paths in each $T_e$, forming a spanning $\ell$-path. The following claim is key to this.
    \begin{claim}
    \label{claim:cycle in edge blowup}
        For every ordered edge $e=v_1\dots v_k \in E^*(F)$ for which $T_e$ is non-empty, there exists a spanning $\ell$-cycle $C_e\subseteq T_e$ which contains an ordered edge that is a blow-up of the ordered edge $e^{+t}\in E^*(F)$.
    \end{claim}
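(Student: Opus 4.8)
The plan is to obtain the claim as an immediate consequence of \Cref{prop:ell cycle in k-partite k-graph}; the only substantive task is to check that the parts of $T_e$ have exactly the sizes demanded by that proposition.

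First I would record that non-emptiness of $T_e$ already forces $\floor{\hat q(e)} \ge 1$: each part $B_{v_i}^e$ of $T_e$ has size $w_i\floor{\hat q(e)}$ with $w_i \ge 1$, so if $\floor{\hat q(e)} = 0$ then $T_e$ would have no vertices at all. Hence, by \eqref{eq:size Te}, $r := |T_e| = \floor{\hat q(e)}\,t(k-1)$ is a positive integer divisible by $t(k-1)$ --- precisely the positivity and divisibility hypotheses needed to invoke \Cref{prop:ell cycle in k-partite k-graph}.

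The main step is the identification of parts. Writing $e = v_1 \dots v_k$, I would set $A := B_{v_1}^e$ and $B_i := B_{v_{i+1}}^e$ for $i \in [k-1]$, so that $T_e$ is exactly the complete $k$-partite $k$-graph with parts $A, B_1, \dots, B_{k-1}$. Since $w_1 = k-1$ we have $|A| = (k-1)\floor{\hat q(e)} = r/t$, and since $w_{i+1} = t-1$ for every $i \in [k-1]$ we have $|B_i| = (t-1)\floor{\hat q(e)} = r(t-1)/(t(k-1))$; these are genuine equalities with no rounding precisely because the construction defined $|B_v^e|$ using $\floor{\hat q(e)}$ rather than $\hat q(e)$. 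Applying \Cref{prop:ell cycle in k-partite k-graph} then yields a spanning $\ell$-cycle $C_e \subseteq T_e$ containing an ordered edge lying in $B_1 \times \dots \times B_{t-1} \times A \times B_t \times \dots \times B_{k-1}$; unravelling the identification, this ordered edge lies in $B_{v_2}^e \times \dots \times B_{v_t}^e \times B_{v_1}^e \times B_{v_{t+1}}^e \times \dots \times B_{v_k}^e$, i.e.\ it is a blow-up of the ordered edge $e^{+t} \in E^*(F)$ (which belongs to $E^*(F)$ since it is merely a reordering of the edge $\{v_1, \dots, v_k\}$ of $F$). This is exactly what the claim asserts.

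I do not anticipate any genuine obstacle here: the argument is essentially bookkeeping once \Cref{prop:ell cycle in k-partite k-graph} and \eqref{eq:size Te} are in hand. The one point that requires a little care is to line up the ordering convention for $e^{+t}$ with the position (namely position $t$) at which the $A$-part appears in the ordered edge produced by \Cref{prop:ell cycle in k-partite k-graph}, and to keep straight that here $v_1$ --- the vertex of $e$ carrying the weight $w_1 = k-1$ --- is the one playing the role of the strong independent part $A$ in that proposition.
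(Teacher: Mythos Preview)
Your proposal is correct and takes essentially the same approach as the paper: you verify the divisibility and part-size hypotheses of \Cref{prop:ell cycle in k-partite k-graph} via \eqref{eq:size Te}, apply it with $A = B_{v_1}^e$ and $B_i = B_{v_{i+1}}^e$, and then identify the resulting ordered edge as a blow-up of $e^{+t}$. Your write-up is in fact slightly more careful than the paper's, explicitly noting that non-emptiness forces $\floor{\hat q(e)} \ge 1$ and that $e^{+t}$ is indeed in $E^*(F)$.
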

    \begin{proof}
        As noted in~\eqref{eq:size Te}, we know $|T_e|=\floor{\hat{q}(e)}t(k-1)$, and hence $|T_e|$ is divisible by $t(k-1)$. Additionally, $|B_{v_1}^e|=(k-1)\floor{\hat{q}(e)}=|T_e|/t$ and $|B_{v_i}^e|=(t-1)\floor{\hat{q}(e)}=|T_e|(t-1)/t(k-1)$. Hence, by setting $A=B_{v_1}^e$ and $(B_1,\dots,B_{k-1})=(B_{v_2}^e,\dots, B_{v_k}^e)$ and applying \Cref{prop:ell cycle in k-partite k-graph}, we can find a spanning $\ell$-cycle $C_e\subseteq T_e$ that contains an ordered edge from $B_{v_2}^e\times\dots\times B_{v_t}^e \times B_{v_1}^e \times B_{v_{t+1}}^e \times\dots \times B_{v_k}^e$, that is, a copy of the ordered edge $e^{+t}$.
    \end{proof}

    Now, consider an arbitrary ordered edge $e\in E^*(F)$ for which $T_e$ is non-empty. From \Cref{claim:cycle in edge blowup}, we know that there exists a spanning $\ell$-cycle $C_e\subseteq T_e$ containing an ordered edge, say $c$, which is a copy of the ordered edge $e^{+t}\in E^*(F)$. Let $P_e= C_e- c$ be the $\ell$-path in $T_e$ obtained by removing the edge $c$ (that is, we just think of $P_e$ as consisting of all the edges of $C_e$ except $c$, ordered in a fashion that makes it an $\ell$-path). By~\ref{itm:Q3}, we know that $Q'$ has a blow-up of every ordered edge of $F$, and thus in particular contains an ordered edge $d$ which is a blow-up of $e^{+t}$. Let $f_1$ and $f_2$ be the edges that precede and succeed $d$ in $Q'$. Since $c$ and $d$ are blow-ups of the same ordered edge of $F$, and by definition of the $\ell$-path $P_e$, we may replace the subpath $f_1 d f_2$ in $Q'$ with the subpath $f_1 d P_e c f_2$ (here we view an $\ell$-path as a sequence of edges, without explicitly describing the vertex sequence as we did previously). This yields an $\ell$-path on vertex set $V(Q') \cup V(T_e)$. Since the $k$-graphs $T_e$ are pairwise disjoint for each $e\in E^*(F)$ and, by~\ref{itm:Q3}, $Q'$ contains disjoint subpaths corresponding to the set of edges $\{e^{+t}:e\in E^*(F)\}$, we may repeat this procedure for every $e\in E^*(F)$ (that is, for every $T_e$), and let $Q''$ be the resulting $\ell$-path. Then $Q''$ is an $\ell$-path from $e_1$ to $e_2$ that spans $V(F^*)$, precisely as required. 
\end{proof}

\section{Extremal hypergraphs}\label{sec:extremal hypergraphs}
Now we turn our attention to hypergraphs that obey the exact minimum supported co-degree condition and whose structure is close to those of the extremal examples, complementing \cref{thm:Hamilton cycle non-extremal}. More precisely, we will prove \cref{thm:Hamilton cycle extremal} (restated below) over the next four sections. In this section, we first provide a detailed proof sketch and then introduce various tools that we will need for the proof. \thmHamiltonCycleExtremal*

\subsection{Proof sketch for \cref{thm:Hamilton cycle extremal}}\label{sec:pf sketch extremal}
As in \Cref{sec:pf overview extremal}, in this sketch we focus on tight cycles, namely on the case $\ell = k-1$ (and $t = k$), and comment on the necessary modifications for general $\ell$-cycles at the end. Suppose $G\in \exE_\eps (n,k,k-1)$. Recall that this implies that $G$ has a vertex subset $A$ of size $\floor{n/k}$ with at most $\eps n^2$ supported pairs, and set $B= V(G)\setminus A$.

First, since there are so few supported pairs in $A$, there are very few vertices which have large vertex degree in $A$. We transfer these to $B$ and argue that, after this modification, every supported set that intersects $A$ has high co-degree into $B$. The extremal structure also implies that almost every $(k-1)$-subset of $B$ is \emph{$A$-rich} -- meaning that it supports almost all of $A$ (\Cref{defn:scarce and rich sets}). We then use a ``cleaning lemma'' (\cref{lem:dense subgraphs}) to find a nearly spanning subgraph of $A$-rich sets in $B$ with nearly complete minimum supported co-degree into $B$, and transfer the few uncovered vertices into $A$. Throughout, we maintain $|A| \approx n/k$. These modifications are summarised in \Cref{prop:final A' conditions}.

Next, we ``balance'' the sizes of $A$ and $B$ by replacing a few vertices of $A$ with short tight paths. The exact vertex degree condition (via \cref{obs:min 1-degree}) provides us with a few paths of length two in $\partial^2[A]$, and we extend these to tight paths, including one special path of some specified length to ensure that $|B| = (k-1)|A|$. Additionally, these paths will start and end with supported sets that have high co-degree across the $A\sqcup B$ partition. This balancing is achieved in \Cref{prop:size balancing final}.

The set $A$ now consists mostly of single vertices and a few tight paths.
As explained in \Cref{sec:pf overview extremal}, we arbitrarily partition $B$ into $k-1$ sets $B_1, \dots, B_{k-1}$ such that $|A| = |B_i|$ for all $i$, and define an auxiliary $k$-partite $k$-graph $\Gp$ with parts $B_1, \dots, B_{k-1}, A$ whose edges correspond to vertex sequences in $B_1 \times \dots \times B_{k-1} \times A$ that form tight paths. We use a random greedy procedure, based on a simple version for almost complete balanced bipartite graphs (see \Cref{lem:random matching key lemma abstract,lem:k-partite matching}), to build a perfect matching $M$ in $\Gp$ by starting with vertices in $B_1$, extending them to ordered supported sets in $B_1 \times B_2$, and so on. We then form an auxiliary digraph whose vertices correspond to the vertex sequences of $M$, and for any $P \in M$, we let its in-neighbours (respectively, out-neighbours) be all the $P' \in M$ such that $P' P$ (respectively, $P P'$) forms a tight path. We argue that the randomness preserves the high supported co-degree conditions across the partition $A\sqcup B$ so that this digraph has high minimum semi-degree. This will ensure a directed Hamilton cycle, which corresponds to a tight Hamilton cycle in $G$. This final step of the proof is done in \Cref{sec:Hamilton cycle extremal}. 

The major difference for $\ell$-cycles is that in our balancing step, instead of replacing vertices with tight paths, we use vertex sequences of specified order that ``support $\ell$-paths'' (see \Cref{def:supported-l-paths}) -- intuitively, these consist of $\ell$-paths for ``as long as possible'' and have some small supported sets on either end. The upshot is that concatenating two such sequences forms another sequences that supports an $\ell$-path. We do this to make $|B| =  (t-1)|A|$. In the next step, we define $\Gp$ to be an auxiliary $t$-partite $t$-graph on $B_1 \times \dots \times B_{t-1} \times A$ whose edges are vertex sequences that support $\ell$-paths. The rest of the proof is largely similar.

\paragraph{Strucutre of the next four sections.}
In the next section, we prove fundamental results for our proof strategy, starting with basics about the minimum supported co-degree and some simple graph theoretic statements in \Cref{sec:graph lemmas}. Then \Cref{sec:supported ell paths} describes key properties about vertex sequences that support $\ell$-paths. \Cref{sec:cleaning} provides a general mechanism for finding a nearly spanning subgraph that approximately preserves supported co-degree conditions in a dense host graph. We then use this to analyse our random matching procedure in \Cref{sec:random matching}.

Once we have set up the necessary machinery, as described in our proof sketch, we modify $G$ to ensure desired co-degree properties in \Cref{sec:extremal structure}, balance the part sizes in \Cref{sec:balancing}, and subsequently construct a Hamilton cycle in \Cref{sec:Hamilton cycle extremal}.

\section{Key lemmas}\label{sec:key lemmas extremal}
We start with a quick observation based on the minimum supported co-degree condition.
\begin{obs}
\label{obs:min 1-degree}    
Let $k \ge 3$, $\ell \in [k-1]$, and $t = \floor{\frac{k}{k-\ell}}(k-\ell)$ and let $G$ be a $k$-graph with $\delta^*(G) \ge n - \floor{n/t} - (k-3)$.
    For all $i\in [k-1]$ and for any supported $i$-set $S\in V(G)$, we have 
    \[
		d_G^1(S)\geq n - \floor{\frac{n}{t}} - (i-2) \ge \left(1 - \frac{1}{t}\right)n - k.
    \]
\end{obs}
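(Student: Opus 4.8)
The plan is to establish the bound $d_G^1(S) \ge n - \floor{n/t} - (i-2)$ by downward induction on $i$, starting from the base case $i = k-1$ and decreasing to $i = 1$; the second inequality in the statement will then follow from elementary estimates. For the base case $i = k-1$, observe that for a supported $(k-1)$-set $S$ the set $N_G^1(S)$ is exactly the set of vertices $v$ with $S \cup \{v\} \in E(G)$, so $d_G^1(S) = d_G(S) \ge \delta^*(G) \ge n - \floor{n/t} - (k-3)$ by hypothesis, and this equals $n - \floor{n/t} - ((k-1)-2)$, as required.

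For the inductive step, fix $i$ with $1 \le i \le k-2$, assume the statement holds for all supported $(i+1)$-sets, and let $S$ be a supported $i$-set. Since $S$ is supported there is an edge $e$ with $S \subseteq e$; as $|e| = k > i$ we may pick a vertex $u \in e \setminus S$, and then $S \cup \{u\}$ is a supported $(i+1)$-set. The key observation is the containment $N_G^1(S \cup \{u\}) \subseteq N_G^1(S)$: if $(S \cup \{u\}) \cup \{v\}$ is supported then it lies in some edge, hence so does $S \cup \{v\}$, and moreover such a $v$ satisfies $v \notin S \cup \{u\}$. Since in addition $u \in N_G^1(S)$ but $u \notin N_G^1(S \cup \{u\})$, we obtain $d_G^1(S) \ge d_G^1(S \cup \{u\}) + 1$. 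Applying the induction hypothesis to $S \cup \{u\}$ then gives $d_G^1(S) \ge \big(n - \floor{n/t} - (i-1)\big) + 1 = n - \floor{n/t} - (i-2)$, which closes the induction.

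For the final inequality, since $\floor{n/t} \le n/t$ we have $n - \floor{n/t} \ge (1 - 1/t)n$, and since $i \le k-1$ we have $i - 2 \le k - 3 \le k$, so that $n - \floor{n/t} - (i-2) \ge (1-1/t)n - k$ (in fact $\ge (1-1/t)n - (k-3)$). The only point needing any care is the inductive step: one must check both the containment $N_G^1(S \cup \{u\}) \subseteq N_G^1(S)$ and that $u$ itself contributes the extra $+1$, namely $u \in N_G^1(S) \setminus N_G^1(S \cup \{u\})$. Together these are precisely what is needed to improve the bound by one unit each time $i$ decreases by one, and there is no substantial obstacle beyond this bookkeeping.
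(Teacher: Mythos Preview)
Your proof is correct and follows essentially the same idea as the paper's: both exploit the observation that enlarging a supported set $S$ by a vertex $u$ shrinks $N_G^1$ while $u$ itself lies in $N_G^1(S)\setminus N_G^1(S\cup\{u\})$. The only cosmetic difference is that the paper extends $S$ all the way to a supported $(k-1)$-set in one step (picking up the extra $k-i-1$ vertices from the completing set $W$ at once), whereas you unroll this into a downward induction gaining $+1$ at each stage.
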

\begin{proof}
    Let $S=\{v_1,\dots,v_i\}$. Since $S$ is supported, there must exist a set $W=\{w_{i+1},\dots, w_{k}\}$ of $(k-i)$ vertices such that $S\cup W\in E(G)$. In particular, $S'=\{v_1,\dots,v_i,w_{i+1},\dots,w_{k-1}\}$ is a supported $(k-1)$-set. It is clear that $W, N_G^1(S')\subseteq N_G^1(S)$.
    Thus, we have
    \begin{align*}
    d_G^1(S)& \geq d_G^1(S')+|W|-1\\
    & \geq \delta^*(G)+k-i-1\\
	&\geq n - \floor{\frac{n}{t}}-(i-2) \ge \left(1 - \frac{1}{t}\right)n - k, 
    \end{align*}
    where we subtract one in the first inequality to prevent counting $w_{k}\in N_G^1(S')\cap W$ twice.
\end{proof}

\subsection{Graph theoretic lemmas}\label{sec:graph lemmas}
We now prove a simple graph theoretic result about short disjoint paths. A \emph{cherry} in a graph will refer to a path of length two.

\begin{lem}
\label{lem:disjoint edges cherries}
Suppose $H$ is a graph on $n$ vertices with minimum degree $\delta(H)\geq x \ge 1$, and suppose that $A$ is a subset of $V(H)$ satisfying $|A|> n/2 + 9x/2$. Then $H$ contains a linear forest $F$ that consists of $x-1$ cherries with leaves in $A$.
\end{lem}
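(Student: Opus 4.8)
The plan is to build the linear forest greedily, one cherry at a time; everything reduces to a single extension step, which I would phrase as follows. Suppose $\mathcal{C}$ is a collection of $j$ vertex-disjoint cherries in $H$, all of whose leaves lie in $A$, with $0\le j\le x-2$; then $H$ contains such a collection of size $j+1$. Starting from $\mathcal{C}=\emptyset$ and applying this $x-1$ times yields the desired forest $F$ (the cases $x\le 1$ being trivial).

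To prove the extension step it is cleanest to let $\mathcal{C}$ be a \emph{maximal} collection of disjoint cherries with leaves in $A$, assume for contradiction that $|\mathcal{C}|=j\le x-2$, and argue as follows. Write $U=V(\mathcal{C})$, so $|U|=3j\le 3(x-2)$, and put $A'=A\setminus U$; then $|A'|\ge|A|-3(x-2)>n/2+3x/2$. Maximality forces every vertex of $V(H)\setminus U$ to have at most one neighbour in $A'$, since a vertex with two neighbours in $A'$ would be the centre of a new cherry disjoint from $U$. In particular $H[A']$ has maximum degree at most $1$, so every $a\in A'$ has at least $x-1$ neighbours in $V(H)\setminus A'$. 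Counting the edges between $A'$ and $V(H)\setminus A'$, splitting $V(H)\setminus A'=U\cup\big((V(H)\setminus A)\setminus U\big)$, using that each vertex of $(V(H)\setminus A)\setminus U$ sends at most one edge to $A'$ and that $|V(H)\setminus A|<n/2-9x/2$, one obtains $e(A',U)\ge (x-1)|A'|-|V(H)\setminus A|>(x-2)n/2$. As $|U|\le 3(x-2)$, some used vertex $u^{\ast}\in U$ then has more than $n/6$ neighbours in $A'$ (in particular $j\ge 1$; the case $j=0$ is already contradictory for $x\ge 2$).

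It remains to rule out such a ``heavy'' used vertex, and this is the step I expect to be the main obstacle. Since $u^{\ast}$ has at least two neighbours in $A'$, one can delete the cherry of $\mathcal{C}$ through $u^{\ast}$ and re-insert a cherry centred at $u^{\ast}$ with two of those neighbours as leaves; if the resulting collection of $j$ cherries is not maximal, one immediately gets $j+1$ cherries and a contradiction. To handle the case where it remains maximal, I would additionally choose $\mathcal{C}$, among all maximal collections of size $\le x-2$, to minimise $\sum_{v\in U\cap A}d_A(v)$, where $d_A(v)$ denotes the number of neighbours of $v$ in $A$; combined with the bound $d_A(a)\le 1+|U\cap A|\le 3x-5$ for every $a\in A'$ (valid because $H[A']$ is a matching), this should let one compare the two collections and produce either a strictly cheaper maximal collection or an extra cherry, contradicting one of the two extremal choices. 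The hypothesis $|A|>n/2+9x/2$ — rather than merely $|A|>n/2$ — is precisely what is used to dominate the $O(x)$-sized error terms in this comparison, which come from the vertices of $A$ that sit inside the forest $U$; this is where the constant $9/2$ is spent.
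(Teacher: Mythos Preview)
Your setup matches the paper's exactly: take a maximal family $\mathcal{C}$ of disjoint cherries with leaves in $A$, write $U=V(\mathcal{C})$ and $A'=A\setminus U$, observe that $H[A']$ has maximum degree at most $1$ and that every vertex of $V(H)\setminus(A\cup U)$ has at most one neighbour in $A'$, and lower-bound the number of $A'$--$U$ edges. The divergence is in how you upper-bound $e(A',U)$. Your swap --- find a single heavy $u^\ast\in U$ and recentre its cherry on $u^\ast$ --- does not increase $|\mathcal{C}|$, and the secondary minimisation of $\sum_{v\in U\cap A}d_A(v)$ is not shown to work: the two new leaves $a_1,a_2\in A'$ may each have $d_A$-value as large as $3x-5$, while the two released vertices of the old cherry need not have larger $d_A$-values, so the sum can go up. You have not explained why ``not cheaper'' would force the new collection to be non-maximal.

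The missing observation, which finishes the argument in one line, is this: for each cherry $uvw\in\mathcal{C}$, at most \emph{one} of $u,v,w$ has four or more neighbours in $A'$. Indeed, if two of them did, one could discard $uvw$ and insert two vertex-disjoint cherries, one centred at each of those two vertices with leaves chosen from their $A'$-neighbourhoods, contradicting maximality of $\mathcal{C}$. This gives $e(A',U)\le j(|A'|+6)$ directly. Combined with your lower bound $e(A',U)\ge (x-1)|A'|-(n-|A|)$ and $j\le x-2$, one gets $|A'|+|A|\le n+6x$; together with $|A|-|A'|\le 3j\le 3x$ this yields $|A|\le n/2+9x/2$, the desired contradiction. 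No secondary extremal choice is needed.
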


\begin{proof}
	Let $F'$ be a disjoint union of cherries with leaves in $A$, such that $|F'|$ is maximised. 
	Let $A' := A \setminus V(F')$ and let $E'$ be the set of edges with exactly one end in $A'$. 
	Then, $H[A']$ has maximum degree at most $1$ (otherwise we could add a cherry with vertices in $A'$ to $F'$, contradicting maximality), showing that $|E'| \ge |A'| (x-1)$. 
	We now wish to upper bound $|E'|$, using the following points.
\begin{itemize}
	\item
		For each cherry $uvw$ in $F'$, at most one of $u$, $v$ and $w$ has at least four neighbours in $A'$. Indeed, otherwise we can find two disjoint cherries with centres in $\{u,v,w\}$ and leaves in $A$, and replace $uvw$ by these cherries, contradicting the maximality of $F'$.
		It follows that the number of edges from $A'$ to $\{u,v,w\}$ is at most $|A'| + 6$.
	\item
		There is at most one edge from $A'$ to each vertex in $V(H) \setminus (A \cup V(F'))$, as otherwise we could add an additional cherry to $F$.
\end{itemize}
Altogether, denoting by $y$ the number of cherries in $F'$ (so that $|F'| = 3y$) we find that 
\begin{equation} \label{eqn:cherries}
	|E'| \le y(|A'| + 6) + (n - |A|).
\end{equation}
It remains to show that $y \ge x-1$.
Otherwise, by combining \eqref{eqn:cherries} with $|E'| \ge (x-1)|A'|$, we get $|A'| + |A| \le 6x + n$. This, along with the observation that $|A| - |A'| \le |F'| \le 3x$, implies that $|A| \le n/2 + 9x/2$, a contradiction. 
\end{proof}

Our next preliminary result is a directed version of Dirac's theorem from~\cite{ghouilahouri1960condition}. Given a digraph $D$ with minimum out-degree $\delta^+(D)$ and minimum in-degree $\delta^-(D)$, we define the \emph{minimum semi-degree} of $D$ as $\delta^0(D)=\min\{\delta^+(D),\delta^-(D)\}$.
\begin{lem}[Ghouila-Houri, 1960~\cite{ghouilahouri1960condition}]
\label{lem:directed Dirac}
If $D$ is an $n$-vertex digraph with $\delta^0(D)\ge n/2$, then it has a directed Hamilton cycle.
\end{lem}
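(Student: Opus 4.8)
This is the classical theorem of Ghouila--Houri, so the plan is simply to invoke \cite{ghouilahouri1960condition}. Were a self-contained argument wanted, I would adapt the longest-structure proof of Dirac's theorem to the directed setting, as follows.

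First I would check that $D$ is strongly connected. If $V(D)$ split into nonempty parts $X$ and $Y$ with no arc from $Y$ to $X$, then every vertex of $Y$ would have all of its out-neighbours inside $Y$, so $|Y| - 1 \ge \delta^+(D) \ge n/2$, and symmetrically $|X| - 1 \ge \delta^-(D) \ge n/2$, contradicting $|X| + |Y| = n$. Now take a longest directed cycle $C$ in $D$ and suppose, for contradiction, that it misses some vertex, i.e.\ $R := V(D) \setminus V(C) \neq \emptyset$. Choose a longest directed path $P = x_1 x_2 \cdots x_h$ inside the induced subdigraph $D[R]$. By maximality of $P$, every in-neighbour of $x_1$ and every out-neighbour of $x_h$ lies in $V(C) \cup V(P)$; since $|V(P)| = h$, it follows that both $N^-(x_1) \cap V(C)$ and $N^+(x_h) \cap V(C)$ have size at least $n/2 - h + 1$.

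Write $C = c_1 c_2 \cdots c_c c_1$ with indices modulo $c$, and put $A = \{\, i : c_i \to x_1 \,\}$ and $B = \{\, i : x_h \to c_{i+1} \,\}$, so $|A|, |B| \ge n/2 - h + 1$. If some index lies in $A \cap B$, then rerouting $C$ along the detour $c_i \to x_1 \to x_2 \to \cdots \to x_h \to c_{i+1}$ in place of the arc $c_i c_{i+1}$ produces a directed cycle of length $c + h > c$, contradicting the choice of $C$. Hence $A$ and $B$ are disjoint subsets of $[c]$, which forces $c \ge 2(n/2 - h + 1) = n - 2h + 2$. When $|R| = 1$ (so $h = 1$) this already gives $c \ge n$, a contradiction; in general, combined with $h \le |R| = n - c$ it only yields $n - c \ge 2$, so the finish requires a sharper count.

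The extra input is that one may splice into $C$ not only all of $P$ but any subpath: whenever $j \le j'$ with $c_i \to x_j$ and $x_{j'} \to c_{i+1}$, replacing $c_i c_{i+1}$ by $c_i \to x_j \to \cdots \to x_{j'} \to c_{i+1}$ again lengthens $C$. Recording, for each $c_i$, the first index $j$ with $c_i \to x_j$ and the last index $j'$ with $x_{j'} \to c_{i+1}$, and using that the vertices of $R$ also satisfy the semi-degree bound (hence each $x_j$ has many neighbours on $C$), one shows these \emph{entry} and \emph{exit} windows along $C$ must overlap somewhere, which is the contradiction. Turning that window-disjointness into an actual clash with $\delta^0(D) \ge n/2$ is the only delicate step, and is the reason even this elementary proof needs care; in the paper I would not reproduce it and would just cite \cite{ghouilahouri1960condition}.
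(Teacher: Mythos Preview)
Your proposal is correct and matches the paper exactly: the paper states this as a named lemma attributed to Ghouila--Houri with the citation \cite{ghouilahouri1960condition} and provides no proof whatsoever. Your acknowledgement that the self-contained sketch is incomplete at the final ``window-overlap'' step is honest and appropriate, but irrelevant here since the paper does not attempt any argument either.
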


\subsection{Supported $\ell$-paths}\label{sec:supported ell paths}
	In this subsection we define the notion of vertex sequences that ``support an $\ell$-path''. Intuitively, we have an $\ell$-path for as long as possible, and require the final few vertices to form a supported set. These are set up so that it is easy to join them up to longer sequences that support an $\ell$-path, and ultimately to a spanning $\ell$-cycle. 
	These will play an important role in our main strategy of splitting our graph into a perfect matching of sequences that support $\ell$-paths that satisfy useful degree conditions, allowing them to combine into a spanning $\ell$-cycle.
	We also define a sequence that supports an extended $\ell$-path, which we will use to construct the required perfect matching of sequences supporting $\ell$-paths. 

	\begin{defn} \label{def:supported-l-paths}
		Let $k \ge 3$, $\ell \in [k-1]$, $t = \floor{\frac{k}{k-\ell}}(k-\ell)$.
		Let $P := w_1 \ldots w_r$ be a sequence of distinct vertices in a $k$-graph $G$. We say that $P$ \emph{supports an $\ell$-path} in $G$ if $r \ge t$, $r$ is divisible by $k-\ell$, $w_{i(k-\ell) + 1} \ldots w_{i(k-\ell)+k}$ is an edge for all integers $i \in [0, \frac{r-t}{k-\ell} - 1]$, and $\{w_{r-t+1}, \ldots, w_r\}$ is a supported set.

		Let $P = w_0 \ldots w_r$ be a sequence of distinct vertices in $G$. We say that $P$ \emph{supports an extended $\ell$-path} if $r \ge t$, $r$ is divisible by $k-\ell$, the sequence $w_1 \ldots w_r$ supports an $\ell$-path, and $w_0 \ldots w_{t-1}$ is a supported set.
	\end{defn}

	We summarise the facts about constructing and connecting sequences supporting $\ell$-paths and extended $\ell$-paths in the following proposition.

	\begin{prop} \label{prop:supported-l-paths}
		Let $G$ be a $k$-graph, where $k \ge 3$, $\ell \in [k-1]$ and $t = \floor{\frac{k}{k-\ell}}(k-\ell)$.
		\begin{enumerate}[label = \rm(\roman*)]
			\item \label{itm:extended-path-1}
				Let $P$ and $Q$ be two sequences of vertices in $G$ of length at least $t$, both of which support an $\ell$-path, and whose vertices are disjoint. If the sequence consisting of the last $t$ vertices in $P$ followed by the first $t-1$ vertices in $Q$ is a tight path, then $PQ$ supports an $\ell$-path. 
			\item \label{itm:extended-path-2}
				Let $P_1, \ldots, P_m$ be sequences of vertices in $G$ of length at least $t$, each of which supports an $\ell$-path, and whose vertices are pairwise disjoint. If $P_iP_{i+1}$ supports an $\ell$-path for every $i \in [m]$ (addition of indices taken modulo $m$) then $P_1 \ldots P_m$ is an $\ell$-cycle.
			\item \label{itm:extended-path-3}
				Let $P = v_0 \ldots v_r$ be a sequence of distinct vertices where $r$ is divisible by $t$. If every subsequence of at most $k$ consecutive vertices that contains at most one vertex $v_i$ with $i$ divisible by $t$ is supported by an edge, then $P$ supports an extended $\ell$-path. 
			\item \label{itm:extended-path-4}
				Let $P$ be a sequence of distinct vertices of length $r$ such that $r = -1 \pmod{k-\ell}$ and $r \ge t-1$, and let $u,v$ be two vertices that do not appear in $P$, such that $uP$ and $Pv$ are tight paths (or supported sets if $r+1 \le k$). Then $uPv$ supports an extended $\ell$-path.
		\end{enumerate}
	\end{prop}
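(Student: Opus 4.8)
The plan is, in each of the four parts, to verify directly every clause of \Cref{def:supported-l-paths} for the sequence in question. Three elementary facts about $t$ are used repeatedly (see \Cref{obs:useful t info}): $k-\ell$ divides $t$; $\ell+1\le t\le k$, so in particular $t+(k-\ell)>k$ and $t+\ell\ge k$; and $2t-1\ge k$. We also use that if a sequence ``supports an $\ell$-path'' then its length is divisible by $k-\ell$, so concatenating such sequences, and shifting indices by such lengths, preserves residues modulo $k-\ell$. We begin with \ref{itm:extended-path-1}. Write $P=p_1\dots p_a$ and $Q=q_1\dots q_b$ with $a,b\ge t$ divisible by $k-\ell$; then $PQ$ has length $a+b$ divisible by $k-\ell$, and its terminal $t$-set $\{q_{b-t+1},\dots,q_b\}$ is supported because $Q$ supports an $\ell$-path. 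Among the $k$-windows of $PQ$ whose first vertex is at position $1+i(k-\ell)$, those with $i\le\frac{a-t}{k-\ell}-1$ are the edges supplied by $P$, and those with $\frac{a}{k-\ell}\le i\le\frac{a+b-t}{k-\ell}-1$ are the edges supplied by $Q$, translated by $a$ (a multiple of $k-\ell$). The only windows left to justify are those with $\frac{a-t}{k-\ell}\le i\le\frac{a}{k-\ell}-1$; a direct check shows they occupy positions $a-t+1$ through $a+\ell$ of $PQ$, that is, the last $t$ vertices of $P$ followed by the first $\ell\le t-1$ vertices of $Q$. This is a subsequence of the sequence formed by the last $t$ vertices of $P$ and the first $t-1$ of $Q$, which is a tight path by hypothesis, and since $t+\ell\ge k$ every such $k$-window is an edge of that tight path. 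Hence $PQ$ supports an $\ell$-path.

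For \ref{itm:extended-path-2}, set $C=P_1\dots P_m$ and let $q_i=|P_1|+\dots+|P_i|$, so each $q_i\equiv 0\pmod{k-\ell}$ and $|C|=q_m$ is divisible by $k-\ell$; the vertices of $C$ are distinct by the disjointness hypothesis. By the definition of an $\ell$-cycle it suffices to show that every cyclic window of $k$ consecutive vertices of $C$ whose first vertex is at a position congruent to $1$ modulo $k-\ell$ is an edge. Such a window starts inside some $P_i$; since its start is $\equiv 1$ and $q_i\equiv 0\pmod{k-\ell}$, it starts at or before position $q_i-(k-\ell)+1$ and so ends at or before $q_i+\ell$, reaching at most $\ell<t\le|P_{i+1}|$ vertices into $P_{i+1}$; in particular it lies within $P_iP_{i+1}$ (indices taken cyclically). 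Viewed inside the concatenation $P_iP_{i+1}$, the window begins at a local position that is $\equiv 1\pmod{k-\ell}$ and at most $|P_i|-(k-\ell)+1$, while ``$P_iP_{i+1}$ supports an $\ell$-path'' supplies edges for all local window-starts up to $|P_i|+|P_{i+1}|-t-(k-\ell)+1$, which is at least $|P_i|-(k-\ell)+1$ because $|P_{i+1}|\ge t$. So the window is an edge, and $C$ is an $\ell$-cycle.

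Parts \ref{itm:extended-path-3} and \ref{itm:extended-path-4} both reduce to the observation that the required $k$-windows contain at most one index divisible by $t$ and the required $t$-windows contain exactly one, so the hypotheses turn the former into edges and the latter into supported sets. In \ref{itm:extended-path-3}, $t\mid r$ gives $(k-\ell)\mid r$ and (we may assume) $r\ge t$; the windows $v_{j(k-\ell)+1}\dots v_{j(k-\ell)+k}$ needed for $v_1\dots v_r$ to support an $\ell$-path each begin at a position $\equiv 1\pmod{k-\ell}$, so by the computation in the proof of \Cref{prop:special vxs in ell paths} (which uses $t>\ell$) each contains at most one multiple of $t$, hence is an edge; and $v_0\dots v_{t-1}$, $v_{r-t+1}\dots v_r$ are $t$-windows whose only multiple-of-$t$ index is $0$, respectively $r$, hence supported sets. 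Thus $v_1\dots v_r$ supports an $\ell$-path and $v_0\dots v_{t-1}$ is supported, so $P$ supports an extended $\ell$-path. In \ref{itm:extended-path-4}, write $z_0\dots z_{r+1}:=uPv$; then $r+1\equiv 0\pmod{k-\ell}$ and $r+1\ge t$. Every $k$-window needed for $z_1\dots z_{r+1}=Pv$ to support an $\ell$-path fits inside $Pv$ because $t+(k-\ell)>k$ (and whenever such a window is required one has $r+1>k$), so each is a $k$-window of the tight path $Pv$ and hence an edge; finally the terminal $t$-set of $Pv$ and the set $z_0\dots z_{t-1}=uw_1\dots w_{t-1}$ are each contained in a tight path (or, when $|Pv|\le k$, a supported set) of length at least $t$, hence supported. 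So $uPv$ supports an extended $\ell$-path.

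The part I expect to require the most care is \ref{itm:extended-path-2}: one must match the exact collection of $k$-windows demanded by the $\ell$-cycle definition, rule out a window straddling three consecutive $P_i$'s, and check that the edges produced by each consecutive pair $P_iP_{i+1}$ extend far enough — each of these hinges on the inequalities $|P_i|\ge t$ and $t\ge\ell+1$ together with the divisibility of all part-lengths by $k-\ell$, and it is worth isolating these as a couple of small numerical observations before assembling the argument.
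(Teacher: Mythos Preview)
Your proof is correct and follows essentially the same approach as the paper's: in each part you verify the clauses of \Cref{def:supported-l-paths} directly, using the same index computations (positions $a-t+1$ through $a+\ell$ in \ref{itm:extended-path-1}, the fact that a window starting in $P_i$ reaches at most $\ell<t$ vertices into $P_{i+1}$ in \ref{itm:extended-path-2}, the appeal to \Cref{prop:special vxs in ell paths} in \ref{itm:extended-path-3}, and the tight-path structure of $Pv$ and $uP$ in \ref{itm:extended-path-4}). If anything, your treatment of \ref{itm:extended-path-4} is slightly more careful than the paper's, since you explicitly handle the case $r+1\le k$ via the observation that $t+(k-\ell)>k$ forces $r+1>k$ whenever a $k$-window is actually required.
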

    \begin{proof}
		For \ref{itm:extended-path-1}, say that the length of $P$ is $p$ and that of $Q$ is $q$. Let $PQ = v_1 \dots v_{p+q}$, where $P = v_1 \dots v_p$ and $Q = v_{p+1} \dots v_{p+q}$. Since $P$ and $Q$ both support $\ell$-paths, we know that $p$ and $q$ are divisible by $k-\ell$, implying the same is true of $p+q$. Furthermore, as $q\ge t$, we also know that the last $t$ vertices of $PQ$ correspond to the last $t$ vertices of $Q$, and hence are supported. All that is left to show is that $v_{i(k-\ell) + 1} \ldots v_{i(k-\ell)+k}$ is an edge for all $i\le \frac{p + q - t}{k-\ell} - 1$. Since $P$ and $Q$ support $\ell$-paths (and in particular their length is divisible by $k-\ell$), we only need to restrict our attention to $i \in [\frac{p - t}{k - \ell}, \frac{p}{k - \ell}-1]$. For this, it would suffice to argue that the subsequence $v_{i_1} \dots v_{i_2}$ forms a tight path, where $i_1 = p - t + 1$ and $i_2 = p + \ell$. By assumption, we know that $v_{p - t + 1} \dots v_{p+t-1}$ is a tight path, and we know $t-1\ge \ell$ (see \Cref{obs:useful t info}), which completes the proof.

		Next, for \ref{itm:extended-path-2}, let $v_1 \dots v_p$ denote the sequence $P= P_1 \dots P_m$. Since each $P_i$ supports an $\ell$-path, we know that $p$ is divisible by $k-\ell$. It suffices to show that $e_j := v_{j(k - \ell) + 1} \dots v_{j(k - \ell) + k}$ is an edge for all $j \ge 0$, where we view the indices cyclically modulo $p$. Crucially, since $|P_i|$ is divisible by $k-\ell$ and at least $t$ for every $i$, for each $e_j$, if $e_j$ starts in $P_i$ then the suffix of the sequence $P_iP_{i+1}$ starting at $e_j$ has length at least $k-\ell + t \ge k$, showing that $e_j$ is a subsequence of $P_iP_{i+1}$.
		Furthermore, as the length of $P_1 \dots P_{i-1}$ is divisible by $k - \ell$, and since $P_i P_{i+1}$ supports an $\ell$-path by \cref{itm:extended-path-1}, we conclude that $e_j$ is indeed an edge.

		We now move to the \ref{itm:extended-path-3}. By \cref{prop:special vxs in ell paths}, we know that every subsequence of the form $v_{i(k - \ell) + 1} \dots v_{i(k - \ell) + k}$ contains at most one vertex $v_i$ with $i$ divisible by $t$ and is thus forms an edge, by assumption. The sets $v_0 \dots v_{t-1}$ and $v_{r - t + 1} \dots v_r$ each have length $t$ and thus have only one $v_i$ with $i$ divisible by $t$, showing that they are supported in $G$. Thus, $P$ supports an extended $\ell$-path.

		The final item \ref{itm:extended-path-4} is also straightforward. Since $Pv$ is a tight path of length divisible by $k-\ell$ and at least $t$, it clearly supports an $\ell$-path. Furthermore, as $r \ge t-1$, the tight path $uP$ contains at least $t$ vertices, and the first $t$ of these are supported.
    \end{proof}

\subsection{Cleaning lemmas}\label{sec:cleaning}
    In this subsection, we will prove important results about finding substructures with large supported co-degrees in very dense hypergraphs. Broadly speaking, we will show that we can delete a few edges and find a nearly spanning subgraph that has no isolated vertices, and approximately preserves degree conditions. Similar ideas have appeared other papers regarding exact co-degree bounds for Hamilton cycles, such as~\cite{rodl2011dirac,liu2022hamiltonian}, but these were presented in an ad-hoc manner as intermediate steps in their proofs. We provide more general and formalised results about finding large, dense subgraphs with good degree conditions that we believe could prove to be independently useful in other settings as well. 

	Let us the begin with the first result. Given an almost complete $k$-graph, we will drop to an almost complete subgraph that spans almost all the vertices, and has high supported co-degree conditions. For $k = 2$ this can be done easily be removing all vertices of small degree, and it turns out that a similar but more subtle strategy works for higher uniformities. We point out that Halfpap, Lemons and Palmer~\cite[Lemma 18]{halfpap2025positive} have a similar result with essentially the same proof. However, our other two results in this section, namely \cref{cor:dense subgraphs weak,cor:dense subgraphs strong}, are proved using the first one, and yield a stronger conclusion than that of \cite[Lemma 18]{halfpap2025positive}. We expect these to be applicable in other settings as well.

\begin{lem}
	\label{lem:dense subgraphs}
	Let $1/n \ll \eps\ll \delta \ll 1/k\le 1/2$. Suppose $F$ is an $n$-vertex $k$-graph with $e(F)\ge (1-\eps)\binom{n}{k}$. Then there exists a subgraph $F'\subseteq F$ with no isolated vertices such that $\delta^*(F') \ge (1 - \delta)n$. In particular, $|F'| \ge (1 - \delta)n$ and $e(F') \ge (1 - k\delta)\binom{n}{k}$.
\end{lem}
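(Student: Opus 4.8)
\emph{Strategy.} I would build $F'$ by a bounded number of one-shot cleaning passes, organised by the size of the vertex sets whose co-degrees we control, and then delete any isolated vertices; this is in the spirit of the proof of \cite[Lemma 18]{halfpap2025positive}. Write $\bar F$ for the $k$-graph of non-edges of $F$, so $e(\bar F) \le \eps\binom nk$, and for a subgraph $H$ of $F$ and a set $T \subseteq V(H)$ with $|T| = j \le k-1$ let $\bar d_H(T)$ denote the number of $k$-subsets of $V(H)$ that contain $T$ but are not edges of $H$. Fix thresholds $\eps \ll \eta_1 \ll \eta_2 \ll \dots \ll \eta_{k-1} \ll \delta 2^{-k}$, each a distinct small power of $\eps$, so that moreover every ratio $\eta_i/\eta_{i+1}$ is still smaller than any fixed function of $\delta$ and $1/k$; call a $j$-set $T$ \emph{$j$-light in $H$} if $\bar d_H(T) < \eta_j\binom{n}{k-j}$, and \emph{$j$-heavy} otherwise.

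\emph{The cleaning passes.} Pass $1$ deletes from $F$ every vertex that is $1$-heavy; since each such vertex lies in at least $\eta_1\binom{n-1}{k-1}$ non-edges, only $O(\eps n/\eta_1)$ vertices are removed, giving $F_1$ on $V_1$ with $|V_1| \ge (1-\delta)n$ (in fact much more) and every vertex of $V_1$ being $1$-light in $F_1$. Then, for $j = 2, 3, \dots, k-1$ in turn, pass $j$ deletes from $F_{j-1}$ every edge that contains a $j$-set which is $j$-heavy with respect to $F_{j-1}$. A routine double count bounds the number of edges deleted at pass $j$ by $\binom kj \bar e(F_{j-1})/\eta_j$ (where $\bar e(\cdot)$ counts non-edges), and iterating this shows $\bar e(F_{k-1}) \le C_k\,\eps^{1/2}\binom nk$ for a constant $C_k = C_k(k)$, so all of these passes together delete at most $O_k(\eps^{1/2})\binom nk \ll \delta\binom nk$ edges. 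Crucially, each pass is carried out exactly once, so there is no cascade of re-examinations to control.

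\emph{The key invariant.} Writing $G := F_{k-1}$, the technical core is the claim, proved by induction on $j$, that \emph{every $j$-set supported in $G$ is $j$-light in $G$} (say with $\eta_j$ weakened to $2^k\eta_j$, an acceptable loss). Indeed, if $T$ is supported in $G$ then $T$ was not deleted as a heavy $j$-set at pass $j$, so $T$ was already light in $F_{j-1}$; the non-edges through $T$ present in $G$ beyond those of $F_{j-1}$ come only from edges $e \supseteq T$ deleted at passes $\ge j$, and each such deletion is triggered by a heavy proper subset of $e$ not contained in $T$, so their number is governed — by a double count — by the lightness of the proper subsets of $T$, which are themselves supported in the relevant intermediate graph and hence light by the induction hypothesis at strictly smaller levels. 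Because the $\eta_i$ increase quickly enough, this contributes only a negligible fraction of $\binom{n}{k-j}$. Making this propagation argument precise — ensuring that level $j$ only ever appeals to strictly lower levels and that the accumulated losses stay under control — is the step I expect to be the main obstacle; everything else is bookkeeping enabled by $\eps$ being far smaller than any function of $\delta$ and $1/k$.

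\emph{Final pass and conclusion.} Finally I take $F'$ to consist of those edges of $G$ all of whose $(k-1)$-subsets are $(k-1)$-light in $G$, and then remove isolated vertices. Given a $(k-1)$-set $S$ supported in $F'$, it is $(k-1)$-light in $G$, so $d_G(S) \ge (1 - 2^k\eta_{k-1})n - k$; a neighbour $v$ of $S$ in $G$ is lost in $F'$ only if $(S \setminus \{x\}) \cup \{v\}$ is $(k-1)$-heavy in $G$ for some $x \in S$, which for fixed $x$ happens for at most $O\big(\bar d_G(S \setminus \{x\})/(\eta_{k-1}n)\big)$ vertices $v$ by a double count. Since $S \setminus \{x\}$ is a supported $(k-2)$-set of $G$, the invariant gives $\bar d_G(S \setminus \{x\}) \le 2^k\eta_{k-2}\binom n2$, so summing over $x \in S$ and using $\eta_{k-2} \ll \eta_{k-1}$, the total number of lost neighbours is at most $\delta n/2$; hence $d_{F'}(S) \ge (1-\delta)n$, i.e.\ $\delta^*(F') \ge (1-\delta)n$. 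A similar estimate shows every surviving vertex keeps all but a negligible fraction of its potential edges, so no vertex becomes isolated and $V(F') = V_1$. The two ``in particular'' assertions are then immediate: $|F'| \ge (1-\delta)n$ is forced by $\delta^*(F') \ge (1-\delta)n$ together with $e(F') \ge 1$ (an edge exists, each of its $(k-1)$-subsets is supported, hence has co-degree at least $(1-\delta)n$); and $e(F') \ge e(F) - |V(F) \setminus V_1|\binom{n-1}{k-1} - O_k(\eps^{1/2})\binom nk \ge (1 - k\delta)\binom nk$.
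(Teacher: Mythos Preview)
Your cleaning strategy is close in spirit to the paper's, but the ``key invariant'' has a genuine gap that the induction you describe does not close. The problematic case is $j' > j$: when an edge $e \supseteq T$ is deleted at pass $j'$ because of a heavy $j'$-set $S' \subseteq e$, it may well be that $T \subseteq S'$, so the deletion is not ``governed by proper subsets of $T$'' in any useful sense. To bound the number of such $S' \supseteq T$ you would need to control $\bar d_{F_{j'-1}}(T)$, i.e.\ the lightness of $T$ itself in a graph \emph{further along} the sequence than $F_{j-1}$ --- which is exactly what you are trying to prove, so an induction on $j$ alone (phrased only about the final graph $G$) cannot close. A direct calculation confirms the circularity: the contribution from heavy $S' \supseteq T$ at a single pass $j'$ is already of order $\bar d_{F_{j'-1}}(T)/\eta_{j'}$, which is larger, not smaller, than $\bar d_{F_{j'-1}}(T)$.

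The paper avoids this by going top-down rather than bottom-up. It builds a descending tower $F_k = F, F_{k-1}, \dots, F_1$, where $F_i$ is the $i$-graph of those $i$-sets $S$ with $d^1_{F_{i+1}}(S) \ge (1-\mu)n$, and then takes $E(F')$ to be those edges $e \in E(F)$ all of whose $i$-subsets lie in $E(F_i)$ for every $i$, with $V(F') = F_1$. The only propagation required is a global edge count showing $e(F_i) \ge (1 - 2^{k-i}\eps/\mu^{k-i})\binom ni$ --- no per-set tracking across intermediate graphs. The co-degree bound then becomes a one-line union bound: if $S$ is supported in $F'$ then every nonempty $S' \subseteq S$ lies in $E(F_{|S'|})$, hence $d^1_{F_{|S'|+1}}(S') \ge (1-\mu)n$; a vertex $v$ fails to lie in $N^1_{F'}(S)$ only if $v \notin N^1_{F_{|S'|+1}}(S')$ for some $S' \subseteq S$, ruling out at most $2^{k-1}\mu n$ vertices in total. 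This organisation sidesteps the circularity that your bottom-up passes create.
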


\begin{proof}
Let $\mu$ satisfy $\eps \ll \mu \ll \delta$. Let us describe how we obtain $F'$ from $F$. We construct a sequence of hypergraphs $F_{k},\dots, F_1$, where each $F_i$ is $i$-uniform. First, we define $F_k=F$. Then, for $i\ge 2$, given $F_{i+1}$, we let $F_i$ be the subgraph of $\partial^{i}{F_{i+1}}$ with $V(F_i)=V(F)$ which contains every $i$-edge $S$ satisfying $d_{F_{i+1}}^1(S)\ge (1-\mu)n$. Finally, in the graph $F_2$, we remove all vertices $v$ such that $d_{F_2}^1(v)\le (1-\mu)n$, and let $F_1$ be the remaining vertex set. Now, we let $V(F')=F_1$, and set $E(F')\subseteq \binom{V(F')}{k}$ to consist of all $k$-sets $S$ which have $\binom{S}{i}\subseteq E(F_i)$ for all $i\in[k]$, that is, every $i$-subset of $S$ is an edge of $F_i$. 

We show that $F'$ satisfies the statement of~\Cref{lem:dense subgraphs} in two parts.
\begin{claim}
\label{claim:dense subgraph vxs}
We have $|F'|\ge (1-2^{k-1}\eps/\mu^{k-1})n \ge (1-\delta)n$.
\end{claim}
\begin{proof}
Our proof proceeds by successively bounding the number of edges in each $F_i$ from below. We know that $e(F_k)=e(F)\ge (1-\eps)\binom{n}{k}$. We define $\mu_i=2^{k-i}\eps/\mu^{k-i}$ for all $i\in[k]$. We claim that, for all such $i$, we have 
\[
e(F_i)\ge (1-\mu_i)\binom{n}{i}.
\]
Indeed, suppose we know that this is true for $F_k,\dots, F_{i+1}$. Write $f := e(F_i)$, so that $f$ is the number of supported $i$-sets $S$ in $F_{i+1}$ that have $d_{F_{i+1}}^1(S)\ge (1-\mu)n$. Consequently, by considering a degree sum over all supported $i$-sets in $F_{i+1}$ and splitting the sum into $i$-sets that are preserved in $F_i$ and those that are not, we see that
\[
\binom{i+1}{i}(1-\mu_{i+1})\binom{n}{i+1}\le fn+\left(\binom{n}{i}-f \right)(1-\mu)n.
\]
Using $\binom{i+1}{i}\binom{n}{i+1}=n\binom{n-1}{i}$ and rearranging, we see that
\begin{align*}
\mu f&\ge \mu\binom{n}{i}-\mu_{i+1}\binom{n-1}{i}-\left(\binom{n}{i}-\binom{n-1}{i} \right)\\
&= \mu\binom{n}{i} - \mu_{i+1}\binom{n-1}{i} - \binom{n-1}{i-1} \\
&\ge \mu\binom{n}{i}-2\mu_{i+1}\binom{n}{i},
\end{align*}
where we use $\binom{n}{i} = \binom{n-1}{i} + \binom{n-1}{i-1}$ and $\binom{n-1}{i-1}\le \mu_{i+1}\binom{n}{i}$ for $n\gg k\ge i$. Dividing throughout by $\mu$ completes the proof of the claim.
In particular, these calculations show that $|F'|=|F_1|\ge (1-\mu_1)n\ge (1-2^{k-1}\eps/\mu^{k-1})n$ as desired.
\end{proof}

\begin{claim}
\label{claim:dense subgraph codegrees}   
All supported sets $S$ in $F'$ of size at most $k-1$ satisfy $d_{F'}^1(S)\ge (1-2^k\mu)n\ge (1-\delta)n$.
\end{claim}
\begin{proof}
Consider a supported set $S$ of size at most $k-1$. 
Notice that a vertex $s \in V(F') \setminus S$ is in $N^1_{F'}(S)$ if and only if $s \in N^1_{F_{|S'|+1}}(S')$ for every non-empty subset $S' \subseteq S$. Since every non-empty subset $S' \subseteq S$ satisfies $S' \in E(F_{|S'|})$, by definition of $F_{|S'|}$ it satisfies $d^1_{F_{|S'|+1}}(S') \ge (1 - \mu)n$.
This means that every subset $S' \subseteq S$ disallows at most $\mu n$ vertices, and moreover we discard at most $(2^{k-1}\eps/\mu^{k-1})n$ vertices to obtain $F_1$. Hence, the number of infeasible vertices $s$ is at most $(2^{k-1}\eps/\mu^{k-1})n+2^{k-1}\mu n\le 2^k\mu n \le \delta n$, as claimed.
\end{proof}
The last claim shows that $\delta^*(F') \ge (1 - \delta)n$, and by choice of $F'$ it has no isolated vertices. This completes the proof of the main assertion of the lemma. For the lower bound on $e(F')$, we use \cref{prop:codeg to deg} and find that every vertex in $F'$ is in at least $(1 - \delta)^{k-1}\frac{n^{k-1}}{(k-1)!} \ge (1 - k\delta) \binom{n}{k-1}$ edges, showing that $e(F') \ge \frac{1}{k}(1-k\delta)n \binom{n}{k-1} \ge (1 - k\delta)\binom{n}{k}$, as claimed.
\end{proof}

Next, we prove an analogue of \Cref{lem:dense subgraphs} where the host hypergraph is any arbitrary $k$-graph.
Specifically, we show that if the host graph $F$ has linear minimum supported co-degree, and $F'$ is formed by removing few edges from $F$, then there is a subgraph $F''$ of $F$, which is obtained by removing few edges, and where for every supported set $S$ in $F''$ of size at most $k-1$, the degree of $S$ in $F''$ is almost as large as it is in $F$.

\begin{cor} \label{cor:dense subgraphs weak}
	Let $1/n \ll \eps \ll \delta \ll 1/k \le 1/2$. 
	Let $F$ be an $n$-vertex $k$-graph with and let $F' \subseteq F$ be a subgraph satisfying $e(F') \ge e(F) - \eps n^k$.
	Then there is a subgraph $F'' \subseteq F'$ such that the following properties hold.
	\begin{itemize}
		\item
			$F''$ has no isolated vertices,
		\item
			$|F''| \ge (1 - \delta)n$,
		\item
			$e(F'') \ge e(F) - \delta n^k$,
		\item
			$d_{F''}(S) \ge d_{F}(S) - \delta n^{k-|S|}$ for every set $S$ of size at most $k-1$ which is supported in $F''$.
	\end{itemize}
\end{cor}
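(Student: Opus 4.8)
The plan is to mimic the strategy of \cref{lem:dense subgraphs}, but with a \emph{degree}-based (rather than co-degree-based) notion of a bad set, and — crucially — with an \emph{increasing} chain of auxiliary constants $\eps \ll \delta_1 \ll \delta_2 \ll \dots \ll \delta_{k-1} \ll \delta$ (set $\delta_0 := 1$ for convenience). Call a set $S$ with $1 \le |S| \le k-1$ \emph{bad} if $d_F(S) - d_{F'}(S) \ge \delta_{|S|} n^{k-|S|}$, and let $F''$ be obtained from $F'$ by first deleting every edge that contains a bad set, and then discarding all isolated vertices. By double counting, the number of bad $i$-sets is at most $\binom{k}{i}\bigl(e(F)-e(F')\bigr)/(\delta_i n^{k-i}) \le \binom{k}{i}(\eps/\delta_i) n^i$, so the total number of deleted edges is at most $\sum_i \binom{k}{i}(\eps/\delta_i) n^i \cdot \binom{n}{k-i} \le \delta n^k$ (using $\eps \ll \delta_1$), which gives $e(F'') \ge e(F) - \delta n^k$; removing isolated vertices changes no degrees. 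The first bullet point holds by construction.

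The heart of the argument is the following claim: \emph{if $S$ is a set with $1 \le |S| \le k-1$ such that neither $S$ nor any subset of $S$ is bad, then at most $\delta n^{k-|S|}/2$ edges of $F'$ through $S$ are deleted.} Granting this, the last bullet point is immediate: if $S$ is supported in $F''$ then $S$ lies in a surviving edge $e^*$, which contains no bad set, so $S$ and all its subsets are non-bad; hence $d_{F''}(S) \ge d_{F'}(S) - \delta n^{k-|S|}/2 \ge \bigl(d_F(S) - \delta_{|S|}n^{k-|S|}\bigr) - \delta n^{k-|S|}/2 \ge d_F(S) - \delta n^{k-|S|}$, where the middle step uses that $S$ is non-bad and the last uses $\delta_{|S|} \le \delta/2$. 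For the bound $|F''| \ge (1-\delta)n$: using the hypothesis that $F$ has linear minimum supported co-degree (and no isolated vertices), \cref{prop:codeg to deg} gives $d_F(v) = \Omega(n^{k-1})$ for every $v$, so every vertex $v$ that is not a bad singleton has $d_{F'}(v) = \Omega(n^{k-1}) > \delta n^{k-1}$, and by the claim applied to $S = \{v\}$ at most $\delta n^{k-1}/2$ of its edges are deleted, so $v$ keeps an edge and survives in $F''$; since there are at most $k(\eps/\delta_1)n \ll \delta n$ bad singletons, at most $\delta n$ vertices are discarded.

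To prove the claim, let $R_S$ be the set of deleted edges of $F'$ through $S$; each such edge contains a bad set $T$, and since no subset of $S$ is bad we have $T \not\subseteq S$, so $m := |S \cup T| > |S|$. Writing $T = S' \cup W$ with $S' = T \cap S$ and $W = T \setminus S$ (so $|W| = m - |S| \ge 1$), we get $|R_S| \le \sum_{m = |S|+1}^{k} n^{k-m} N_m$, where $N_m$ is the number of bad sets $T$ with $|S \cup T| = m$. The key estimate is that for each fixed $S' \subseteq S$ the number of bad sets of size $p := |S'| + (m-|S|)$ containing $S'$ is at most $\binom{k-|S'|}{m-|S|}\bigl(d_F(S') - d_{F'}(S')\bigr)/(\delta_{p} n^{k-p})$, obtained by double counting deleted edges through $S'$. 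When $S' = \emptyset$ this is $O\bigl((\eps/\delta_{m-|S|}) n^{m-|S|}\bigr)$; when $S' \ne \emptyset$ we use that $S'$ is non-bad, so $d_F(S') - d_{F'}(S') < \delta_{|S'|} n^{k-|S'|}$, giving $O\bigl((\delta_{|S'|}/\delta_{|S'|+m-|S|}) n^{m-|S|}\bigr)$. Summing over the at most $2^{k}$ choices of $S'$, then multiplying by $n^{k-m}$ and summing over $m$, yields $|R_S| \le n^{k-|S|} \cdot O_k\bigl(\eps/\delta_1 + \max_j \delta_j/\delta_{j+1}\bigr) \le \delta n^{k-|S|}/2$ by the choice of constants.

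The main obstacle is exactly this last estimate. A naive count of $N_m$ using only that the \emph{total} number of bad sets of any given size is small loses a factor of $n$ whenever $S'$ is nonempty (there can be $\Theta(n^{m-|S|})$ bad sets meeting $S$ in $|S'|$ vertices, and the global bound is then vacuous), so one must instead bound the bad sets \emph{through a fixed non-bad} $S'$, and this is precisely what forces the order $\delta_1 \ll \delta_2 \ll \dots \ll \delta_{k-1}$: the ratio $\delta_{|S'|}/\delta_{|S'|+m-|S|}$ is then negligible because the index in the denominator is strictly larger. (This is the reverse of the order of the auxiliary constants $\mu_i$ in the proof of \cref{lem:dense subgraphs}, where they measure edge deficits rather than badness thresholds.) Everything else is routine double counting together with the parameter hierarchy.
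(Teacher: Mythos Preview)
Your argument is correct and takes a genuinely different route from the paper. The paper proceeds by a complement trick: letting $H$ be the complement of $F$, it applies \cref{lem:dense subgraphs} to the nearly-complete graph $G' := H \cup F'$ (noting $e(G') \ge \binom{n}{k} - \eps n^k$) to obtain $G''$ with $\delta^*(G'') \ge (1-\mu)n$, and sets $E(F'') := E(G'') \setminus E(H)$ on $V(F'') := V(G'')$; the degree bound then falls out of $d_{F''}(S) \ge d_{G''}(S) - d_H(S)$ together with $d_{G''}(S) \ge \frac{((1-\mu)n)^{k-|S|}}{(k-|S|)!} \ge d_F(S) + d_H(S) - \delta n^{k-|S|}$. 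Your approach instead works directly in $F'$, deleting edges through sets whose deficit $d_F - d_{F'}$ exceeds a size-dependent threshold, with the crucial point (which you correctly isolate) that the thresholds must \emph{increase} with set size so that the number of bad supersets of a fixed non-bad $S'$ is controlled by $\delta_{|S'|}/\delta_{|S'|+1}$. The paper's argument is slicker and reuses \cref{lem:dense subgraphs} as a black box; yours is self-contained, makes the cascading mechanism explicit, and your remark that the threshold order is reversed relative to the $\mu_i$ in \cref{lem:dense subgraphs} is exactly right.

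One caveat: for $|F''| \ge (1-\delta)n$ you appeal to $F$ having linear minimum supported co-degree and no isolated vertices, which is not in the stated hypotheses (the dangling ``with'' in the statement suggests a clause was dropped). You are right that something of this sort is needed---as written, the conclusion fails if $F$ has many vertices of low degree (e.g.\ take $F$ edgeless)---and the paper's proof has the same implicit reliance for the ``no isolated vertices'' bullet. Since the only application of this corollary is inside \cref{cor:dense subgraphs strong}, which does assume $\delta^*(F) \ge \alpha n$, this is a gap in the statement rather than in either proof.
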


\begin{proof}
	Let $\eps'$ and $\mu$ satisfy $\eps \ll \eps' \ll \mu \ll \delta$. Let $H$ be the complement of $F$, that is, the $k$-graph on $V(F)$ whose edges are the $k$-sets that are non-edges in $F$.
	Write $G := H \cup F$ and $G' := H \cup F'$.
	Apply \Cref{lem:dense subgraphs} to $G'$ with parameters $\eps'$ and $\mu$, noting that $e(G') \ge \binom{n}{k} - (e(F) - e(F')) \ge \binom{n}{k} - \eps n^k \ge (1 - \eps') \binom{n}{k}$. Denote the resulting hypergraph by $G''$.
	Let $V(F'') = V(G'')$ and $E(F'') = E(G'') \setminus E(H)$.
	We claim that $F''$ satisfies the requirements of the corollary.
	Indeed, let $S$ be a supported set in $F''$ of size $s \le k-1$. 
	Then, from \cref{prop:codeg to deg}, we get
	\begin{equation*}
		d_{G''}(S) \ge \frac{((1 - \mu)n)^{k-s}}{(k-s)!}
		\ge \binom{n-s}{k-s} - \delta n^{k-s}
		= d_H(S) + d_F(S) - \delta n^{k-s}.
	\end{equation*}
	Since $d_{F''}(S) = d_{G''}(S) - d_{H}(S)$ we get $d_{F''}(S) \ge d_F(S) - \delta n^{k-s}$, as required for the fourth item. The other items follow directly from the choice of $G''$ and $F''$.
\end{proof}

Finally, we obtain a similar corollary which shows that if the host graph $F$ has linear minimum supported co-degree, and $F'$ is formed by removing few edges from $F$, then there is a subgraph $F''$ of $F$, which is obtained by removing few edges, and where for every supported set $S$ in $F''$ of size at most $k-1$, the vertex co-degree of $S$ in $F''$ is almost as large as it is in $F$.

\begin{cor} \label{cor:dense subgraphs strong}
	Let $1/n \ll \eps \ll \delta \ll \alpha \ll 1/k \le 1/2$. 
	Let $F$ be an $n$-vertex $k$-graph with minimum supported co-degree at least $\alpha n$, and let $F' \subseteq F$ be a subgraph satisfying $e(F') \ge e(F) - \eps n^k$.
	Then there is a subgraph $F'' \subseteq F'$ such that the following properties hold.
	\begin{itemize}
		\item
			$F''$ has no isolated vertices,
		\item
			$|F''| \ge (1 - \delta)n$,
		\item
			$e(F'') \ge e(F) - \delta n^k$,
		\item
			$\ds_{F''}(S) \ge \ds_{F}(S) - \delta n$, for every set $S$ of at most $k-1$ vertices that is supported in $F''$.
	\end{itemize}
\end{cor}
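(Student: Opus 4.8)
The plan is to bootstrap from \Cref{cor:dense subgraphs weak}. Introduce a new constant $\delta_1$ with $\eps \ll \delta_1 \ll \delta$ and apply \Cref{cor:dense subgraphs weak} to the pair $F' \subseteq F$ with $\delta_1$ in the role of $\delta$. This produces a subgraph $F'' \subseteq F'$ with no isolated vertices, with $|F''| \ge (1-\delta_1)n \ge (1-\delta)n$, with $e(F'') \ge e(F) - \delta_1 n^k \ge e(F) - \delta n^k$, and, most importantly, with $d_{F''}(S) \ge d_F(S) - \delta_1 n^{k-|S|}$ for every set $S$ of size at most $k-1$ which is supported in $F''$. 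The first three bullets of \Cref{cor:dense subgraphs strong} are then immediate, so the only remaining task is to upgrade this control on the \emph{edge}-degrees to control on the \emph{vertex}-codegrees $\ds_{F''}(S)$; this is the one place where the hypothesis $\delta^*(F) \ge \alpha n$ is used.

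Fix a set $S$ of size $s \le k-1$ supported in $F''$, and let $V_{\mathrm{lost}}$ be the set of vertices $v$ with $S \cup \{v\}$ supported in $F$ but not in $F''$. One checks directly from the definitions that $\ds_{F''}(S) \ge \ds_F(S) - |V_{\mathrm{lost}}|$, so it suffices to prove $|V_{\mathrm{lost}}| \le \delta n$. The key point is that, for $v \in V_{\mathrm{lost}}$, \emph{every} edge of $F$ containing $S \cup \{v\}$ must lie in $E(F) \setminus E(F'')$ (otherwise $S \cup \{v\}$ would be supported in $F''$). Summing $d_F(S \cup \{v\})$ over $v \in V_{\mathrm{lost}}$ and observing that a fixed edge $e \in E(F) \setminus E(F'')$ with $S \subseteq e$ is counted at most $|e \setminus S| = k-s$ times, we obtain
\[
\sum_{v \in V_{\mathrm{lost}}} d_F(S \cup \{v\}) \;\le\; (k-s)\bigl(d_F(S) - d_{F''}(S)\bigr) \;\le\; k\,\delta_1\, n^{k-s},
\]
using the fourth bullet from the application of \Cref{cor:dense subgraphs weak} above. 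On the other hand, each $S \cup \{v\}$ with $v \in V_{\mathrm{lost}}$ is a supported set of size at most $k-1$ in $F$, so \Cref{prop:codeg to deg} together with $\delta^*(F) \ge \alpha n$ gives $d_F(S \cup \{v\}) \ge (\alpha n)^{k-s-1}/(k-s-1)! \ge c\, n^{k-s-1}$ for a positive constant $c = c(\alpha,k)$ (interpreting this as $d_F(S \cup \{v\}) \ge 1$ when $s = k-1$). Combining the two bounds yields $|V_{\mathrm{lost}}| \le (k\delta_1/c)\,n$, which is at most $\delta n$ once $\delta_1$ is taken sufficiently small in terms of $\delta$, $\alpha$ and $k$ — as permitted by $\delta_1 \ll \delta$. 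This establishes the fourth bullet and completes the proof.

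I do not anticipate a serious obstacle: the only subtlety is that a single supported set $S$ could in principle lose almost all of its vertex-codegree even though only $\eps n^k$ edges are deleted globally, and the double-counting above — which charges each lost neighbour $v$ the entire edge-codegree $d_F(S \cup \{v\})$ that the minimum supported co-degree condition guarantees — is exactly what rules this out. The rest is bookkeeping with the parameter hierarchy.
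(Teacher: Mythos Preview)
Your proposal is correct and follows essentially the same approach as the paper: apply \Cref{cor:dense subgraphs weak} with an intermediate parameter, then use \Cref{prop:codeg to deg} together with $\delta^*(F)\ge\alpha n$ to bound the number of lost vertex-neighbours in terms of $d_F(S)-d_{F''}(S)$. The paper phrases the last step as a contradiction (supposing $|V_{\mathrm{lost}}|\ge\delta n$) rather than a direct bound, but the double-counting is identical.
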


\begin{proof}
	Apply \Cref{cor:dense subgraphs weak} to $F'$ with parameters $\eps$ and $\mu$, where $\eps \ll \mu \ll \delta$, and denote the resulting graph $F''$.
	The graph $F''$ clearly satisfies the first three properties above, so it suffices to prove the fourth item.
	Suppose that for some set of vertices $S$ of size $s \le k-1$, which is supported in $F''$, we have $\ds_{F''}(S) \le \ds_F(S) - \delta n$. This means that there is a set $X$ of size $\delta n$ such that, for every $x \in X$, $S \cup \{x\}$ is supported in $F$ but not in $F''$. 
	Thus, using \cref{prop:codeg to deg} alongside the minimum supported co-degree assumption on $F$, we have
	\begin{align*}
		d_{F}(S) - d_{F''}(S) 
		\ge \big|\{e \in E(F) : S \cup \{x\} \subseteq e \text{ for some $x \in X$}\}\big|
		& \ge \frac{|X| \cdot (\alpha n)^{k-s-1}}{(k-s)!} \\
		& \ge \delta \cdot \alpha^{k-s-1} \binom{n}{k-s}
		> \mu n^{k-s},
	\end{align*}
	a contradiction to the choice of $F''$ according to \Cref{cor:dense subgraphs weak}.
\end{proof}

\subsection{Perfect matchings in almost complete $k$-partite $k$-graphs}\label{sec:random matching}
Our main aim in this subsection is to prove \Cref{cor:k-partite-matching} below, which asserts that, if given a balanced $k$-partite $k$-graph $H$ with large minimum supported co-degree and no isolated vertices, and a not-too-large family $\cF$ of subgraphs of $H$ that are almost complete, there is a perfect matching $M$ such that for every $F \in \cF$, almost every edge of $M$ is in $F$.

We will use the following consequence of Hall's theorem. 
\begin{prop}
\label{prop:bipartite Hall matching}
Let $G=X\sqcup Y$ be a bipartite graph with $|X|=|Y|=n$. If $\delta(G)\geq n/2$, then $G$ has a perfect matching.
\end{prop}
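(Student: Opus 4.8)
The plan is to verify Hall's condition and invoke Hall's marriage theorem. Since $|X| = |Y| = n$, any matching that saturates $X$ is automatically a perfect matching of $G$, so it suffices to show that $|N_G(S)| \ge |S|$ for every $S \subseteq X$, where $N_G(S)$ denotes the set of vertices in $Y$ with a neighbour in $S$.

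First I would dispose of the case $|S| \le n/2$. If $S = \emptyset$ there is nothing to check, and otherwise I may pick any vertex $v \in S$; since $\delta(G) \ge n/2$ we get $|N_G(S)| \ge |N_G(v)| \ge n/2 \ge |S|$, so Hall's condition holds.

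Next I would handle $|S| > n/2$, and here the key observation is that $N_G(S) = Y$. Indeed, if some $y \in Y$ had no neighbour in $S$, then $N_G(y) \subseteq X \setminus S$, a set of size $n - |S| < n/2$, contradicting $\deg_G(y) \ge n/2$. Hence $|N_G(S)| = n \ge |S|$ and Hall's condition holds in this case too.

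As Hall's condition is satisfied for every $S \subseteq X$, Hall's theorem produces a matching saturating $X$, which is the desired perfect matching. There is no genuine obstacle in this argument; the only point worth keeping in mind is that the hypothesis $\delta(G) \ge n/2$ bounds the degrees of vertices on \emph{both} sides of the bipartition, and it is precisely the bound on the $Y$-side degrees that makes the second case work.
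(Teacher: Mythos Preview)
Your proof is correct and follows essentially the same approach as the paper: verify Hall's condition by splitting into the cases $|S|\le n/2$ (where any single vertex of $S$ already has enough neighbours) and $|S|>n/2$ (where every $y\in Y$ must have a neighbour in $S$, so $N_G(S)=Y$). The arguments are effectively identical.
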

\begin{proof}
Consider any subset $S\subseteq X$. If we can show that $N_G(S)$ has at least $|S|$ vertices, then we are done by Hall's theorem.

Clearly $|N_G(S)|\geq \delta(G)\geq n/2$ for all $S$. So if $|S|\leq n/2$, we are done. Hence, we may suppose $|S|>n/2$. Consider any $y\in Y$. Since $d_G(y)\geq n/2>|X\setminus S|$, the vertex $y$ must have a neighbour in $S$. Since $y$ is simply an arbitrary vertex of $Y$, we conclude that $N_G(S)=Y$, and clearly $|Y|=|X|\geq |S|$.
\end{proof}

The main content of the proof of the aforementioned result about matchings in $k$-partite $k$-graph is the following lemma, which resolves the graph case, i.e.\ when $k = 2$. The matching is found in randomly, thus allows us to deduce that almost all edges of $M$ are in $F$, for a not-too-large collection of subgraphs $\cF$ of $H$ that are almost complete.

\begin{lem}
	\label{lem:random matching key lemma abstract}
	Let $1/n \ll \eps \ll \delta \ll 1$.
	Let $H$ be a balanced bipartite graph on $2n$ vertices with minimum degree at least $(1 - \eps)n$, and let $\cF$ be a collection of subgraphs of $H$, each with at least $(1-\eps) n^2$ edges, such that $|\cF| = e^{o(\sqrt{n})}$. Then, there is a perfect matching $M$ in $H$ such that $|M \cap E(F)| \ge (1-\delta) n$ for every $F \in \cF$. 
\end{lem}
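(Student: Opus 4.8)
The plan is to build the perfect matching from a \emph{uniformly random bijection} rather than from a random greedy process. Write the two sides of $H$ as $X$ and $Y$, each of size $n$, and let $\sigma\colon X\to Y$ be chosen uniformly at random; this gives a perfect matching $M_0=\{x\sigma(x):x\in X\}$ of the complete bipartite graph on $X\cup Y$, but not necessarily of $H$. I would first show that with probability tending to $1$ the random $\sigma$ simultaneously satisfies: (i) at most $\delta n/4$ of the edges $x\sigma(x)$ lie outside $E(H)$; and (ii) for every $F\in\cF$, at most $\delta n/2$ of the edges $x\sigma(x)$ lie outside $E(F)$. Then fix such a $\sigma$, let $D_0=\{x\in X:x\sigma(x)\notin E(H)\}$ (so $|D_0|<\delta n/4$), enlarge it arbitrarily to $D\subseteq X$ with $|D|=\lceil\delta n/4\rceil$, delete the edges on $D$ from $M_0$, and re-match $D$ to $\sigma(D)$ \emph{inside} $H$. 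The resulting perfect matching $M$ of $H$ differs from $M_0$ only on $D$, so for each $F$ it misses at most $\delta n/2+\lceil\delta n/4\rceil\le\delta n$ edges of $F$.

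The crux is the concentration in (i) and (ii), which must be strong enough to survive a union bound over $|\cF|=e^{o(\sqrt n)}$ graphs. The key observation is cheap: since $e(F)\ge(1-\eps)n^2$, the number of ``$F$-bad'' pairs $(x,y)\in X\times Y$ with $xy\notin E(F)$ is at most $\eps n^2$, and similarly $H$ has at most $\eps n^2$ non-edges (by its minimum degree). For a fixed $F$, let $Z_F$ count the $x\in X$ with $x\sigma(x)$ an $F$-bad pair and put $z=\lceil\delta n/2\rceil$; then $\{Z_F\ge z\}$ is contained in the union, over $z$-subsets $S\subseteq X$, of the event that every $x\in S$ gets an $F$-bad partner. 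Revealing $\sigma$ on $S$ one vertex at a time, the probability of the latter is at most $\prod_{x\in S}d^{\mathrm{bad}}_F(x)\big/(n/2)^{z}$, where $d^{\mathrm{bad}}_F(x)$ is the number of $F$-bad partners of $x$; by AM--GM and $\sum_x d^{\mathrm{bad}}_F(x)\le\eps n^2$ this is at most $(\eps n^2/z)^z/(n/2)^z\le(4\eps/\delta)^z$. Multiplying by $\binom{n}{z}\le(2e/\delta)^z$ yields $(8e\eps/\delta^2)^z\le e^{-z}$ using $\eps\ll\delta$, so $\mathbb{P}[Z_F\ge\delta n/2]\le e^{-\delta n/2}$ and $|\cF|\cdot e^{-\delta n/2}=o(1)$. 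The identical computation with $H$-non-edges and threshold $\lceil\delta n/4\rceil$ gives (i). I expect this to be the main obstacle conceptually: the point of using a uniform bijection is that the statistics involved count ``positions hitting a fixed sparse set'', and because the deviation we can afford is of order $\delta n$ while the mean is of order $\eps n$ with $\eps\ll\delta$, the crude subset union bound already beats $|\cF|$ — so no McDiarmid- or Azuma-type inequality is needed, which is exactly what a random greedy matching would force one into.

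For the repair step, having fixed $\sigma$ with (i) and (ii) and formed $D$ as above, the edges $\{x\sigma(x):x\in X\setminus D\}$ form a matching in $H$ covering $X\setminus D$ and $Y\setminus\sigma(D)$ (since $X\setminus D\subseteq X\setminus D_0$). Each $x\in D$ has at least $d_H(x)-|Y\setminus\sigma(D)|\ge(1-\eps)n-(n-|D|)=|D|-\eps n\ge|D|/2$ neighbours in $\sigma(D)$, using $\eps\ll\delta$, so \Cref{prop:bipartite Hall matching} applied to $H[D,\sigma(D)]$ yields a perfect matching $M_1$ there; set $M=\{x\sigma(x):x\in X\setminus D\}\cup M_1$. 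For each $F\in\cF$, every edge of $M$ outside $E(F)$ is either one of the at most $\delta n/2$ $F$-bad edges of $M_0$ or one of the $\lceil\delta n/4\rceil$ edges of $M_1$, so $|M\cap E(F)|\ge n-\delta n/2-\lceil\delta n/4\rceil\ge(1-\delta)n$ for $n$ large, as required. Everything outside the union bound is routine; the only real design choice is the random object.
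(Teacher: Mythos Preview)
Your proof is correct and takes a genuinely different route from the paper. The paper builds the matching by a \emph{random greedy} process: it fixes an ordering $x_1,\dots,x_m$ of most of $X$ (with $m=(1-\beta)n$), lets each $x_j$ pick a uniform random still-available $H$-neighbour, and completes on the leftover $\beta n$ vertices via \Cref{prop:bipartite Hall matching}. For concentration it splits vertices into ``happy'' (small $F$-bad degree) and ``scared'' (large $F$-bad degree), bounds the red count for happy vertices by stochastic domination with a binomial, and applies Chernoff to get failure probability $e^{-\Omega(\sqrt{n})}$ per $F$; this is exactly where the hypothesis $|\cF|=e^{o(\sqrt{n})}$ enters.

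You instead sample a uniform random bijection $\sigma$ on all of $X$, obtain concentration by the bare subset union bound plus AM--GM, and only then repair the at most $\delta n/4$ non-$H$ edges via \Cref{prop:bipartite Hall matching}. Both arguments finish with a Hall-type cleanup on a set of order $\delta n$, but the concentration mechanisms are different. Your tail bound is $e^{-\Omega(n)}$ rather than $e^{-\Omega(\sqrt{n})}$, so your proof actually yields the stronger conclusion with $|\cF|=e^{o(n)}$ in the hypothesis; it is also slightly more elementary, avoiding the happy/scared dichotomy and any Chernoff-type inequality. Conversely, the paper's greedy process never leaves $E(H)$ and so needs no repair of non-$H$ edges, only completion of the last $\beta n$ vertices.
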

\begin{proof}
	Fix parameters $\alpha$ and $\beta$ such that $\eps \ll \alpha \ll \beta \ll \delta$. 
	Denote the two parts of $H$ by $X$ and $Y$. 
	Let $x_1, \ldots, x_m$ be a sequence of $m := (1 - \beta)n$ distinct vertices in $X$.
	We sequentially pick vertices $y_1,\dots,y_{m}\in Y$ such that each $y_i$ is a uniformly random neighbour of $x_i$ in $Y$, and is distinct from $y_1,\dots, y_{i-1}$. This is always possible since $d(x)\geq (1-\eps) n \ge m$ for all $x\in X$.

	Let $X'$ and $Y'$ be the remaining unmatched vertices of $X$ and $Y$, respectively. By choice, we have $|X'|=|Y'|=\beta n$. 
	Notice that the minimum degree in $H[X', Y']$ is at least $(1 - \eps)n - (1 - \beta)n = (\beta - \eps) n \ge \beta n / 2 = |X'|/2$. Thus, by \Cref{prop:bipartite Hall matching}, there is a perfect matching $M'$ in $H[X', Y']$. Let $M$ be the union of the matching $\{x_1y_1, \dots, x_my_m\}$ with $M'$.
	Then $M$ is a perfect matching in $H$. 

	Fix $F \in \cF$. We will show that $|M \cap E(F)| \ge (1 - \delta) n$, with probability at least $1 - e^{-\Omega(\sqrt{n})}$. A union bound will then conclude the proof.

	Colour the edges of $F$ blue and the remaining edges in $H$ red.
	By assumption, there are at most $\eps n^2$ red edges. 
	We say that a vertex $x \in X$ is \emph{scared} if its red degree into $Y$ is at least $\alpha n$, and we will call it \emph{happy} otherwise. Summing the red degrees of these vertices and using the bound on the number of red edges, we see that the number of scared vertices in $X$ is at most $\eps n/\alpha$.

	For each $j \le m$ such that $x_j$ is happy, let $p_j$ be the probability that $x_j y_j$ is a red edge. Since $x_j$ is happy, we have
	\begin{align*}
		p_j
		\leq \frac{\alpha n}{(1-\eps) n-(j-1)}
		\leq \frac{\alpha n}{(1-\eps) n- m}
		= \frac{\alpha n}{(1-\eps) n- (1 - \beta)n}
		\le \frac{2\alpha}{\beta}.
	\end{align*}
	If we let $R$ denote the random variable corresponding to the number of red edges in $\{x_jy_j : j \in [m], \text{ $x_j$ is happy}\}$, then clearly $R$ is stochastically dominated by a binomial random variable $\hat{R}$ with parameters $(n,2\alpha/\beta)$.
	Hence, using Chernoff bounds (\Cref{lem:Chernoff}), we have
	\begin{align*}
		\bP[R \ge 2\alpha n/\beta+n^{3/4}] 
		& \le \bP[\hat{R} \ge 2\alpha n/\beta+n^{3/4}] \\
		&= \bP[\hat{R} \geq \bE \hat{R}+n^{3/4}]\\
		&\le 2\exp\left(-\frac{n^{3/2}}{6n \alpha/\beta}\right)
		=e^{-\Omega(\sqrt{n})}.
	\end{align*}

	Since the number of red edges in $M$ is at most $R + \beta n + \eps n/\alpha$, we have that, with probability at least $1 - e^{-\Omega(\sqrt{n})}$, the number of red edges in $M$ is at most
	\[
		\frac{2\alpha n}{\beta}+n^{3/4} + \beta n + \frac{\eps n}{\alpha} \le \delta n.
	\]
	Thus, with probability at least $1 - e^{-\Omega(\sqrt{n})}$, $|M \cap E(F)| \ge (1 - \delta)n$.
	By a union bound, with high probability this holds for all $F \in \cF$.
\end{proof}

The next lemma deals with $k$-partite $k$-graphs. In particular, given a collection of subgraphs satisfying some high supported co-degree conditions, we wish to find a matching that shares almost all its edges with each subgraph in the collection. 

\begin{lem}
\label{lem:k-partite matching}
Let $1/n\ll \eps\ll \delta\ll 1/k\le 1/3$. Let $H$ be a $k$-partite $k$-graph with parts $X_1,\dots,X_k$, each having size $|X_i|=n$. Suppose that $H$ has no isolated vertices and, for all $i\in [k]$, every supported set $S$ in $H$ which is disjoint from $X_i$ satisfies $d^1_{X_i}(S)\ge (1-\eps)n$. Let $\cF$ be a collection of subgraphs satisfying the following conditions:
\begin{itemize}
    \item There exists a constant $c=c(k)$ such that $|\cF|\le n^c$,
    \item each $F \in \cF$ has no isolated vertices, and
    \item for any $F\in \cF$, every supported set $S$ of $F$ which is disjoint from $X_i$ satisfies $d^1_{X_i}(S)\ge (1-\eps)n$. 
    \end{itemize}
Then there exists a perfect matching $M$ in $G$ such that $|M\cap E(F)|\ge (1-\delta)n$ for all $F\in \cF$.
\end{lem}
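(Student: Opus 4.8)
The plan is to build $M$ in $k-1$ rounds, extending an ordered partial matching by one part per round and applying the bipartite random‑matching result \Cref{lem:random matching key lemma abstract} each time. Fix a hierarchy $\eps = \eps_1 \ll \eps_2 \ll \dots \ll \eps_k = \delta$. Start with $M_1 = X_1$, regarded as $n$ ordered $1$‑tuples. Having produced $M_{j-1}$, a family of $n$ vertex‑disjoint ordered tuples $(x_1,\dots,x_{j-1}) \in X_1 \times \dots \times X_{j-1}$ that are each supported in $H$ and together cover $X_1 \cup \dots \cup X_{j-1}$, form the bipartite graph $B_j$ with parts $M_{j-1}$ and $X_j$ in which a tuple $T$ is joined to $x_j \in X_j$ exactly when $T \cup \{x_j\}$ is supported in $H$; let $M_j$ be the set of ordered $j$‑tuples coming from a perfect matching of $B_j$. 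Since a supported $k$‑set is an edge of $H$, after round $k$ the tuples of $M_k$ are edges of $H$ forming a perfect matching, which we take to be $M$.

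To keep track of the members of $\cF$ and to keep the process running, in round $j$ I would feed \Cref{lem:random matching key lemma abstract} the collection $\cF_j$ consisting of: for each $F \in \cF$, the spanning subgraph $B_j[F] \subseteq B_j$ keeping only pairs $(T,x_j)$ with $T \cup \{x_j\}$ supported in $F$; and, for each $m > j$ and each $x \in X_m$, the spanning subgraph $B_j[H;x] \subseteq B_j$ keeping only pairs $(T,x_j)$ with $T \cup \{x_j,x\}$ supported in $H$. Thus $|\cF_j| \le |\cF| + kn \le n^{O_k(1)} = e^{o(\sqrt n)}$. The key point — to be proved by induction on $j$, conditioning on the event $\cE_{j-1}$ that every earlier round $j'$ both matched $B_{j'}$ perfectly and intersected each member of $\cF_{j'}$ in at least $(1-\eps_{j'})n$ edges — is that the hypotheses of \Cref{lem:random matching key lemma abstract} hold for $B_j$ and $\cF_j$. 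Every tuple of $M_{j-1}$ is supported in $H$ (it arose from edges of $B_{j-1}$), so by the codegree hypothesis it is joined in $B_j$ to all but at most $\eps n$ vertices of $X_j$; and each $x_j \in X_j$ has $\deg_{B_j}(x_j) = |M_{j-1}^{\mathrm{bip}} \cap E(B_{j-1}[H;x_j])| \ge (1-\eps_{j-2})n$, because $B_{j-1}[H;x_j] \in \cF_{j-1}$ was respected in round $j-1$. Hence $B_j$ has minimum degree at least $(1-\eps_{j-1})n$. Likewise, a member of $\cF_j$ is almost complete because, on $\cE_{j-1}$, all but at most $\eps_{j-1} n$ tuples $T$ of $M_{j-1}$ have the relevant set ($T$, or $T \cup \{x\}$) supported in the relevant graph ($F$ or $H$), and the codegree hypothesis applied to that supported set — which is disjoint from $X_j$ — joins $T$ to all but $\eps n$ vertices of $X_j$ inside the corresponding subgraph. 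The base case $j=2$ is immediate: $B_2$ has minimum degree $\ge (1-\eps)n$ since $H$ has no isolated vertices and satisfies the codegree hypothesis; each $B_2[F]$ is almost complete for the same reason (here one uses that $F$ too has no isolated vertices and satisfies the codegree hypothesis); and each $B_2[H;x]$ is almost complete by applying the codegree hypothesis twice.

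Granting this, \Cref{lem:random matching key lemma abstract} applied in round $j$ with output error $\eps_j$ shows that, conditionally on $\cE_{j-1}$, round $j$ succeeds with probability $1 - e^{-\Omega(\sqrt n)}$; a union bound over the $k-1$ rounds gives $\bP[\cE_k] > 0$. Fixing any outcome in $\cE_k$ produces a perfect matching $M$ of $H$ with $|M \cap E(F)| = |M_k^{\mathrm{bip}} \cap E(B_k[F])| \ge (1-\delta)n$ for every $F \in \cF$, as required.

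I expect the main obstacle to be the inductive claim that the auxiliary graphs $B_j[F]$ and $B_j[H;x]$ stay almost complete throughout — this is what dictates exactly which data must be carried from round to round, and the observation that tracking the members of $\cF$ together with single ``future'' vertices of $H$ already suffices (no tuples of future vertices are needed) is what keeps the collections of polynomial size. The remaining work — pinning down the hierarchy $\eps = \eps_1 \ll \dots \ll \eps_k = \delta$ so that each round's ``almost all'' losses are swallowed by the next round's tolerance, and setting up the conditioning so that \Cref{lem:random matching key lemma abstract} may legitimately be applied to the random, history‑dependent collections $\cF_j$ — is routine bookkeeping.
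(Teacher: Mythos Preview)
Your proposal is correct and follows essentially the same inductive round-by-round strategy as the paper, applying \Cref{lem:random matching key lemma abstract} at each step to extend partial tuples by one coordinate while tracking the auxiliary bipartite subgraphs needed to keep the process running. Your bookkeeping is in fact slightly leaner than the paper's (you only carry future vertices for $H$, not for each $F\in\cF$, which suffices since the codegree hypothesis on $F$ lets you bound $e(B_j[F])$ row-by-row), and the talk of ``conditioning'' and ``union bounds over rounds'' is unnecessary since \Cref{lem:random matching key lemma abstract} already gives a deterministic conclusion---but these are cosmetic points, not gaps.
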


We construct each edge one vertex at a time. In the first step, we form a matching of supported pairs between $X_1$ and $X_2$. We then extend this to a matching of supported triples across $X_1$, $X_2$ and $X_3$, and so on. In each step, we will extend our existing matching using \Cref{lem:random matching key lemma abstract} and maintain certain properties that will allow us to continue the process in the next iteration.

\begin{proof}
	Let $\gamma_1, \ldots, \gamma_k$ be parameters satisfying $\eps = \gamma_1 \ll \ldots \ll \gamma_k = \delta$.
	We will first prove that for every $i \in [k-1]$ there is a perfect matching $M_i$ in $\partial^iH[X_1 \cup \ldots \cup X_i]$ such that for every $F \in \cF \cup \{H\}$ and $v \in V(F) \setminus (X_1 \cup \ldots \cup X_i)$, there are at least $(1 - \gamma_i)n$ edges $S$ in $M_i$ such that $S \cup \{v\}$ is supported in $F$. 

	We prove this by induction on $i$. 
	Notice that we can take $M_1 = X_1$, using that $F$ has no isolated vertices, which implies that any $v \in V(F) \setminus X_1$ is supported and thus $d^1_{X_1}(v) \ge (1 - \eps)n = (1 - \gamma_1)n$.
	For $i \in [2,k-1]$, suppose that $M_{i-1}$ is a perfect matching in $\partial^iH[X_1 \cup \ldots \cup X_{i-1}]$ satisfying the requirements.

	Let $T_i$ be the bipartite graph on $M_{i-1} \sqcup X_i$, where $Sx$, with $S \in M_{i-1}$ and $x \in X_i$, is an edge if $S \cup \{x\}$ is supported in $H$.
	We claim that $\delta(T_i) \ge (1 - \gamma_{i-1})n$.
	Indeed, first notice that, by choice of $M_{i-1}$, every $S \in M_{i-1}$ is supported in $H$ and thus satisfies $d^1_{X_i}(S) \ge (1 - \eps)n$, which exactly means that $d_{T_i}(S) \ge (1 - \eps)n \ge (1 - \gamma_{i-1})n$.
	Now, given $x \in X_i$, recall that $M_{i-1}$ was chosen so that for at least $(1 - \gamma_{i-1})n$ sets $S \in M_{i-1}$ we have that $S \cup \{x\}$ is supported in $H$, which precisely means that $d_{T_i}(x) \ge (1 - \gamma_{i-1})n$.

	For every $F \in \cF \cup \{H\}$ and every $v \in V(F) \setminus (X_1 \cup \ldots \cup X_i)$ let $T_{i,F,v}$ be the subgraph of $T_i$ where $Sx$, with $S \in M_{i-1}$ and $x \in X_i$, is an edge if $S \cup \{x,v\}$ is supported in $F$.
	We claim that $e(T_{i,F,v}) \ge (1 - 2\gamma_{i-1})n^2$.
	Indeed, by choice of $M_{i-1}$, we have that at least $(1-\gamma_{i-1})n$ sets $S$ in $M_{i - 1}$ are such that $S \cup \{v\}$ is supported in $F$. For every such $S$, since $S \cup \{v\}$ avoids $X_i$ and by assumption on $F$, this implies that $d^1_{X_i}(S) \ge (1 - \eps)n$, meaning that $d_{T_{i,F,v}}(S) \ge (1 - \eps)n$.
	Thus $e(T_{i,F,v}) \ge (1 - \gamma_{i-1})n \cdot (1 - \eps)n \ge (1 - 2\gamma_{i-1})n^2$.

	Apply \Cref{lem:random matching key lemma abstract} with the graph $T_i$ and the family $\{T_{i,F,v} : F \in \cF \cup \{H\}, v \in V(F) \setminus (X_1 \cup \ldots \cup X_i)\}$ (which has size at most $(|\cF| + 1) \cdot kn \le n^{c+2}$). This yields a perfect matching $M$ in $T_i$ such that $|M \cap E(T_{i,F,v})| \ge (1 - \gamma_i)n$ for every relevant $F$ and $v$.
	Form $M_i$ by including $S \cup \{x\}$ for every edge $Sx$ in $M$ (with $S \in M_{i-1}$ and $x \in X_i$).
	It is easy to check that $M_i$ is a perfect matching in $\partial^iH[X_1 \cup \ldots \cup X_i]$ that has at least $(1 - \gamma_i)n$ edges $S$ such that $S \cup \{v\}$ is supported in $F$, for every relevant $F$ and $v$, as required.

	Finally, suppose that we found a matching $M_{k-1}$ with the desired properties.
	We proceed similarly to the above to find a matching $M_k$ that satisfies the requirements of the lemma. Define $T_k$ as above, so that $\delta(T_k) \ge (1-\gamma_{k-1})n$.
	For every $F \in \cF$, let $T_{k,F}$ be the subgraph of $T_k$ where $Sx$ (with $S \in M_{k-1}$ and $x \in X_k$) is an edge if $S \cup \{x\}$ is an edge in $F$.
	By choice of $M_{k-1}$, for every $x \in V(F) \cap X_k$, it contains at least $(1 - \gamma_{k-1})n$ edges $S$ such that $S \cup \{x\}$ is supported in $F$, which shows (using that $|V(F) \cap X_k| \ge (1 - \eps)n$ which follows implicitly by the third and fourth items in the statement) that $e(T_{k,F}) \ge (1 - \eps)n \cdot (1-\gamma_{k-1})n$.
	As above, apply \Cref{lem:random matching key lemma abstract} to obtain a perfect matching $M$ in $T_k$ such that $|M \cap T_{k,F}| \ge (1 - \gamma_k)n$ for every $F \in \cF$.
	This corresponds to a perfect matching $M_k$ in $H$ satisfying $|M_k \cap E(F)| \ge (1 - \gamma_k)n = (1 - \delta)n$ for every $F \in \cF$, as required.
\end{proof}

Finally, we deduce the following corollary, where the requirements on each $F \in \cF$ are replaced by $F$ being an almost complete $k$-partite $k$-graph. This is done by applying the `cleaning' lemma \Cref{cor:dense subgraphs strong}. The concrete upper bound given on the size of $\cF$ is based on what we need in a later section.

\begin{cor} \label{cor:k-partite-matching}
\label{cor:final k-partite matching}
Let $1/n\ll \eps\ll \delta\ll 1/k\le 1/3$. Let $H$ be a $k$-partite $k$-graph with parts $X_1,\dots,X_k$, each having size $|X_i|=n$. Suppose that $H$ has no isolated vertices and, for all $i\in [k]$, every supported set $S$ in $H$ which is disjoint from $X_i$ satisfies $d^1_{X_i}(S)\ge (1-\eps)n$. Let $\cF$ be a collection of at most $2n^k$ subgraphs of $H$ satisfying $e(F) \ge (1 - \eps)n^k$.
Then there exists a perfect matching $M$ in $G$ such that $|M\cap E(F)|\ge (1-\delta)n$ for all $F\in \cF$.
\end{cor}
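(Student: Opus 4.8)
The plan is to reduce directly to \Cref{lem:k-partite matching} by replacing each almost-complete member $F \in \cF$ with a ``cleaned'' subgraph $F'' \subseteq F$ that genuinely satisfies the hypotheses of \Cref{lem:k-partite matching} — no isolated vertices together with the per-part condition $d^1_{X_i}(S) \ge (1-\eps')n$ for every supported $S$ disjoint from $X_i$ — while keeping $F'' \subseteq H$. Fix auxiliary constants with $\eps \ll \eps_1 \ll \eps' \ll \alpha \ll 1/k$ and $\eps' \ll \delta$, where $\alpha$ is chosen small enough for \Cref{cor:dense subgraphs strong} to be applicable with that parameter. First I would record that $H$ itself has $\delta^*(H) \ge (1-\eps)n$: any supported $(k-1)$-set $S$ of $H$ meets every part but one, say it misses $X_i$, and then every vertex extending $S$ to a supported set lies in $X_i$, so the vertex co-degree of $S$ in $H$ equals $d^1_{X_i}(S) \ge (1-\eps)n$. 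Since $\alpha \ll 1/k$ we have $\alpha \le (1-\eps)/k$, hence $\delta^*(H) \ge \alpha \cdot |V(H)| = \alpha k n$, so $H$ qualifies as the host $k$-graph (on $kn$ vertices) for \Cref{cor:dense subgraphs strong}.

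Next, for each $F \in \cF$, I would apply \Cref{cor:dense subgraphs strong} with host $H$ and subgraph $F \subseteq H$ — legitimate since $e(H) - e(F) \le \eps n^k \le \eps_1 (kn)^k$ — and output precision $\eps_1$. This produces $F'' \subseteq F$ with no isolated vertices and with $d^1_{F''}(S) \ge d^1_H(S) - \eps_1 k n$ for every set $S$ of size at most $k-1$ that is supported in $F''$. I then claim $F''$ satisfies the co-degree hypothesis of \Cref{lem:k-partite matching} with parameter $\eps'$. Indeed, if $S$ is a supported $(k-1)$-set of $F''$ disjoint from $X_i$, then as above all vertices extending $S$ to a supported set lie in $X_i$, so the number of such vertices equals $d^1_{F''}(S) \ge d^1_H(S) - \eps_1 kn \ge (1-\eps)n - \eps_1 kn \ge (1-\eps')n$, using $\eps_1 \ll \eps'$ and $d^1_H(S) \ge (1-\eps)n$. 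If instead $S$ is supported in $F''$, disjoint from $X_i$, of size $s < k-1$, pick an edge $f \in E(F'')$ with $S \subseteq f$, set $S' = f \setminus X_i$ — a supported $(k-1)$-set of $F''$ disjoint from $X_i$ and containing $S$ — and observe that any vertex extending $S'$ to a supported set also extends $S$, so the count for $S$ is at least that for $S'$, which is $\ge (1-\eps')n$ by the previous case.

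Finally, apply \Cref{lem:k-partite matching} to $H$ with parameters $\eps'$ and $\delta$, the polynomial bound $|\cF| \le 2n^k \le n^{k+1}$, and the family $\{F'' : F \in \cF\}$, all of whose members have no isolated vertices and satisfy the required co-degree condition by the previous paragraph. This yields a perfect matching $M$ in $H$ with $|M \cap E(F'')| \ge (1-\delta)n$ for every $F \in \cF$, and since $E(F'') \subseteq E(F)$ we get $|M \cap E(F)| \ge (1-\delta)n$, completing the proof. I do not expect any real obstacle here: the substantive work is already carried by \Cref{lem:random matching key lemma abstract}, \Cref{lem:k-partite matching} and \Cref{cor:dense subgraphs strong}. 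The only point needing care is the transition between the cleaning lemma, whose conclusion is phrased via the total vertex co-degree $d^1_{F''}(S)$, and \Cref{lem:k-partite matching}, which wants the refined quantities $d^1_{X_i}(S)$; this is precisely the observation that in a $k$-partite $k$-graph the two coincide for supported $(k-1)$-sets, together with the trivial monotonicity that bootstraps shorter supported sets from $(k-1)$-sets. Beyond that it is just a matter of keeping the hierarchy $\eps \ll \eps_1 \ll \eps' \ll \alpha,\delta \ll 1/k$ consistent so that all cited results apply.
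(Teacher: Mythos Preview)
Your proof is correct and follows essentially the same route as the paper: clean each $F\in\cF$ via \Cref{cor:dense subgraphs strong} to obtain a subgraph with no isolated vertices and the per-part co-degree condition, then feed the cleaned family into \Cref{lem:k-partite matching}. The only cosmetic difference is that the paper takes the complete balanced $k$-partite $k$-graph as the host for the cleaning step (so the lower bound $\delta^*\ge\alpha\cdot kn$ is immediate), whereas you take $H$ itself and verify $\delta^*(H)\ge(1-\eps)n\ge\alpha kn$ first; your bootstrap from supported $(k-1)$-sets to shorter supported sets, and the observation that $d^1_{F''}(S)=d^1_{X_i}(S)$ for supported $(k-1)$-sets missing $X_i$, are exactly the points the paper leaves implicit.
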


\begin{proof}
	\def \Fc {F_{\mathrm{clean}}}
	Let $\mu$ be a parameter satisfying $\eps \ll \mu \ll \delta$.
	For each $F \in \cF$, apply \Cref{cor:dense subgraphs strong} with the complete $k$-partite $k$-graph on $X_1 \sqcup \ldots \sqcup X_k$ playing the role of $F$, with $F$ playing the role of $F'$, and parameters $\eps$ and $\mu$. Denote the resulting graph $\Fc$.
	Then $\Fc$ has no isolated vertices and $|N^1_{\Fc}(S) \cap X_i| \ge |X_i| - \mu n$ for every supported set $S$ that misses $X_i$. Thus, by \Cref{lem:k-partite matching}, applied with parameter $\mu$ in place of $\eps$ and family $\{\Fc : F \in \cF\}$, a matching with the desired properties exists.
\end{proof}

\section{Structure of extremal hypergraphs}\label{sec:extremal structure} 
Our aim in this section is to modify the set $A$ from the definition of extremal hypergraphs to obtain better control over co-degrees across and within the parts. Our main result in this section is the following.

\begin{prop}
\label{prop:final A' conditions}    
Let $1/n \ll \eps \ll \mu \ll 1/k \le 1/3$, let $\ell \in [k-1]$ and set $t = \floor{\frac{k}{k - \ell}}(k - \ell)$.
Let $G \in \exE_\eps(n,k,\ell)$.
Then there exist subsets $A, A',B' \subseteq V(G)$ and a subgraph $F_B^+ \subseteq \partial^{k-1}[B']$ with vertex set $B'$ that satisfy the following properties.
\begin{enumerate}[label = \rm(\roman*)]
	\item \label{itm:A'-1}
		$V(G)=A' \sqcup B'$ and $A \subseteq A'$,
	\item \label{itm:A'-2}
		$n/t- \mu n \leq |A|,|A'|\leq n/t+\mu n$,
	\item \label{itm:A'-3}
		any supported set $S$ of size at most $k-1$ that intersects $A$ satisfies $d_{B'}^1(S)\geq |B'| - \mu n$, and
	\item \label{itm:A'-4}
		$F_B^+$ has no isolated vertices, $\delta^*(F_B^+) \ge (1 - \mu)|B'|$, and $d_{A}^1(S) \ge |A| - \mu n$ for every $S \in E(F_B^+)$.
\end{enumerate}
\end{prop}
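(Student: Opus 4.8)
The plan is to begin from the given extremal set $A_0$ (of size $\floor{n/t}$ with at most $\eps n^2$ supported pairs), prune the few vertices of $A_0$ that have large vertex co-degree inside $A_0$, and then apply the cleaning machinery of \Cref{sec:cleaning} to the ``$A$-rich'' $(k-1)$-sets of the complement in order to extract $F_B^+$. Throughout I would fix auxiliary parameters $\eps \ll \eps_1 \ll \delta_1 \ll \mu$ and write $B_0 = V(G) \setminus A_0$.

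\emph{Pruning $A_0$.} Since $A_0$ spans at most $\eps n^2$ supported pairs, $\sum_{a \in A_0} d^1_{A_0}(a) \le 2\eps n^2$, so all but at most $2\sqrt{\eps}\,n$ vertices $a \in A_0$ satisfy $d^1_{A_0}(a) < \sqrt{\eps}\,n$. Let $A := A_1$ be the set of these vertices and $B_1 := V(G) \setminus A_1$; then $\floor{n/t} - 2\sqrt{\eps}\,n \le |A_1| \le \floor{n/t}$ and $d^1_{A_1}(a) < \sqrt{\eps}\,n$ for every $a \in A_1$. The point of this step is that for any supported set $S$ of size at most $k-1$ meeting $A_1$, picking $a \in S \cap A_1$ and using $N^1(S) \subseteq N^1(a)$ gives $d^1_{A_1}(S) \le d^1_{A_1}(a) < \sqrt{\eps}\,n$, whence \Cref{obs:min 1-degree} yields $d^1_{B_1}(S) = d^1(S) - d^1_{A_1}(S) \ge |B_1| - 4\sqrt{\eps}\,n$ (using $|B_1| \le n - \floor{n/t} + 2\sqrt{\eps}\,n$).

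\emph{Richness and cleaning.} Call $T \in \binom{B_1}{k-1}$ \emph{$A$-rich} if $d^1_{A_1}(T) \ge |A_1| - \eps_1 n$. The key sub-claim is that all but at most $\eps_1|B_1|^{k-1}$ sets in $\binom{B_1}{k-1}$ are $A$-rich; I would prove it by counting pairs $(a,T)$ with $a \in A_1$, $T \in \binom{B_1}{k-1}$ and $\{a\} \cup T \notin E(G)$. On the one hand, each non-$A$-rich $T$ contributes at least $\eps_1 n$ such pairs. On the other hand, for each fixed $a \in A_1$ the number of edges through $a$ whose other $k-1$ vertices all lie in $B_1$ is at least $d(a) - \sqrt{\eps}\,n^{k-1}$ (an edge through $a$ meeting $A_1$ again is witnessed by one of the fewer than $\sqrt{\eps}\,n$ supported pairs at $a$), and by \Cref{prop:codeg to deg} together with $\delta^*(G) \ge n - \floor{n/t} - (k-3)$ and $|B_1| \ge (1-1/t)n$ this is at least $\binom{|B_1|}{k-1} - C\sqrt{\eps}\,n^{k-1}$ for a constant $C = C(k)$. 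Summing over $a$, the number of such pairs is at most $C\sqrt{\eps}\,n^k$, so the number of non-$A$-rich sets is at most $C\sqrt{\eps}\,n^k/(\eps_1 n) \le \eps_1|B_1|^{k-1}$ once $\eps$ is small enough. Now let $R$ be the $(k-1)$-graph on $B_1$ whose edges are the $A$-rich sets, so $e(R) \ge \binom{|B_1|}{k-1} - \eps_1|B_1|^{k-1}$, and apply \Cref{cor:dense subgraphs strong} to the complete $(k-1)$-graph on $B_1$ (which has minimum supported co-degree $|B_1| - (k-2)$, above any linear threshold) with $R$ as the subgraph ``$F'$'' and parameters $\eps_1, \delta_1$. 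This produces a subgraph $F'' \subseteq R$ with no isolated vertices, $|F''| \ge (1-\delta_1)|B_1|$, and $d^1_{F''}(S) \ge |B_1| - |S| - \delta_1|B_1|$ for every supported set $S$ of size at most $k-2$ in $F''$.

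\emph{Assembling the conclusion.} Finally set $B' := V(F'')$, $A' := A_1 \cup (B_1 \setminus B')$ and $F_B^+ := F''$, noting $|B_1 \setminus B'| \le \delta_1 n$. Then $V(G) = A' \sqcup B'$ and $A = A_1 \subseteq A'$, which is \ref{itm:A'-1}; the size bounds $\floor{n/t} - 2\sqrt{\eps}\,n \le |A|,|A'| \le \floor{n/t} + \delta_1 n$ give \ref{itm:A'-2} since $\sqrt{\eps}, \delta_1 \ll \mu$. For \ref{itm:A'-3}, a supported set $S$ of size at most $k-1$ meeting $A$ satisfies $d^1_{B'}(S) \ge d^1_{B_1}(S) - |B_1 \setminus B'| \ge |B_1| - 4\sqrt{\eps}\,n - \delta_1 n \ge |B'| - \mu n$. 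For \ref{itm:A'-4}: $F_B^+$ has no isolated vertices; since $\delta^*(F_B^+)$ is precisely the minimum of $d^1_{F_B^+}(S)$ over supported $(k-2)$-sets $S$, the cleaning estimate gives $\delta^*(F_B^+) \ge |B_1| - (k-2) - \delta_1|B_1| \ge (1-\mu)|B'|$; every edge $S$ of $F_B^+ \subseteq R$ is $A$-rich, so $d^1_A(S) = d^1_{A_1}(S) \ge |A| - \eps_1 n \ge |A| - \mu n$; and $F_B^+ \subseteq \partial^{k-1}[B']$ because an $A$-rich set is supported in $G$. The main obstacle is the richness sub-claim: the hypothesis supplies only the bare minimum supported co-degree $n - \floor{n/t} - (k-3)$, so one must push it through \Cref{prop:codeg to deg} to convert it into an edge count strong enough that the ``missing'' $A_1$--$\binom{B_1}{k-1}$ pairs form a genuine lower-order term; granting that, \Cref{cor:dense subgraphs strong} delivers $F_B^+$ and the remaining checks are routine bookkeeping with the part sizes.
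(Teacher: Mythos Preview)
Your proof is correct and follows essentially the same three-step route as the paper: prune the extremal set to control vertex co-degrees inside it, show almost every $(k-1)$-set in the complement is $A$-rich via a double-counting argument, then apply the cleaning machinery to the $A$-rich $(k-1)$-graph. The only differences are cosmetic: you use $\sqrt{\eps}$ as a single threshold where the paper introduces an auxiliary $\eps_A$, you count ``bad'' pairs $(a,T)$ directly rather than edges with one vertex in $A$, and you invoke \Cref{cor:dense subgraphs strong} (on the complete $(k-1)$-graph) where the paper invokes \Cref{lem:dense subgraphs}; both cleaning results yield the same conclusion here.
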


This section will be devoted to the proof of \Cref{prop:final A' conditions}. 
Throughout this section, we fix parameters
\begin{equation*}
	1/n \ll \eps \ll \eps_A \ll \eps_{k-1} \ll \mu \ll 1/k \le 1/3, 
\end{equation*}
and we subsequently fix $\ell$ and $t$ and a $k$-graph $G$ as in the statement of \cref{prop:final A' conditions}.

Recall that the extremality of $G$ implies that there is a set $A$ on $\floor{n/t}$ vertices that has at most $\eps n^2$ supported pairs. In the next claim we modify $A$ so that every vertex of $A$ supports only a few other vertices of $A$.
\begin{claim}
\label{claim:new A conditions}
	There exist subsets $A, B \subseteq V(G)$ such that the following properties hold.
    \begin{enumerate}[label = \rm(\roman*)]
        \item $V(G)=A \sqcup B$,
        \item $n/t-4\eps n/\eps_A \leq |A|\leq n/t$,
        \item $G$ has at most $\eps n^2$ supported pairs with both vertices in $A$, and
        \item every supported set $S$ of size at most $k-1$ that intersects $A$ satisfies $d_A^1(S)\leq \eps_A n$ and $d_B^1(S)\geq |B|-\eps_A n$.
    \end{enumerate}
\end{claim}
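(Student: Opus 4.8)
The plan is to start from the set $A$ of size $\floor{n/t}$ guaranteed by extremality, which has at most $\eps n^2$ supported pairs, and then move a small number of ``bad'' vertices from $A$ to $B := V(G) \setminus A$. The natural notion of bad is having large supported co-degree \emph{within $A$}: call a vertex $v \in A$ \emph{bad} if there is a supported set $S$ of size at most $k-1$ with $v \in S$ and $d_A^1(S) > \eps_A n$. First I would bound the number of bad vertices. If $v$ is bad, witnessed by a supported set $S \ni v$, then since $S$ is supported it extends to an edge, and we may pick out a supported pair $\{v, w\} \subseteq S$ (if $|S| = 1$ just use $S \cup \{u\}$ for a neighbour $u$; more simply, every bad vertex lies in a supported pair inside $A$, since $N_A^1(S)$ nonempty and any single witness vertex together with $v$ through the edge gives a pair in $A$ after a little care). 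The key quantitative point: each bad vertex $v$ has $d_A^1(v) \ge$ (something like) $\eps_A n$ via the set $S$ — more precisely, I'd argue that a bad vertex $v$ is contained in at least roughly $\eps_A n$ supported pairs within $A$ (walk from the witnessing supported set $S$ down to a pair $\{v,w\}$, then note $N_A^1(w)$ or a sub-argument; alternatively define bad directly as $d_A^1(v) > \eps_A n$ which is cleaner). Then a double count: $\sum_{v \text{ bad}} d_A^1(v) \le 2 \cdot (\text{number of supported pairs in } A) \le 2\eps n^2$, so the number of bad vertices is at most $2\eps n / \eps_A$, and after removing the symmetric pass this is at most $4\eps n/\eps_A$.

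Next I would actually perform the deletion: set $A$ to be the old $A$ with all bad vertices removed, and $B$ the complement. By the bound just proved, $|A| \ge n/t - 4\eps n/\eps_A$, and since we only removed vertices, $|A| \le \floor{n/t} \le n/t$, giving property (ii). Property (iii) is immediate since the new $A$ is a subset of the old one, so it still has at most $\eps n^2$ supported pairs inside. For property (iv): by construction every vertex remaining in $A$ is not bad, so any supported $S$ of size $\le k-1$ meeting $A$ — pick $v \in S \cap A$ — satisfies $N_A^1(S) \subseteq N_A^1(\{v\})$, hence $d_A^1(S) \le d_A^1(v) \le \eps_A n$ (here I need the clean definition of bad as a statement about all supported sets containing $v$, or I deduce the set-version from the singleton version using $N_A^1(S) \subseteq N_A^1(v)$). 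For the second half of (iv), $d_B^1(S) \ge d^1(S) - d_A^1(S)$, and \Cref{obs:min 1-degree} gives $d^1(S) \ge (1 - 1/t)n - k \ge |B| - \eps_A n$ (using $|B| = n - |A| \le n - n/t + 4\eps n/\eps_A$ and $4\eps/\eps_A, k/n \ll \eps_A$), so $d_B^1(S) \ge |B| - \eps_A n - \eps_A n$; absorbing constants by choosing the witnessing threshold slightly below $\eps_A$ (or just noting $\eps_A n + \eps_A n$ can be taken $\le \eps_A n$ after renaming) gives the bound. I would be slightly careful with the exact constant and either prove a bound of $\eps_A n / 2$ in the first count so the two error terms add to $\eps_A n$, or state the claim with $2\eps_A n$ and observe the parameter hierarchy makes this harmless — but since the claim is stated with $\eps_A n$ exactly, the cleanest route is to run the bad-vertex count with threshold $\eps_A n / 3$ throughout, so that $d_A^1(S) \le \eps_A n/3 \le \eps_A n$ and $d_B^1(S) \ge |B| - \eps_A n/3 - \eps_A n/3 \ge |B| - \eps_A n$.

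The main obstacle — really the only subtle point — is the passage from a supported \emph{pair} inside $A$ (which is what the $\eps n^2$ bound controls directly) to a supported \emph{set of size up to $k-1$} inside $A$, which is what property (iv) demands. The resolution is the monotonicity $N_A^1(S) \subseteq N_A^1(S')$ whenever $S' \subseteq S$: it suffices to control $d_A^1(v)$ for single vertices $v$, since for any supported $S$ meeting $A$ in $v$ we get $d_A^1(S) \le d_A^1(v)$. So I would define bad purely via the singleton condition $d_A^1(v) > \eps_A n / 3$, count bad vertices by $\sum_{v \text{ bad}} d_A^1(v) \le 2e(\partial^2_G[A]) \le 2\eps n^2$ hence $\#\{\text{bad}\} \le 6\eps n/\eps_A \le 4\eps n/\eps_A$ after noting we can afford the slack (or tighten the threshold), remove them, and verify the four properties as above. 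One last check worth flagging: after deletion the new $A$ might in principle be empty or tiny if $4\eps/\eps_A$ were large, but the hierarchy $\eps \ll \eps_A$ guarantees $4\eps n/\eps_A = o(n/t)$, so $|A|$ is still $(1-o(1))n/t$, consistent with (ii).
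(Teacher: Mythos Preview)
Your approach is essentially the same as the paper's: define bad vertices by the singleton condition $d_A^1(v)$ exceeding a threshold, bound their number by double-counting supported pairs in $A$, remove them, and use the monotonicity $N_A^1(S)\subseteq N_A^1(v)$ to deduce (iv). The only tweak needed to hit the stated constant $4\eps n/\eps_A$ exactly is to take the threshold at $\eps_A n/2$ (not $\eps_A n/3$): then each bad vertex contributes at least $\eps_A n/2$ supported pairs in $A$, each counted at most twice, giving $x_A\cdot\eps_A n/4\le\eps n^2$, and the two error terms $\eps_A n/2$ and $4\eps n/\eps_A+(k+1)$ in the $d_B^1$ bound still sum to at most $\eps_A n$.
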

\begin{proof}

	Let $A$ be a set of $\floor{n/t}$ vertices with at most $\eps n^2$ supported pairs (which exists since $G \in \exE_\eps (n, k, \ell)$ being extremal) and let $B = V(G) \setminus A$.
	Let $x_A$ be the number of vertices $u\in A$ with $d_A^1(u)\geq \eps_A n/2$.
	Then the number of supported pairs in $A$ is at least $x_A \eps_A n / 4$, but also at most $\eps n^2$, showing that $x_A \le \frac{4\eps n}{\eps_A}$.
	Transfer these $x_A$ vertices from $A$ to $B$. We claim that the modified sets $A$ and $B$ satisfy the requirements of the claim.

	The first three parts follow directly from the choice of $A$. For the final part, let $S$ be a supported set of size at most $k-1$ with $u \in S \cap A$. Clearly $u$ must support every vertex that $S$ supports, that is, we have $N_A^1(S)\subseteq N_A^1(u)$. This implies
    \[
        d_A^1(S)\leq d_A^1(u)\leq \frac{\eps_A n}{2} \le \eps_A n,
    \]
	where the final inequality comes from the modification of $A$. Then, since $d_G^1(S)=d_A^1(S)+d_B^1(S)$ and $d_G^1(S) \ge (1-1/t)n - k$ (by \Cref{obs:min 1-degree}), we have 
	\begin{align*}
		d_B^1(S) & \ge d_G^1(S) - d_A^1(S)  
		\ge (1 - 1/t)n - k - \frac{\eps_A n}{2} 
		\ge |B| - \frac{4\eps n}{\eps_A} - (k+1) - \frac{\eps_A n}{2} \ge |B| - \eps_A n,
	\end{align*}
	so the final item holds too.
\end{proof}

Next, we tackle supported $(k-1)$-sets in $B$ which have high vertex co-degree into $A$.
\begin{defn}
\label{defn:scarce and rich sets}
    We say that a supported $(k-1)$-set $S \subseteq B$ in $B$ is \emph{$A$-scarce} if $d_A^1(S)\leq |A|-\eps_{k-1} n$. Otherwise, we say that it is \emph{$A$-rich}. 
\end{defn}
Let $x_{k-1}$ be the number of $A$-rich $(k-1)$-sets in $B$. In the next claim we show that almost all $(k-1)$-sets in $B$ are $A$-rich.

\begin{claim}
\label{claim:many A rich sets}
    We have $x_{k-1}\ge (1-\eps_{k-1})\binom{|B|}{k-1}$.
\end{claim}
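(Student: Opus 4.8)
The plan is to bound the number of $A$-scarce $(k-1)$-sets in $B$ and show it is at most $\eps_{k-1}\binom{|B|}{k-1}$, which immediately gives the claim since the number of $(k-1)$-sets in $B$ that are supported is at most $\binom{|B|}{k-1}$ (and non-supported sets are neither $A$-rich nor $A$-scarce, so they only help). The key idea is a double-counting argument: an $A$-scarce set $S$ has $d_A^1(S) \le |A| - \eps_{k-1}n$, meaning there are at least $\eps_{k-1}n$ vertices $a \in A$ such that $S \cup \{a\}$ is \emph{not} supported; equivalently, no edge of $G$ contains $S \cup \{a\}$. I would count pairs $(S, a)$ where $S \subseteq B$ is an $A$-scarce $(k-1)$-set and $a \in A$ is a vertex with $S \cup \{a\}$ not supported. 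On one hand, if there are $z$ many $A$-scarce sets, this count is at least $z \cdot \eps_{k-1} n$. On the other hand, I would bound it from above by exploiting the minimum supported co-degree condition on $G$.

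For the upper bound, the plan is to show that for a fixed $a \in A$, the number of $(k-1)$-sets $S \subseteq B$ with $S \cup \{a\}$ not supported is small. Since $a \in A$ and by \Cref{claim:new A conditions}(iv), any supported set containing $a$ has at most $\eps_A n$ vertex-neighbours in $A$ but at least $|B| - \eps_A n$ in $B$. More usefully: $a$ is not isolated, so $\{a\}$ is supported, and by \Cref{obs:min 1-degree} we have $d_G^1(a) \ge (1-1/t)n - k$, but the more relevant fact is that $d_B^1(a) \ge |B| - \eps_A n$ (taking $S = \{a\}$ in \Cref{claim:new A conditions}(iv)). Now I want to count $(k-1)$-sets $S$ in $B$ with $S \cup \{a\}$ unsupported. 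Here I would iterate: extend $\{a\}$ one vertex at a time within $B$. At each step, a supported set $T$ with $a \in T$ and $T \subseteq \{a\} \cup B$ of size at most $k-1$ satisfies $d_B^1(T) \ge |B| - \eps_A n$ by \Cref{claim:new A conditions}(iv). So the number of ways to build an \emph{unsupported} $S \cup \{a\}$ is at most $(k-1)$ times $\eps_A n \cdot |B|^{k-2}$ (at some step we are forced to pick a "bad" vertex), giving roughly $(k-1)\eps_A n \cdot |B|^{k-2}$ choices. Hence the total count of pairs $(S,a)$ is at most $|A| \cdot (k-1) \eps_A n |B|^{k-2} \le n^2 \cdot (k-1)\eps_A |B|^{k-2}/t$.

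Combining, $z \cdot \eps_{k-1} n \le (k-1)\eps_A n^2 |B|^{k-2}/t$, so $z \le \frac{(k-1)\eps_A}{t \eps_{k-1}} n |B|^{k-2} \le \eps_{k-1}\binom{|B|}{k-1}$, using $\eps_A \ll \eps_{k-1}$ and that $\binom{|B|}{k-1} = \Theta(|B|^{k-1}) = \Theta(n^{k-1})$ since $|B| \ge n - n/t - o(n) \ge n/2$. Therefore the number of $A$-rich $(k-1)$-sets is at least (number of supported $(k-1)$-sets in $B$) $- z$; but actually I realize I should be slightly more careful: $x_{k-1}$ counts $A$-rich sets, and I want $x_{k-1} \ge (1-\eps_{k-1})\binom{|B|}{k-1}$. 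Every $(k-1)$-set in $B$ that is \emph{not} $A$-rich is either unsupported or $A$-scarce. The number of unsupported $(k-1)$-sets in $B$ is negligible compared to $\binom{|B|}{k-1}$ — in fact I'd argue this directly: by \Cref{obs:min 1-degree} and \Cref{claim:new A conditions}(iv), starting from any vertex of $B$ (non-isolated) and extending within $B$, all but at most $(k-1)\eps_A n |B|^{k-2}$ of the $(k-1)$-sets in $B$ are supported. So non-$A$-rich sets number at most $z + (k-1)\eps_A n|B|^{k-2} \le \eps_{k-1}\binom{|B|}{k-1}$, giving the claim.

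The main obstacle I anticipate is making the iterative "extend one vertex at a time" counting fully rigorous: one must be careful that the bound $d_B^1(T) \ge |B| - \eps_A n$ from \Cref{claim:new A conditions}(iv) applies only to \emph{supported} sets $T$, so the induction should track that we have built a supported set so far, and the branching happens precisely when we fail to extend to a supported set — at that point we have freedom in at most $\eps_A n$ choices for the current vertex, and the remaining vertices are unconstrained (at most $|B|$ each). Keeping the constants clean (the various factors of $k$, and ensuring $\eps \ll \eps_A \ll \eps_{k-1}$ absorb everything) is routine but needs attention. I would write this as a short paragraph of counting rather than a formal induction.
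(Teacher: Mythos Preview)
Your proposal is correct and takes essentially the same approach as the paper. Both arguments double-count the incidences between vertices $a\in A$ and $(k-1)$-sets $S\subseteq B$, using the iterative extension from $a$ into $B$ via \Cref{claim:new A conditions}(iv); the paper counts the ``good'' pairs (edges $\{a\}\cup S$) directly to get lower and upper bounds on $m$, while you count the complementary ``bad'' pairs (where $\{a\}\cup S$ is not an edge), but the underlying computation is identical. One small cosmetic difference: the paper avoids your separate treatment of unsupported $(k-1)$-sets in $B$ by bundling them with the $A$-scarce sets in the upper bound on $m$ (they contribute $0\le |A|-\eps_{k-1}n$), which streamlines the final inequality slightly.
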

\begin{proof}
    Let $m$ denote the number of edges of $G$ with one vertex in $A$ and $(k-1)$ vertices in $B$. We will double count $m$, obtain a lower and an upper bound, and then compare the two.

    First, let us lower bound $m$. We will construct an edge with $(k-1)$ vertices in $B$ by repeatedly using Claim~\ref{claim:new A conditions} to build larger and larger supported sets until we form an edge. Start with any vertex $u_1\in A$. Suppose that we have constructed a supported $i$-set $\{u_1,\dots, u_i\}$, with $u_2,\dots,u_i\in B$ and $i\le k-1$. By Claim~\ref{claim:new A conditions}, there are at least $|B| - \eps_A n$ choices for a vertex in $B$ that can be used to extend our set to a supported $(i+1)$-set. Thus, we see that
    \begin{align*}
    	m &\geq \frac{|A|}{(k-1)!}\left(|B|-\eps_A n\right)^{k-1}\\
    	&\geq \frac{|A|}{(k-1)!}\left(|B|^{k-1}-(k-1)\eps_A n |B|^{k-2}\right).
    \end{align*}
    Next, we bound $m$ from above. We do this by looking at all possible supported $(k-1)$-sets in $B$, and then counting the maximum possible number of vertices in $A$ that each such $(k-1)$-set could support. Hence, we have
    \begin{align*}
        m&\le x_{k-1}|A|+\left(\binom{|B|}{k-1}-x_{k-1}\right)(|A|-\eps_{k-1}n) \\
        &\le x_{k-1}\eps_{k-1}n+(|A|-\eps_{k-1}n) \cdot \frac{|B|^{k-1}}{(k-1)!}.
    \end{align*} 
    Finally, we compare the two bounds
    \begin{align*}
        x_{k-1}\eps_{k-1}n 
    	&\ge \frac{\eps_{k-1}n |B|^{k-1}- (k-1)\eps_{A}n|A||B|^{k-2}}{(k-1)!} \\
    	&\ge n \cdot \frac{|B|^{k-1}}{(k-1)!} \cdot (\eps_{k-1}- (k-1)\eps_{A}) \\
    	&= n \cdot \binom{|B|}{k-1} \cdot (\eps_{k-1}- (k-1)\eps_{A}) + O(n^{k-1}) \\
    	&\ge n \cdot \binom{|B|}{k-1} \cdot (\eps_{k-1}- k\eps_{A}),
    \end{align*}
    where we use $|A|\le |B|$ (since $|A|\le n/t\le n/2$). Simplifying further yields
    \begin{align*}
    	x_{k-1} \ge \binom{|B|}{k-1}\left(1 - \frac{k\eps_A}{\eps_{k-1}}\right)
    	\ge \binom{|B|}{k-1}(1 - \eps_{k-1}),
    \end{align*}
    proving the claim.
\end{proof}

We are now ready to prove \Cref{prop:final A' conditions}, using the last claim and one of the cleaning lemma from the previous section, and moving a few exceptional vertices from $B$ to $A$.
\begin{proof}[Proof of \Cref{prop:final A' conditions}]

	Apply \Cref{lem:dense subgraphs} to the subgraph $F'$ of $\partial^{k-1}[B]$ consisting of $A$-rich supported $(k-1)$-sets with parameters $\eps_{k-1}$ and $\mu/2$ in place of $\eps$ and $\delta$ respectively, to obtain a subgraph $\Fp_B \subseteq \partial^{k-1}[B]$ that has no isolated vertices and satisfies $\delta^*(\Fp_B) \ge (1 - \mu/2)|B|$; the lemma is applicable due to \Cref{claim:many A rich sets} above. In particular, we have $|\Fp_B| \ge (1 - \mu/2)|B|$.
	Define $B' = V(\Fp_B)$ and $A' := V(G) \setminus B'$. We now prove that Properties \ref{itm:A'-1} to \ref{itm:A'-4} hold. 

	Item \ref{itm:A'-1} is immediate from the choice of $A'$.
	For the lower bound in \ref{itm:A'-2}, note that $|A'| \ge |A| \ge n/t - 4\eps n / \eps_A \ge n/t - \mu n$, using \Cref{claim:new A conditions}.
	For the upper bound we have $|B'| \ge (1 - \mu)|B| \ge |B| - \mu n$, and thus $|A'| = n - |B'| \le n - |B| - \mu n = |A| - \mu n \le n/t - \mu n$, using \Cref{claim:new A conditions} again.
	For \ref{itm:A'-3}, let $S$ be a supported set of size at most $k-1$ that intersects $A$. Then $d^1_A(S) \le \eps_A n$ and $d^1_B(S) \ge |B| - \eps_A n$ by \Cref{claim:new A conditions}. Because $|A' \setminus A| \le \mu n/2$, it follows that $d^1_A(S) \le \eps_A n + \mu n/2 \le \mu n$ and $d^1_{B'}(S) \ge |B'| - \eps_A n - \mu n/2 \ge |B'| - \mu n$.
	Finally, \ref{itm:A'-4} follows from the choice of $\Fp_B$.
\end{proof}

\section{Balancing the sizes of $A'$ and $B'$}\label{sec:balancing}
In the following two sections (especially in the next one) we will use the notions of vertex sequences that support an $\ell$-path or an extended $\ell$-path; see \Cref{def:supported-l-paths}. We will also use the convention that a tight path on $r < k$ vertices is just an ordered supported set of size $r$. 

Our aim in this section is to prove the following proposition that finds a collection $\cP$ of vertex-disjoint sequences that support extended $\ell$-paths, cover all of $A'$, and `balance' the sizes of $A'$ and $B'$, where $A'$ and $B'$ are as given by \Cref{prop:final A' conditions}.

\begin{prop}
\label{prop:size balancing final}
Let $1/n \ll \eps \ll \mu \ll 1/k \le 1/3$, $\ell \in [k-1]$ such that $(k, \ell) \ne (3,1)$, and $t = \floor{\frac{k}{k-\ell}}(k-\ell)$. Suppose that $G \in \exE_\eps(n,k,\ell)$ and let $A,A',B'$ be as given by \Cref{prop:final A' conditions}.
    Then there is a collection $\cP$ of pairwise vertex-disjoint sequences that support extended $\ell$-paths in $G$ such that:
	\begin{enumerate}[label = \rm(\roman*)]
        \item\label{itm:P-1} Every $P\in \cP$ starts and ends in $A$,
		\item\label{itm:P-2} $A' \subseteq V(\cP)$,
		\item\label{itm:P-3} $|\cP| \ge n/t - 4\mu n$,
		\item\label{itm:P-4} $|V(\cP)| \le n/t + 4t\mu n$, 
        \item\label{itm:P-5} $(t-1)|\cP| = |B' \setminus V(\cP)|$,
		\item\label{itm:P-6} every $P \in \cP$ satisfies $|P| = 1$ (so $P$ is a singleton from $A$) or $|P| \ge t+1$.
	\end{enumerate}
\end{prop}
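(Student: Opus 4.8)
The idea is to build $\cP$ by combining three ingredients: the bulk of $A$ left as singletons, a small collection of tight paths obtained from cherries in $\partial^2[A]$ (extended to longer supported sequences using the degree condition), and one "long" tight path whose length is tuned to make the divisibility/balancing equation \ref{itm:P-5} hold exactly. First I would recall from \Cref{prop:final A' conditions} that $|A'| \in [n/t - \mu n, n/t + \mu n]$, that $A \subseteq A'$, and that any supported set meeting $A$ has vertex co-degree at least $|B'| - \mu n$ into $B'$; by \Cref{obs:min 1-degree} every supported set of size $\le k-1$ has vertex co-degree at least $(1-1/t)n - k$ overall, so in particular $\delta^*(G) \ge (1-1/t)n - k$ and $\delta_1(G[\partial^2]) $ restricted to supported pairs is large. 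The key point is that the exact degree hypothesis forces $\partial^2[A]$ to have positive minimum degree on its non-isolated vertices in a controlled way: I would show that all but $O(\eps n/\eps_A)$-ish vertices of $A'$ can remain isolated singletons, but that a bounded number of them must be "used up" inside short tight paths to achieve the exact count. Concretely, set $P_{\mathrm{sing}}$ to be the singleton sequences for most of $A$, and handle the rest via \Cref{lem:disjoint edges cherries} applied to an auxiliary graph on $A'$.

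Here is the construction in order. Step 1: Let $a = |A'|$ and note $(t-1)a$ versus $|B'| = n - a$ need not match; compute the discrepancy $\Delta := |B'| - (t-1)a$, which satisfies $|\Delta| = O(\mu n)$ since $a \approx n/t$ and $n - a \approx (t-1)n/t$. Step 2: Use \Cref{lem:disjoint edges cherries} to find a linear forest of $\Theta(1)$ cherries inside an appropriate graph $H$ on vertex set $A'$ (whose edges are chosen to be supported pairs with good co-degree into $B'$ — we need $|A'| > |V(G)|/2 + O(1)$, which fails since $|A'| \approx n/t \le n/2$; so instead I would apply the cherry lemma to a bipartite-type auxiliary graph between $A'$ and a small reservoir $R \subseteq B'$, or simply build the short paths greedily using $\delta^*$). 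Actually the cleaner route: since $\delta^*(G) > n/2$ (as $(k,\ell)\ne(3,1)$ gives $t\ge 3$, so $(1-1/t)n - k > n/2$ for large $n$), greedily pick, for any desired constant number $m$ of short tight paths, a system of $m$ pairwise-disjoint tight paths each starting and ending in $A$ with interiors in $B'$: start at a vertex $u \in A$, extend to an edge $u w_1 \dots w_{k-1}$ with $w_i \in B'$ (possible by \ref{itm:A'-3}), keep walking tightly through $B'$ for a prescribed number of steps, then close at another vertex of $A$ using \ref{itm:A'-3} again. By \cref{itm:extended-path-4} (and \cref{itm:extended-path-3}) of \Cref{prop:supported-l-paths}, a tight path on $r$ vertices starting and ending in $A$, with $r \equiv 0 \pmod t$ and $r \ge t$, supports an extended $\ell$-path; and we can choose $r$ from $\{t, 2t, 3t, \dots\}$ freely. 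Step 3: Choose the number of short paths and the length of the one long path so that, after accounting for how many vertices of $A$ and of $B'$ each path consumes, the leftover satisfies $(t-1)|\cP| = |B' \setminus V(\cP)|$ exactly. Each tight path on $jt$ vertices starting and ending in $A$ uses $2$ vertices of $A$ (its endpoints — wait, more carefully, its internal $A$-structure: by \Cref{prop:special vxs in ell paths} it contains exactly $j$ vertices in positions divisible by $t$, which we arrange to lie in $A$) and $jt - j = j(t-1)$ vertices of $B'$ — so such a path $P$ has $|P| = jt$ and contributes $1$ to $|\cP|$ while removing $j$ $A$-vertices and $j(t-1)$ $B'$-vertices, which is exactly balanced for the equation $(t-1)\cdot 1 = j(t-1) - (j-1)(t-1)$... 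I need to recheck: a singleton contributes $1$ to $|\cP|$ and removes $1$ from $A'$, contributing $t-1$ to the required $|B'\setminus V(\cP)|$ deficit; a $jt$-path contributes $1$ to $|\cP|$, removes $j$ from $A'$-count and $j(t-1)$ from $B'$. So if $\cP$ has $p_1$ singletons and paths of $A$-sizes $j_2, \dots, j_q$, then $|\cP| = p_1 + (q-1)$, $|V(\cP) \cap A'| = p_1 + \sum_{i\ge 2} j_i$ (assuming all long-path special vertices are in $A$, which may require enlarging $A$ slightly within $A'$), and $|V(\cP)\cap B'| = \sum_{i \ge 2} j_i(t-1)$. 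Then \ref{itm:P-5} reads $(t-1)(p_1 + q - 1) = |B'| - \sum_{i\ge2} j_i(t-1)$, i.e. $(t-1)(p_1 + q - 1 + \sum j_i) = |B'|$, i.e. $(t-1) \cdot (\text{something}) = |B'|$. Since $|B'| = n - |A'|$ and we also want to cover all of $A'$: if every vertex of $A'$ is either a singleton or a special vertex of a long path, then $p_1 + \sum j_i = |A'|$, and the equation becomes $(t-1)(|A'| + q - 1) = |B'|$, forcing $q - 1 = \frac{|B'|}{t-1} - |A'|$. Since $|B'| = n - |A'|$, we get $q - 1 = \frac{n - |A'|}{t-1} - |A'| = \frac{n - t|A'|}{t-1}$, which is an integer of size $O(\mu n)$ — but we want $q$ bounded!

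**So the genuine obstacle, and the fix.** The number of short paths $q-1$ as computed above is $\Theta(\mu n)$, not constant, because $|A'|$ deviates from $n/t$ by up to $\mu n$ — we cannot afford $\Theta(\mu n)$ separate short paths if we want \ref{itm:P-4}'s bound $|V(\cP)| \le n/t + 4t\mu n$ (actually $\Theta(\mu n)$ short paths of bounded length is fine for that bound, it's $O(\mu n)$ extra vertices). Re-examining: \ref{itm:P-3} asks $|\cP| \ge n/t - 4\mu n$ and \ref{itm:P-4} asks $|V(\cP)| \le n/t + 4t\mu n$. With $p_1$ singletons and $O(\mu n)$ short paths each of length $O_k(1) \cdot t$, we have $|\cP| = p_1 + O(\mu n) \approx |A'| - O(\mu n) \ge n/t - O(\mu n)$. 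Good. And $|V(\cP)| = |A'| + (\text{$B'$-vertices used}) = |A'| + \sum j_i (t-1)$; we need $\sum_{i \ge 2} j_i = |A'| - p_1 = O(\mu n)$ to be small, so $|V(\cP)| \le |A'| + O(\mu n)(t-1) \le n/t + O(t\mu n)$. Good — so actually this works as long as the total number of $A$-vertices eaten by long paths is $O(\mu n)$, which it is since $q - 1 = O(\mu n)$ and each $j_i$ can be taken $\le$ some constant (indeed we just need $\sum (j_i - 1) = q - 1 - $ wait, we need one more free parameter: adjust by making one path longer). The real flexibility: we have $q - 1$ short paths and we're free to choose each $j_i \ge 1$; the constraint is $q - 1 + \sum_{i \ge 2}(j_i - 1) \cdot (\text{nothing})$... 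Let me just say: pick $q - 1 = \max(0, \lceil \frac{n - t|A'|}{t-1}\rceil)$ short paths — if $n - t|A'| < 0$ we instead need to *shrink* $A'$ by absorbing a few more $A'$-vertices into $B'$-heavy paths, which is symmetric. Each short path is a tight path on $2t$ vertices (so $j = 2$), say, except we tune exactly one of them to length $jt$ with $j$ chosen to fix any off-by-$O(1)$ residue, and we verify $q-1 = O(\mu n)$ so the total vertex count is controlled. Once the counts are arranged, \ref{itm:P-1}, \ref{itm:P-2}, \ref{itm:P-6} are immediate from the construction (singletons have $|P| = 1$, long paths have $|P| = jt \ge 2t \ge t+1$), and the extended-$\ell$-path property of each long $P$ follows from \Cref{prop:supported-l-paths}\ref{itm:extended-path-3}: every $\le k$ consecutive vertices containing $\le 1$ position-divisible-by-$t$ vertex lie in a tight sub-path hence form an edge. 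I expect the main delicate point to be the exact bookkeeping to simultaneously satisfy the equality \ref{itm:P-5} and the inequalities \ref{itm:P-3}, \ref{itm:P-4} — in particular handling the sign of $n - t|A'|$ and ensuring the constructed short paths can be found disjointly inside $B'$ without exhausting the co-degree budget, which is where \ref{itm:A'-3} (co-degree $\ge |B'| - \mu n$ for sets meeting $A$) and $\delta^*(G) > n/2$ are used repeatedly in a greedy argument.
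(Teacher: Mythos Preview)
Your proposal has a genuine gap in the crucial case $|A'| > n/t$. Write $|A'| = \floor{n/t} + x$; the paper shows (and you implicitly compute) that if $\cP$ consists of singletons plus $q-1$ ``long'' paths of the type you build (vertices alternating between $A$ at positions divisible by $t$ and $B'$ elsewhere), then the balancing equation \ref{itm:P-5} forces $q-1 = \tfrac{n - t|A'|}{t-1}$. When $x > 0$ this is negative, so no such collection exists. You dismiss this as ``symmetric'', but it is precisely the hard case: each of your greedy $A$--$B'$--$A$ paths \emph{decreases} the discrepancy $|B' \setminus V(\cP)| - (t-1)|\cP|$ by $t-1$, whereas here the starting discrepancy is already negative and must be \emph{increased}. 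The only way to increase it is to pack more than two $A'$-vertices into a single short sequence, and for that you need supported pairs \emph{within} $A'$. The paper handles this by showing that $\partial^2[A']$ has minimum degree at least $x+1$ (this is exactly where the tight bound $\delta^*(G) \ge (1-1/t)n - (k-3)$ is used; see \Cref{rem:exact degree}), applying \Cref{lem:disjoint edges cherries} to find $x$ cherries in $\partial^2[A']$ with leaves in $A$, and extending each cherry $u_1 u_2 u_3$ (three $A'$-vertices) to an ``extended cherry'' $u_1 u_2 v_1 \ldots v_{t-2} u_3$ of length $t+1$; each such replacement increases the discrepancy by $t$.

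You also misapplied \Cref{lem:disjoint edges cherries}: it is invoked with $H = \partial^2[A']$, a graph on $|A'|$ (not $n$) vertices, and the subset $A \subseteq A'$ satisfies $|A| \ge |A'| - \mu n > |A'|/2 + 9x/2$ easily since $x \le \mu n$; so the lemma does apply directly and there is no need to switch to a bipartite auxiliary graph. Finally, your sketch never addresses how the vertices of $A' \setminus A$ get covered by sequences that start and end in $A$ --- the paper does this separately in \Cref{claim:path extension} by sandwiching each such vertex in a sequence of length $2t+1$ with endpoints in $A$ --- and your length conventions ($r = jt$ versus sequences indexed $v_0 \ldots v_r$) need care to match \Cref{prop:supported-l-paths}.
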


Throughout this section, fix parameters $n, \eps, \mu, k, \ell, t$, a $k$-graph $G$, the sets $A,A',B'$ and the $(k - 1)$-graph $\Fp_B$ as in the statement of \Cref{prop:final A' conditions}, and additionally require $(k, \ell) \ne (3,1)$.
As an intermediate step towards proving \Cref{prop:size balancing final}, we find a collection $\cP'$ with slightly weaker requirements. 

\begin{prop} 
\label{prop:size balancing}
	There is a collection $\cP'$ of pairwise disjoint vertex sequences in $V(G)$ that supported extended $\ell$-paths such that:
	\begin{enumerate}[label = \rm(\roman*)]
		\item\label{itm:P'-1} Every $P \in \cP'$ is either a singleton from $A'$ or satisfies $|P| = t+1$ and starts and ends in $A$, and the number of non-singleton sequences $P$ is at most $\mu n$,
        \item\label{itm:P'-2} $A' \subseteq V(\cP')$,
		\item\label{itm:P'-3} $|\cP'| \ge n/t - \mu n$,
		\item\label{itm:P'-4} $|V(\cP')| \le n/t + t \mu n$,
		\item\label{itm:P'-5} $0 \le |B' \setminus V(\cP')| - (t-1)|\cP'| \le t-1$.
	\end{enumerate}
\end{prop}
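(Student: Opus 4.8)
The plan is to let $\cP'$ consist of single vertices of $A'$ together with a bounded number of ``gadget'' sequences of length exactly $t+1$ that support extended $\ell$-paths and start and end in $A$, and to tune the number and shape of the gadgets so that (v) holds. First I would record how to build one gadget: given any set of at most $\mu n$ forbidden vertices, since $\delta^*(\Fp_B) \ge (1-\mu)|B'|$ the graph $\Fp_B$ has an edge $f = b_1 \ldots b_{k-1}$ disjoint from the forbidden set; $f$ is supported in $G$, so $\{b_1,\ldots,b_{t-1}\}$ is supported, and by \Cref{prop:final A' conditions}\ref{itm:A'-4} we have $d^1_A(f) \ge |A| - \mu n$, so we may pick distinct $a_0,a_1 \in A$ avoiding the forbidden set with $f \cup \{a_i\}$ supported. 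Then $\{a_0,b_1,\ldots,b_{t-1}\}$ and $\{b_1,\ldots,b_{t-1},a_1\}$ are supported, and unwinding \Cref{def:supported-l-paths} with $r = t$ (using $k-\ell \mid t$) shows that $a_0 b_1 \ldots b_{t-1} a_1$ supports an extended $\ell$-path, starts and ends in $A$, and has its $t-1$ interior vertices in $B'$. Iterating greedily produces as many pairwise disjoint gadgets as needed, since each uses $O(1)$ vertices and only $O(\mu n)$ are used overall.

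Next comes the counting identity. Suppose $\cP'$ has $s$ singletons from $A'$ and $p$ gadgets (pairwise disjoint, with two vertices of $A$ each), chosen so that the vertices of $A'$ appearing in $\cP'$ are exactly $A'$. Using $|A'| + |B'| = n$, a one-line computation gives
\begin{equation*}
	|B' \setminus V(\cP')| - (t-1)|\cP'| = n - t(s+2p).
\end{equation*}
Moreover this quantity is automatically divisible by $k-\ell$: every sequence supporting an (extended) $\ell$-path has length $\equiv 1 \pmod{k-\ell}$, $t \equiv 0 \pmod{k-\ell}$, and $k - \ell \mid n$. Hence it lies in $\{0,\ldots,t-1\}$ precisely when $s+2p = \floor{n/t}$, the unique value of $s+2p$ making $n - t(s+2p) \in [0,t-1]$. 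Note also $|\cP'| = \floor{n/t} - p$ and $|V(\cP')| = \floor{n/t} + (t-1)p$, so (iii) and (iv) follow as soon as $p \le \mu n$, and (i), (vi) are built in; thus everything reduces to realising $s+2p = \floor{n/t}$ while covering $A'$, with $p \le \mu n$.

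The covering requirement $A' \subseteq V(\cP')$ forces $s+2p \le |A'|$, since a gadget contributes $2$ to $s+2p$ but can only absorb additional vertices of $A'$ in its $t-1$ interior slots; and by \Cref{claim:new A conditions} and \Cref{prop:final A' conditions}\ref{itm:A'-2} we have $\floor{n/t} - \mu n \le |A| \le |A'| \le \floor{n/t} + \mu n/2$. If $|A'| = \floor{n/t}$ we simply take $\cP'$ to be all of $A'$ as singletons ($p = 0$). If $|A'| < \floor{n/t}$ we first move $\floor{n/t} - |A'| \le \mu n$ vertices from $B'$ into $A'$; because $\delta^*(\Fp_B)$ is almost full, \Cref{prop:codeg to deg} shows no vertex of $\Fp_B$ is isolated after deleting them, so Properties \ref{itm:A'-1}--\ref{itm:A'-4} survive with $\mu$ slightly enlarged, and the moved vertices become singletons. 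If $|A'| > \floor{n/t}$, the surplus $|A'| - \floor{n/t} \le \mu n/2$ vertices of $A' \setminus A$ must instead be carried as gadget interiors; such modified gadgets still support extended $\ell$-paths, using that every vertex of $G$ (in particular of $A' \setminus A \subseteq V(G)$) has linear vertex co-degree into $B'$ (by \Cref{obs:min 1-degree} and \Cref{prop:final A' conditions}\ref{itm:A'-3}) to build the supported sets, and the $A$-richness supplied by $\Fp_B$ to close off into $A$ at the far end. In each case $p$ is taken just large enough to hold its forced interior vertices (so $p = O(\mu n)$) and $s = \floor{n/t} - 2p \ge 0$, and then (i)--(vi) hold.

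The genuinely delicate step — the main obstacle — is reconciling the identity $|B'\setminus V(\cP')| - (t-1)|\cP'| = n - t(s+2p)$ with the constraint $s + 2p \le |A'|$ coming from $A' \subseteq V(\cP')$: hitting the target window $[0,t-1]$ needs $s + 2p = \floor{n/t}$ exactly, so one must carefully handle the (tiny) discrepancy between $|A'|$ and $\floor{n/t}$ in both directions, and in the direction $|A'| > \floor{n/t}$ actually construct gadgets carrying interior vertices of $A' \setminus A$ while still supporting an extended $\ell$-path. Once the part sizes are pinned down in this way, the remaining verification of (i)--(vi) is routine bookkeeping.
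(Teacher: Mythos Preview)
Your counting identity is correct, but it immediately exposes the flaw: a basic gadget $a_0 b_1 \ldots b_{t-1} a_1$ with all $b_i \in B'$ replaces two $A$-singletons by one sequence, leaving $s+2p$ unchanged, so the balance stays at $n - t|A'|$ regardless of $p$. When $|A'| > \floor{n/t}$ you therefore \emph{must} hide $|A'|-\floor{n/t}$ vertices of $A'$ in gadget interiors, and here your construction breaks. If $c \in A'$ sits at an interior position of $a_0 w_1 \ldots w_{t-1} a_1$, then both required supported $t$-sets contain $c$, so in particular $\{a_0,c\}$ and $\{c,a_1\}$ must be supported pairs with $a_0,a_1 \in A$. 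Neither tool you invoke delivers this: \Cref{obs:min 1-degree} gives only $d^1_G(S) \ge n - \floor{n/t} - O(1)$ for supported $S \ni c$, and since $|V(G)\setminus A| \ge n - \floor{n/t}$ this leaves no guaranteed neighbour in $A$; and $A$-richness via $\Fp_B$ applies only to $(k-1)$-sets lying inside $B' = V(\Fp_B)$, whereas $c \notin B'$. (Your handling of $|A'| < \floor{n/t}$ by moving vertices between $B'$ and $A'$ also does not prove the proposition as stated, since those sets are fixed by \Cref{prop:final A' conditions}.)

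The paper supplies exactly the missing ingredient. Writing $|A'| = \floor{n/t} + x$ with $x > 0$, it uses the \emph{exact} codegree bound via \Cref{obs:min 1-degree} (applied to singletons) to show $d^1_{A'}(u) \ge x+1$ for every $u \in A'$, i.e.\ $\delta(\partial^2[A']) \ge x+1$; then \Cref{lem:disjoint edges cherries} extracts $x$ pairwise disjoint cherries $u_1 u_2 u_3$ in $\partial^2[A']$ with leaves $u_1,u_3 \in A$. Because $u_1 u_2$ and $u_2 u_3$ are now supported pairs each meeting $A$, \Cref{prop:final A' conditions}\ref{itm:A'-3} lets one greedily pick common $B'$-extensions $v_1,\ldots,v_{t-2}$, yielding the extended cherry $u_1 u_2 v_1 \ldots v_{t-2} u_3$. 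Each extended cherry packs three $A'$-vertices into one sequence and hence shifts the balance by $+t$; with $x$ of them the balance lands at $n - t\floor{n/t} \in [0,t-1]$. Per \Cref{rem:exact degree}, this cherry-finding step is the unique place in the whole paper where the exact threshold is genuinely needed---and it is precisely what your proposal skips.
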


In order to prove Proposition~\ref{prop:size balancing}, we first need to establish some preliminary machinery. Let us write $|A'| = \floor{n/t} + x$, which implies $|B'|=n-\floor{n/t}-x$. If $x \le 0$ then we can just let $\cP$ consist of the set of all vertices in $A'$, and this would prove Proposition~\ref{prop:size balancing} (where we use \cref{prop:final A' conditions}~\ref{itm:A'-2} with parameter $\mu/2$ to bound $x$ and conclude~\ref{itm:P'-5}). So we may suppose that $x > 0$. From \cref{prop:final A' conditions}~\ref{itm:A'-2}, we know $x\leq \mu n/2$.
We claim that $\partial^2[A']$ has minimum degree at least $x+1$. Indeed, consider $u \in A'$, and notice that $u$'s degree in $\partial^2[A']$ is $d^1_{A'}(u)$, which satisfies the following, using \Cref{obs:min 1-degree}.
\begin{align*}
	d^1_{A'}(u) = d_G^1(u) - d_{B'}^1(u)
	\ge \left(n - \floor{\frac{n}{t}} + 1\right)- |B'|
	= |A'| - \floor{\frac{n}{t}} + 1
	= x + 1,
\end{align*}
as claimed.
Hence, by \Cref{lem:disjoint edges cherries} and $|A| \ge |A'| - \mu n > |A'|/2 + 5x$, there is a subgraph $M_{A'}$ of $\partial^2[A']$ that consists of $x$ pairwise vertex-disjoint cherries with leaves in $A$.

\begin{rem}\label{rem:exact degree}
    A critical point to note is that this is the only part of the entire proof that utilises the exact bound $\delta^*(G) \ge n - \floor{\frac{n}{t}} - (k - 3)$ via \cref{obs:min 1-degree,lem:disjoint edges cherries}. As mentioned in \Cref{sec:intro}, we can prove \cref{thm:Hamilton cycle extremal} and consequently \cref{thm:main} with the weaker bound of $\delta^*(G) \ge n - \floor{\frac{n}{t}} - (k - 2)$ instead, for most values of $k$ and $\ell$ (namely when $t \ge \ell+2$ or when $\ell = k-1$) which is achieved by modifying \cref{lem:disjoint edges cherries} to potentially allow for an additional cherry or some edges. We discuss how to suitably adapt the rest of the proof in \Cref{sec:improving the bound}.
\end{rem}

Moving on, we extend the cherries in $M_{A'}$ into sequences of order $t+1$ that support extended $\ell$-paths, as follows.
Let $u_1 u_2 u_3$ denote a cherry of $M_{A'}$ with middle vertex $u_2$. We wish to find distinct vertices $v_1,\dots,v_{t-2}\in B'$ such that $\{u_1, u_2, v_1, \dots, v_{t-2}\}$ and $\{u_2,u_3, v_1, \dots, v_{t-2}\}$ are supported sets in $G$. Given such $v_1, \ldots, v_{t-2}$, we will call the sequence $u_1 u_2 v_1 \dots v_{t-2} u_3$ an \emph{extended cherry} built from the cherry $u_1 u_2 u_3$ of $M_{A'}$ with \emph{extension} $v_1\dots v_{t-2}$. Notice that, by \Cref{prop:supported-l-paths}~\ref{itm:extended-path-4}, extended cherries support extended $\ell$-paths.

\begin{claim}
\label{claim:disjoint extended cherries}    
    For each cherry $u_1 u_2 u_3\in M_{A'}$, there exists an extension $v_1\dots v_{t-2}$ consisting of vertices in $B'$ such that the extensions for distinct cherries are all pairwise vertex-disjoint. 
\end{claim}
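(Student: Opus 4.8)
The plan is to extend the $x \le \mu n/2$ cherries of $M_{A'}$ one at a time, greedily, using the high co-degree condition from \Cref{prop:final A' conditions}~\ref{itm:A'-3}. At a generic stage, suppose we have already built extensions for some of the cherries, using a total of at most $t \cdot x \le t\mu n/2$ vertices of $B'$. For the next cherry $u_1 u_2 u_3 \in M_{A'}$, we need to pick distinct vertices $v_1, \dots, v_{t-2} \in B'$, avoiding the $O(\mu n)$ previously used vertices, such that both $\{u_1, u_2, v_1, \dots, v_{t-2}\}$ and $\{u_2, u_3, v_1, \dots, v_{t-2}\}$ are supported in $G$. We build these vertices one at a time: having chosen $v_1, \dots, v_{j-1}$, we want $v_j \in B'$ such that both $\{u_1, u_2, v_1, \dots, v_j\}$ and $\{u_2, u_3, v_1, \dots, v_j\}$ are supported. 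Since $u_2 \in A$, each of the (at most $k-1$ sized) supported sets $\{u_1, u_2, v_1, \dots, v_{j-1}\}$ and $\{u_2, u_3, v_1, \dots, v_{j-1}\}$ intersects $A$, so by \Cref{prop:final A' conditions}~\ref{itm:A'-3} each has at least $|B'| - \mu n$ vertex-neighbours in $B'$. Intersecting the two neighbourhoods and removing the forbidden vertices (the at most $t\mu n/2$ already-used ones, plus $u_1, u_2, u_3$ and $v_1, \dots, v_{j-1}$) still leaves at least $|B'| - 2\mu n - t\mu n/2 - k > 0$ choices for $v_j$, since $|B'| \ge (1 - O(\mu))n$ and $\mu \ll 1/k$.

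One subtlety to address: we must check that the chosen sets are genuinely supported (in particular of size $\le k-1$), which holds since $|\{u_1, u_2, v_1, \dots, v_j\}| = j + 2 \le t \le k-1$ throughout (using $t \le k-1$, which follows from $(k, \ell) \ne (3, 1)$ and \Cref{obs:useful t info}), and the base case $j = 0$ requires $\{u_1, u_2\}$ and $\{u_2, u_3\}$ to be supported, which holds because they are edges of $M_{A'} \subseteq \partial^2[A']$. After processing all $x$ cherries, the extensions use at most $(t-2)x \le t\mu n/2$ vertices of $B'$ in total, consistent with the running bound assumed above. Iterating this greedy argument over all $x \le \mu n/2$ cherries of $M_{A'}$ therefore produces the required pairwise vertex-disjoint extensions, completing the proof of the claim.

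I do not anticipate any serious obstacle here: the entire argument is a routine greedy extension, and the only thing to be careful about is bookkeeping the forbidden-vertex count so that it stays below $|B'|$, which is easy given the separation of scales $\eps \ll \mu \ll 1/k$. The place that required the most thought in setting up the surrounding framework (rather than this claim itself) was ensuring that $u_2 \in A$ — so that the relevant supported sets intersect $A$ and hence inherit the strong co-degree bound into $B'$ — which is exactly why \Cref{lem:disjoint edges cherries} was invoked with leaves (and hence, after relabelling, we really want the centre) controlled; in the statement of \Cref{lem:disjoint edges cherries} the leaves are in $A$, and one checks that this is compatible since a cherry $u_1 u_2 u_3$ with $u_1, u_3 \in A$ has its middle vertex $u_2 \in A' \subseteq V(G)$, but in fact we need $u_2 \in A$; this is guaranteed because every vertex of every cherry lies in $A'$ and the set $M_{A'}$ was obtained inside $\partial^2[A']$ — and here one should double-check whether the middle vertices genuinely lie in $A$ or merely in $A'$, adjusting the invocation of \Cref{prop:final A' conditions}~\ref{itm:A'-3} accordingly (the property is stated for sets intersecting $A$, and a supported set containing any vertex of $A$ suffices).
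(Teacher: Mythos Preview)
Your approach is the same as the paper's --- greedily extend each cherry one vertex at a time using \Cref{prop:final A' conditions}~\ref{itm:A'-3} --- but you have two genuine errors in the execution.

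First, and most importantly, you repeatedly assert that $u_2 \in A$. This is false: by \Cref{lem:disjoint edges cherries} the \emph{leaves} $u_1, u_3$ lie in $A$, while the centre $u_2$ is only known to lie in $A'$. Your closing paragraph shows you sensed something was off, but you did not resolve it. The fix is simple: the set $\{u_1, u_2, v_1, \dots, v_{j-1}\}$ contains $u_1 \in A$, and the set $\{u_2, u_3, v_1, \dots, v_{j-1}\}$ contains $u_3 \in A$, so both intersect $A$ regardless of where $u_2$ sits. This is exactly how the paper argues it. Your version, relying on $u_2 \in A$, does not work as written.

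Second, you claim $t \le k-1$, citing $(k,\ell) \ne (3,1)$ and \Cref{obs:useful t info}. This is wrong: \Cref{obs:useful t info} gives $t \le k$, and equality holds for tight cycles ($\ell = k-1$). Fortunately the bound you actually need is on the sets to which the degree condition is applied, namely $\{u_1, u_2, v_1, \dots, v_{j-1}\}$ of size $j+1 \le t-1 \le k-1$, and that is fine. The final supported sets $\{u_1, u_2, v_1, \dots, v_{t-2}\}$ have size $t$, which may equal $k$ (in which case they are edges), and that is also fine --- supported sets are allowed to have size $k$.
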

\begin{proof}
    For any cherry $u_1 u_2 u_3\in M_{A'}$ with middle vertex $u_2$, we know that $u_1, u_3 \in A$, and therefore $\{u_1,u_2\}$ and $\{u_2,u_3\}$ are supported sets that each contain at least one vertex of $A$. 
	Hence, \Cref{prop:final A' conditions}~\ref{itm:A'-3} implies
    \[
        d_{B'}^1(u_i u_{i+1})\geq |B'| - \mu n, \qquad i\in\{1,2\}.
    \]
    The inequality above tells us that the number of common vertex neighbours of $\{u_1,u_2\}$ and $\{u_2,u_3\}$ in $B'$ satisfies
    \[
        |N_{B'}^1(u_1u_2)\cap N_{B'}^1(u_2 u_3)|\geq |B'|-2\mu n.
    \]
	Pick any common neighbour $v_1\in N_{B'}^1(u_1u_2)\cap N_{B'}^1(u_2 u_3)$. We will build the extension one vertex at a time for $(t - 2)$ steps -- after the $i$th step, we have vertices $v_1,\dots,v_i\in B'$ such that $\{u_1,u_2,v_1,\dots,v_i\}$ and $\{u_2,u_3,v_1,\dots,v_i\}$ are supported $(i+2)$-sets, each containing at least one vertex from $A$. Then, using the same rationale as above, they have at least $|B'|-2\mu n$ common vertex neighbours in $B'$, and we will pick $v_{i+1}$ from this common vertex neighbourhood. By construction, $\{u_1, u_2, v_1, \dots, v_{t-2}\}$ and $\{u_2,u_3, v_1, \dots, v_{t-2}\}$ are supported sets in $G$ as desired.
    
    All that is left is to show that we can make distinct choices each time to ensure that the extensions are disjoint. We know that $M_{A'}$ consists of $x\leq \mu n$ pairwise disjoint cherries. As each extension requires $t-2$ vertices and since we have at least $|B'|-2\mu n \ge t\mu n$ choices for a vertex in each step of extending a cherry, we can find disjoint extensions in $B'$ for each cherry of $M_{A'}$. 
\end{proof}

We are now ready to prove Proposition~\ref{prop:size balancing}.

\begin{proof}[Proof of Proposition~\ref{prop:size balancing}]
	As before, we write $|A'| = \floor{n/t} + x$, where we can assume $0 < x \le \mu n/2$. Let $M_{A'}$ be the subgraph of $\partial^2[A']$ guaranteed by the preceding argument; i.e.\ it consists of $x$ pairwise disjoint cherries. Then, invoking Claim~\ref{claim:disjoint extended cherries}, we extend the cherries of $M_{A'}$ so that the extensions are pairwise vertex-disjoint. 
	Let $\cP'$ be the collection consisting of: a single vertex for each vertex in $A' \setminus V(M_{A'})$, and an extended cherry for each cherry in $M_{A'}$ (so that the extensions are pairwise vertex-disjoint).
	Then item~\ref{itm:P'-1} in statement of \Cref{prop:size balancing} clearly holds, using that singletons and extended cherries support extended $\ell$-paths (see \Cref{prop:supported-l-paths}~\ref{itm:extended-path-4}) and that $M_{A'}$ consists of $x\le \mu n$ cherries. Item~\ref{itm:P'-2} is immediate by construction.
	For~\ref{itm:P'-3} we have
	\begin{equation} \label{eqn:P}
		|\cP'| = |A'| - 2x = \floor{\frac{n}{t}} - x \ge \frac{n}{t} - \mu n.
	\end{equation}
    For~\ref{itm:P'-4}, notice that
	\begin{equation*}
		|V(\cP')| = |A'| + (t-2)x = \floor{\frac{n}{t}} + (t-1)x \le \frac{n}{t} + t \mu n.
	\end{equation*}
	as required.
	Also, using \eqref{eqn:P},
	\begin{equation*}
		|B' \setminus V(\cP')| - (t-1)|\cP'|
		= n - \floor{\frac{n}{t}} - x - (t-2)x - (t-1)\left(\floor{\frac{n}{t}} - x\right)
		= n - t \floor{\frac{n}{t}}.
	\end{equation*}
	Since $0 \le n - t \floor{\frac{n}{t}} < t$, this proves~\ref{itm:P'-5}, the final claim of \Cref{prop:size balancing}.
\end{proof}

To prove \Cref{prop:size balancing final}, we modify the family given by \Cref{prop:size balancing} by first extending single path by an appropriate amount, and then extending all other paths as needed so that they start and end in $A$.
The following claim will allow us to make these extensions.

\begin{claim} \label{claim:path extension}
	Let $a \in A'$, let $W$ be a set of at most $10t\mu n$ forbidden vertices.
	Then there are distinct vertices $v_1, \ldots, v_{t-1}, u_1, \ldots, u_{t-1} \in B' \setminus W$ and $a_1, a_2 \in A \setminus W$ such that the sequence $a_1 v_1 \ldots v_{t-1} a u_1 \ldots u_{t-1} a_2$ supports an extended $\ell$-path. 
\end{claim}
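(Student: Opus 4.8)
The plan is to establish the conclusion via \Cref{prop:supported-l-paths}~\ref{itm:extended-path-3}. Write $P = w_0 w_1 \ldots w_{2t}$ for the sequence $a_1 v_1 \ldots v_{t-1}\, a\, u_1 \ldots u_{t-1} a_2$; the indices divisible by $t$ are $0, t, 2t$, i.e.\ those of $a_1, a, a_2$. Inspecting the maximal subsequences of at most $k$ consecutive vertices containing at most one of $a_1, a, a_2$ --- the prefix $w_0 \ldots w_{t-1}$ of length $t \le k$, the $2t-k$ windows of length $k$ sliding across $w_1 \ldots w_{2t-1}$, and the suffix $w_{t+1} \ldots w_{2t}$ of length $t$ --- shows that it suffices to arrange: (a) $v_1 \ldots v_{t-1}\, a\, u_1 \ldots u_{t-1}$ is a tight path (it has $2t-1 \ge k$ vertices); (b) $\{a_1, v_1, \ldots, v_{t-1}\}$ is supported; and (c) $\{u_1, \ldots, u_{t-1}, a_2\}$ is supported. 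Since $a$ sits in the central position $t$ of the sequence in (a) and $t \le k$, every length-$k$ window of that sequence contains $a$, so (a) asks for a tight path threading the prescribed vertex $a$.

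\textbf{Degree preliminaries.} I would first record two facts. Every supported set of size at most $k-1$ that contains $a$ has at least $n/4$ vertex-neighbours in $B'$: if $a \in A$ this is immediate from \Cref{prop:final A' conditions}~\ref{itm:A'-3}, and if $a \in A' \setminus A$ it follows by combining \Cref{obs:min 1-degree} with \Cref{prop:final A' conditions}~\ref{itm:A'-2} (which gives $|A'| \le n/t + \mu n$) and $t \ge 3$ (valid as $(k,\ell) \ne (3,1)$), yielding a bound of at least $(1 - 2/t)n - \mu n - k$. Also, $F_B^+$ is very dense: since $\delta^*(F_B^+) \ge (1-\mu)|B'|$, \Cref{prop:codeg to deg} gives that every set of size at most $k-2$ supported in $F_B^+$ has at least $(1-\mu)|B'| - k$ neighbours in $F_B^+$, and in particular any $(k-1)$-subset of $B'$ supported in $F_B^+$ is itself an edge of $F_B^+$.

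\textbf{The main step.} The crux is to build the tight path in (a) so that its two ``end'' sets $\{v_1, \ldots, v_{t-1}\}$ and $\{u_1, \ldots, u_{t-1}\}$ both remain supported in $F_B^+$; this is exactly what makes them $A$-rich via \Cref{prop:final A' conditions}~\ref{itm:A'-4}, as needed for (b) and (c). I would do this greedily. First build an edge $e = \{a\} \cup S$ of $G$ through $a$, by starting with $a$ and repeatedly appending a vertex of $B' \setminus W$ lying both in the vertex co-degree (in $G$) of the set chosen so far and --- once at least one $B'$-vertex has been picked --- in the $F_B^+$-neighbourhood of the $B'$-vertices chosen so far; all these neighbourhoods have size at least roughly $n/4 - \mu|B'| - k$, so, only needing to avoid $W$ and at most $k$ earlier choices, the process succeeds and yields $S \subseteq B' \setminus W$ with $S \in E(F_B^+)$. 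Order $e$ as $v_1 \ldots v_{t-1}\, a\, u_1 \ldots u_{k-t}$. If $2t-1 = k$ (that is, $t = (k+1)/2$) then (a) already holds; otherwise extend to a tight path $v_1 \ldots v_{t-1}\, a\, u_1 \ldots u_{t-1}$ by appending $u_{k-t+1}, \ldots, u_{t-1}$ one at a time, each taken in $B' \setminus W$, distinct from earlier choices, and lying both in the $G$-co-degree of the current length-$(k-1)$ window (which contains $a$) and in the $F_B^+$-co-degree of the previously chosen $u_j$'s --- this keeps $\{u_1, \ldots, u_{t-1}\}$ supported in $F_B^+$. Finally, as $\{v_1, \ldots, v_{t-1}\} \subseteq S$ and $\{u_1, \ldots, u_{t-1}\}$ are each contained in an edge of $F_B^+$, \Cref{prop:final A' conditions}~\ref{itm:A'-4} gives that each has at least $|A| - \mu n$ vertex-neighbours in $A$; choose $a_1 \in N_A^1(\{v_1, \ldots, v_{t-1}\}) \setminus (W \cup \{a\})$ and then $a_2 \in N_A^1(\{u_1, \ldots, u_{t-1}\}) \setminus (W \cup \{a, a_1\})$, possible since $|A| \ge n/t - \mu n > \mu n + |W| + 2$ by \Cref{prop:final A' conditions}~\ref{itm:A'-2}. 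Now (b) and (c) hold, (a) holds by construction, the $2t+1$ vertices are distinct (the $v_i, u_i$ lie in $B'$ and $a, a_1, a_2$ in $A'$), and \Cref{prop:supported-l-paths}~\ref{itm:extended-path-3} finishes the proof.

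\textbf{Main obstacle.} The difficulty is precisely this construction: the path in (a) is short (at most $2k-1$ vertices) and must route the fixed vertex $a$ through its middle, so there is no room for a separate connecting phase --- the $F_B^+$-membership of the two end sets has to be woven into the same greedy walk that threads the path around $a$, all the while forcing every chosen vertex to stay in $B'$. The only slightly delicate bookkeeping point is that the usable co-degree into $B'$ degrades from about $|B'|$ to about $n/3$ when $a \in A' \setminus A$; but since $t \ge 3$ this still greatly exceeds $|W| \le 10t\mu n$, so the greedy process never stalls.
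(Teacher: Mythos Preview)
Your proposal is correct and follows essentially the same reduction as the paper (via \Cref{prop:supported-l-paths}~\ref{itm:extended-path-3} to conditions (a)--(c)), and uses the same endgame (the $A$-richness of $F_B^+$-supported sets to locate $a_1,a_2$). The one genuine difference is in how you guarantee that the two end-sets $\{v_1,\ldots,v_{t-1}\}$ and $\{u_1,\ldots,u_{t-1}\}$ lie in $F_B^+$. The paper does this by a counting comparison: it first lower-bounds the number of sequences $v_1,\ldots,v_{t-1},u_1,\ldots,u_{t-1}\in B'\setminus W$ for which (a) holds by $(n/2t)^{2(t-1)}$, then upper-bounds the number of sequences where one of the two end-sets fails to be supported in $F_B^+$ by roughly $2t\mu n^{2(t-1)}$, and subtracts. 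You instead weave the $F_B^+$-constraint directly into a single greedy pass, intersecting at each step the $G$-co-degree of the current window (which always contains $a$, so the $n/4$ bound applies) with the $F_B^+$-co-degree of the $B'$-vertices selected so far. Your route is slightly more constructive and avoids the counting comparison; the paper's route separates the two concerns more cleanly. Both are short and rest on the same two ingredients.

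One small point of presentation: when you invoke \Cref{prop:final A' conditions}~\ref{itm:A'-4} for the $(t-1)$-sets, note that the item as stated applies to $(k-1)$-sets $S\in E(F_B^+)$; you are implicitly using that a set supported in $F_B^+$ sits inside some edge $S\in E(F_B^+)$ and hence inherits $d_A^1\ge |A|-\mu n$ by monotonicity of $N^1$. That is exactly what the paper does too, just worth making explicit.
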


\begin{proof}
    By \cref{prop:supported-l-paths}~\ref{itm:extended-path-3}, it suffices to find distinct $v_1, \ldots, v_{t-1}, u_1, \ldots, u_{t-1} \in B' \setminus W$ and $a_1, a_2 \in A \setminus W$ so that: $v_1 \ldots v_{t-1} a u_1 \ldots u_{t-1}$ is a tight path and the sets $\{a_1, v_1, \ldots, v_{t-1}\}$ and $\{a_2, u_1, \ldots, u_{t-1}\}$ are supported. Indeed, in the sequence $a_1 v_1 \ldots v_{t-1} a u_1 \ldots u_{t-1} a_2$, the only subsequences of consecutive vertices that have at most one index divisible by $t$ (when we number the indices by $0$ to $2t$) are contained in $a_1 v_1 \ldots v_{t-1}$, $v_1 \ldots v_{t-1} a u_1 \ldots u_{t-1}$ or $u_1 \ldots u_{t-1}a_2$, and it is easy to check that for each of them every sequence of at most $k$ consecutive edges forms a supported set.

	First, observe that if $P$ is a tight path of order $p \le 4k$, and $W'$ is a set of size at most $11t\mu n$, then the number of vertices $v\in B' \setminus W'$ such that $Pv$ (similarly $vP$) is a tight path of order $p+1$ is the number of vertex neighbours of the last $\min\{p, k-1\}$ vertices in $P$ (or the first $\min\{p, k-1\}$ vertices in $P$ if we are looking for $vP$) that lie outside $W'$, which, by \cref{obs:min 1-degree,prop:final A' conditions}, is at least
	\begin{equation*}
		n - \frac{n}{t} - k - |A'| - |W'| - p \ge \frac{n}{2t},
	\end{equation*}
	using that $t \ge 3$ (which follows from $(k,\ell) \neq (3,1)$; see \Cref{obs:useful t info}). Staring with $a$ and applying this inequality repeatedly to obtain the vertices $u_1, \ldots, u_{t-1}, v_{t-1}, \ldots, v_1$ in that order, we find that the number of sequences $v_1, \ldots, v_{t-1}, u_1, \ldots, u_{t-1} \in B' \setminus W$ such that $v_1 \ldots v_{t-1} a u_1 \ldots u_{t-1}$ is a tight path is at least $(\frac{n}{2t})^{2(t-1)}$. 

	We argue that there is such a sequence such that $\{v_1, \ldots, v_{t-1}\}$ and $\{u_1, \ldots, u_{t-1}\}$ are supported sets in $\Fp_B$. Indeed, as $V(\Fp_B) = B'$ and $\Fp_B$ has no isolated vertices, by the minimum positive co-degree assumption, the number of sequences $x_1, \ldots, x_{t-1} \in B'$ such that $\{x_1, \ldots, x_{t-1}\}$ is supported in $\Fp_B$ is at least $(1 - \mu)^{t-2}|B'|^{t-1}$. Thus, the number of sequences $x_1, \ldots, x_{t-1} \in B'$ such that $\{x_1, \ldots, x_{t-1}\}$ is not supported in $\Fp_B$ is at most $(1 - (1 - \mu)^{t-2})|B'|^{t-1} \le t \mu n^{t-1}$, implying that the number of sequences $v_1, \ldots, v_{t-1}, u_1, \ldots, u_{t-1} \in B'$ such that one of $\{v_1, \ldots, v_{t-1}\}$ and $\{u_1, \ldots, u_{t-1}\}$ is not supported in $\Fp_B$ is at most $2t\mu n^{2(t-1)} < (\frac{n}{2t})^{2(t-1)}$. Thus, there is a choice of $v_1, \ldots, v_{t-1}, u_1, \ldots, u_{t-1}$ as above such that the $v_i$'s and the $u_i$'s form supported sets in $\Fp_B$. 

	To finish, recall that supported sets of size at most $k-1$ in $\Fp_B$ are $A$-rich. This means that $|N^1(\{v_1, \ldots, v_{t-1}\}) \cap A| \ge |A| - \mu n \ge n/t - 2\mu n$, which implies that there is a vertex $a_1 \in (A \setminus (W \cup \{a\})) \cap N^1(\{v_1, \ldots, v_{t-1}\})$. Similarly, there is a vertex $a_2 \in (A \setminus (W \cup \{a, a_1\}) \cap N^1(\{u_1, \ldots, u_{t-1}\}$.
	The sequence $v_1, \ldots, v_{t-1}, u_1, \ldots, u_{t-1}, a_1, a_2$ satisfies the requirement of the claim.
\end{proof}

\begin{claim} \label{claim:exceptional path extension}
	Let $W$ be a set of at most $10t \mu n$ forbidden vertices and let $r$ satisfy $t+1 \le r \le 2t$ and $r \equiv 1 \pmod{k-\ell}$. Then there is a sequence of distinct $r$ vertices in $V(G) \setminus W$ that starts and ends in $A$, its other vertices are in $B'$, and it supports an extended $\ell$-path.
\end{claim}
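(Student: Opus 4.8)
The plan is to produce the sequence in the form $a_1\,b_1\,\cdots\,b_{r-2}\,a_2$, where $a_1,a_2\in A\setminus W$ are distinct, the $b_i$ are distinct vertices of $B'\setminus W$, the subsequence $b_1\cdots b_{r-2}$ is a tight path in $G$, and each of $a_1 b_1\cdots b_{r-2}$ and $b_1\cdots b_{r-2}a_2$ is a tight path (or, when $r\le k+1$, merely a supported set, using the stated convention for sequences of fewer than $k$ vertices). Granting this, since $r\equiv 1\pmod{k-\ell}$ we have $r-2\equiv -1\pmod{k-\ell}$, and also $r-2\ge t-1$, so \Cref{prop:supported-l-paths}~\ref{itm:extended-path-4}, applied with $u=a_1$, $v=a_2$ and $P=b_1\cdots b_{r-2}$, shows that $a_1 b_1\cdots b_{r-2}a_2$ supports an extended $\ell$-path. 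This sequence has $r$ distinct vertices lying outside $W$, it starts and ends in $A$, and all its other vertices lie in $B'$, which is exactly what is claimed.

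Two degree facts drive the construction. First, for any supported $(k-1)$-set $S$ in $G$ we have $d_G^1(S)\ge (1-1/t)n-k$ by \Cref{obs:min 1-degree}, and since $V(G)=A'\sqcup B'$ with $|A'|\le n/t+\mu n$ (\Cref{prop:final A' conditions}~\ref{itm:A'-2}), this yields $d_{B'}^1(S)\ge (1-2/t)n-k-\mu n\ge n/4$, using $t\ge 3$ (\Cref{obs:useful t info}; this is where $(k,\ell)\ne(3,1)$ enters). Because $S\cup\{b\}$ is a $k$-set, a vertex $b\notin S$ lies in $N^1(S)$ precisely when $S\cup\{b\}$ is an edge, so this lets us keep extending a tight path in $G$ by new vertices of $B'$. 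Second, by \Cref{prop:final A' conditions}~\ref{itm:A'-4}, every edge $T$ of $\Fp_B$ — and hence every subset of such an edge — has $d_A^1(T)\ge |A|-\mu n\ge n/t-2\mu n$; this is what will let us attach endpoints in $A$. Since $|W|+r\le 10t\mu n+2t$ is dwarfed by $n/4$ and by $n/t-2\mu n$ for $\mu$ small in terms of $k$ and $n$ large, greedy choices that avoid $W$ and previously picked vertices are always available.

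If $r\le k+1$ (so $r-2\le k-1$), I would greedily grow, inside $\Fp_B$ and avoiding $W$, a set $\{b_1,\dots,b_{r-2}\}$ contained in an edge of $\Fp_B$ (possible since $\Fp_B$ has no isolated vertices and $\delta^*(\Fp_B)\ge(1-\mu)|B'|$); then $\{b_1,\dots,b_{r-2}\}$ is supported in $G$ and has $d_A^1\ge |A|-\mu n$, so I can pick distinct $a_1,a_2\in N_A^1(\{b_1,\dots,b_{r-2}\})\setminus W$. Both $a_1 b_1\cdots b_{r-2}$ and $b_1\cdots b_{r-2}a_2$ are then supported sets of size at most $k$, so \ref{itm:extended-path-4} applies. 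If $r\ge k+2$, I would build $b_1\cdots b_{r-2}$ as a genuine tight path in $G$ whose first $k-1$ and last $k-1$ vertices are both edges of $\Fp_B$: start from an edge $\{b_1,\dots,b_{k-1}\}$ of $\Fp_B$ avoiding $W$, and for $m=k,k+1,\dots,r-2$ choose a new $b_m\in B'\setminus W$ with $\{b_{m-k+1},\dots,b_m\}\in E(G)$ \emph{and} $\{b_{m-k+2},\dots,b_m\}\in E(\Fp_B)$. Such a $b_m$ exists since, under the invariant that $\{b_{m-k+1},\dots,b_{m-1}\}$ is an edge of $\Fp_B$ (hence a supported $(k-1)$-set of $G$ whose subset $\{b_{m-k+2},\dots,b_{m-1}\}$ is supported in $\Fp_B$), the first condition is met by $d_{B'}^1(\{b_{m-k+1},\dots,b_{m-1}\})\ge n/4$ vertices of $B'$ while at most $\mu|B'|$ vertices fail the second. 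At the end $\{b_1,\dots,b_{k-1}\}$ and $\{b_{r-k},\dots,b_{r-2}\}$ are edges of $\Fp_B$, so I choose $a_1\in N_A^1(\{b_1,\dots,b_{k-1}\})\setminus W$ and then $a_2\in N_A^1(\{b_{r-k},\dots,b_{r-2}\})\setminus(W\cup\{a_1\})$; now $\{a_1,b_1,\dots,b_{k-1}\}$ and $\{b_{r-k},\dots,b_{r-2},a_2\}$ are edges of $G$, so $a_1 b_1\cdots b_{r-2}$ and $b_1\cdots b_{r-2}a_2$ are tight paths, and \ref{itm:extended-path-4} finishes the job.

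I expect the main obstacle to be exactly the tension exploited above: the block $b_1\cdots b_{r-2}$ must be a tight path in $G$, built using $G$'s co-degrees (about which we know nothing regarding co-degree into $A$), yet its two terminal $(k-1)$-blocks must have large co-degree into $A$, which is guaranteed only for edges of $\Fp_B$. The resolution is the observation that one may extend a tight path in $G$ by a single new vertex of $B'$ while keeping its last $k-1$ vertices an edge of $\Fp_B$, since the two constraints together forbid only a linear-sized proper portion of $B'$; when $r\le k+1$ there is not enough length to run this double bookkeeping, but then the entire block fits inside one edge of $\Fp_B$ and the short-sequence clause of \ref{itm:extended-path-4} removes the need for genuine tight paths. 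The remaining work is routine: checking that every neighbourhood invoked genuinely exceeds $|W|+2t$ (which needs $\mu$ small as a function of $k$), that the $\Fp_B$-edge and the tight path can indeed be grown avoiding $W$, and that the degenerate short cases $r-2<k-1$ really do call only for supported-set conditions in \ref{itm:extended-path-4}.
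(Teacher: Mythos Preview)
Your proof is correct, but it proceeds differently from the paper's.

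The paper fixes $a_1\in A\setminus W$ first, greedily extends into $B'\setminus W$ to obtain many tight paths $a_1 v_1\cdots v_{r-2}$ (reusing verbatim the degree estimate from the proof of \Cref{claim:path extension}, which gives at least $(n/2t)^{r-2}$ such sequences), and then invokes the counting argument of that proof to conclude that for at least one of these the final $\min\{r-2,k-1\}$ vertices form a set supported in $\Fp_B$; $A$-richness of that set then supplies $a_2$.

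You instead build the interior $b_1\cdots b_{r-2}$ entirely inside $B'$, starting from an edge of $\Fp_B$ and, in the case $r\ge k+2$, greedily maintaining the invariant that the trailing $k-1$ vertices are an edge of $\Fp_B$ at every step; this guarantees both terminal $(k-1)$-blocks are $A$-rich, so $a_1$ and $a_2$ can be attached at the two ends directly. Your argument is more self-contained and constructive (no counting, no appeal to the previous claim), at the cost of the short case split and of tracking the $\Fp_B$-invariant; the paper's argument is shorter precisely because it recycles the machinery of \Cref{claim:path extension}. Both routes terminate with the same application of \Cref{prop:supported-l-paths}~\ref{itm:extended-path-4}.
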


\begin{proof}
	We follow a similar strategy to the previous proof. Pick some $a_1 \in A \setminus W$. By the second paragraph in the previous proof, there are at least $(\frac{n}{2t})^{r-2}$ sequences $v_1 \ldots v_{r-1}$ of distinct vertices in $B' \setminus W$ such that $a v_1 \ldots v_{r-1}$ is a tight path in $G$.
	By the third paragraph in the same proof, there is such a sequence where the final $\min\{r-1, k-1\}$ vertices form a supported set in $\Fp_B$.
	This implies that there is $a_2 \in A \setminus (W \cup \{a_1\})$ such that $v_1 \ldots v_{r-1} a_2$ is a tight path in $G$.
	Finally, by \Cref{prop:supported-l-paths}~\ref{itm:extended-path-4}, we conclude that $a_1 v_1 \dots v_{r-2} a_2$ is the desired sequence.
\end{proof}

\begin{proof}[Proof of \Cref{prop:size balancing final}]
Let $\cP'$ be the output of \cref{prop:size balancing}. We modify $\cP'$ in two steps as described above. 
Let $A_0$ be the set of all vertices in $A' \setminus A$ that form singleton sequences in $\cP'$. Then, if we invoke \cref{prop:final A' conditions} with $\mu/2$ in place of $\mu$ (note that this makes all its conclusions strictly stronger, so all preceding results in this section still hold), we have $|A_0| \le |A' \setminus A| \le \mu n$.
By repeatedly applying \Cref{claim:path extension}, we find, for each $a \in A_0$, a sequence $S_a$ consisting of $a$, two vertices from $A$ and $2(t-1)$ vertices from $B'$ (in some order), so that: $S_a$ starts and ends in $A$; it supports an extended $\ell$-path; and the $S_a$'s are pairwise vertex-disjoint.
Define $\cP''$ to the collection of vertex sequences $P$ such that either $P \in \{S_a : a \in A_0\}$ or $P \in \cP'$ is not a singleton vertex of $A$ that appears in some sequence $S_a$ with $a \in A_0$.
Write $r := |B' \setminus V(\cP'')| - (t-1)|\cP''|$.
Then $r = |B' \setminus V(\cP')| - (t-1)|\cP'|$, 
because replacing the three singleton paths from $A'$ that appear in $S_a$ by $S_a$ decreases $|B' \setminus V(\cP')|$ by $2(t-1)$ and $|\cP'|$ by 2, showing that $0 \le r \le t$ via \Cref{prop:size balancing}~\ref{itm:P'-5}. Moreover, by \Cref{prop:size balancing}~\ref{itm:P'-2}, we know  $A' \subseteq V(\cP')$, so we see that $|B' \setminus V(\cP')| + |V(\cP')| = |A'| + |B'| = n$. Hence, we have
\begin{equation*}
	n = r + (t-1)|\cP'| + |V(\cP')|
	\equiv r + (t-1)|\cP'| + |\cP'|
	\equiv r \pmod {k-\ell},
\end{equation*}
using that all sequences in $\cP'$ are pairwise disjoint and have length $1 \pmod{t}$, and that $t$ is divisible by $k - \ell$. Since $n$ is divisible by $k-\ell$, so is $r$. In particular, $r+t+1 \equiv 1 \pmod{k-\ell}$.
By \Cref{claim:exceptional path extension}, we can find a sequence $S$ of $r+t+1$ vertices such that: it starts and ends in $A$; its other vertices are in $B'$; it supports an extended $\ell$-path; and it is disjoint from all non-singleton sequences in $\cP''$, where we use the fact that the sequences $\{S_a: a\in A_0\}$ span at most $2t|A_0| \le 2t \mu n$ vertices and \cref{prop:size balancing}~\ref{itm:P'-1} implies that the other non-singleton sequences in $\cP''$ span at most $(t+1)\mu n$ vertices. 
Let $\cP$ be obtained from $\cP''$ by removing the two singleton sequences in $\cP''$ that appear in $S$ and adding $S$.

It remains to verify that the five desired properties in the statement of \cref{prop:size balancing final} indeed hold. Throughout, we will utilise \cref{prop:size balancing} \crefrange{itm:P'-1}{itm:P'-5}.
The first two are immediate from the construction of $\cP$. 
For the property~\ref{itm:P-3}, we have
\begin{equation*}
	|\cP| = |\cP''| - 1 \ge |\cP'| - 2|A' \setminus A|
	\ge |\cP'| - 2\mu n - 1
	\ge \frac{n}{t} - 4\mu n.
\end{equation*}
For the item~\ref{itm:P-4},
\begin{align*}
	|V(\cP)| \le |V(\cP'')| + (r + t + 1) 
	& \le |V(\cP')| + (2t+1)|A' \setminus A| + (r + t + 1) \\
	& \le \frac{n}{t} + (t + 2t+1) \mu n + 2t
	\le \frac{n}{t} + 4t \mu n.
\end{align*}
Next, by the choice of $\cP$, we have $|\cP| = |\cP''| - 1$ and $|B' \setminus V(\cP)| = |B' \setminus V(\cP'')| - (r + t - 1)$, and therefore
\begin{equation*}
	|B' \setminus V(\cP)| - (t-1)|\cP|
	= |B' \setminus V(\cP'')| - (t-1)|\cP''| - (r + t-1) + t-1 = 0,
\end{equation*}
proving~\ref{itm:P-5}.
Finally, \ref{itm:P-6} is immediate from the construction: all sequences have length $1$ (singletons from $A$), $t+1$ (extended cherries), $2t+1$ (sequences $S_a$ with $a$ being a singleton from $A'$ in $\cP'$), and $r + t+1 \ge t+1$ for the final sequence $S$.
\end{proof}

\section{Building a Hamilton cycle}\label{sec:Hamilton cycle extremal}
Rather than constructing a Hamilton $\ell$-cycle in $G$ directly, we first build a perfect matching in an auxiliary balanced $t$-partite $t$-graph (see $\Gp$, defined below). We will do so in a semi-random manner using~\Cref{lem:k-partite matching}, and argue that the co-degrees in the auxiliary graph are large enough to connect this matching into a Hamilton cycle, and the auxiliary $t$-graph is chosen such that this will correspond to a Hamilton $\ell$-cycle in $G$. 

The $t$-partite $t$-graph will have one part corresponding to the collection $\cP$ found in the previous section, and the other $t-1$ parts will correspond to an arbitrary equipartition of $B'' := B' \setminus V(\cP)$. That these parts have the same size follows from the fifth (and crucial) property~\ref{itm:P-5} of $\cP$, which asserts that $|B''| = (t-1)|\cP|$. Because we intend to use \Cref{lem:k-partite matching}, which applies to $t$-partite $t$-graphs with very high minimum supported co-degree, we need to prove that $\Gp$ indeed has high minimum supported co-degree, and this will follow from the choice of $B'$ as the vertex set of the hypergraph $\Fp_B$ (see \Cref{prop:final A' conditions}), whose edges are $A$-rich sets, and from property~\ref{itm:P-1} of $\cP$, which guarantees that each sequence in $\cP$ starts and ends in $A$. 

Throughout this section, we will frequently recall the notion of $A$-rich sets (recall \Cref{defn:scarce and rich sets}) and, like in the previous section, that of vertex sequences that support (extended) $\ell$-paths (recall \Cref{def:supported-l-paths}). Again, we say that a tight path on $r < k$ vertices is simply an (ordered) supported set of size $r$. As in the previous two sections, we fix
\begin{equation*}
	1/n \ll \eps \ll \mu \ll 1/k \le 1/3, 
\end{equation*}
and we subsequently set $\ell\in [k-1]$ such that $(k, \ell) \ne (3, 1)$, $t = \floor{\frac{k}{k - \ell}}(k - \ell)$, and let $G \in \exE_\eps (n, k, \ell)$ be a $k$-graph. We fix sets  $A, A', B \subseteq V(G)$ as in \cref{prop:size balancing final,prop:final A' conditions} and let $\cP$ be the collection of vertex sequences output by \cref{prop:size balancing final}.

Write $m := |\cP|$, define $B'' := B' \setminus V(\cP)$ (so $|B''| = (t-1)m$ by \cref{prop:size balancing final}~\ref{itm:P-5}) and let $B_1 \sqcup \ldots \sqcup B_{t-1}$ be an arbitrary equipartition of $B''$. We define a $t$-partite $t$-graph $\Gp$ as follows. Let $V(\Gp)= B_1 \sqcup \ldots \sqcup B_{t-1} \sqcup \cP$ and $E(\Gp)$ be the subset of $B_1 \times \ldots \times B_{t-1} \times \cP$ such that any $(v_1, \ldots, v_{t-1}, P) \in B_1 \times \ldots \times B_{t-1} \times \cP$ forms an edge in $\Gp$ if and only if the vertex sequence corresponding to $v_1\dots v_{t-1} P$ (which, we note, has length congruent to $0$ modulo $k-\ell$ by \Cref{prop:size balancing final} and \Cref{def:supported-l-paths}) supports an $\ell$-path in $G$ and $\{v_1,\dots,v_{t-1}\}$ is supported in $\Fp_B$. The partite structure of $\Gp$ lends an inherent ordering to every $t$-edge. Specifically, we will always think of edges of $\Gp$ with vertices as ordered tuples from $B_1 \times \dots \times B_{t - 1} \times \cP$, and will often write them as vertex sequences with this canonical ordering. 

As mentioned above, we would like to apply \Cref{lem:k-partite matching} to $\Gp$. This requires showing that $\Gp$ has high minimum supported co-degree, which is what we show in the next claim.
For notational convenience, we define $B_t := \cP$ and let $\cP_A$ be the set of singleton paths in $\cP$ (which are, by property~\ref{itm:P-1} of $\cP$, vertices in $A$).
Notice that the number of non-singleton paths in $\cP$ is at most $\frac{1}{t+1}(|V(\cP)| - |\cP|) \le 4\mu n$ (using properties~\ref{itm:P-3} and~\ref{itm:P-4} of $\cP$, the fact non-singleton paths have length at least $t + 1$ from~\ref{itm:P-6}, and $t\ge 2$). Hence,
\begin{equation} \label{eqn:PA}
	|\cP_A| \ge |\cP| - 4\mu n.
\end{equation}

\begin{claim}
\label{claim:G+ partite sets sizes and codegrees}
	Let $i \in [t]$ and let $S$ be a supported set in $\Gp$ that avoids $B_i$, then $d^1_{B_i}(S) \ge |B_i| - 5t\mu n$ (where $d^1$ is taken with respect to $\Gp$).
\end{claim}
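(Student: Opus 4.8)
The plan is to prove this by analysing the structure of a supported set $S$ in $\Gp$, which forces strong constraints on the underlying vertices and sequences in $G$, and then to count how these constraints interact with the high-codegree properties of $\Fp_B$ and of sequences in $\cP$. Write $S = \{s_1, \ldots, s_r\}$ where each $s_j$ lies in some part $B_{k_j}$ with all $k_j \neq i$, and say $s_j$ corresponds either to a vertex of $B''$ (if $k_j < t$) or to a sequence $P_j \in \cP$ (if $k_j = t$). Since $S$ is supported in $\Gp$, there is an edge $e \supseteq S$ of $\Gp$; extracting from $e$ the vertices in parts other than $B_i$, plus the elements of $S$, we know that there is a tuple $(w_1, \ldots, w_{t-1}, Q) \in B_1 \times \cdots \times B_{t-1} \times \cP$ that contains $S$, such that $w_1 \ldots w_{t-1} Q$ supports an $\ell$-path in $G$ and $\{w_1, \ldots, w_{t-1}\}$ is supported in $\Fp_B$. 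In particular, each $s_j$ with $k_j < t$ belongs to a set of $t-1$ vertices supported in $\Fp_B$, hence (by the definition of $\Fp_B$ and $A$-richness) is a supported vertex of $\Fp_B$; and if $k_t$ appears, the sequence $Q$ starts and ends in $A$ by property~\ref{itm:P-1} of $\cP$.

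Next I would split into the two cases $i < t$ and $i = t$. In the case $i < t$: I want to count vertices $v \in B_i$ such that $S \cup \{v\}$ is still supported in $\Gp$. It suffices to complete $(\ldots, v, \ldots)$ to a full edge of $\Gp$. Start from the tuple $(w_1, \ldots, w_{t-1}, Q)$ witnessing that $S$ is supported, replace the $B_i$-coordinate by $v$, and ask when the result is still an edge. We need $\{w_1, \ldots, w_{i-1}, v, w_{i+1}, \ldots, w_{t-1}\}$ supported in $\Fp_B$ — this excludes at most $\mu |B'|$ choices of $v$ since $\delta^*(\Fp_B) \ge (1-\mu)|B'|$ and the remaining $t-2$ vertices already form a supported set in $\Fp_B$ — and we need the concatenated sequence to support an $\ell$-path in $G$, which by \Cref{prop:supported-l-paths}~\ref{itm:extended-path-3} reduces to checking that every window of at most $k$ consecutive vertices with at most one index divisible by $t$ is a supported set in $G$; the only windows affected by changing the $i$-th coordinate involve $v$ together with a few neighbouring $w_j$'s and possibly the first vertex of $Q$, all of which lie in supported sets that intersect $A$ (using that $Q$ starts in $A$ and that $\{w_1, \ldots, w_{t-1}\}$ is $A$-rich, hence supports many vertices of $A$), so by \Cref{prop:final A' conditions}~\ref{itm:A'-3} and \ref{obs:min 1-degree} each such window condition rules out at most $\mu n$ vertices; summing the constantly-many (at most $\sim 2t$) forbidden sets, plus the $\mu|B'|$ lost from the $\Fp_B$ condition, gives at most $5t\mu n$ excluded vertices, so $d^1_{B_i}(S) \ge |B_i| - 5t\mu n$.

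In the case $i = t$, the argument is analogous but we count sequences $P \in \cP$ with $S \cup \{P\}$ supported in $\Gp$. Again start from a witnessing edge, keep its $B_1, \ldots, B_{t-1}$ coordinates $w_1 \ldots w_{t-1}$ (which contain all of $S$), and ask for which $P \in \cP$ the tuple $(w_1, \ldots, w_{t-1}, P)$ is an edge; the $\Fp_B$-condition is already satisfied, so we only need $w_1 \ldots w_{t-1} P$ to support an $\ell$-path. Since $P$ starts in $A$, the relevant windows all meet $A$, and the only nontrivial window is $\{w_1, \ldots, w_{t-1}\}$ together with the first vertex $a$ of $P$ (and possibly a few more early vertices of $P$ if $|P| < k$, but since $P$ has length $1$ or at least $t+1 \ge k-\ell+1$, the number of early vertices of $P$ entering a window is small): we need $a \in N^1(\{w_1, \ldots, w_{t-1}\})$ and, if $|P| < k$, a constantly-many further supported-set conditions, each of which is a condition on $a$ or on $a$ together with vertices of $A \cup B'$ that intersect $A$. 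Since $\{w_1, \ldots, w_{t-1}\}$ is $A$-rich, $|N^1(\{w_1,\ldots,w_{t-1}\}) \cap A| \ge |A| - \mu n$, so the number of $a \in A$ that fail is at most $\mu n$; the number of $P \in \cP$ starting at such an $a$ is also at most $\mu n$, and the number of non-singleton $P \in \cP$ is at most $4\mu n$ by the remark before the claim, so at most $5\mu n \le 5t\mu n$ sequences $P$ are excluded. I would formalise ``$P$ starting at $a$'' to conclude $d^1_{B_t}(S) \ge |B_t| - 5t\mu n$.

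The main obstacle will be bookkeeping the window conditions of \Cref{prop:supported-l-paths}~\ref{itm:extended-path-3} precisely at the junction between the $w_j$'s and the sequences in $\cP$ — one must verify that every ``bad'' window is genuinely a supported set meeting $A$ (so that \Cref{prop:final A' conditions}~\ref{itm:A'-3} applies), carefully handling the short sequences $P$ with $|P| = 1$ versus $|P| \ge t+1$, and making sure the total count of forbidden configurations stays below $5t\mu n$; none of this is deep, but it requires care about exactly which indices are divisible by $t$ and which sets are $A$-rich versus merely supported.
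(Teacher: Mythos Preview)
Your approach is the same as the paper's: take a witnessing edge $(w_1,\ldots,w_{t-1},Q)$ containing $S$, vary only the $B_i$-coordinate, and bound the number of forbidden choices by summing the constraints coming from $\Fp_B$ and from the edge/supported-set conditions in $G$. The case $i=t$ is essentially right (the paper restricts to singletons $a\in\cP_A$ straight away, which is cleaner, but your version arrives at the same bound).

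There is, however, a small but real gap in the case $i<t$. You hedge with ``and possibly the first vertex of $Q$'' and then invoke $A$-richness of $\{w_1,\ldots,w_{t-1}\}$ as a fallback. The $A$-richness is a red herring here: it says the set supports many vertices of $A$, not that it \emph{contains} one, so it cannot give you \Cref{prop:final A' conditions}~\ref{itm:A'-3}. What actually makes the argument work is the inequality $t\le k$: any window of $k$ consecutive positions that contains position $i\le t-1$ necessarily also contains position $t$, which is where the first vertex of $Q$ sits, and that vertex lies in $A$ by \ref{itm:P-1}. So \emph{every} relevant $(k-1)$-set $e_j$ contains $w_1\in A$, and \Cref{prop:final A' conditions}~\ref{itm:A'-3} applies directly to each of them. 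Once you make this explicit (at most $t$ sets $e_j$, each excluding at most $\mu n$ vertices of $B_i$, plus $\mu|B'|$ from the $\Fp_B$ condition) the bound $|B_i|-5t\mu n$ follows. Also, \Cref{prop:supported-l-paths}~\ref{itm:extended-path-3} is not quite the right tool here (it concerns sequences indexed $0,\ldots,r$ with $t\mid r$); it is cleaner to unfold the definition of ``supports an $\ell$-path'' directly, treating $|Q|=1$ and $|Q|\ge t+1$ separately as the paper does.
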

\begin{proof}
    Since $S$ is supported, it is contained in some edge $e$, which has the form $b_1 \ldots b_t$, with $b_i \in B_i$. Writing $S' = \{b_1, \ldots, b_t\} \setminus \{b_i\}$, it suffices to show $d^1_{B_i}(S') \ge |B_i| - 5t \mu n$ in $\Gp$, as $d^1_{B_i}(S) \ge d^1_{B_i}(S')$.

    We consider two cases: $i = t$ and $i \in [t-1]$.
	Let us begin with the first case. By definition of $\Gp$, we know that $b_1 \ldots b_{t-1}$ is supported by some edge of $\Fp_B$, which means that $b_1 \ldots b_{t-1}$ contained in some $A$-rich set, implying that $b_1 \ldots b_{t-1} a$ is supported in $G$ for all but at most $\mu n$ vertices $a \in A$, and hence for all but at most $\mu n$ vertices $a \in \cP_A$. Since every (ordered) supported $t$-set supports an $\ell$-path, it follows that $b_1\ldots b_{t-1}a \in E(\Gp)$ for all but at most $\mu n$ vertices $a \in \cP_A$. Thus, we find that 
	\begin{equation} 
		d^1_{B_t}(S') \ge d^1_{\cP_A}(b_1 \ldots b_{t-1})
		\ge |\cP_A| - \mu n
        \ge |\cP| - 5 \mu n
		\ge |B_t| - 5t \mu n,
	\end{equation} 
	using \eqref{eqn:PA} and $B_t = \cP$.

	For the second case, recall, from \Cref{prop:size balancing final}, that $b_t=w_1\dots w_r$ is a vertex sequence with $w_1,w_r \in A$. 
	For convenience, define $w_j = w_r$ for $j \ge r+1$.
	For $j \in [i]$, write 
	\begin{equation*}
		e_j=\{b_j, \dots, b_{i-1}, b_{i+1}, \dots, b_{t-1}, w_1, \dots, w_{k-t+j}\}.
	\end{equation*}
	We claim that, for any $b_i \in B_i$, the sequence $b_1 \ldots b_{t}$ is an edge in $\Gp$ if and only if the following conditions hold.
	\begin{itemize}
		\item
			$b_i \in N^1_{\Fp_B}(b_1 \ldots b_{i-1} b_{i+1} \ldots b_{t-1})$,
		\item
			$b_i \in N_G^1(e_j)$ for $j = 1$ and for every other $j \in [i]$ such that $j \equiv 1\pmod{k-\ell}$ and $k - t + j \le r$.
	\end{itemize}
	Indeed, the first condition is equivalent to $b_1 \ldots b_{t-1}$ being supported in $\Fp_B$ (using that $S'$ is supported in $\Gp$, which implies that $b_1 \ldots b_{i-1}b_{i+1} \ldots b_{t-1}$ is supported in $\Fp_B$), so it suffices to check that the second condition is equivalent to $b_1 \ldots b_t w_1 \ldots w_r$ supporting an $\ell$-path.
	If $r = 1$ one just needs to check when $b_1 \ldots b_{t-1}w_1$ is supported in $G$, which is exactly when $b_i \in N_G^1(e_1)$ (again using that $S'$ is supported in $\Gp$, showing that $e_1$ is supported in $G$), so we may assume that $r \ge 2$, which implies $r \ge t+1$ by \ref{itm:P-6} in \Cref{prop:size balancing final}.
	Recall that for $b_1 \ldots b_{t-1} w_1 \ldots w_r$ to support an $\ell$-path we need that every $k$ consecutive vertices in this sequence, that start at an index which is $1 \pmod{k-\ell}$, form an edge, and that the last $t$ vertices in the sequence form a supported set in $G$. The latter holds due to $w_1 \ldots w_r$ supporting an extended $\ell$-path and $r \ge t+1$. The former is equivalent to the second item above (noting that the sequence has length at least $2t \ge k$).

	In particular, since $S'$ is supported in $\Gp$ we know that there is some $b_i \in B_i$ that satisfies the above requirements, showing that $b_1 \ldots b_{i-1} b_{i+1} \ldots b_{t-1}$ is supported in $\Fp_B$ and that $e_j$ is supported in $G$ for $j = 1$ and every other $j \le \min\{i, r-k+t\}$ with $j \equiv 1 \pmod{k-\ell}$; denote the set of such $j$'s by $J$.

	Since, for every $j \in J$, we have that $e_j$ is a supported set in $G$ that contains $w_1 \in A$, it satisfies $d^1_{B'}(e_j) \ge |B'| - \mu n$ by \Cref{prop:final A' conditions}~\ref{itm:A'-3}, and thus $d^1_{B_i}(e_j) \ge |B_i| - \mu n$. Moreover, by \Cref{prop:final A' conditions}~\ref{itm:A'-4}, we have $d^1_{\Fp_B}(b_1 \ldots b_{i-1}b_{i+1}\ldots b_{t-1}) \ge |B'| - \mu n$ in $B'$. 
    Altogether, we have $d^1_{B_i}(S') \ge |B_i| - t \mu n \ge |B_i| - 5t\mu n$ in $\Gp$.
\end{proof}

Before proceeding further, it would be useful to describe an outline of our strategy for the remaining stages of the proof. Our next step will be to find a perfect matching $M$ in $\Gp$, which corresponds to a collection of pairwise disjoint vertex sequences in $G$ that support $\ell$-paths, and also span $G$. We then form an auxiliary digraph $\Dp$ whose vertices are the edges of $M$, and we direct an edge from $e$ to $f$ whenever the vertex sequence $ef$ supports an $\ell$-path. In particular, a directed Hamilton cycle in $\Dp$ will correspond to a spanning $\ell$-cycle in $G$ (using \Cref{prop:supported-l-paths}~\ref{itm:extended-path-2}). Thus, we need to show that $D$ has large minimum semi-degree, and then~\Cref{lem:directed Dirac} will provide the desired cycle. We will show that each edge of $\Gp$ has many potential in-neighbour and out-neighbour edges in $\Gp$. We then find the matching $M$ in $\Gp$ using~\Cref{lem:k-partite matching}, and argue that the digraph $\Dp$ formed as a result of this matching maintains high semi-degree conditions.

To formalise this, for an edge $e = b_1\ldots b_{t-1} P \in E(\Gp)$ (where $b_i \in B_i$ and $P \in \cP$), let $\Fp_e$ be the subgraph of $\Gp$ consisting of edges $f = b_1' \ldots b_{t-1}' P'$ (where $b_i' \in B_i$ and $P' \in \cP$) such that the vertex sequence $ef = b_1 \ldots b_{t-1} P b_1' \ldots b_{t-1}' P'$ supports an $\ell$-path in $G$, and similarly define $\Fm_e$ as the subgraph of $\Gp$ consisting of edges $f$ such that $fe$ supports an $\ell$-path in $G$.

\begin{claim} \label{claim:Fp}
		$e(\Fp_e) \ge |B_1|^t - 5t\mu n^t$ for every $e \in E(\Gp)$.
\end{claim}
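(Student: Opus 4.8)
The plan is to fix an edge $e = b_1 \ldots b_{t-1} P \in E(\Gp)$ and count the edges $f = b_1' \ldots b_{t-1}' P'$ of $\Gp$ with $ef$ supporting an $\ell$-path by building the tuple $(b_1', \ldots, b_{t-1}')$ greedily and then counting the admissible $P'$. Write the underlying vertex sequence of $e$ as $b_1 \ldots b_{t-1} w_1 \ldots w_p$ with $P = w_1 \ldots w_p$, and let $z_1 \ldots z_t$ be its last $t$ vertices. By \Cref{prop:size balancing final}~\ref{itm:P-6} we have $p = 1$ or $p \ge t+1$, so $z_1 \ldots z_t$ equals $b_1 \ldots b_{t-1} w_1$ or $w_{p-t+1}\ldots w_p$; in both cases $z_t = w_p \in A$ by \Cref{prop:size balancing final}~\ref{itm:P-1}, and $z_1 \ldots z_t$ is a supported set since $e$ supports an $\ell$-path (recall \Cref{def:supported-l-paths}).

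The key reduction is \Cref{prop:supported-l-paths}~\ref{itm:extended-path-1}: the sequences of $e$ and of $f$ both support $\ell$-paths and have length at least $t$, so if they are vertex-disjoint and the $(2t-1)$-vertex sequence $z_1 \ldots z_t b_1' \ldots b_{t-1}'$ is a tight path in $G$, then $ef$ supports an $\ell$-path. Hence it suffices to lower bound the number of tuples $(b_1', \ldots, b_{t-1}', P')$ with $P' = a'$ a singleton from $\cP_A$ such that: (a) $\{b_1', \ldots, b_{t-1}'\}$ is supported in $\Fp_B$; (b) $b_1' \ldots b_{t-1}' a'$ is a supported $t$-set (so, together with (a), $f \in E(\Gp)$, as a supported $t$-set supports an $\ell$-path); (c) $z_1 \ldots z_t b_1' \ldots b_{t-1}'$ is a tight path in $G$; (d) $e$ and $f$ are vertex-disjoint. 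Restricting $P'$ to singletons loses only the $\le 4\mu n$ non-singleton members of $\cP$, by \eqref{eqn:PA}.

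To count, I would choose $b_1', \ldots, b_{t-1}'$ in order. When choosing $b_i'$, condition (c) either asks us to complete one new length-$k$ window of $z_1 \ldots z_t b_1' \ldots b_i'$, or — while this prefix is still shorter than $k$ — to keep it a supported set; in either case the relevant block of at most $k-1$ vertices is a consecutive sub-block of the tight path built so far (hence supported), and, since $i \le t-1 \le k-1$, it still contains position $t$, namely the vertex $z_t \in A$. Thus \Cref{prop:final A' conditions}~\ref{itm:A'-3} shows that all but at most $\mu n$ vertices of $B_i$ are admissible for (c); similarly $\delta^*(\Fp_B) \ge (1-\mu)|B'|$ together with $\Fp_B$ having no isolated vertices shows that all but at most $\mu n$ vertices of $B_i$ are admissible for (a); discarding the $O(1)$ already-used vertices loses at most a constant more. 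Since $|B_i| = m := |\cP|$, greedily we obtain at least $(m - 3\mu n)^{t-1} \ge m^{t-1} - 3t\mu n\, m^{t-2}$ tuples $(b_1', \ldots, b_{t-1}')$ satisfying (a) and (c). For each such tuple, $\{b_1', \ldots, b_{t-1}'\}$ lies in an edge of $\Fp_B$, which by \Cref{prop:final A' conditions}~\ref{itm:A'-4} has at least $|A| - \mu n$ vertex-neighbours in $A$, so $b_1' \ldots b_{t-1}' a'$ is a supported $t$-set for all but at most $\mu n$ vertices $a' \in A$; using \eqref{eqn:PA} and excluding the at most one choice of $a'$ clashing with $e$, at least $m - 6\mu n$ singletons $P' = a' \in \cP_A$ satisfy (b) and (d). Multiplying,
\begin{equation*}
	e(\Fp_e) \ge (m^{t-1} - 3t\mu n\, m^{t-2})(m - 6\mu n) \ge m^t - (3t+6)\mu n\, m^{t-1} \ge |B_1|^t - 5t\mu n^t,
\end{equation*}
using $t \ge 3$ and $m = |B_1| \le n$.

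I expect the main obstacle to be the window bookkeeping in condition (c): one must check that in the $(2t-1)$-vertex sequence $z_1 \ldots z_t b_1' \ldots b_{t-1}'$ every newly completed $k$-window, and every too-short prefix, is governed by a supported set of size at most $k-1$ meeting $A$, so that the near-complete co-degree of \Cref{prop:final A' conditions}~\ref{itm:A'-3} applies at every single step and one never needs to fall back on the much weaker global estimate $\delta^*(G) \ge (1-1/t)n-k$ from \Cref{obs:min 1-degree}.
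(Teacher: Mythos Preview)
Your argument is correct and follows essentially the same route as the paper's proof: fix $e$, let $z_1\ldots z_t$ be its last $t$ vertices (with $z_t\in A$), greedily pick $b_1',\ldots,b_{t-1}'$ so that $z_1\ldots z_t b_1'\ldots b_{t-1}'$ is a tight path and $\{b_1',\ldots,b_{t-1}'\}$ is supported in $\Fp_B$, and then choose $P'=a'\in\cP_A$ using $A$-richness; the appeal to \Cref{prop:supported-l-paths}~\ref{itm:extended-path-1} and the observation that every relevant $(k-1)$-window contains $z_t\in A$ (so \Cref{prop:final A' conditions}~\ref{itm:A'-3} applies at each step) are exactly what the paper does. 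Your bookkeeping of vertex-disjointness is a touch more explicit than the paper's, and your constants $(m-3\mu n)^{t-1}(m-6\mu n)$ versus the paper's $(|B_1|-2\mu n)^{t-1}(|B_1|-5\mu n)$ differ only by harmless $O(1)$ losses that still give $|B_1|^t-5t\mu n^t$ once $t\ge 3$.
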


\begin{proof}
	Write $e=b_1\dots b_{t-1} P$. We wish to find edges $f=b_1' \dots b_{t-1}' P'\in \Gp$ such that $ef$ supports an $\ell$-path in $G$. By assumption, $e$ supports an $\ell$-path, which implies that the last $t$ vertices of $e$, say $w_1 \dots w_{t}$, constitute a supported set in $G$. Since we know that $f$ supports an $\ell$-path in $G$ as well, if we can show that $w _1 \dots w_{t} b_1' \dots b_{t-1}'$ is a tight path, then by \Cref{prop:supported-l-paths}~\ref{itm:extended-path-1} we have $f \in E(\Fp_e)$.

	From \Cref{prop:size balancing final}~\ref{itm:P-1}, we know that $w_t\in A$, and then \Cref{prop:final A' conditions}~\ref{itm:A'-3} implies that $d_{B'}^1(S)\ge |B'|-\mu n$ and consequently $d_{B_i}^1(S)\ge |B_i|-\mu n$ for any $G$-supported set $S$ containing $w_{t}$ and for any $i\in [t-1]$. We show by induction that, for $i \in [t-1]$, the number of vertex sequences $b_1' \dots b_i'\ \in B_1 \times \ldots \times B_i$ such that $w_1 \dots w_{t} b_1' \ldots b_i'$ is a tight path in $G$ and $\{b_1', \dots, b_i'\}$ is supported in $\Fp_B$ is at least $(|B_1| - 2\mu n)^i$. This holds for $i = 1$ by the arguments above. Let $i\in [2,t-1]$ and suppose the assertion holds up to $i-1$. Let $b_1' \dots b_{i-1}'\in B_1 \times \dots \times B_{i-1}$ be such that $w_1 \dots w_{t} b_1' \dots b_{i-1}'$ is a tight path and $\{b_1', \ldots, b_{i-1}'\}$ is supported in $\Fp_B$. Then, for $b_i' \in B_i$ to be such that $w_1 \dots w_{t} b_1' \dots b_i'$ is a tight path and $\{b_1', \ldots, b_i'\}$ is supported in $\Fp_B$, we exactly need 
	\begin{equation*}
		b_i' \in N^1_G(w_{s_i} \ldots w_{t} b_1' \ldots b_{i-1}') \cap N^1_{\Fp_B}(b_1' \ldots b_{i-1}') \cap B_i, \quad s_i := \max\{t-(k-i)+1, 1\}. 
	\end{equation*}
	By the previous remark and the structure of $\Fp_B$ given by \cref{prop:final A' conditions}~\ref{itm:A'-4}, there are at least $|B_i| - 2\mu n = |B_1| - 2\mu n$ options for $b_i'$, showing that the number of sequences $b_1',\ldots, b_i'$ as required is at least $(|B_1| - 2\mu n)^i$, as claimed.
	To finish off, we recall that $b_1' \ldots b_{t-1}' a$ is an edge in $\Gp$ for every $a \in \cP_A \cap N_G^1(b_1' \ldots b_{t-1}')$. Since $\{b_1, \ldots, b_{t-1}'\}$ is supported in $\Fp_B$, it is $A$-rich and thus the number of such $a$ is at least $|\cP_A| - \mu n \ge |\cP| - 5 \mu n = |B_1| - 5\mu n$, by \eqref{eqn:PA}.
	In total we conclude that the number of edges $f$ in $\Gp$ that are in $\Fp_e$ is at least $(|B_1| - 2\mu n)^{t-1} (|B_1| - 5\mu n) \ge |B_1|^t - (2(t-1)\mu n + 5\mu n)|B_1|^{t-1} \ge |B_1|^t - 5t\mu n^t$.
\end{proof}

\begin{claim} \label{claim:Fm}
		$e(\Fm_e) \ge |B_1|^t - 5t\mu n^t$ for every $e \in E(\Gp)$.
\end{claim}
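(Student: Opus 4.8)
The plan is to mirror the proof of Claim~\ref{claim:Fp}, running the argument ``from the other side''. We want to count edges $f = b_1' \ldots b_{t-1}' P' \in E(\Gp)$ such that the vertex sequence $fe$ supports an $\ell$-path in $G$, where $e = b_1 \ldots b_{t-1} P$ is fixed. By \Cref{prop:supported-l-paths}~\ref{itm:extended-path-1}, since both $e$ and $f$ support $\ell$-paths in $G$, it suffices to ensure that the last $t$ vertices of $f$ followed by the first $t-1$ vertices of $e$ form a tight path in $G$. The first $t-1$ vertices of $e$ are $b_1 \ldots b_{t-1}$, so we need the last $t$ vertices of $f$, together with $b_1 \ldots b_{t-1}$, to be a tight path.

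First I would describe the structure of the last $t$ vertices of $f$. By \Cref{prop:size balancing final}~\ref{itm:P-1}, the sequence $P'$ starts and ends in $A$; since $|P'| = 1$ or $|P'| \ge t+1$ by \ref{itm:P-6}, the last $t$ vertices of $f$ are either $b_2' \ldots b_{t-1}' P'$ (when $P'$ is a singleton from $A$, so $|P'| = 1$) or the last $t$ vertices of $P'$ itself (when $|P'| \ge t+1$). In either case, the last vertex of $f$ lies in $A$. This is the key point that lets us invoke \Cref{prop:final A' conditions}~\ref{itm:A'-3}: any $G$-supported set containing a vertex of $A$ has vertex co-degree at least $|B'| - \mu n$ into $B'$, hence at least $|B_i| - \mu n$ into each $B_i$.

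Now I would run an induction building the tight path ``backwards'': we want the number of sequences $b_1' \ldots b_{t-1}'$ (with $b_j' \in B_j$, and with $\{b_1', \ldots, b_{t-1}'\}$ supported in $\Fp_B$) such that the concatenation of the last $t$ vertices of $f$ with $b_1 \ldots b_{t-1}$ is a tight path. Here the delicate point compared to Claim~\ref{claim:Fp} is the order in which we reveal the $b_j'$: the tight-path constraint on $b_j'$ involves the $b_{j'}'$ with $j' < j$ (which come earlier in $f$) and the last several vertices of $P'$; it does \emph{not} involve $b_1 \ldots b_{t-1}$ unless $t - 1$ or fewer vertices separate $b_j'$ from the start of $e$, i.e. only for $j$ close to $t-1$. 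So I would first reveal $P'$ — it suffices to pick $P' \in \cP_A$ (a singleton from $A$), which forces the last $t$ vertices of $f$ to be $b_2' \ldots b_{t-1}' P'$ and simplifies matters — then reveal $b_1', b_2', \ldots, b_{t-1}'$ in order, at each step using \Cref{prop:final A' conditions}~\ref{itm:A'-3} (since the relevant supported set always contains the vertex $P' \in A$) and the minimum supported co-degree of $\Fp_B$ from \Cref{prop:final A' conditions}~\ref{itm:A'-4} to get at least $|B_i| - 2\mu n$ choices for each $b_i'$; finally, the constraint that $b_{t-1}' b_1 \ldots b_{t-1}$ (the overlap with $e$) be a tight path costs at most another additive loss, again controlled via \Cref{prop:final A' conditions}~\ref{itm:A'-3} applied to the supported set containing $P'$. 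Since $|\cP_A| \ge |\cP| - 4\mu n = |B_1| - 4\mu n$ by \eqref{eqn:PA}, multiplying through the $t-1$ coordinate choices and the choice of $P'$ yields at least $(|B_1| - 2\mu n)^{t-1}(|B_1| - 4\mu n) \ge |B_1|^t - 5t\mu n^t$ edges $f \in E(\Fm_e)$, as desired.

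The main obstacle — really the only subtlety — is bookkeeping the tight-path constraints correctly when the overlap region $b_1 \ldots b_{t-1}$ of $e$ interacts with the tail of $f$. Unlike in Claim~\ref{claim:Fp}, where the ``seam'' $w_1 \ldots w_t$ was entirely inside the fixed edge $e$, here the seam straddles $f$ (whose tail vertices we are choosing) and the \emph{fixed} prefix $b_1 \ldots b_{t-1}$ of $e$. One has to check that each newly revealed $b_i'$ is constrained only by already-revealed vertices plus the fixed vertices $b_1 \ldots b_{\min\{i, \cdot\}}$, so that the co-degree bound still applies at each step; this is routine because every constraining supported set contains $P' \in A$, but it requires writing down the windows $w_{s_i} \ldots$ carefully, exactly as in the displayed line of Claim~\ref{claim:Fp}. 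Everything else is identical to Claim~\ref{claim:Fp}, so the write-up can be quite short, essentially saying ``the same argument, reading $f$ in reverse, works''.

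\begin{proof}
	Write $e = b_1 \ldots b_{t-1} P$ with $b_i \in B_i$ and $P \in \cP$. We wish to count edges $f = b_1' \ldots b_{t-1}' P' \in E(\Gp)$ (with $b_i' \in B_i$ and $P' \in \cP$) such that the vertex sequence $fe$ supports an $\ell$-path in $G$. Since $f$ supports an $\ell$-path in $G$ by definition of $\Gp$, and $e$ supports an $\ell$-path in $G$, by \Cref{prop:supported-l-paths}~\ref{itm:extended-path-1} it suffices to ensure that the sequence formed by the last $t$ vertices of $f$ followed by the first $t-1$ vertices of $e$, namely $b_1 \ldots b_{t-1}$, is a tight path in $G$.

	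We restrict attention to edges $f$ with $P' \in \cP_A$, so that $P'$ is a single vertex of $A$ (see property~\ref{itm:P-1} of $\cP$), and hence the last $t$ vertices of $f$ are $b_2' b_3' \ldots b_{t-1}' P'$, with final vertex $P' \in A$. Thus we need the sequence $b_2' b_3' \ldots b_{t-1}' P' b_1 b_2 \ldots b_{t-1}$ to be a tight path in $G$.

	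We show by induction on $i \in [t-1]$ that the number of sequences $b_1' \ldots b_i' \in B_1 \times \ldots \times B_i$ such that $\{b_1', \ldots, b_i'\}$ is supported in $\Fp_B$ and the subsequence of the above consisting of those $b_j'$ with $j \le i$ (together with the vertices of $P'$ and of $b_1 \ldots b_{t-1}$ needed to complete a window of length $k$) can be extended to a tight path, is at least $(|B_1| - 2\mu n)^i$; more precisely, writing the target sequence as $z_1 z_2 \ldots z_{2t-1}$ with $z_{i} = b_{i+1}'$ for $i \in [t-2]$, $z_{t-1} = P'$, and $z_{t-1+i} = b_i$ for $i \in [t-1]$, we count sequences $b_1' \ldots b_{t-1}'$ so that $z_1 \ldots z_{2t-1}$ is a tight path in $G$ and $\{b_1', \ldots, b_{t-1}'\}$ is supported in $\Fp_B$.

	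Reveal $b_1', b_2', \ldots, b_{t-1}'$ in this order. Given $b_1', \ldots, b_{i-1}'$ with the required properties, the constraint on $b_i'$ is that
	\begin{equation*}
		b_i' \in N^1_G\big(z_{r_i} \ldots z_{i'-1}\big) \cap N^1_{\Fp_B}(b_1' \ldots b_{i-1}') \cap B_i,
	\end{equation*}
	where $i'$ is the position of $b_i'$ in the sequence $z_1 \ldots z_{2t-1}$ and $r_i = \max\{i' - k + 1, 1\}$; every window of length $k$ ending at or after position $i'$ that we must complete contains the vertex $z_{t-1} = P' \in A$ (using that the sequence has length $2t-1 \ge k$ and that $P'$ sits at a central position). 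Hence the relevant $G$-supported set contains a vertex of $A$, so by \Cref{prop:final A' conditions}~\ref{itm:A'-3} it has vertex co-degree at least $|B'| - \mu n$ into $B'$, hence at least $|B_i| - \mu n$ into $B_i$; moreover by \Cref{prop:final A' conditions}~\ref{itm:A'-4} we have $d^1_{\Fp_B}(b_1' \ldots b_{i-1}') \ge |B'| - \mu n$ in $B'$, hence at least $|B_i| - \mu n$ into $B_i$. Thus there are at least $|B_i| - 2\mu n = |B_1| - 2\mu n$ valid choices for $b_i'$, completing the induction.

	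Finally, for each such sequence $b_1' \ldots b_{t-1}'$, the edge $f = b_1' \ldots b_{t-1}' P'$ lies in $E(\Gp)$ for every $P' \in \cP_A$ with $P' \in N^1_G(b_1' \ldots b_{t-1}')$; since $\{b_1', \ldots, b_{t-1}'\}$ is supported in $\Fp_B$, it is $A$-rich, so the number of such $P'$ is at least $|\cP_A| - \mu n \ge |\cP| - 5\mu n = |B_1| - 5\mu n$ by \eqref{eqn:PA}. Combining, the number of edges $f \in E(\Fm_e)$ is at least
	\begin{equation*}
		(|B_1| - 2\mu n)^{t-1}(|B_1| - 5\mu n) \ge |B_1|^t - \big(2(t-1)\mu n + 5\mu n\big)|B_1|^{t-1} \ge |B_1|^t - 5t\mu n^t,
	\end{equation*}
	as required.
\end{proof}
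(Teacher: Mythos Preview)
Your overall approach---restrict to $P' \in \cP_A$ so that the last vertex of $f$ lies in $A$, and then exploit \Cref{prop:final A' conditions}~\ref{itm:A'-3} at every step---is the same as the paper's. However, the order in which you reveal the $b_i'$ is the wrong way round, and this creates a genuine gap.

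The target sequence you must make into a tight path is $b_1' b_2' \ldots b_{t-1}' P' b_1 \ldots b_{t-1}$ (your definition of $z_1 \ldots z_{2t-1}$ accidentally drops $b_1'$, but that is a minor indexing slip). Since $t \le k$, \emph{every} window of $k$ consecutive vertices in this sequence contains position $t-1$, i.e.\ contains $b_{t-1}'$. Consequently, if you reveal $b_1', b_2', \ldots, b_{t-1}'$ from left to right, no tight-path edge can be verified until the very last step. The constraint you actually write, $b_i' \in N^1_G(z_{r_i} \ldots z_{i'-1})$, is the constraint for the window \emph{ending} at $b_i'$; but $i' \le t-1 < k$, so there is no such window, and this condition reduces to ``$\{b_1',\ldots,b_i'\}$ is supported'', which is already implied by the $\Fp_B$ constraint. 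At the end of your induction you have only ensured that $\{b_1',\ldots,b_{t-1}'\}$ is supported in $\Fp_B$ and that $\{b_1',\ldots,b_{t-1}',P'\}$ is supported in $G$; you have not checked any of the edges that involve the fixed vertices $b_1,\ldots,b_{t-1}$, for instance the edge $\{b_2',\ldots,b_{t-1}',P',b_1,\ldots,b_{k-t+1}\}$. So the conclusion that $fe$ supports an $\ell$-path does not follow.

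The paper fixes this by revealing in the opposite order: first choose $a = P' \in \cP_A$ so that $a b_1 \ldots b_{t-1}$ is supported (using that $\{b_1,\ldots,b_{t-1}\}$ is $A$-rich), and then reveal $b_{t-1}', b_{t-2}', \ldots, b_1'$ one at a time. When $b_{t-i}'$ is revealed, the only new edge in the tight path is the one \emph{starting} at $b_{t-i}'$, namely $\{b_{t-i}',\ldots,b_{t-1}',a,b_1,\ldots,b_{s_i}\}$ with $s_i = \min\{k-i-1,\,t-1\}$. All its other vertices are already fixed or previously revealed, and it contains $a \in A$, so \Cref{prop:final A' conditions}~\ref{itm:A'-3} gives at least $|B_{t-i}| - \mu n$ choices. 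Together with the $\Fp_B$ constraint this yields at least $|B_1| - 2\mu n$ choices per step, and the same final calculation as yours goes through.
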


\begin{proof}
    Again, set $e = b_1 \dots b_t P$. Recall that we want edges $f=b_1' \dots b_{t-1}' P' \in \Gp$ such that $fe$ supports an $\ell$-path in $G$. From \eqref{eqn:PA}, we know that most elements of $B_t=\cP$ consist of singleton vertices from $A$. For simplicity, we will only try to find edges in $\Fm_e$ that are of the form $f=b_1' \dots b_{t-1}'a$, where $a\in \cP_A$. Analogous to the situation above, since $e$ and $f$ support $\ell$-paths in $G$, in order for $fe$ to do so as well, it will suffice to argue that $b_1' \ldots b_{t-1'} a b_1 \ldots b_{t-1}$ is a tight path in $G$.

	We inductively argue that, for each $i\in [0,t-1]$, there are at least $(|B_1| - 2\mu n)^{i} (|B_1| - 5\mu n)$ sequences $b_{t-i}' \dots b_{t-1}' a \in B_{t-i} \times \dots \times B_{t-1} \times \cP_A$ such that $b_{t-i}' \dots b_{t-1}' a b_1 \dots b_{t-1}$ is a tight path in $G$ and $\{b_{t-i}', \dots, b_{t-1}'\}$ is supported in $\Fp_B$. Start with $i=0$. By definition of $\Gp$, we know that $\{b_1, \dots, b_{t-1}\}$ is $A$-rich, meaning that $a b_1 \dots b_{t-1}$ is supported in $G$ for all but at most $\mu n$ vertices $a \in \cP_A$ and hence, by \eqref{eqn:PA}, for at least $|\cP_A|-\mu n \ge |B_1| - 5\mu n$ vertices $a \in \cP_A$. Let $i\in [t-1]$ and suppose the induction hypothesis holds for $i-1$. Let $b_{t-i+1}' \dots b_{t-1}' a \in B_{t-i+1} \times \dots \times B_{t-1} \times \cP_A$ be such that $b_{t-i+1}' \dots b_{t-1}' a b_1 \dots b_{t-1}$ is a tight path in $G$ and $\{b_{t-i+1}', \ldots, b_{t-1}'\}$ is supported in $\Fp_B$. Then, for $b_{t-i}'$ to be a valid choice, we require exactly that
	\begin{equation*}
		b_{t-i}' \in N_G^1(b_{t-i+1}' \dots b_{t-1}' a b_1 \dots b_{s_i}) \cap N_{\Fp_B}^1(b_{t-i+1}' \ldots b_{t-1}') \cap B_i, \quad s_i := \min\{k-i-1, t-1\}.
	\end{equation*}
	Crucially, as $a\in \cP_A \subseteq A$, we can use the same rationale as in the above proof of \cref{claim:Fp} to conclude that there are at least $|B_{t-i}|-2\mu n=|B_1|-2\mu n$ choices for $b_{t-i}'$. This proves the inductive statement, and the same calculations as in the proof of \Cref{claim:Fp} complete the proof of this claim.
\end{proof}

We are now finally ready to prove~\Cref{thm:Hamilton cycle extremal}.

\begin{proof}[Proof of~\Cref{thm:Hamilton cycle extremal}]
	Fix a constant $\mu'$ such that $\mu\ll \mu'\ll 1/k\le 1/3$, let $\Gp$ be the auxiliary $t$-partite $t$-graph defined above, and let $\cF := \{\Fp_e, \Fm_e : e \in E(\Gp)\}$ be as defined above.
	Notice that $\Gp$ and $\cF$ satisfy the requirements of \Cref{cor:k-partite-matching} (with parameters $|B_1| = m$, $\mu'$ and $t$ in place of $n$, $\eps$ and $k$ respectively), using \Cref{claim:G+ partite sets sizes and codegrees,claim:Fp,claim:Fm}. Thus, by \Cref{cor:k-partite-matching} there is a perfect matching $M$ in $\Gp$ such that $|M \cap E(\Fp_e)|, |M \cap E(\Fm_e)| \ge m/2$ for every $e \in E(\Gp)$.

	As discussed previously, we now define a corresponding auxiliary digraph $\Dp$. Set $V(\Dp)=E(M)$, so the vertices of $\Dp$ correspond to edges of $\Gp$. For any $e\in V(\Dp)$, define its in and out neighbourhoods to be
	\[
		N^+(e)=\{f\in V(\Dp): f\in \Fp_e\} \text{ and } N^-(e)=\{f\in V(\Dp): f\in \Fm_e\}.
	\]
    From the choice of $M$ and $\Dp$, we have $\delta^0(\Dp) \ge m/2 = |\Dp|/2$. In particular, if we let $M = \{P_1, \ldots, P_m\}$ where we think of each $P_i = (b_1, \dots, b_{t - 1}, P) \in B_1 \times \dots \times B_{t-1} \times \cP$ as the vertex sequence $b_1 \dots b_{t-1} P$, then \Cref{lem:directed Dirac} can be applied to find a directed Hamilton cycle $P_1 \ldots P_m$ in $\Dp$.

	The crucial observation we make here is that $M$ consists of pairwise disjoint vertex sequences in $\Gp$ that span $G$, each of which supports an $\ell$-path in $G$. By \Cref{prop:supported-l-paths}~\ref{itm:extended-path-2}, we conclude that the vertex sequence $P_1 \ldots P_m$ is a Hamilton tight cycle in $G$.
\end{proof}

\section{Open problems}\label{sec:conclusion}
    Our main result determines (up to an additive error of $1$) the optimal minimum supported co-degree condition that guarantees a spanning $\ell$-cycle in an $n$-vertex $k$-graph (when $k-\ell$ divides $n$, a necessary divisibility condition) for all $k \ge 3$ and $\ell \in [k-1]$ except when $(k,\ell) = (3,1)$, and as mentioned in \Cref{sec:intro}, an upcoming paper of Mycroft and Z{\'a}rate-Guer{\'e}n resolves this remaining open case.

    These results contribute to the broader research theme of finding the best possible minimum supported co-degree to ensure a specified spanning substructure, an area which poses some tantalising questions. Our original motivation for working on this problem arose from Illingworth, Lang, M\"uyesser, Parczyk and Sgueglia~\cite[Conjecture $1.4$]{illingworth2025spanning}, who conjectured that $\delta^*(G) \ge (1-1/k)n$ guarantees a tight Hamilton cycle. The main focus of their paper is the minimum supported co-degree required to ensure a spanning $(k-1)$-sphere -- a subset of edges in a $k$-graph that induces a homogeneous simplicial complex homeomorphic to the $(k-1)$-sphere $\mathbb{S}^{k-1}$, and whose $0$-skeleton spans the entire vertex set. They tackle the following conjecture of Georgakopoulos, Haslegrave, Montgomery and Narayanan~\cite{georgakopoulos2022spanning} (paraphrased).
	\begin{conj}\label{conj:spheres}
		Every $k$-graph $G$ of order $n>k \ge 2$ with $\delta^*(G) \ge n/2$ and with no isolated vertices contains a spanning copy of $\mathbb{S}^{k-1}$.\footnote{In the original phrasing of the conjecture $G$ is required to be tightly connected -- meaning that any two edges can be joined by a tight walk -- but this follows from the other assumptions.}
	\end{conj}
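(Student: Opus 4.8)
The plan is to prove the conjecture by upgrading the known asymptotic bound to the exact one. The case $k=2$ is Dirac's theorem, so assume $k \ge 3$. The statement with $\delta^*(G) \ge (1/2+\eps)n$ in place of $n/2$ is the main result of~\cite{illingworth2025spanning}: it is proved by applying a blow-up tiling lemma — essentially \Cref{lem:blow-up chain} with $\cG$ and $\cS$ the families of $k$-graphs with supported co-degree above $(1/2+\eps)n$ and $(1/2+\eps/2)s$ respectively — and assembling the spanning copy of $\mathbb{S}^{k-1}$ out of spanning ``triangulated pieces'' found inside the cyclically linked blow-ups $F_1^*, \dots, F_r^*$. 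To remove the $\eps$ I would run a stability argument in exactly the shape of \Cref{thm:Hamilton cycle non-extremal,thm:Hamilton cycle extremal}: fix the extremal construction below, and split into the case where $G$ is \emph{far} from it (non-extremal) and the case where it is \emph{close} (extremal).

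Unlike for Hamilton $\ell$-cycles, the tight lower bound for spheres is \emph{connectivity}-based rather than independence-based. A natural extremal construction is $G = H_1 \cup H_2$ where $H_1, H_2$ are complete (or almost complete) $k$-graphs meeting in exactly $k-1$ common vertices, each of order roughly $(n+k-1)/2$; then $\delta^*(G)$ is about $n/2$ but strictly below it, and $G$ contains no spanning $\mathbb{S}^{k-1}$ because the $1$-skeleton of a triangulated $(k-1)$-sphere is $k$-connected while $G$ has a vertex cut of size $k-1$. (For $k \ge 3$ a short edge count shows that ``large strong independent set'' constructions of the type in \Cref{sec:lower bound} no longer block a spanning sphere — an abstract triangulation of $\mathbb{S}^{k-1}$ in which the relevant set is independent can be built by stacking — so conjecturally the connectivity construction is the only near-extremal family, whereas for $k = 2$ one must additionally allow the Dirac construction $K_{(n-1)/2} \vee \overline{K_{(n+1)/2}}$.) In the extremal case the plan is: clean up $G$ using variants of \Cref{lem:dense subgraphs} and \Cref{cor:dense subgraphs strong} to expose an almost-balanced partition $V(G) = U_1 \sqcup U_2$ with a near-$(k-1)$ bottleneck; use the \emph{exact} hypothesis $\delta^*(G) \ge n/2$, together with a cherry/short-path transfer argument as in \Cref{lem:disjoint edges cherries}, to move a bounded number of vertices across the cut and so destroy the obstruction while keeping $|U_1| \approx |U_2|$; and finally build the sphere from a semi-random perfect matching in an auxiliary partite hypergraph via \Cref{cor:k-partite-matching}, in the same spirit as \Cref{sec:Hamilton cycle extremal}.

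For the non-extremal case I would re-run the~\cite{illingworth2025spanning} argument under the weaker hypothesis $\delta^*(G) \ge (1/2-\eps)n$ plus the non-extremal assumption, using the latter to recover the slack that the extra $\eps n$ previously provided. This needs (i) a structural-inheritance lemma in the style of \Cref{lem:concentration}, showing that a uniformly random induced subgraph of a non-extremal $G$ is again non-extremal with the right parameters, so that \Cref{lem:blow-up chain} applies; and (ii) inside each $(\gamma,m)$-nearly-regular blow-up $F^*$ of a small non-extremal $F$, a spanning triangulated piece with prescribed boundary edges. Step (ii) I would carry out exactly as in \Cref{sec:Farkas,sec:absorbing walk,sec:proof Hamilton paths}: a perfect (weighted) fractional matching in $F$ obtained through Farkas' lemma, an induced partition of $F^*$ into balanced complete $k$-partite $k$-graphs each of which carries the required local piece, and a connecting/absorbing walk in $F^*$ that links these pieces and absorbs the few leftover vertices — with ``$\ell$-path'' throughout replaced by the appropriate triangulated-ball-with-boundary notion. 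Gluing the pieces along the shared edges of the cyclic chain then yields the spanning $\mathbb{S}^{k-1}$.

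The main obstacle is the topological bookkeeping that does not appear in the cycle case. One must (a) pin down the correct local object to find in a blow-up — a spanning triangulated $(k-1)$-ball (or annulus) with a prescribed boundary $(k-2)$-sphere — and prove a clean gluing lemma guaranteeing that assembling these objects along the cyclic chain of \Cref{lem:blow-up chain} produces $\mathbb{S}^{k-1}$ and not some other closed manifold; and (b) in the extremal analysis, establish that the connectivity construction really is the \emph{only} near-extremal family, i.e.\ that every $G$ with $\delta^*(G) \ge n/2$ that is not close to it is genuinely non-extremal. Point (b) is the classical bottleneck of exact stability results and here looks substantially more delicate than \Cref{sec:extremal structure}, since there is no single ``sparse set of size $n/t$'' to latch onto and one must instead rule out, via a careful analysis of how supported pairs are distributed, all ways in which $G$ could fail to have the mild connectivity/expansion that the non-extremal argument requires.
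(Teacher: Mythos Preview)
This statement is a \emph{conjecture} in the paper, not a theorem; the paper does not prove it. It appears in the concluding section on open problems, where the authors remark that the blow-up framework used for \Cref{thm:Hamilton cycle non-extremal} ``can potentially be used to improve this to a stability result for \Cref{conj:spheres}'' and that ``perhaps one could build on such results to prove exact bounds, thereby fully settling these conjectures, though this is likely to require some careful extremal analysis.'' There is no proof in the paper for you to compare against.

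Your proposal is not a proof but a research plan, and you are candid about this (``The plan is\ldots'', ``I would run\ldots'', ``The main obstacle is\ldots''). As a plan it is sensible and in fact tracks closely what the paper itself speculates: a stability split into non-extremal and extremal regimes, with the non-extremal case handled by the blow-up tiling machinery and the extremal case by ad-hoc structural analysis. The two difficulties you flag --- the topological gluing of local pieces into a genuine $\mathbb{S}^{k-1}$, and the identification of the correct extremal family together with a robust stability statement --- are real and are precisely why this remains open. In particular, your claim that the connectivity construction is ``conjecturally the only near-extremal family'' is itself a nontrivial assertion that would need to be established, and the replacement of ``$\ell$-path'' by ``triangulated ball with prescribed boundary'' hides substantial work (the analogue of \Cref{prop:ell cycle in k-partite k-graph} for spheres is not obvious). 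So: reasonable outline, consonant with the paper's own suggestions, but not a proof, and the paper offers none either.
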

    The chief result in~\cite{georgakopoulos2022spanning} provides an asymptotically optimal minimum co-degree condition for a $3$-graph to contain a vertex-spanning copy of any \emph{surface}, meaning an arbitrary connected, closed $2$-manifold. In particular, this addresses the minimum co-degree necessary to guarantee a spanning $2$-sphere in a $3$-graph.
    \begin{thm}[{\cite[Theorem 1.4]{georgakopoulos2022spanning}}]\label{thm:spheres-codegree}
        Let $\cS$ be an arbitrary surface and $\eps>0$. Then any sufficiently large $n$-vertex $3$-graph with $\delta(G) \ge n/3 + \eps n$ contains a spanning copy of $\cS$. Moreover, for any $n \in \mathbb{N}$, there exists an $n$-vertex $3$-graph $H$ with $\delta(H) = \floor{\frac{n}{3}} - 1$ such that there are at most $2\ceil{\frac{n}{3}}$ vertices in the $0$-skeleton of a copy of any surface in $H$.
    \end{thm}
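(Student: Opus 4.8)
I would prove the two assertions separately: the sufficiency of $\delta(G)\geq n/3+\eps n$ by the absorption method adapted to the topological setting, and the tightness by an explicit construction. By a \emph{triangulated surface in $G$} I mean a set of triples of $G$ forming a simplicial triangulation of a closed $2$-manifold (and similarly for surfaces-with-boundary). Fix $\cS$ and $\eps>0$ and let $G$ be large with $\delta(G)\geq n/3+\eps n$. The first ingredient is \emph{flexibility of triangulations}: every closed surface has a triangulation on $N$ vertices for all sufficiently large $N$ (obtained from a minimal triangulation by repeated vertex splits), and any bounded region of a triangulation can be re-triangulated by local moves (bistellar flips and pyramid subdivisions) while preserving the homeomorphism type. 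Using these one builds an \emph{absorber gadget} for a vertex $v$: a constant-size set of triples of $G$ on a vertex set $Z$, attached along a ``socket'' triangle $\tau$, admitting two triangulations of a disk bounded by $\tau$ --- one with interior vertex set $Z$, one with interior vertex set $Z\cup\{v\}$. Since at this density the link of a vertex need not contain a triangle, the gadget is assembled not from a single triangle through $v$ but from a short cycle in the link of $v$ together with a bounded number of further vertices that can be incorporated into the surface around it. A random-greedy selection in the style of Montgomery's absorber templates then yields a single triangulated disk $\cA$ in $G$ on $o(n)$ vertices that can \emph{absorb} any vertex set $W$ with $|W|\leq\mu n$, meaning $\cA$ re-triangulates to a disk with the same boundary and interior vertex set $V(\cA)\cup W$.

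\textbf{The almost-spanning cover.} The role of the bound $n/3$ is to make an almost-spanning cover possible. I would pass to a fractional relaxation --- non-negative weights on the triples of $G$ in which each pair of vertices receives total weight two ``on average'' --- and show, via a Farkas-type argument exactly in the spirit of \Cref{lem:farkas} and \Cref{lem:wt perfect frac matching}, that such a weighting exists precisely at the threshold $n/3$; this fractional object is then converted into an almost-spanning triangulated copy of $\cS$-minus-a-disk using tight-connectivity of $G$ (immediate from the co-degree condition) to splice pieces together, controlling the homeomorphism type by performing only type-preserving moves and inserting a bounded number of handles in one dedicated region if a parity mismatch arises. Arranging this cover to have hole $\partial\cA$, disjoint from $V(\cA)$, and to miss at most $\mu n$ vertices, then absorbing those vertices with $\cA$ and sealing the hole, produces a spanning triangulated copy of $\cS$ in $G$.

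\textbf{Tightness.} For the second statement I would exhibit, for each $n$, a $3$-graph $H$ on $n$ vertices built from a tripartition $V=V_1\sqcup V_2\sqcup V_3$ with $|V_i|=n/3+O(1)$, arranged so that a direct count of the co-degrees of the three types of pairs gives $\delta(H)=\floor{n/3}-1$. One then shows that the restricted edge set of $H$ confines every triangulated surface $T\subseteq E(H)$ to the union of two of the parts up to a bounded number of exceptional vertices: classifying, at each vertex of $T$, which types of triple of $H$ can appear around it along a link cycle forces a single type to propagate across the (connected) surface, after which a short Euler-characteristic computation caps the number of covered vertices at $2\ceil{n/3}$; the part sizes are tuned so that this bound is exactly attained.

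\textbf{The main obstacle.} The crux --- absent in matching- and cycle-embedding problems --- is the global topological invariant: a triangulated surface must have a prescribed homeomorphism type, so local fragments cannot be combined freely, and \emph{every} absorber move and every leftover absorption must be type-neutral. The hard part is to design absorber gadgets that are simultaneously (i) type-neutral, (ii) guaranteed in abundance by $\delta(G)\geq n/3+\eps n$ alone (recall that minimum degree $n/3$ does not even force a triangle in a link), and (iii) composable without mutual interference, and, in parallel, to carry out the fractional-to-integral cover while matching its boundary to $\partial\cA$ and keeping the global type under control throughout.
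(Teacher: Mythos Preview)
This statement is not proved in the paper: it is Theorem~1.4 of \cite{georgakopoulos2022spanning}, quoted in \Cref{sec:conclusion} purely as background for the discussion of open problems, with no proof given or claimed. There is therefore no ``paper's own proof'' to compare your proposal against.

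That said, a brief comment on your sketch as a standalone attempt. The overall architecture (absorption plus almost-spanning cover, with topological bookkeeping) is indeed the approach taken in \cite{georgakopoulos2022spanning}, but your proposal is too vague at the two genuinely hard points to count as a proof. First, the absorber gadget: you correctly note that at co-degree $n/3$ a vertex link need not contain a triangle, but then simply assert that a gadget ``is assembled from a short cycle in the link together with a bounded number of further vertices.'' The actual construction in \cite{georgakopoulos2022spanning} is delicate and occupies a substantial portion of that paper; your description does not specify the gadget, does not verify that it exists in abundance under the co-degree hypothesis, and does not check type-neutrality. Second, the almost-spanning step: you invoke a Farkas-type fractional argument ``in the spirit of \Cref{lem:wt perfect frac matching}'' to produce a fractional object with pair-weight two, and then assert this ``is then converted'' into an almost-spanning triangulated surface-minus-a-disk. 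This conversion is the heart of the matter --- a fractional weighting on triples is very far from a simplicial surface, and the mechanism by which one passes from one to the other (while controlling the homeomorphism type globally) is entirely absent from your sketch. The tightness construction is also left unspecified: ``arranged so that'' and ``a short Euler-characteristic computation'' are not a proof.
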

    
    The authors of~\cite{illingworth2025spanning} utilise the same blow-up framework we use to prove \cref{thm:Hamilton cycle non-extremal} (as mentioned, \cref{lem:blow-up chain} is based on Lemma 2.1 of their paper) to prove an asymptotic weakening of this conjecture, that is, that $\delta^*(G) \ge n/2 + o(n)$ suffices. We believe that the methods used here can potentially be used to improve this to a stability result for \cref{conj:spheres}, and possibly achieve the same improvement for \cref{thm:spheres-codegree} as well. Perhaps one could build on such results to prove an exact bounds, thereby fully settling these conjectures, though this is likely to require some careful extremal analysis.

\providecommand{\bysame}{\leavevmode\hbox to3em{\hrulefill}\thinspace}
\providecommand{\MR}{\relax\ifhmode\unskip\space\fi MR }
\providecommand{\MRhref}[2]{%
  \href{http://www.ams.org/mathscinet-getitem?mr=#1}{#2}
}
\providecommand{\href}[2]{#2}

\appendix

\section{Proof of \Cref{lem:blow-up chain}} \label{appendix:proof-blow-up-tiling}

	In this section we sketch the proof of \Cref{lem:blow-up chain}, restated here.

	\lemBlowUpChain*

	As mentioned earlier, this lemma can be proved by following the arguments of the proof of \cite[Lemma 2.1]{illingworth2025spanning} and making a few small modifications. We note that Lang and Sanhueza-Matamala \cite[Lemma 6.3]{lang2024hypergraph} provide a somewhat more general lemma that could be used for out purpose instead, but both assumption and conclusion are slightly different to the ones in \Cref{lem:blow-up chain}.
	We now state \cite[Lemma 2.1]{illingworth2025spanning}.

	\begin{lem}[Lemma 2.1 in \cite{illingworth2025spanning}]
		\label{lem:blow-up chain-original}
		Let $1/n\ll 1/m_2\ll 1/m_1\ll 1/s,\gamma\ll \eps,1/k\le 1/2$. 
		Let $G$ be an $n$-vertex $k$-graph with no isolated vertices and with $\delta^*(G) \ge (1/2+\eps)n$, and let $\cS$ be the family of $s$-vertex $k$-graphs with no isolated vertices and with $\delta^*(S) \ge (1/2 + \eps/2)s$. 
		Then there exists a sequence of $k$-graphs $F_1,\dots, F_r$ and a sequence of subgraphs $F_1^*,\dots, F_r^*\subseteq G$ such that the following hold for all $i,j\in [r]$:

		\begin{itemize}
			\item $F_i$ is a copy of a $k$-graph in $\cS$,
			\item $F_i^*$ is a $(\gamma,m_i^*)$-nearly-regular blow-up of $F_i$ for some $m_i^*\in[m_1,m_2]$,
			\item $V(F_1^*)\cup\dots\cup V(F_r^*)=V(G)$,
			\item $V(F_i^*)\cap V(F_j^*)=\emptyset$ unless $j \in \{i-1,i,i+1\}$,
			\item $V(F_i^*)\cap V(F_{i+1}^*)$, with $i \in [r-1]$, consists of exactly $k$ vertices that induce an edge in $F_i^*$ and $F_{i+1}^*$ that is disjoint from the singleton parts of both blow-ups (if they exist).
		\end{itemize}
	\end{lem}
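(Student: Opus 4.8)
The plan is to follow the argument of \cite[Lemma~2.1]{illingworth2025spanning}; we outline its structure. Two consequences of the hypothesis $\delta^*(G)\ge (1/2+\eps)n$ do all the work. First, $G$ is strongly tightly connected: every supported set of size at most $k-1$ has more than $n/2$ vertex-neighbours, and from this one obtains (as in \Cref{prop:initial walk supported sets} and \Cref{prop:correct order walk supported sets}, applied with $\delta^*(G)>n/2$) that any two disjoint edges are joined by a tight walk of bounded length whose length can be prescribed modulo any fixed integer. Second, a uniformly random $s$-subset $S\subseteq V(G)$ — even conditioned on containing an arbitrary fixed set $W$ with $|W|\le 2k$ — satisfies $G[S]\in\cS$ with probability at least $1-1/s^2$. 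Indeed, for each of the at most $\binom{s}{k-1}$ supported $(k-1)$-subsets $T\subseteq S$, the size of $N^1_G(T)\cap S$ is hypergeometric with mean at least $(1/2+\eps)s-O(k)$, so it exceeds $(1/2+\eps/2)s$ outside probability $e^{-\Omega(s)}$ by \Cref{lem:Chernoff}; and since each $v\in S$ has at least $(1/2+\eps)n$ vertex-neighbours in $G$, with the same concentration at least $(1/2+\eps/2)s$ of them lie in $S$, so $v$ extends to an edge inside $S$ and $G[S]$ has no isolated vertex. A union bound over the $T$'s and $v$'s, using that $s$ is large, gives the claim.

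With these facts in hand, one first builds an \emph{approximate} chain. Since almost every $s$-set induces a member of $\cS$, an averaging argument (or a short appeal to Farkas' lemma \Cref{lem:farkas}, in the spirit of the proof of \Cref{lem:wt perfect frac matching}) produces a fractional tiling of $V(G)$ by copies of graphs in $\cS$ that assigns total weight $1-o(1)$ to each vertex. One then \emph{integralises} it: for each graph $F$ in the support, with weight $y_F$, one embeds a nearly-regular blow-up of $F$ into $G$ with parts of size about $y_F M$ for a common scale $M\in[m_1,m_2]$, choosing which vertices play which role so that the chosen vertices genuinely span the complete $k$-partite pattern of the blow-up. Arranging these blow-ups to be pairwise vertex-disjoint and to cover all but a bounded number of vertices is precisely Lang's iterated rounding procedure \cite{lang2023tiling} (one can instead quote \cite[Proposition~6.3]{lang2024hypergraph} as a black box). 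The tiles are then threaded into a chain: before constructing $F_{i+1}^*$, fix an edge $e_i$ of $F_i^*$ disjoint from its at most one singleton part, and use the conditional concentration above to force the seed set of $F_{i+1}$ to contain $e_i$ and to realise $e_i$ as an edge of $F_{i+1}^*$ avoiding its singleton part. This produces the required $k$-vertex consecutive overlaps, and disjointness of non-consecutive tiles is automatic.

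It remains to deal with the bounded set $Z$ of still-uncovered vertices. Following the absorption method, one reserves at the outset a short sub-chain of nearly-regular blow-ups with the property that any constant-sized such $Z$ can be absorbed into it, one vertex at a time: placing a vertex into a given part of one of these tiles, enlarging that part by one, is legitimate exactly when the vertex lies in the relevant common vertex-neighbourhoods, and $\delta^*(G)>n/2$ leaves enough slack that a random choice of candidate absorber tiles works by the standard counting; enlarging a single part of a nearly-regular blow-up of an $\cS$-graph by one vertex keeps it nearly-regular with underlying graph still in $\cS$. Absorbing $Z$ — using the single permitted singleton part of the last tile for a leftover parity vertex if necessary — completes the chain $F_1^*,\dots,F_r^*$ with all five listed properties. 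The cyclic variant and the abstract version (\Cref{lem:blow-up chain}) follow from the same proof: for a cycle one additionally insists that the seed set of the final tile contains an entry edge reserved inside $F_1^*$ (which is why one allows $|W|\le 2k$ in the concentration step), and for the abstract version one replaces the two displayed consequences of the degree condition by the stated hypotheses on $\cG$ and $\cS$.

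The main obstacle is the faithful integralisation of the fractional tiling into vertex-disjoint nearly-regular blow-ups whose selected vertices actually induce the prescribed $k$-partite patterns: the mutual dependence between which vertex is assigned which role and the adjacencies it must satisfy is what makes this step delicate, and it is exactly Lang's contribution that we are importing. Designing the absorbing sub-chain so that slot-ins preserve both near-regularity and membership in $\cS$ is a secondary technical point. The remaining ingredients — the tight-walk routing and the concentration estimates — are routine once $\delta^*(G)>n/2$.
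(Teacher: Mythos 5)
Your outline agrees with the intended route (and with the appendix sketch) in its first two stages: the inheritance of the degree condition by random $s$-sets containing a prescribed set $W$ of size at most $2k$, and the use of Lang's tiling machinery as a black box to produce an almost-perfect packing of $G$ by (nearly-)regular blow-ups of graphs in $\cS$, together with robustness-based threading of consecutive tiles through shared edges. However, your treatment of the uncovered vertices is where the argument breaks. You propose to absorb each leftover vertex by inserting it into a part of a pre-reserved tile, ``enlarging that part by one,'' claiming this is legitimate whenever the vertex lies in ``the relevant common vertex-neighbourhoods'' and that $\delta^*(G)>n/2$ leaves enough slack for a random choice of absorber tiles. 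This does not work: to add $v$ to the part $B_u$ of a blow-up $F^*$ of $F$, the vertex $v$ must form an edge of $G$ with \emph{every} crossing $(k-1)$-tuple in the blow-ups of every edge of $F$ containing $u$ --- on the order of $m^{k-1}$ simultaneous co-degree constraints. The hypothesis only guarantees that each single supported $(k-1)$-set has at least $(1/2+\eps)n$ vertex-neighbours; the intersection of even three such neighbourhoods has no nontrivial lower bound, so for a generic tile and a generic vertex the insertion is impossible, and reserving tiles in advance that can accept an \emph{arbitrary} vertex is impossible for the same reason. Moreover, the uncovered set after the almost-perfect packing has size up to $\eta n$ (unbounded as $n\to\infty$), not constant, so even the quantitative bookkeeping of a ``short absorbing sub-chain'' fails.

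The correct handling --- and the reason the statement permits one singleton part per blow-up at all --- is the opposite of insertion: for each uncovered vertex $v$ one builds a \emph{fresh} $(0,m)$-nearly-regular blow-up of a graph in $\cS$ in which $v$ itself is the singleton part. This is possible because, by robustness, $v$ lies in many induced copies of graphs in $\cS$, and since $k$-partite $k$-graphs have Tur\'an number zero, supersaturation plus pigeonhole lets one blow up such a copy around $v$; these new tiles are chosen pairwise disjoint and meeting each old tile in a small proportion of its vertices, after which the old tiles are shrunk. Your step ``enlarging a single part of a nearly-regular blow-up of an $\cS$-graph by one vertex keeps it nearly-regular with underlying graph still in $\cS$'' silently assumes exactly the adjacency condition that is the whole difficulty. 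The remainder of your sketch (the connecting tiles, the $|W|\le 2k$ robustness for closing the cycle in the generalised version) is consistent with the paper's argument once this covering step is repaired.
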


	For convenience, we make the following definition (inspired by terminology from \cite{lang2024hypergraph}).

	\begin{defn}
		Let $G$ be an $n$-vertex $k$-graph and let $\cS$ be a family of $s$-vertex $k$-graphs.
		We say that $G$ \emph{satisfies $\cS$ $r$-robustly} if for every set $W$ of $r$ vertices in $G$, there are at least $(1 - 1/s^2)\binom{n-r}{s-r}$ sets $U$ of $s$ vertices in $G$ such that $W \subseteq U$ and $G[U] \in \cS$.
	\end{defn}

	Notice that \Cref{lem:blow-up chain} and \Cref{lem:blow-up chain-original} differ on two points. First, instead of considering general families $\cG$ and $\cS$ of $n$-vertex and $s$-vertex $k$-graphs (where, in the terminology we just defined, every $G \in \cG$ satisfies $\cS$ $r$-robustly for every $r \le 2k$), the latter lemma is stated for specific choice $\cG_0$ and $\cS_0$, namely where $\cG_0$ is the family of $n$-vertex $k$-graphs with no isolated vertices and with minimum supported codegree at least $(1/2+\eps)n$, and $\cS_0$ is the family of $s$-vertex $k$-graphs with no isolated vertices and with minimum supported codegree at least $(1/2 + \eps/2)s$. Notice that, by \cite[Lemma 3.8]{illingworth2025spanning}, it is indeed the case that every $G \in \cG_0$ satisfies $\cS$ $r$-robustly for every $r \le 2k$.\footnote{In fact, they prove a stronger bound, namely that for every $G \in \cG_0$ and set $W$ of $r$ vertices in $G$, where $r \le 2k$, there are at least $(1 - e^{-\sqrt{s}})\binom{n-r}{s-r}$ sets $U$ of $s$ vertices such that $W \subseteq U$ and $G[U] \in \cS_0$. The weaker bound of $(1 - 1/s^2)\binom{n-r}{s-r}$ suffices for the purpose of the proof of \Cref{lem:blow-up chain-original}.}
	The second difference is more minor: while in \Cref{lem:blow-up chain} we require the sequence $F_1^*, \ldots, F_r^*$ to form a cyclic chain structure, namely that every two consecutive graphs intersect in one edge, etc., in \Cref{lem:blow-up chain-original} they are only required to form a chain, meaning that the `linking edge' between $F_r^*$ and $F_1^*$ is missing.

	In short, in order to modify the proof of \Cref{lem:blow-up chain-original} from \cite{illingworth2025spanning} to prove \Cref{lem:blow-up chain}, we can simply replace each mention of the specific properties $\cG_0$ and $\cS_0$ by general properties $\cG$ and $\cS$ such that every $G \in \cG$ satisfies $\cS$ $r$-robustly for every $r \le 2k$. Additionally, we simply add one more `link' to the chain $F_1^*, \ldots, F_r^*$ in order to make it cyclic.

	In the rest of the section, we give a more detailed proof sketch of \Cref{lem:blow-up chain}.
	Let $\eta$ and $m_{1.5}$ be constants such that $1/n \ll \eta \ll 1/m_2 \ll 1/m_{1.5} \ll m_1$. Fix families $\cG$ and $\cS$ as above and let $G \in \cG$. We proceed in three steps. Here a \emph{packing} in a graph $G$ is a collection of pairwise vertex-disjoint subgraphs of $G$.
	
\paragraph{Finding an almost perfect packing of $G$ by regular blow-ups of graphs in $\cS$.}

	In this step we find a collection of vertex-disjoint $(0,m_2)$-regular blow-ups of graphs in $\cS$ within $G$ that cover all but at most $\eta n$ vertices in $G$. That this is possible follows immediately by Lemma 4.4 from the breakthrough paper of Lang \cite{lang2023tiling}, providing very general conditions for the existence of a perfect packing of a graph $G$ by copies of a hypergraph $F$. The lemma is applicable for a graph $G$ and a family $\cS$ exactly when for every vertex $w \in V(G)$ there are at least $(1 - 1/s + \mu) \binom{n-1}{s-1}$ many $s$-sets of vertices in $G$ that contain $w$ and induce a graph in $\cS$, for some constant $\mu$, and this is implied by $G$ satisfying $\cS$ $1$-robustly.

\paragraph{Finding a perfect packing of $G$ by nearly-regular blow-ups of graphs in $\cS$.}
	Here the idea is to start with an almost perfect packing as in the previous step, and, while there is an uncovered vertex, greedily finding a $(0, m_{1.5})$-nearly-regular blow-up of a graph in $\cS$, in such a way that these new blow-ups are pairwise vertex-disjoint and together they cover only a small proportion of each blow-up from the previous step. The desired perfect packing is obtained by taking the new nearly-regular blow-ups together with the old regular blow-ups but with the vertices from the new blow-ups removed.
	To accomplish this, we use that the Tur\'an number of any $k$-partite $k$-graph is zero, implying that any $n$-vertex $k$-graph with $\Omega(n^k)$ edges contains a $(0,m_3)$-regular blow-up of an edge when $m_3 \ll n$, and a pigeonhole argument, to conclude that every vertex $v$ is contained in many $(0,m_{1.5})$-nearly-regular blow-ups of a graph in $\cS$. To find the desired perfect packing, we use that $G$ contains $\cS$ $1$-robustly, and this is the only property of $\cG_0$ and $\cS_0$ that is used in \cite{illingworth2025spanning} for this step.

\paragraph{Finding a cyclic chain.}
	\def \br {b_{\mathrm{right}}}
	\def \bl {b_{\mathrm{left}}}
	\def \er {e_{\mathrm{right}}}
	\def \el {e_{\mathrm{left}}}
	Denote by $B_1^*, \ldots, B_r^*$ the perfect packing of $G$ by nearly regular blow-ups of graphs in $\cS$, found in the previous step.
	For each $i \in [r]$ we let $\el^i$ and $\er^i$ be two disjoint edges in $B_i$ that avoid the vertex in $B_i$ corresponding to the singleton part (if it exists), and let $\bl^i$ and $\br^i$ be the blow-ups of $\el^i$ and $\er^i$ in $B_i^*$.
	Now, using \cite[Lemma 4.3]{illingworth2025spanning} and that $G$ satisfies $\cS$ $2k$-robustly (corresponding to the collection of copies of graphs in $\cS$ containing one copy of $\er^i$ and one copy of $\el^{i+1}$), we find that there are many $(0,m_1)$-regular blow-ups of graphs in $\cS$ that contain a copy of $\er^i$ and $\el^{i+1}$.
	One can then conclude that we can pick such blow-ups $C_i^*$ so that they are pairwise vertex-disjoint, and together cover only a small proportion of each $B_i^*$. Now, for each $i \in [r]$, remove from $B_i^*$ all vertices participating in some $C_j^*$ except for the vertices of one copy of $\er^i$ contained in $C_i^*$ and one copy of $\el^i$ contained in $C_{i-1}^*$. We thus obtain a cyclic chain $B_1^*, C_1^*, \ldots, B_r^*, C_r^*$.
	Again, all we need to know about $\cG$ and $\cS$ here is that every $G \in \cG$ satisfies $\cS$ $2k$-robustly.
	Also, note that the only difference between getting a chain and getting a cyclic chain is that for the latter we also need to define $C_r^*$, and for the former it suffices to find $C_1^*, \ldots, C_{r-1}^*$.

\section{Improving \cref{thm:main}}\label{sec:improving the bound}
	Recall that our main theorem asserts that every $n$-vertex $k$-graph with minimum supported codegree at least $(1-1/t)n - (k-3)$ has a spanning $\ell$-cycle. This condition is tight when $\ell = \frac{k-1}{2}$ (and $t$ divides $n$ and $n/t+1$ is even) and is off by at most $1$ in general, as can be seen in \Cref{ex:lower bd strong} and \Cref{ex:lower bd weak}, respectively. 
	We now briefly describe how to improve the bound in \cref{thm:Hamilton cycle extremal} and consequently in \cref{thm:main} to $\delta^*(G) \ge (1 - 1/t)n - (k - 2)$ when $t \ge \ell+2$ or when $\ell = k-1$ (i.e.\ we are seeking a Hamilton tight cycle).
	Notice that we always have $t \ge \ell+1$ (see \Cref{obs:useful t info}). This leaves open the question of whether the bound $(1-1/t)n - (k-2)$ is tight or off by $1$ when $t = \ell+1$ and $\ell \neq \frac{k-1}{2}$, or when $\ell = \frac{k-1}{2}$ and $n$ does not satisfy certain divisibility conditions mentioned above.

\paragraph{The case $t \ge \ell+2$.}

	The crucial difference here, as alluded to in \Cref{rem:exact degree}, is that \cref{obs:min 1-degree} now instead becomes $d^1_G(S) \ge n - \floor{n/t} - (i-1)$. 
	This implies that $\partial^2[A']$ has minimum degree at least $x$, not $x+1$ as before, and we can no longer guarantee the existence of $x$ cherries with leaves in $A$. Instead, we then modify \cref{lem:disjoint edges cherries} to instead obtain a linear forest $F$ consisting of edges and cherries, both with leaves in $A$, such that $|F| \in \{3x, 3x+1\}$. 
	We extend a cherry $u_1 u_2 u_3$ of $F$ to $u_1 u_2 v_1 \dots v_{t-2} u_3$ as before. In a similar manner, we also extend any edge $u_1 u_2 \in E(F)$ to a sequence $u v_1 \dots v_{t-2} u_1 u_2$ for $v_i \in B'$ and $u\in A$ (using a strategy similar to \cref{claim:path extension,claim:exceptional path extension}) such that $u v_1 \dots v_{t-2}$ and $v_1 \dots v_{t-2} u_1 u_2$ are both supported. Observe that this sequence does not support an extended $\ell$-path according to \Cref{def:supported-l-paths}, but if we change the definition so as to additionally allow a sequence $v_0 \ldots v_t$ to be considered to support an extended $\ell$-path if $v_1 \ldots v_t$ and $v_0 \ldots v_{t-2}$ are supported, then the rest of the analysis carries through. Indeed, the crucial point is that if $U = u_1 \ldots u_{2t}$ is a sequence of distinct vertices such that $u_1 \ldots u_{2t-2}$ is a tight path and $u_{t+1} \ldots u_{2t}$ is supported, then $U$ supports an $\ell$-path. Indeed, clearly the last $t$ vertices in $U$ form a supported set, so it remains to check that for every $i \in [0, \frac{t}{k-\ell} - 1]$, the subsequence $u_{i(k-\ell)+1} \ldots u_{i(k-\ell)+k}$ is an edge. Notice that if $i \le \frac{t}{k-\ell}-1$ then $i(k-\ell) + k \le t - (k-\ell) + k = t + \ell \le 2t-2$, using $\ell \le t-2$, so $u_{i(k-\ell)+1} \ldots u_{i(k-\ell)+k}$ is contained in $u_1 \ldots u_{2t-2}$, which is a tight path, and thus it is an edge. This allows us to form edges $b_1 \dots b_{t - 1} P \in E(\Gp)$ with $b_i \in B_i$ and $P \in \cP$ corresponding to an ``extended edge of $F$'' of the form $u v_1 \dots v_{t-2} u_1 u_2$, as described above.

\paragraph{The case $\ell = k-1$ (tight cycles).}
	In this case the strategy we have outlined above will work by extending only the cherries in $F$ but not the edges of $F$ (and simply treating these as supported sets of size two in $\cP$). The reason this modification works for tight cycles but fails in general is that it is important that the sequences in the family $\cP$ in \Cref{prop:size balancing final} have length $1 \pmod{k-\ell}$ and indeed this holds for sequences of length $2$ when $\ell = k-1$, but fails otherwise.

\end{document}